\DeclareFontFamily{U}{mathx}{\hyphenchar\font45}
\DeclareFontShape{U}{mathx}{m}{n}{<-> mathx10}{}
\DeclareSymbolFont{mathx}{U}{mathx}{m}{n}
\DeclareMathAccent{\widebar}{0}{mathx}{"73}
\newtheorem{proposition}{Proposition}[section]
\newtheorem{theorem}[proposition]{Theorem}
\newtheorem{lemma}[proposition]{Lemma}
\newtheorem{prop}[proposition]{Proposition}
\newtheorem{cor}[proposition]{Corollary}
\newtheorem{conj}[proposition]{Conjecture}
\theoremstyle{definition}
\newtheorem{example}[proposition]{Example}
\newtheorem{definition}[proposition]{Definition}
\newtheorem{question}[proposition]{Question}
\theoremstyle{remark}
\newtheorem{remark}[proposition]{Remark}
\numberwithin{equation}{section}
\newcounter{margincounter}
\newcommand{\newword}[1]{\textbf{\emph{#1}}}
\newcommand{\integers}{\mathbb Z}
\newcommand{\rationals}{\mathbb Q}
\newcommand{\reals}{\mathbb R}
\newcommand{\ep}{\varepsilon}
\newcommand{\col}{\mathbf{col}}
\newcommand{\sgn}{\operatorname{sgn}}
\newcommand{\vsgn}{\mathbf{sgn}}
\newcommand{\set}[1]{{\left\lbrace #1 \right\rbrace}}
\newcommand{\br}[1]{{\langle #1 \rangle}}
\newcommand{\A}{{\mathcal A}}
\newcommand{\F}{{\mathcal F}}
\newcommand{\ck}{^\vee}
\newcommand{\dashname}[1]{\stackrel{#1}{\mbox{---\!---}}}
\newcommand{\st}{^\mathrm{st}}
\renewcommand{\th}{^\text{th}}
\newcommand{\nd}{^\text{nd}}
\newcommand{\0}{{\hat{0}}}
\newcommand{\1}{{\hat{1}}}
\newcommand{\Tits}{\mathrm{Tits}}
\newcommand{\Cone}{\mathrm{Cone}}
\newcommand{\relint}{\mathrm{relint}}
\newcommand{\diag}{\mathrm{diag}}
\newcommand{\g}{\mathbf{g}}
\renewcommand{\b}{\mathbf{b}}
\renewcommand{\k}{\mathbf{k}}
\renewcommand{\a}{\mathbf{a}}
\newcommand{\e}{\mathbf{e}}
\newcommand{\x}{\mathbf{x}}
\newcommand{\y}{\mathbf{y}}
\renewcommand{\t}{\mathbf{t}}
\renewcommand{\v}{\mathbf{v}}
\newcommand{\w}{\mathbf{w}}
\newcommand{\tB}{\tilde{B}}
\newcommand{\T}{\mathbb{T}}
\newcommand{\R}{\mathcal{R}}
\renewcommand{\P}{\mathbb{P}}
\newcommand{\K}{\mathbb{K}}
\newcommand{\Trop}{\operatorname{Trop}}
\newcommand{\bB}{\widebar{B}}
\title{Universal geometric cluster algebras}
\author{Nathan Reading}
\thanks{This material is based upon work partially supported by the National Security Agency under Grant Number H98230-09-1-0056, by the Simons Foundation under Grant Number 209288 and by the National Science Foundation under Grant Number DMS-1101568.}
\subjclass[2010]{Primary 13F60, 52B99; Secondary 05E15, 20F55}
\begin{document}

\begin{abstract}
We consider, for each exchange matrix $B$, a category of geometric cluster algebras over $B$ and coefficient specializations between the cluster algebras.
The category also depends on an underlying ring $R$, usually $\integers$, $\rationals$, or $\reals$.
We broaden the definition of geometric cluster algebras slightly over the usual definition and adjust the definition of coefficient specializations accordingly.
If the broader category admits a universal object, the universal object is called the cluster algebra over $B$ with universal geometric coefficients, or the universal geometric cluster algebra over $B$.
Constructing universal geometric coefficients is equivalent to finding an $R$-basis for $B$ (a ``mutation-linear'' analog of the usual linear-algebraic notion of a basis).
Polyhedral geometry plays a key role, through the mutation fan $\F_B$, which we suspect to be an important object beyond its role in constructing universal geometric coefficients.
We make the connection between $\F_B$ and $\g$-vectors.
We construct universal geometric coefficients in rank $2$ and in finite type and discuss the construction in affine type.
\end{abstract}
\maketitle

\setcounter{tocdepth}{1}
\tableofcontents

\section{Introduction}
In this paper, we consider the problem of constructing universal geometric cluster algebras:  cluster algebras that are universal (in the sense of coefficient specialization) among cluster algebras of geometric type with a fixed exchange matrix $B$.
In order to accommodate universal geometric cluster algebras beyond the case of finite type, we broaden the definition of geometric type by allowing extended exchange matrices to have infinitely many coefficient rows.
We have also chosen to broaden the definition in another direction, by allowing the coefficient rows to have entries in some underlying ring (usually $\integers$, $\rationals$, or $\reals$).
We then narrow the definition of coefficient specialization to rule out pathological coefficient specializations.

There are at least two good reasons to allow the underlying ring $R$ to be something other than $\integers$.
First, when $R$ is a field, a universal geometric cluster algebra over $R$ exists for any $B$ (Corollary~\ref{univ exists}).
Second, the polyhedral geometry of mutation fans, defined below, strongly suggests the use of underlying rings larger than~$\rationals$.
(See in particular Section~\ref{rk2 sec}.)

On the other hand, there are good reasons to focus on the case where $R=\integers$.
In this case, the broadening of the definition of geometric type (allowing infinitely many coefficient rows) is mild:  
Essentially, we pass from polynomial coefficients to formal power series coefficients.
The only modification of the definition of coefficient specializations is to require that they respect the formal power series topology.
The case $R=\integers$ is the most natural in the context of the usual definition of geometric type.
We have no proof, for general $B$, that universal geometric cluster algebras exist over $\integers$, but we also have no counterexamples.
In this paper and \cite{unisurface,unitorus} we show that universal geometric cluster algebras over $\integers$ exist for many exchange matrices.

Given an underlying ring $R$ and an exchange matrix $B$, the construction of a universal geometric cluster algebra for $B$ over $R$ is equivalent to finding an $R$-basis for $B$ (Theorem~\ref{basis univ}).
The notion of an $R$-basis for $B$ is a ``mutation-linear'' analog of the usual linear-algebraic notion of a basis.
The construction of a basis is in turn closely related to the mutation fan $\F_B$, a (usually infinite) fan of convex cones that are essentially the domains of linearity of the action of matrix mutation on coefficients.
In many cases, a basis is obtained by taking one nonzero vector in each ray of $\F_B$, or more precisely in each ray of the $R$-part of $\F_B$.
(See Definition~\ref{R-part def}.)
Indeed, we show (Corollary~\ref{FB basis ray}) that a basis with the nicest possible properties only arises in this way.

The fan $\F_B$ appears to be a fundamental object.
For example, conditioned on a well-known conjecture of Fomin and Zelevinsky (sign-coherence of principal coefficients), we show (Theorem~\ref{g subfan}) that there is a subfan of $\F_B$ containing all cones spanned by $\g$-vectors of clusters of cluster variables for the transposed exchange matrix $B^T$.
In particular, for $B$ of finite type, the fan $\F_B$ coincides with the $\g$-vector fan for $B^T$, and as a result, universal coefficients for $B$ are obtained by making an extended exchange matrix whose coefficient rows are exactly the $\g$-vectors of cluster variables for $B^T$ (Theorem~\ref{finite g univ}). 
This is a new interpretation of the universal coefficients in finite type, first constructed in~\cite[Theorem~12.4]{ca4}.
We conjecture that a similar statement holds for $B$ of affine type as well, except that one must adjoin one additional coefficient row beyond those given by the $\g$-vectors for $B^T$ (Conjecture~\ref{affine univ conj}).
We intend to prove this conjecture in general in a future paper using results of \cite{framework, afframe}.
We will also describe the additional coefficient row in terms of the action of the Coxeter element.
Here, we work out the rank-$2$ case and one rank-$3$ case of the conjecture.
The cases of the conjecture where $B$ arises from a marked surface are proved in~\cite{unisurface}.  (See \cite[Remark~7.15]{unisurface}).  
Further, but more speculatively, we suspect that $\F_B$ should play a role in the problem of finding nice additive bases for cluster algebras associated to $B$.
In finite type, the cluster monomials are an additive basis.
Each cluster variable indexes a ray in the $\g$-vector fan (i.e.\ in $\F_B$), and cluster monomials are obtained as combinations of cluster variables whose rays are in the same cone of $\F_B$.
Beyond finite type, the rays of $\F_B$ may in some cases play a similar role to provide basic building blocks for constructing additive bases.

When $B$ arises from a marked surface in the sense of \cite{cats1,cats2}, the rational part of the mutation fan $\F_B$ can in most cases be constructed by means of a slight modification of the notion of laminations.
This leads to the construction, in \cite{unisurface}, of universal geometric coefficients for a family of surfaces including but not limited to the surfaces of finite or affine type.  
(The family happens to coincide with the surfaces of polynomial growth identified in \cite[Proposition~11.2]{cats1}.) 
In \cite{unitorus}, we explicitly construct universal geometric coefficients for an additional surface: the once-punctured torus.
A comparison of \cite{FG2011} and \cite{unitorus} suggests a connection between universal geometric coefficients and the cluster $\mathcal{X}$-varieties of Fock and Goncharov \cite{FGDilog,FG2011}, but the precise nature of this connection has not been worked out.

The definitions and results given here are inspired by the juxtaposition of several results on cluster algebras of finite type from \cite{ca4,cambrian,camb_fan,YZ}.
Details on this connection are given in Remark~\ref{me and ca4}.

Throughout this paper, $[n]$ stands for $\set{1,2,\ldots,n}$.
The notation $[a]_+$ stands for $\max(a,0)$, and $\sgn(a)$ is $0$ if $a=0$ or $a/|a|$ if $a\neq 0$.
Given a vector $\a=(a_1,\ldots,a_n)$ in $\reals^n$, the notation $\mathbf{min}(\a,\mathbf{0})$ stands for $(\min(a_1,0),\ldots,\min(a_n,0))$.
Similarly, $\vsgn(\a)$ denotes the vector $(\sgn(a_1),\ldots,\sgn(a_n))$.

Notation such as $(u_i:i\in I)$ stands for a list of objects indexed by a set $I$ of arbitrary cardinality, generalizing the notation $(u_1,\ldots,u_n)$ for an $n$-tuple.
We use the Axiom of Choice throughout the paper without comment.

\section{Cluster algebras of geometric type}\label{defs sec}
In this section we define cluster algebras of geometric type.
We define geometric type more broadly than the definition given in \cite[Section~2]{ca4}.
The exposition here is deliberately patterned after \cite[Section~2]{ca4} to allow easy comparison.

\begin{definition}[\emph{The underlying ring}]\label{def R}
All of the definitions in this section depend on a choice of an \newword{underlying ring} $R$, which is required be either the integers $\integers$ or some field containing the rationals $\rationals$ as a subfield and contained as a subfield of the reals $\reals$.
At present, we see little need for the full range of possibilities for $R$.
Rather, allowing $R$ to vary lets us deal with a few interesting cases simultaneously.
Namely, the case $R=\integers$ allows us to see the usual definitions as a special case of the definitions presented here, while examples in Section~\ref{rk2 sec} suggest taking $R$ to be the field of algebraic real numbers, or some finite-degree extension of $\rationals$.
The case $R=\reals$ also seems quite natural given the connection that arises with the discrete (real) geometry of the mutation fan.
(See Definition~\ref{mut fan def}.)
\end{definition}

\begin{definition}[\emph{Tropical semifield over $R$}]\label{trop semifield}
Let $I$ be an indexing set.
Let $(u_i:i\in I)$ be a collection of formal symbols called \newword{tropical variables}.
Define $\Trop_R(u_i:i\in I)$ to be the abelian group whose elements are formal products of the form $\prod_{i\in I}u_i^{a_i}$ with each $a_i\in R$ and multiplication given by 
\[\prod_{i\in I}u_i^{a_i}\cdot\prod_{i\in I}u_i^{b_i}=\prod_{i\in I}u_i^{a_i+b_i}.\]
Thus, as a group, $\Trop_R(u_i:i\in I)$ is isomorphic to the direct product, over the indexing set $I$, of copies of $R$.
The multiplicative identity $\prod_{i\in I}u_i^0$ is abbreviated as $1$.

We define an \newword{auxiliary addition} $\oplus$ in $\Trop_R(u_i:i\in I)$ by 
\[\prod_{i\in I}u_i^{a_i}\oplus\prod_{i\in I}u_i^{b_i}=\prod_{i\in I}u_i^{\min(a_i,b_i)}.\]
The triple $(\Trop_R(u_i:i\in I)\,,\oplus\,,\,\cdot\,)$ is a \newword{semifield}: an abelian multiplicative group equipped with a commutative, associative addition $\oplus$, with multiplication distributing over $\oplus$.
Specifically, this triple is called a \newword{tropical semifield over $R$}.

We endow $R$ with the discrete topology and endow $\Trop_R(u_i:i\in I)$ with the \newword{product topology} as a product of copies of the discrete set $R$.
The product topology is sometimes called the \newword{Tychonoff} topology or the \newword{topology of pointwise convergence}.
Details on the product topology are given later in Section~\ref{univ sec}.
The reason we impose this topology is seen in Proposition~\ref{continuous linear}.
When $I$ is finite, the product topology on $\Trop_R(u_i:i\in I)$ is the discrete topology, and when $I$ is countable, the product topology on $\Trop_R(u_i:i\in I)$ is the usual topology of formal power series (written multiplicatively).

As a product of copies of $R$, the semifield $\Trop_R(u_i:i\in I)$ is a module over~$R$.
Since the group operation in $\Trop_R(u_i:i\in I)$ is written multiplicatively, the scalar multiplication corresponds to exponentiation.
That is, $c\in R$ acts by sending $\prod_{i\in I}u_i^{a_i}$ to $\prod_{i\in I}u_i^{c a_i}$.
\end{definition}
One recovers \cite[Definition~2.2]{ca4} by requiring $I$ to be finite and taking the underlying ring $R$ to be $\integers$.
One may then ignore the topological considerations.

The role of the tropical semifield $\P$ in this paper is to provide a coefficient ring $\integers\P$ for cluster algebras.
The notation $\integers\P$ denotes the group ring, over $\integers$, of the multiplicative group $\P$, ignoring the addition $\oplus$.
The larger group ring $\rationals\P$ also makes a brief appearance in Definition~\ref{seed}.

\begin{definition}[\emph{Labeled geometric seed of rank $n$}]\label{seed}
Let $\P=\Trop_R(u_i:i\in I)$ be a tropical semifield.
Let $\K$ be a field isomorphic to the field of rational functions in $n$ independent variables with coefficients in $\rationals\P$.
A \newword{(labeled) geometric seed} of rank $n$ is a pair $(\x,\tB)$ described as follows:
\begin{enumerate}
\item[$\bullet$] $\tB$ is a function from $\left([n]\cup I\right)\times [n]$ to $R$.
For convenience, the function $\tB$ is referred to as a matrix over $R$.
\item[$\bullet$] The first rows of $\tB$ indexed by $[n]$ have integer entries and form a \newword{skew-symmetrizable} matrix $B$.  
(That is, there exist positive integers $d_1,\ldots,d_n$ such that $d_ib_{ij}=-d_jb_{ji}$ for all $i,j\in[n]$.  
If the integers $d_i$ can be taken to all be $1$, then $B$ is \newword{skew-symmetric}.)
\item[$\bullet$] $\x=(x_1,\ldots,x_n)$ is an $n$-tuple of algebraically independent elements of $\K$ which generate $\K$ (i.e.\ $\K=\rationals\P(x_1,\ldots,x_n)$).
\end{enumerate}
The $n$-tuple $\x$ is the \newword{cluster} in the seed and the entries of $\x$ are called the \newword{cluster variables}.
The square matrix $B$ is the \newword{exchange matrix} or the \newword{principal part} of $\tB$ and the full matrix $\tB$ is the \newword{extended exchange matrix}.
The rows of $\tB$ indexed by $I$ are called \newword{coefficient rows};  the coefficient row indexed by $i$ is written $\b_i=(b_{i1},\ldots,b_{in})$.
The semifield $\P$ is called the \newword{coefficient semifield} and is determined up to isomorphism by the number of coefficient rows of $\tB$ (i.e.\ by the cardinality of~$I$).
The elements $y_j=\prod_{i\in I}u_i^{b_{ij}}$ of $\P$ are the \newword{coefficients} associated to the seed.
\end{definition}

\begin{definition}[\emph{Mutation of geometric seeds}]\label{mutation}
Fix $I$, $\P=\Trop_R(u_i:i\in I)$, and $\K$ as in Definitions~\ref{trop semifield} and~\ref{seed}.
For each $k\in[n]$ we define an involution $\mu_k$ on the set of labeled geometric seeds of rank $n$.
Let $(\x,\tB)$ be a labeled geometric seed.

We define a new seed $\mu_k(\x,\tB)=(\x',\tB')$ as follows.
The new cluster $\x'$ is $(x_1',\ldots,x_n')$ with $x_j'=x_j$ whenever $j\neq k$ and 
\begin{equation}\label{x mut}
x_k'=\frac{y_k\prod_{i=1}^nx_i^{[b_{ik}]_+}+\prod_{i=1}^nx_i^{[-b_{ik}]_+}}{x_k(y_k\oplus 1)}\,\,,
\end{equation}
where, as above, $y_k$ is the coefficient $\prod_{i\in I}u_i^{b_{ik}}$ and $1$ is the multiplicative identity $\prod_{i\in I}u_i^0$.
Thus $x_k'$ is a rational function in $\x$ with coefficients in $\integers\P$, or in other words, $x_k'\in\K$.

The new extended exchange matrix $\tB'$ has entries
\begin{equation}\label{b mut}
b_{ij}'=\left\lbrace\!\!\begin{array}{ll}
-b_{ij}&\mbox{if }i=k\mbox{ or }j=k;\\
b_{ij}+\sgn(b_{kj})\,[b_{ik}b_{kj}]_+&\mbox{otherwise.}
\end{array}\right.
\end{equation}

The top part of $\mu_k(\tB)$ is skew-symmetrizable with the same skew-symmetrizing constants $d_i$ as the top part of $\tB$.
The map $\mu_k$ is an involution.

The notation $\mu_k$ also denotes the mutation $\mu_k(\tB)=\tB'$ of extended exchange matrices, which does not depend on the cluster $\x$.
Given a finite sequence $\k=k_q,\ldots,k_1$ of indices in~$[n]$, the notation $\mu_\k$ stands for $\mu_{k_q}\circ\mu_{k_{q-1}}\circ\cdots\circ\mu_{k_1}$.
We have indexed the sequence $\k$ so that the first entry in the sequence is on the right.
\end{definition}

\begin{definition}[\emph{Mutation equivalence of matrices}]\label{mut eq}
Two exchange matrices (or extended exchange matrices) are called \newword{mutation equivalent} if there is a sequence~$\k$ such that one matrix is obtained from the other by applying $\mu_\k$.
The set of all matrices mutation equivalent to $B$ is the \newword{mutation class} of $B$.
\end{definition}

\begin{definition}[\emph{Regular $n$-ary tree}]\label{tree}
Let $\T_n$ denote the $n$-regular tree with edges labeled by the integers $1$ through $n$ such that each vertex is incident to exactly one edge with each label.
The notation $t\dashname{k}t'$ indicates that vertices $t$ and $t'$ are connected by an edge labeled $k$.
\end{definition}

\begin{definition}[\emph{Cluster pattern and $Y$-pattern of geometric type}]\label{pattern}
Fix a vertex $t_0$ in $\T_n$.
Given a seed $(\x,\tB)$, the assignment $t_0\mapsto (\x,\tB)$ extends to a unique map $t\mapsto(\x_t,\tB_t)$ from the vertices of $\T_n$ by requiring that $(\x_{t'},\tB_{t'})=\mu_k(\x_t,\tB_t)$ whenever $t\dashname{k}t'$.
This map is called a \newword{cluster pattern (of geometric type)}.
The map $t\mapsto\tB_t$ is called a \newword{$Y$-pattern (of geometric type)}.
The cluster variables and matrix entries of $(\x_t,\tB_t)$ are written $(x_{1;t},\ldots,x_{n;t})$ and $(b_{ij}^t)$.
For each $j\in[n]$, the coefficient $\prod_{i\in I}u_i^{b_{ij}^t}$ is represented by the symbol $y_{j;t}$.
\end{definition}

\begin{definition}[\emph{Cluster algebra of geometric type}]\label{cluster algebra}
The \newword{cluster algebra} $\A$ associated to a cluster pattern $t\mapsto(\x_t,\tB_t)$ is the $\integers\P$-subalgebra of $\K$ generated by all cluster variables occurring in the cluster pattern.
That is, setting
\[\mathcal{X}=\set{x_{i;t}:t\in\T_n,i\in[n]},\]
we define $\A=\integers\P[\mathcal{X}]$.
Since the seed $(\x,\tB)=(\x_{t_0},\tB_{t_0})$ uniquely determines the cluster pattern, we write $\A=\A_R(\x,\tB)$.
Up to isomorphism, the cluster algebra is determined entirely by $\tB$, so we can safely write $\A_R(\tB)$ for $\A_R(\x,\tB)$.
\end{definition}

\begin{remark}\label{comparison}
Definitions~\ref{seed}, \ref{mutation}, \ref{pattern}, and~\ref{cluster algebra} are comparable to \cite[Definition~2.3]{ca4}, \cite[Definition~2.4]{ca4}, \cite[Definition~2.9]{ca4}, and \cite[Definition~2.11]{ca4} restricted to the special case described in \cite[Definition~2.12]{ca4}.
Essentially, the only difference is the broader definition of the notion of a tropical semifield, which forces us to allow $\tB$ to have infinitely many rows, and to allow non-integer entries in the coefficient rows of $\tB$.
Definition~\ref{tree} is identical to \cite[Definition~2.8]{ca4}.
\end{remark}

\section{Universal geometric coefficients}\label{univ sec}
In this section, we define and discuss a notion of coefficient specialization of cluster algebras of geometric type and define cluster algebras with universal geometric coefficients.
These definitions are comparable, but not identical, to the corresponding definitions in \cite[Section~12]{ca4}, which apply to arbitrary cluster algebras.

\begin{definition}[\emph{Coefficient specialization}]\label{specialization}
Let $(\x,\tB)$ and $(\x',\tB')$ be seeds of rank~$n$, and let $\P$ and $\P'$ be the corresponding tropical semifields over the same underlying ring $R$.
A ring homomorphism $\varphi:\A_R(\x,\tB)\to\A_R(\x',\tB')$ is a \newword{coefficient specialization} if
\begin{enumerate}[(i)]
\item the exchange matrices $B$ and $B'$ coincide;
\item $\varphi(x_j)=x'_j$ for all $j\in[n]$;
\item \label{spec restr}
the restriction of $\varphi$ to $\P$ is a continuous $R$-linear map to $\P'$ with $\varphi(y_{j;t})=y'_{j;t}$ and $\varphi(y_{j;t}\oplus 1)=y'_{j;t}\oplus 1$ for all $j\in[n]$ and $t\in\T_n$.
\end{enumerate}
Continuity in \eqref{spec restr} refers to the product topology described in Definition~\ref{trop semifield}.
\end{definition}

\begin{definition}[\emph{Cluster algebra with universal geometric coefficients}]\label{universal}
A cluster algebra $\A=\A_R(\tB)$ of geometric type with underlying ring $R$ is \newword{universal over~$R$} if for every cluster algebra $\A'=\A_R(\tB')$ of geometric type with underlying ring~$R$ sharing the same initial exchange matrix $B$, there exists a unique coefficient specialization from $\A$ to $\A'$.
In this case, we also say $\tB$ is \newword{universal over $R$} and call the coefficient rows of $\tB$ \newword{universal geometric coefficients} for $B$ over $R$.
\end{definition}

The local conditions of Definition~\ref{specialization} imply some global conditions, as recorded in the following proposition, whose proof follows immediately from \eqref{x mut} and \eqref{b mut}.

\begin{prop}\label{global local}
Continuing the notation of Definition~\ref{specialization}, if $\varphi$ is a coefficient specialization, then 
\begin{enumerate}[(i$'$)]
\item \label{global local B}
the exchange matrices $B_t$ and $B'_t$ coincide for any $t$.  
\item \label{global local x}
$\varphi(x_{j;t})=x'_{j;t}$ for all $j\in[n]$ and $t\in\T_n$.
\end{enumerate}
\end{prop}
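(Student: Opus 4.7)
The plan is to induct on the graph distance from the base vertex $t_0$ in $\T_n$. At $t_0$ itself, assertions (i$'$) and (ii$'$) are precisely conditions (i) and (ii) of Definition~\ref{specialization}, so the base case is immediate.

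For the inductive step, suppose $t \dashname{k} t'$ and that (i$'$) and (ii$'$) hold at $t$. The matrix mutation rule \eqref{b mut}, restricted to the principal-part rows, is a local formula in the entries of $B_t$ alone. Hence $B_t = B'_t$ gives $B_{t'} = \mu_k(B_t) = \mu_k(B'_t) = B'_{t'}$, so (i$'$) propagates across the edge. For (ii$'$) at $t'$, the case $j \neq k$ is trivial since $x_{j;t'} = x_{j;t}$ and $x'_{j;t'} = x'_{j;t}$. For $j = k$, I would apply $\varphi$ to the exchange relation \eqref{x mut} that defines $x_{k;t'}$. Using the inductive hypothesis on the $x_{i;t}$, the equality of $b^t_{ik}$ with the corresponding entry of $B'_t$ coming from (i$'$) at $t$, and Definition~\ref{specialization}(iii) applied to the two elements $y_{k;t}$ and $y_{k;t}\oplus 1$ of $\P$, I find that the image under $\varphi$ is exactly the rational expression in $\x'_t$ that defines $x'_{k;t'}$.

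The only subtlety worth flagging is the role of $y_{k;t} \oplus 1$ in \eqref{x mut}: since $\oplus$ is not the ring addition of $\integers\P$, one cannot deduce $\varphi(y_{k;t}\oplus 1) = y'_{k;t}\oplus 1$ from $\varphi(y_{k;t}) = y'_{k;t}$ by the ring structure alone, which is exactly why Definition~\ref{specialization}(iii) records the two conditions separately. The continuity and $R$-linearity clauses in (iii) play no role in this edge-by-edge propagation; they secure global good behavior of $\varphi$ on all of $\P$ but are not needed here. So the induction goes through with no real obstacle, consistent with the author's remark that the proof is immediate from \eqref{x mut} and \eqref{b mut}.
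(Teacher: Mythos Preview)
Your proposal is correct and is exactly the argument the paper has in mind: the paper states only that the proof ``follows immediately from \eqref{x mut} and \eqref{b mut},'' and your induction on distance from $t_0$ is the natural way to unpack that. Your remarks about why both conditions $\varphi(y_{j;t})=y'_{j;t}$ and $\varphi(y_{j;t}\oplus 1)=y'_{j;t}\oplus 1$ are needed, and about the irrelevance here of continuity and $R$-linearity, are accurate and worth noting.
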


\begin{remark}\label{FZ def}
A converse to Proposition~\ref{global local} is true for certain $B$.
Specifically, suppose the exchange matrix $B$ has no column composed entirely of zeros.
If the restriction of $\varphi$ to $\P$ is a continuous $R$-linear map and conditions~(\ref{global local B}$'$) and (\ref{global local x}$'$) of Proposition~\ref{global local} both hold, then $\varphi$ is a coefficient specialization.
This fact is argued in the proof of \cite[Proposition~12.2]{ca4}, with no restrictions on $B$.
(Naturally, the argument there makes no reference to continuity and $R$-linearity, which are not part of \cite[Definition~12.1]{ca4}.)
The argument given there is valid in the context of Definition~\ref{specialization}, over any $R$, as long as $B$ has no zero column.

The reason \cite[Proposition~12.2]{ca4} can fail when $B$ has a zero column is that it may be impossible to distinguish $p_{j;t}^+$ from $p_{j;t}^-$ in \cite[(12.2)]{ca4}.
Rather than defining $p_{j;t}^+$ and $p_{j;t}^-$ here, we consider a simple case in the notation of this paper.
Take $R=\integers$ and let $\tB$ have a $1\times1$ exchange matrix $[0]$ and a single coefficient row~$1$, so that in particular $\P=\Trop_\integers(u_1)$.
The cluster pattern associated with $(x_1,\tB)$ has two seeds:  One with cluster variable $x_1$ and coefficient $u_1$, and the other with cluster variable $x_1^{-1}(u_1+1)$ and coefficient $u_1^{-1}$.
On the other hand, let $\tB'$ also have exchange matrix $[0]$ but let its single coefficient row be~$-1$.
Let $\P'=\Trop_\integers(u'_1)$.
The cluster pattern associated with $(x'_1,\tB')$ has a seed with cluster variable $x_1'$ and coefficient $(u'_1)^{-1}$ and a seed with cluster variable $(x_1')^{-1}(1+u_1')$ and coefficient $u_1'$.
The map $\varphi$ sending $x_1$ to $x'_1$ and $u_1$ to $u_1'$ extends to a ring homomorphism satisfying condition (\ref{global local x}$'$).
However, condition \eqref{spec restr} fails.

The characterization in \cite[Theorem~12.4]{ca4} of universal coefficients in finite type is valid except when $B$ has a zero column, or in other words when $A(B)$ has a irreducible component of type $A_1$.
It can be fixed by separating $A_1$ as an exceptional case or by taking \cite[Proposition~12.2]{ca4} as the definition of coefficient specialization.
\end{remark}

\begin{remark}\label{why linear}
Here we discuss the requirement in Definition~\ref{specialization} that the restriction of $\varphi$ be $R$-linear.
This is strictly stronger than the requirement that the restriction be a group homomorphism, which is all that is required in \cite[Definition~12.1]{ca4}.
The additional requirement in Definition~\ref{specialization} is that the restriction of $\varphi$ commutes with scaling (i.e.\ exponentiation) by elements of $R$.
This requirement is forced by the fact that Definitions~\ref{def R} and~\ref{trop semifield} allow the underlying ring $R$ to be larger than $\rationals$.
When $R$ is $\integers$ or $\rationals$, the requirement of commutation with scaling is implied by the requirement of a group homomorphism.

In general, however, consider again the example of a cluster algebra $\A$ of rank~$1$, given by the $2\times1$ matrix with rows $0$ and $1$ and the initial cluster $\x$ with a single entry $x_1$.  
The coefficient semifield is $\Trop_R(u_1)$.
The cluster variables are $x_1$ and $x_1^{-1}(u_1+1)$.
Suppose the underlying ring $R$ is a field.
Then $R$ is a vector space over $\rationals$, and homomorphisms of additive groups from $R$ to itself correspond to $\rationals$-linear maps of vector spaces.
If $R$ strictly contains $\rationals$, then there are infinitely many such maps fixing $1$.
Thus, if we didn't require commutation with scaling, there would be infinitely many coefficient specializations from $\A$ to itself fixing $1$, $x_1$ and $x_1^{-1}(u_1+1)$.
The requirement of linearity ensures that the identity map is the only coefficient specialization fixing $1$, $x_1$ and $x_1^{-1}(u_1+1)$.
\end{remark}

\begin{remark}\label{why continuous}
Here we discuss the requirement in Definition~\ref{specialization} that the restriction of $\varphi$ be continuous.
This requirement is forced, once we allow the set of tropical variables to be infinite. 
Take $I$ to be infinite, let $\P=\Trop_R(u_i:i\in I)$ and consider a cluster algebra $\A$ of rank $1$ given by the exchange matrix $[0]$ and each coefficient row equal to $1$.
Consider the set of $R$-linear maps $\varphi:\P\to\P$ with $\varphi(u_i)=u_i$ for all $i\in I$.
Linearity plus the requirement that $u_i\mapsto u_i$ only determine $\varphi$ on elements of $\P$ with finite support (elements $\prod_{i\in I}u_i^{a_i}$ with only finitely many nonzero $a_i$).
This leaves infinitely many $R$-linear maps from $\P$ to itself that send every tropical variable $u_i$ to itself.
Dropping the requirement of continuity in Definition~\ref{specialization}, each of these maps would be a coordinate specialization.
\end{remark}

The following proposition shows that the requirements of $R$-linearity and continuity resolve the issues illustrated in Remarks~\ref{why linear} and~\ref{why continuous}.

\begin{prop}\label{continuous linear}
Let $\Trop_R(u_i:i\in I)$ and $\Trop_R(v_k:k\in K)$ be tropical semifields over $R$ and fix a family $(p_{ik}:i\in I,k\in K)$ of elements of $R$.
Then the following are equivalent.
\begin{enumerate}[(i)]
\item There exists a continuous $R$-linear map $\varphi:\Trop_R(u_i:i\in I)\to\Trop_R(v_k:k\in K)$ with $\varphi(u_i)=\prod_{k\in K}v_k^{p_{ik}}$ for all $i\in I$.
\item For all $k\in K$, there are only finitely many indices $i\in I$ such that $p_{ik}$ is nonzero.
\end{enumerate}
When these conditions hold, the unique map $\varphi$ is
\[\varphi\Bigl(\prod_{i\in I}u_i^{a_i}\Bigr)=\prod_{k\in K}v_k^{\sum_{i\in I}p_{ik}a_i}.\]
\end{prop}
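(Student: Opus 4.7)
The plan is to prove the two implications separately: (ii) implies (i) by directly exhibiting the stated formula and verifying it defines a continuous $R$-linear map, and (i) implies (ii) by contradiction, using continuity against a sequence of tropical variables that converges to~$1$.

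For (ii) $\Rightarrow$ (i), I first check that the formula $\varphi(\prod_i u_i^{a_i}) = \prod_k v_k^{\sum_i p_{ik} a_i}$ is well-defined: under hypothesis (ii), the exponent $\sum_{i \in I} p_{ik} a_i$ reduces to a finite sum for each $k$, so it represents an element of $R$.  The map $\varphi$ is an $R$-linear homomorphism because the assignment $(a_i)_{i \in I} \mapsto (\sum_i p_{ik} a_i)_{k \in K}$ is a linear map of direct products of copies of $R$ (remembering that in both tropical semifields the group operation corresponds to addition of exponents and $R$-scaling corresponds to scalar multiplication of exponents).  Continuity follows from the standard characterization that a map into a product of discrete spaces is continuous iff each coordinate is; here the $k$-coordinate of $\varphi(\prod_i u_i^{a_i})$ depends on only the finitely many $a_i$ with $p_{ik} \neq 0$, so it factors through a finite subproduct of the domain and is continuous.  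Finally, specializing $a_j = \delta_{ij}$ gives $\varphi(u_i) = \prod_k v_k^{p_{ik}}$.

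For (i) $\Rightarrow$ (ii), I argue by contradiction.  Suppose some $k_0 \in K$ admits infinitely many indices $i \in I$ with $p_{i k_0} \neq 0$, and choose (by countable choice) a sequence of distinct such indices $i_1, i_2, \ldots$.  The sequence $u_{i_n}$ converges to $1$ in the product topology, because for each fixed $j \in I$ the $j$-coordinate of $u_{i_n}$ is zero whenever $i_n \neq j$, which occurs for all but at most one $n$.  By continuity of $\varphi$, the sequence $\varphi(u_{i_n}) = \prod_k v_k^{p_{i_n k}}$ must converge to $\varphi(1) = 1$.  But the $k_0$-coordinate of $\varphi(u_{i_n})$ is $p_{i_n k_0} \neq 0$ for every $n$, contradicting coordinatewise convergence to $1$ with $R$ carrying the discrete topology.

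Uniqueness of $\varphi$ is then almost automatic: $R$-linearity together with the prescribed values $\varphi(u_i) = \prod_k v_k^{p_{ik}}$ forces the stated formula on every element of finite support, and such finite-support elements are dense in $\Trop_R(u_i : i \in I)$ under the product topology (each element is the limit of its truncations to finite subsets of $I$), so continuity determines $\varphi$ everywhere.  The main point requiring care is simply matching the multiplicative notation of tropical semifields to the additive language of $R$-modules, and recognizing that the product topology is coordinatewise, so that the finiteness condition (ii) corresponds exactly to continuity of each coordinate projection of $\varphi$.
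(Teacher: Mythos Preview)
Your proof is correct, and for the direction (ii)$\Rightarrow$(i) it matches the paper's argument essentially verbatim.  The interesting divergence is in (i)$\Rightarrow$(ii) and in uniqueness.  The paper first reduces to a single target coordinate (your implicit use of ``continuous into a product iff each coordinate is continuous'' does the same thing), and then argues (i)$\Rightarrow$(ii) by a direct analysis of the open set $\varphi^{-1}(\{0\})$: it writes this set as a union of basic open boxes, picks one whose finite ``support'' set $S_\mu$ is containment-minimal, and shows that $p_i\neq 0$ exactly for $i\in S_\mu$.  Uniqueness then comes from the same structural analysis: any $a$ differs from its truncation to $S_\mu$ by an element of that basic box, hence by something in the kernel.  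Your sequential argument---$u_{i_n}\to 1$ forces $\varphi(u_{i_n})\to 1$, contradicted in the $k_0$-coordinate---is shorter and more conceptual, exploiting only that continuous maps preserve limits; your density-plus-Hausdorff argument for uniqueness is likewise cleaner.  The paper's approach has the minor advantage of avoiding any appeal to sequences or nets (so no worry about whether sequences suffice in a possibly non-first-countable space), and it yields the explicit formula as a byproduct rather than as a separate step; your approach buys brevity and makes the role of the product topology more transparent.  One small wording point: your parenthetical ``each element is the limit of its truncations to finite subsets of $I$'' should be read as a net indexed by finite subsets under inclusion, since when $I$ is uncountable no sequence of truncations will do---but density plus Hausdorff codomain is all you actually use, so the argument stands.
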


We conclude this section by giving the details of the definition of the product topology and proving Proposition~\ref{continuous linear}.
We assume familiarity with the most basic ideas of point-set topology.
Details and proofs regarding the product topology are found, for example, in \cite[Chapter~3]{Kelley}.
Let $(X_i:i\in I)$ be a family of topological spaces, where $I$ is an arbitrary indexing set.
Let $X$ be the direct product $\prod_{i\in I}X_i$, and write the elements of $X$ as $(a_i:i\in I)$ with each $a_i\in X_i$.
Let $P_j:X\to X_j$ be the $j\th$ projection map, sending $(a_i:i\in I)$ to $a_j$.
The product topology on $X$ is the coarsest topology on $X$ such that each $P_j$ is continuous.
Open sets in $X$ are arbitrary unions of sets of the form $\prod_{i\in I}O_i$ where each $O_i$ is an open set in $X_i$ and the identity $O_i=X_i$ holds for all but finitely many $i\in I$.

We are interested in the case where each $X_i$ is $R$ with the discrete topology (meaning that every subset of $R$ is open).
We write $X_I$ for $\prod_{i\in I}X_i$ in this case.
For each $i\in I$, let $e_i\in X_I$ be the element whose entry is~$1$ in the position indexed by~$i$, with entries~$0$ in every other position.
Later in the paper, we use the bold symbol $\e_i$ for a standard unit basis vector in $\reals^n$.
Here, we intend the non-bold symbol $e_i$ to suggest a similar idea, even though a basis for $X_I$ may have many more elements than $I$.
The open sets in $X_I$ are arbitrary unions of sets of the form $\prod_{i\in I}O_i$ where each $O_i$ is a subset of $R$ and the identity $O_i=R$ holds for all but finitely many $i\in I$.
The following is a rephrasing of Proposition~\ref{continuous linear}.

\begin{prop}\label{continuous linear rewrite}
Let $X_I$ and $X_K$ be as above for indexing sets $I$ and $K$.
Fix a family $(p_{ik}:i\in I,k\in K)$ of elements of $R$.
Then the following are equivalent.
\begin{enumerate}[(i)]
\item There exists a continuous $R$-linear map $\varphi:X_I\to X_K$ with $\varphi(e_i)=(p_{ik}:k\in K)$ for all $i\in I$.
\item For all $k\in K$, there are only finitely many indices $i\in I$ such that $p_{ik}$ is nonzero.
\end{enumerate}
When these conditions hold, the unique map $\varphi$ is
\[\varphi(a_i:i\in I)=\biggl(\,\sum_{i\in I}p_{ik}a_i:k\in K\biggr).\]
\end{prop}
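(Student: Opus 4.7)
The plan is to prove Proposition~\ref{continuous linear rewrite} in three steps: (ii)$\Rightarrow$(i) (existence), (i)$\Rightarrow$(ii) (necessity of the finiteness condition), and uniqueness of $\varphi$. Throughout I will work with the characterization of the product topology in terms of basic open sets: a neighborhood base at $(a_i:i\in I)\in X_I$ consists of sets of the form $\{(b_i):b_i=a_i\text{ for all }i\in F\}$ where $F$ ranges over finite subsets of $I$. I will also repeatedly use the standard fact that a map into $X_K$ is continuous if and only if each of its compositions with the projections $P_k:X_K\to R$ is continuous.

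For (ii)$\Rightarrow$(i), I would define $\varphi$ by the displayed formula. For each $k\in K$, let $F_k$ be the finite set of $i$ with $p_{ik}\neq 0$; then the sum $\sum_{i\in I}p_{ik}a_i=\sum_{i\in F_k}p_{ik}a_i$ is finite, so the formula makes sense. $R$-linearity is immediate. To check continuity, it suffices to check that $P_k\circ\varphi$ is continuous for each $k$. But $P_k\circ\varphi$ factors through the continuous projection $X_I\to\prod_{i\in F_k}R$ onto a finite discrete product, after which any map to discrete $R$ is automatically continuous.

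For (i)$\Rightarrow$(ii), suppose $\varphi$ is continuous and $R$-linear with $\varphi(e_i)=(p_{ik}:k\in K)$, and suppose for contradiction that some $k_0\in K$ has infinitely many $i$ with $p_{ik_0}\neq 0$. The set $U=P_{k_0}^{-1}(\{0\})$ is an open neighborhood of $\mathbf{0}$ in $X_K$, so by continuity $\varphi^{-1}(U)$ contains a basic open neighborhood of $\mathbf{0}\in X_I$, determined by a finite set $F\subseteq I$. Choosing any $i\notin F$ with $p_{ik_0}\neq 0$ yields $e_i\in\varphi^{-1}(U)$, which forces $p_{ik_0}=(\varphi(e_i))_{k_0}=0$, a contradiction.

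For uniqueness, given any $\varphi$ satisfying (i), $R$-linearity fixes $\varphi$ on every element with finite support. For an arbitrary $(a_i)\in X_I$, the net of finite-support truncations $(a_i)^F$ (indexed by finite $F\subseteq I$ under inclusion) converges to $(a_i)$ in the product topology, and $\varphi((a_i)^F)=(\sum_{i\in F}p_{ik}a_i:k\in K)$ by $R$-linearity. For each fixed $k$, the $k$-th coordinate stabilizes at $\sum_{i\in F_k}p_{ik}a_i$ once $F\supseteq F_k$, so the net converges in $X_K$ to the element given by the displayed formula. Continuity then forces $\varphi((a_i))$ to equal this limit. I expect the only real subtlety to be taking care that $R$ may be uncountable, so that sequential arguments must be replaced by the net/neighborhood-base arguments used above; once that is internalized, each step is routine.
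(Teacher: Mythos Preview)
Your proof is correct. Both you and the paper reduce to the coordinate maps $P_k\circ\varphi$ (the paper isolates this as Lemma~\ref{continuous linear one} and then proves the single-target case as Proposition~\ref{continuous linear lite}), and the (ii)$\Rightarrow$(i) direction is handled the same way. The differences lie in the other direction and in uniqueness. For (i)$\Rightarrow$(ii), the paper analyzes the open set $\varphi^{-1}(\{0\})$ in detail: it writes it as a union of basic open sets, chooses one with a containment-minimal ``frozen'' index set $S_\mu$, and argues that $p_i\neq 0$ exactly for $i\in S_\mu$; your argument is more economical, using only that some basic neighborhood of $\mathbf{0}$ lies in the preimage and then picking a single $e_i$ outside the finite frozen set. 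For uniqueness, the paper gets the formula in the course of the same argument by comparing $a$ with its truncation to $S_\mu$, whereas you argue separately via convergence of the net of finite truncations. Your route is a bit cleaner conceptually (density of finite-support elements plus continuity), at the cost of invoking nets; the paper's route stays entirely within elementary open-set manipulations but is longer. Either way the content is the same, and your care to use nets rather than sequences is exactly the right precaution given that $I$ may be uncountable.
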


To simplify the proof of Proposition~\ref{continuous linear rewrite}, we appeal to the following lemma.  

\begin{lemma}\label{continuous linear one}
A map $\varphi:X_I\to X_K$ is continuous and $R$-linear if and only if the map $P_k\circ\varphi:X_I\to R$ is continuous and $R$-linear for each  $k\in K$.
\end{lemma}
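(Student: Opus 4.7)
The plan is to handle the two directions separately, using the universal property of the product topology and the fact that equality and algebraic operations in $X_K$ can be checked coordinatewise.

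For the forward direction, suppose $\varphi:X_I\to X_K$ is continuous and $R$-linear. By the very definition of the product topology on $X_K$, each projection $P_k:X_K\to R$ is continuous, so $P_k\circ\varphi$ is a composition of continuous maps and is therefore continuous. Each $P_k$ is also $R$-linear, since addition and scalar multiplication in $X_K$ are defined coordinatewise, and composition of $R$-linear maps is $R$-linear, so $P_k\circ\varphi$ is $R$-linear. This direction is essentially automatic.

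For the backward direction, assume each $P_k\circ\varphi$ is continuous and $R$-linear. Continuity of $\varphi$ follows from the universal property of the product topology: a subbasis for the topology on $X_K$ consists of sets of the form $P_k^{-1}(O)$ for $k\in K$ and $O\subseteq R$ (open, hence arbitrary since $R$ is discrete), and $\varphi^{-1}(P_k^{-1}(O))=(P_k\circ\varphi)^{-1}(O)$ is open in $X_I$ by the assumed continuity of $P_k\circ\varphi$; since preimages commute with unions and finite intersections, $\varphi^{-1}$ sends every open set of $X_K$ to an open set of $X_I$. For $R$-linearity, fix $a,b\in X_I$ and $c\in R$. For every $k\in K$, the assumed $R$-linearity of $P_k\circ\varphi$ together with the coordinatewise $R$-linearity of $P_k$ gives
\[P_k(\varphi(a+b))=P_k(\varphi(a))+P_k(\varphi(b))=P_k(\varphi(a)+\varphi(b))\]
and similarly $P_k(\varphi(ca))=c\,P_k(\varphi(a))=P_k(c\,\varphi(a))$. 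Since two elements of $X_K=\prod_{k\in K}R$ are equal precisely when their projections agree for every $k$, we conclude $\varphi(a+b)=\varphi(a)+\varphi(b)$ and $\varphi(ca)=c\,\varphi(a)$.

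I do not expect any real obstacle here; both directions are standard properties of products of topological $R$-modules, with the only mild subtlety being the use of the subbasis argument to pass from continuity of each $P_k\circ\varphi$ to continuity of $\varphi$. The coordinatewise verification of algebraic operations is routine.
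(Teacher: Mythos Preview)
Your proof is correct and follows essentially the same approach as the paper. The paper separates the continuity and linearity assertions, citing a standard reference for the continuity statement (the universal property of the product topology) and calling the linearity statement immediate from the coordinatewise definition of the operations; you have simply written out both of these standard arguments in full.
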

The assertions of the lemma for continuity and linearity are independent of each other.
A direct proof for continuity is found in \cite[Chapter~3]{Kelley}.
The assertion for linearity is immediate because the linearity of each $P_k\circ\varphi$ is the linearity of $\varphi$ in each entry, which is equivalent to the linearity of $\varphi$.
Lemma~\ref{continuous linear one} reduces the proof of Proposition~\ref{continuous linear rewrite} to the following proposition:

\begin{prop}\label{continuous linear lite}
Let $X_I$ be as above and endow $R$ with the discrete topology.
Fix a family $(p_i:i\in I)$ of elements of $R$.
Then the following are equivalent.
\begin{enumerate}[(i)]
\item \label{contin lin lite map}
There exists a continuous $R$-linear map $\varphi:X_I\to R$ with $\varphi(e_i)=p_i$ for all $i\in I$.
\item \label{contin lin lite finite}
There are only finitely many indices $i\in I$ such that $p_i$ is nonzero.
\end{enumerate}
When these conditions hold, the unique map $\varphi$ is $\varphi(a_i:i\in I)=\sum_{i\in I}p_ia_i$.
\end{prop}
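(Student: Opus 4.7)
The plan is to prove the two implications separately and then identify the unique formula.

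For the direction (\ref{contin lin lite finite})$\Rightarrow$(\ref{contin lin lite map}), I would simply define $\varphi(a_i:i\in I)=\sum_{i\in I}p_ia_i$, which is a finite sum under hypothesis (\ref{contin lin lite finite}) and hence a well-defined element of $R$. $R$-linearity is immediate. For continuity, since $R$ carries the discrete topology, it suffices to show the preimage of each singleton $\set{r}\subseteq R$ is open in $X_I$. Let $F\subseteq I$ be the finite set of indices with $p_i\neq 0$. Since $\varphi$ depends only on the coordinates indexed by $F$, the preimage $\varphi^{-1}(r)$ is a union of basic open sets determined by restricting only those finitely many coordinates, so it is open. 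Clearly $\varphi(e_i)=p_i$.

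For the direction (\ref{contin lin lite map})$\Rightarrow$(\ref{contin lin lite finite}), I would exploit continuity at the zero element $\mathbf{0}=(0:i\in I)$. Since $\varphi$ is $R$-linear, $\varphi(\mathbf{0})=0$, and since $\set{0}$ is open in the discrete space $R$, the preimage $\varphi^{-1}(0)$ is an open neighborhood of $\mathbf{0}$. By the definition of the product topology, this neighborhood must contain a basic open set of the form $\prod_{i\in I}O_i$ with $0\in O_i$ for every $i$ and $O_i=R$ for all $i$ outside some finite set $F$. In particular, for any $j\notin F$, the element $e_j$ lies in this basic open set, so $p_j=\varphi(e_j)=0$. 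This yields (\ref{contin lin lite finite}).

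To identify the formula, suppose (\ref{contin lin lite map}) and (\ref{contin lin lite finite}) both hold, and let $F$ be as in the previous paragraph (chosen large enough to contain the support of $(p_i)$). Given an arbitrary $(a_i:i\in I)\in X_I$, write it as the sum of $\sum_{i\in F}a_i e_i$ and a vector whose coordinates indexed by $F$ are zero. The second summand lies in the basic open set above, hence in $\varphi^{-1}(0)$, so $R$-linearity gives $\varphi(a_i:i\in I)=\sum_{i\in F}a_i\varphi(e_i)=\sum_{i\in I}p_ia_i$, as claimed. Uniqueness of $\varphi$ then follows.

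The only real subtlety is recognizing that continuity at $\mathbf{0}$ (into a discrete codomain) forces $\varphi$ to vanish on a cofinite-support subspace, and this is exactly the mechanism that rules out the pathological unrestricted $R$-linear maps discussed in Remark~\ref{why continuous}. Once this observation is in hand, both the implication and the explicit formula fall out cleanly; no deeper point-set topology is required.
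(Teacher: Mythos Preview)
Your proof is correct and follows essentially the same approach as the paper: both directions hinge on the observation that $\varphi^{-1}(\set{0})$ contains a basic open set of the product topology, forcing $p_i=0$ outside the associated finite index set, after which the formula follows by decomposing any element into a finite-support piece plus a remainder lying in the kernel. Your version is in fact slightly more streamlined than the paper's, which takes a minimal basic open set and argues both that $p_i=0$ for $i\notin S_\mu$ and that $p_i\neq 0$ for $i\in S_\mu$; by choosing your basic open set to contain $\mathbf{0}$ you observe directly that $e_j$ lies in it for $j\notin F$, which is all that is needed.
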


\begin{proof}
Suppose $\varphi:X_I\to R$ is a continuous $R$-linear map with $\varphi(e_i)=p_i$ for all $i\in I$.
The continuity of $\varphi$ implies in particular that $\varphi^{-1}(\set{0})$ is some open set $O$ in $X_I$.
The set $O$ is the union, indexed by some set $M$, of sets $(O_m:m\in M)$, where each $O_m$ is a Cartesian product $\prod_{i\in I}O_{mi}$ with $O_{mi}\subseteq R$ for all $i\in I$ and with the identity $O_{mi}=R$ holding for all but finitely many $i\in I$.
The set $M$ is nonempty because $R$-linearity implies that $\phi(0\cdot e_i)=0$ for any $i\in I$.
We choose $M$ and the sets $O_m$ to be maximally inclusive, in the following sense:  
If $O$ contains some set $U=\prod_{i\in I}U_i$  with $U_i\subseteq R$ for all $i\in I$ and with the identity $U_i=R$ holding for all but finitely many $i\in I$, then there exists $m\in M$ such that $U=O_m$.

For each $m\in M$, let $S_m$ be the finite set of indices $i\in I$ such that $O_{mi}\neq R$ and choose $\mu\in M$ to minimize the set $S_\mu$ under containment.
\textit{A priori}, there may not be a unique minimum set, but some minimal set exists because $M$ is nonempty and the sets $S_m$ are finite.
We now show that $p_i=0$ if and only if $i\not\in S_\mu$.

If $p_i=0$ for some $i\in S_\mu$, then for any element $z$ of $O_{\mu}$, the element $z+ce_i$ is in~$O$.
In particular, the set obtained from $O_\mu=\prod_{j\in I}O_{\mu j}$ by replacing $O_{\mu i}$ with $R$ is contained in $O$, and so equals $O_{\mu'}$ for some $\mu'\in M$.
This contradicts our choice of $\mu$ to minimize $S_\mu$, and we conclude that $p_i\neq0$ for all $i\in S_\mu$.
Now suppose $i\not\in S_\mu$.
Let $x\in O_\mu$ and let $y$ be obtained from $x$ by subtracting~$1$ in the entry indexed by~$i$.
Since $O_{\mu i}=R$, we have $y\in O_\mu$ as well.
Thus $p_i=\varphi(e_i)=\varphi(x-y)=0-0=0$.

Given \eqref{contin lin lite map}, we have established \eqref{contin lin lite finite}, and now we show that the map $\varphi$ is given by $\varphi(a_i:i\in I)=\sum_{i\in I}p_ia_i$, which is a finite sum by \eqref{contin lin lite finite}.
Let $a=(a_i:i\in I)\in X_I$.
Let $b=(b_i:i\in I)\in X_I$ have $b_i=0$ whenever $i\in I\setminus S_\mu$ and $b_i=a_i$ whenever $i\in S_\mu$.
Then $a$ and $b$ differ by an element of $O_\mu$, so $\varphi(a-b)=0$ by linearity, and thus $\varphi(a)=\varphi(b)$.
But $b$ is a finite linear combination $\sum_{i\in S_\mu}b_ie_i$, so $\varphi(a)=\varphi(b)=\sum_{i\in S_\mu}p_ib_i=\sum_{i\in S_\mu}p_ia_i$.
Since $p_i=0$ for $i\not\in S_\mu$, we have $\varphi(a)=\sum_{i\in I}p_ia_i$.

Finally, suppose \eqref{contin lin lite finite} holds, let $S$ be the finite set $\set{i\in I:p_i\neq0}$, and define $\varphi:X_I\to R$ by $\varphi(a_i:i\in I)=\sum_{i\in I}p_ia_i$.
It is immediate that this map is $R$-linear.
The map is also continuous:
Given any subset $U$ of $R$, let $O=\varphi^{-1}(U)$.
Then, for every $a=(a_i:i\in I)\in O$, let $O_a$ be the product $\prod_{i\in I}O_{ai}$ where $O_{ai}=\set{a_i}$ if $i\in S$ or $O_{ai}=R$ if $i\not\in S$.
The map $\varphi$ is constant on $O_a$, so $\set{a}\subseteq O_a\subseteq O$ for each $a\in O$.
Thus $O=\bigcup_{a\in A}O_a$, and this is an open set in $X_I$.
\end{proof}

\section{Bases for \texorpdfstring{$B$}{B}}\label{bases sec}
In this section, we define the notion of an $R$-basis for an exchange matrix $B$ and show that an extended exchange matrix is universal for $B$ over $R$ if and only if its coefficient rows constitute an $R$-basis for $B$.
This amounts to a simple rephrasing of the definition combined with a reduction to single components.

\begin{definition}[\emph{Mutation maps}]\label{map}
The exchange matrix $B$ defines a family of piecewise linear maps $(\eta^B_k:\reals^n\to\reals^n:k\in[n])$ as follows.
(The use of the real numbers $\reals$ here rather than the more general $R$ is deliberate.)
Given a vector $\a=(a_1,\ldots,a_n)\in\reals^n$, let $\tB$ be an extended exchange matrix having exchange matrix $B$ and having a single coefficient row $\a=(a_1,\ldots,a_n)$.
Let $\eta_k^B(\a)$ be the coefficient row of $\mu_k(\tB)$.
By \eqref{b mut}, $\eta_k^B(\a)=(a_1',\ldots,a_n')$, where, for each $j\in[n]$:  
\begin{equation}\label{mutation map def}
a'_j=\left\lbrace\begin{array}{ll}
-a_k&\mbox{if }j=k;\\
a_j+a_kb_{kj}&\mbox{if $j\neq k$, $a_k\ge 0$ and $b_{kj}\ge 0$};\\
a_j-a_kb_{kj}&\mbox{if $j\neq k$, $a_k\le 0$ and $b_{kj}\le 0$};\\
a_j&\mbox{otherwise.}
\end{array}\right.
\end{equation}
The map $\eta_k^B$ is a continuous, piecewise linear, invertible map with inverse $\eta_k^{\mu_k(B)}$, so in particular it is a homeomorphism from $\reals^n$ to itself.

For any finite sequence $\k=k_q,k_{q-1},\ldots,k_1$ of indices in $[n]$, let $B_1=B$ and define $B_{i+1}=\mu_{k_i}(B_i)$ for $i\in[q]$.
Equivalently, $B_{i+1}=\mu_{k_i,\ldots,k_1}(B)$ for $i\in[q]$.
As before, we index the sequence $\k$ so that the first entry in the sequence is on the right.
Define
\begin{equation}\label{eta def}
\eta_\k^B=\eta^B_{k_q,k_{q-1}\ldots,k_1}=\eta_{k_q}^{B_{q}}\circ\eta_{k_{q-1}}^{B_{q-1}}\circ\cdots\circ\eta_{k_1}^{B_1}
\end{equation}
This is again a piecewise linear homeomorphism from $\reals^n$ to itself, with inverse $\eta^{B_{q+1}}_{k_1,\ldots,k_q}$.
When $\k$ is the empty sequence, $\eta^B_\k$ is the identity map.
The maps $\eta_\k^B$ are collectively referred to as the \newword{mutation maps} associated to $B$.
\end{definition}

\begin{definition}[\emph{$B$-coherent linear relations}]\label{B coherent}
Let $B$ be an $n\times n$ exchange matrix.
Let $S$ be a finite set, let $(\v_i:i\in S)$ be vectors in $\reals^n$ and let $(c_i:i\in S)$ be elements of $R$.
Then the formal expression $\sum_{i\in S}c_i\v_i$ is a \newword{$B$-coherent linear relation with coefficients in $R$} if the equalities
\begin{eqnarray}
\label{linear eta}
&&\sum_{i\in S}c_i\eta^B_\k(\v_i)=\mathbf{0},\mbox{ and}\\
\label{piecewise eta}
&&\sum_{i\in S}c_i\mathbf{min}(\eta^B_\k(\v_i),\mathbf{0})=\mathbf{0}
\end{eqnarray}
hold for every finite sequence $\k=k_q,\ldots,k_1$ of indices in~$[n]$.
The requirement that \eqref{linear eta} holds when $\k$ is the empty sequence ensures that a $B$-coherent linear relation is in particular a linear relation in the usual sense. 
A $B$-coherent linear relation $\sum_{i\in S}c_i\v_i$ is \newword{trivial} if $c_i=0$ for all $i\in S$.
\end{definition}

The definition of an $R$-basis for $B$ is parallel to the definition of a linear-algebraic basis, with $B$-coherent linear relations replacing ordinary linear relations.

\begin{definition}[\emph{$R$-Basis for $B$}]\label{basis B def}
Let $I$ be some indexing set and let $(\b_i:i\in I)$ be a collection of vectors in $R^n$.
Then $(\b_i:i\in I)$ is an \newword{$R$-basis} for $B$ if and only if the following two conditions hold.
\begin{enumerate}[(i)]
\item \label{basis span}
If $\a\in R^n$, then there exists a finite subset $S\subseteq I$ and elements $(c_i:i\in S)$ of $R$ such that $\a-\sum_{i\in S}c_i\b_i$ is a $B$-coherent linear relation.
\item \label{basis indep}
 If $S$ is a finite subset of $I$ and $\sum_{i\in S}c_i\b_i$ is a $B$-coherent linear relation with coefficients in $R$, then $c_i=0$ for all $i\in S$.
\end{enumerate}
If condition \eqref{basis span} holds, then $(\b_i:i\in I)$ is called an \newword{$R$-spanning set for $B$}, and if condition \eqref{basis indep} holds, then $(\b_i:i\in I)$ is called an \newword{$R$-independent set for $B$}.
\end{definition}

\begin{theorem}\label{basis univ}
Let $\tB$ be an extended exchange matrix with exchange matrix $B$ and whose coefficient rows have entries in $R$.
Then $\tB$ is universal over $R$ if and only if the coefficient rows of $\tB$ are an $R$-basis for $B$.
\end{theorem}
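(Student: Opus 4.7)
The plan is to reinterpret the definition of coefficient specialization as a statement about scalars, and to recognize that statement as the $B$-coherent-linear-relation condition of Definition~\ref{B coherent}. Write the coefficient rows of $\tB$ as $(\b_i:i\in I)$ and those of $\tB'$ as $(\b'_k:k\in I')$, so that $\P=\Trop_R(u_i:i\in I)$ and $\P'=\Trop_R(u'_k:k\in I')$. Condition~(ii) of Definition~\ref{specialization} pins down $\varphi$ on cluster variables, so any coefficient specialization $\varphi$ is determined by its restriction $\varphi|_\P$, which by condition~(iii) must be continuous and $R$-linear. Proposition~\ref{continuous linear} parametrizes such restrictions by families $(p_{ik}:i\in I,\,k\in I')$ of elements of $R$ with only finitely many $p_{ik}$ nonzero for each fixed $k$, via $\varphi(u_i)=\prod_{k\in I'}(u'_k)^{p_{ik}}$.

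The first main step is to expand the requirements $\varphi(y_{j;t})=y'_{j;t}$ and $\varphi(y_{j;t}\oplus 1)=y'_{j;t}\oplus 1$ into explicit conditions on $(p_{ik})$. Using $y_{j;t}=\prod_i u_i^{b_{ij}^t}$, the equality $B=B'$ (which implies $B_t=B'_t$ for all $t$ by Proposition~\ref{global local}), and the fact that each coefficient row of an extended exchange matrix mutates independently via a mutation map $\eta^B_\k$, a direct computation will show that the first requirement is equivalent to
\[\sum_{i\in I}p_{ik}\,\eta^B_\k(\b_i)=\eta^B_\k(\b'_k)\qquad\text{for all sequences }\k\text{ and all }k\in I',\]
while the second requirement is the analogous identity with $\eta^B_\k(\,\cdot\,)$ replaced by $\mathbf{min}(\eta^B_\k(\,\cdot\,),\mathbf{0})$. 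These are precisely~\eqref{linear eta} and~\eqref{piecewise eta} applied to the formal expression $\b'_k-\sum_i p_{ik}\b_i$, so condition~(iii) of Definition~\ref{specialization} holds if and only if, for every $k\in I'$, the expression $\b'_k-\sum_i p_{ik}\b_i$ is a $B$-coherent linear relation.

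Next, I would verify that any such $\varphi|_\P$ extends uniquely to a ring homomorphism $\A_R(\tB)\to\A_R(\tB')$ with $\varphi(x_j)=x'_j$. By the Laurent phenomenon, both cluster algebras live inside Laurent polynomial rings in the initial cluster, and the assignment $x_j\mapsto x'_j$ together with $\varphi|_\P$ determines a ring homomorphism between these ambient Laurent rings. An induction on the distance from $t_0$ in $\T_n$, using the exchange relation~\eqref{x mut} and the $y$-identities established in the previous step, shows that this ambient homomorphism sends each $x_{j;t}$ to $x'_{j;t}$, and therefore restricts to a ring homomorphism on cluster algebras. Consequently, coefficient specializations $\A_R(\tB)\to\A_R(\tB')$ are in bijection with families $(p_{ik})$ of finite column support such that each $\b'_k-\sum_i p_{ik}\b_i$ is $B$-coherent.

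With this bijection in hand, universality of $\tB$ becomes the statement that for every choice of coefficient rows $(\b'_k:k\in I')$ in $R^n$ there is exactly one such family $(p_{ik})$. Existence for every $\tB'$ reduces, by taking $\tB'$ with a single coefficient row equal to an arbitrary $\a\in R^n$, to condition~\eqref{basis span} of Definition~\ref{basis B def}. Uniqueness applied to $\tB'=\tB$ is equivalent to condition~\eqref{basis indep}: a nontrivial $B$-coherent relation $\sum_{i\in S}c_i\b_i$ lets one perturb the identity specialization ($p_{ij}=\delta_{ij}$) by replacing $p_{i,k_0}$ with $p_{i,k_0}+c_i$ for a single chosen column $k_0$, producing a distinct second specialization, while $R$-independence of $(\b_i)$ conversely forces any two families satisfying the $B$-coherent condition for a common $\tB'$ to agree column-by-column, by subtraction. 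The main obstacle is the bookkeeping in the translation step, keeping track that the $y$-conditions must be verified simultaneously at every vertex of $\T_n$ and that mutation acts on each coefficient row independently via the maps $\eta^B_\k$; once that translation is correctly set up, the equivalence of universality with the $R$-basis property follows essentially mechanically.
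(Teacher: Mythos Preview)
Your proposal is correct and follows essentially the same approach as the paper: both translate condition~(iii) of Definition~\ref{specialization} via Proposition~\ref{continuous linear} into a family of scalars $(p_{ik})$, recognize the resulting identities as the $B$-coherence conditions~\eqref{linear eta}--\eqref{piecewise eta} for $\b'_k-\sum_i p_{ik}\b_i$, and then read off the equivalence with Definition~\ref{basis B def}. The paper organizes the argument by first reducing to the case where $\tB'$ has a single coefficient row (its condition~(b)) and is terser about why the data determine a ring homomorphism, whereas you work with general $\tB'$ column-by-column and supply the Laurent-phenomenon justification explicitly; these are cosmetic differences only.
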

\begin{proof}
Let $(\b_i:i\in I)$ be the coefficient rows of $\tB$.
We show that the following conditions are equivalent.
\begin{enumerate}[(a)]
\item \label{basis univ univ}
$\tB$ is universal over $R$.
\item \label{basis univ one row}
For every extended exchange matrix $\tB'$ with exactly one coefficient row, sharing the exchange matrix $B$ with $\tB$ and having entries in $R$, there exists a unique coefficient specialization from $\A_R(\tB)$ to $\A_R(\tB')$.
\item \label{basis univ coh}
 For every extended exchange matrix $\tB'$ with exactly one coefficient row $\a=(a_1,\ldots,a_n)\in R^n$, sharing the exchange matrix $B$ with $\tB$, there exists a unique choice $(p_i:i\in I)$ of elements of $R$, finitely many nonzero, such that both of the following  conditions hold:
\begin{enumerate}[(i)]
\item $\displaystyle\sum_{i\in I}p_i\b_i^t=\a^t$ for every $t\in\T_n$.
\item $\displaystyle\sum_{i\in I}p_i\mathbf{min}(\b_i^t,\mathbf{0})=\mathbf{min}(\a^t,\mathbf{0})$ for every $t\in\T_n$.
\end{enumerate}
Here $\b_i^t$ is the coefficient row of $\tB_t$ indexed by $i\in I$ in the $Y$-pattern $t\mapsto \tB_t$ with $\tB_{t_0}=\tB$ and $\a_t$ is the coefficient row of $\tB'_t$ in the $Y$-pattern $t\mapsto\tB'_t$ with $\tB'_{t_0}=\tB'$.
\end{enumerate}

If \eqref{basis univ univ} holds, then \eqref{basis univ one row} holds by definition.
Condition \eqref{basis univ one row} is the assertion that there exists a unique map satisfying Definition~\ref{specialization}\eqref{spec restr}.
By Proposition~\ref{continuous linear}, choosing a continuous $R$-linear map is equivalent to choosing elements $(p_i:i\in I)$ of $R$ with finitely many nonzero.
The remainder of Definition~\ref{specialization}\eqref{spec restr} is rephrased in \eqref{basis univ coh} as conditions (i) and (ii).
We see that Condition \eqref{basis univ coh} is a rephrasing of condition \eqref{basis univ one row}.
Now suppose \eqref{basis univ coh} holds and let $\tB''$ be an extended exchange matrix with coefficient rows indexed by an arbitrary set~$K$.
For each $k\in K$, condition \eqref{basis univ coh} implies that there is a unique choice of elements $p_{ik}$ of $R$ satisfying, in the $k\th$ component, the conditions of Proposition~\ref{continuous linear} and of Definition~\ref{specialization}.
The elements $p_{ik}$, taken together for all $k\in K$, satisfy the conditions of Propositions~\ref{continuous linear} and~\ref{specialization}, and thus define the unique coordinate specialization from $\A_R(\tB)$ to $\A_R(\tB'')$.
We have verified that \eqref{basis univ coh} implies \eqref{basis univ univ}, so that the three conditions are equivalent.

But \eqref{basis univ coh} is equivalent to the assertion that, for each $\a\in R^n$, there exists a unique finite subset $S\subseteq I$ and unique nonzero elements $(c_i:i\in S)$ of $R$ such that $\a-\sum_{i\in S}c_i\b_i$ is a $B$-coherent linear relation.
This is equivalent to the assertion that $(\b_i:i\in I)$ is an $R$-basis for $B$.
\end{proof}

\begin{remark}\label{explicit spec}
Given a universal extended exchange matrix $\tB$ over $R$ and an extended exchange matrix $\tB'$ sharing the exchange matrix $B$ with $\tB$ and having entries in $R$, the proof of Theorem~\ref{basis univ} provides an explicit description of the unique coefficient specialization from $\A_R(\x,\tB)$ to $\A_R(\x',\tB')$.
It is the map sending each $x_j$ to $x'_j$ and acting on coefficient semifields as follows:
Let $(\b_i:i\in I)$ be the coefficient rows of $\tB$ and let $(\a_k:k\in K)$ be the coefficient rows of $\tB'$.
For each $k$, there is a unique choice $(p_{ik}:i\in I)$ of elements of $R$, finitely many nonzero, such that $\a_k-\sum_{i\in S}p_{ik}\b_i$ is a $B$-coherent linear relation. 
Then the restriction of $\varphi$ to coefficient semifields is the map described in Proposition~\ref{continuous linear}.
\end{remark}

\begin{prop}\label{basis exists}
Suppose the underlying ring $R$ is a field.
For any exchange matrix $B$, there exists an $R$-basis for $B$.
Given an $R$-spanning set $U$ for $B$, there exists an $R$-basis for $B$ contained in $U$. 
\end{prop}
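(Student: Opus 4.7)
\emph{Plan.} The first assertion will follow from the second by taking $U = R^n$: for any $\a \in R^n$, the formal expression $\a - 1\cdot \a$ is trivially $B$-coherent, since both \eqref{linear eta} and \eqref{piecewise eta} reduce to $0=0$. So $R^n$ is an $R$-spanning set for $B$, and it suffices to extract a basis from a given spanning set $U$.

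The key observation driving the argument is that, for any fixed finite tuple of vectors $(\v_i : i \in S)$ in $\reals^n$, the defining equations \eqref{linear eta} and \eqref{piecewise eta} are $R$-linear in the coefficients $(c_i)$, because the vectors $\eta^B_\k(\v_i)$ and $\mathbf{min}(\eta^B_\k(\v_i), \mathbf{0})$ lie in $\reals^n$ and do not involve the $c_i$. From this I will derive a substitution principle: if $\a - \sum_{u \in S} c_u u$ is $B$-coherent and, for each $u \in S$, the expression $u - \sum_{v \in T_u} d_{u,v} v$ is $B$-coherent, then the substituted expression $\a - \sum_{v} \bigl(\sum_u c_u d_{u,v}\bigr) v$ is again $B$-coherent. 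This is verified by applying $\eta^B_\k$ (respectively $\mathbf{min}(\eta^B_\k(\cdot),\mathbf{0})$) and chaining the equalities.

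I then invoke Zorn's lemma on the collection of $R$-independent subsets of $U$, ordered by inclusion. This collection is nonempty (the empty set is vacuously $R$-independent), and any chain has an upper bound, namely its union: a would-be nontrivial $B$-coherent relation on the union is supported on finitely many vectors, all lying in a single element of the chain. Let $V$ denote a resulting maximal $R$-independent subset of $U$. To show $V$ spans, fix $\a \in R^n$ and use that $U$ spans to write $\a - \sum_{u \in S} c_u u$ as a $B$-coherent relation with $S \subseteq U$ finite. For each $u \in S \setminus V$, maximality of $V$ forces $V \cup \{u\}$ to be $R$-dependent, yielding a nontrivial $B$-coherent relation in which the coefficient of $u$ must be nonzero (otherwise $V$ itself would be $R$-dependent). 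Here the field hypothesis enters: dividing by this nonzero coefficient (justified by the $R$-linearity observation above) produces a $B$-coherent relation of the form $u - \sum_{v} d_{u,v} v$ with all $v \in V$. The substitution principle then combines these with the original expression to give a $B$-coherent relation expressing $\a$ in terms of finitely many elements of $V$. The only real obstacle is the bookkeeping in the substitution step, but this reduces to the routine verification of the $R$-linearity of \eqref{linear eta} and \eqref{piecewise eta} in the coefficients, so no conceptual difficulty is expected.
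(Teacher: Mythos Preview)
Your proof is correct and follows essentially the same Zorn's-lemma argument as the paper: both take a maximal $R$-independent subset of $U$ and use the field hypothesis to divide out the nonzero coefficient on a newly adjoined vector. Your version is in fact slightly more careful than the paper's, since you make the substitution principle explicit; the paper's proof, as written, adjoins a vector $\a\in R^n$ not assumed to lie in $U$ and then invokes maximality \emph{within} $U$, a step that tacitly relies on exactly the transitivity of $B$-coherent relations that you spell out.
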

\begin{proof}
Let $U$ be an $R$-spanning set for $B$.
Given any chain $U_1\subseteq U_2\subseteq\cdots$ of $R$-independent sets for $B$ contained in $U$, the union of the chain is an $R$-independent set for $B$.
Thus Zorn's Lemma says that among the $R$-independent subsets of $U$, there exists a maximal set $M$.
We show that if $R$ is a field, then $M$ is an $R$-spanning set for $B$.
If not, then there exists a vector $\a\in R^n$ such that no $B$-coherent linear relation $\a-\sum_{i\in S}c_i\b_i$ over $R$ exists with each $\b_i$ in $M$.
We can assume $\a\in U$ because $U$ is an $R$-spanning set for $B$:
If each element of $U$ is a $B$-coherent combination of elements of $M$, then every element of $R^n$ is a $B$-coherent combination of elements of $M$.
If $M\cup\set{\a}$ is not an $R$-independent set for $B$, then since $M$ is an $R$-independent set for $B$, there exists a $B$-coherent linear relation $c\a-\sum_{i\in S}c_i\b_i$ over $R$ with $c\neq 0$ and each $\b_i$ in $M$.
Since $R$ is a field, $\a-\sum_{i\in S}\frac{c_i}{c}\b_i$ is a $B$-coherent linear relation over $R$, and this contradiction shows that $M\cup\set{\a}$ is an $R$-independent set for $B$.
That contradicts the maximality of $M$, and we conclude that $M$ is an $R$-spanning set for $B$.
We have proved the second statement of the proposition.
The first statement follows because $R^n$ is an $R$-spanning set for~$B$.
\end{proof}

Theorem~\ref{basis univ} and Proposition~\ref{basis exists} combine to prove the following corollary.
\begin{cor}\label{univ exists}
Suppose the underlying ring $R$ is a field.
For any exchange matrix $B$, there exists an extended exchange matrix $\tB$ with exchange matrix $B$ that is universal over~$R$.
The cluster algebra $\A_R(\tB)$ has universal geometric coefficients over $R$.
\end{cor}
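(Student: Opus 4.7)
The plan is a one-step deduction, combining Proposition~\ref{basis exists} with Theorem~\ref{basis univ}. First I would invoke Proposition~\ref{basis exists}, whose hypothesis that $R$ is a field is exactly the hypothesis of the corollary, to produce an $R$-basis $(\b_i : i\in I)$ for the given exchange matrix $B$. The proof of that proposition simply runs Zorn's Lemma on the family of $R$-independent subsets of the tautological $R$-spanning set $R^n$, so the indexing set $I$ can be taken to be any set parameterizing the chosen maximal $R$-independent subset.

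Next I would assemble the extended exchange matrix $\tB$ whose principal part is $B$ and whose coefficient rows, indexed by $I$, are precisely the vectors $\b_i$. This is a valid extended exchange matrix in the sense of Definition~\ref{seed}: the principal part $B$ is skew-symmetrizable by hypothesis, each $\b_i$ lies in $R^n$, and the construction places no constraint on the cardinality of $I$.

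Finally, since the coefficient rows of $\tB$ form an $R$-basis for $B$, Theorem~\ref{basis univ} asserts that $\tB$ is universal over $R$. The second sentence of the corollary, that $\A_R(\tB)$ has universal geometric coefficients over $R$, is then just the rewording of universality in the language of Definition~\ref{universal}.

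There is essentially no obstacle at the level of the corollary itself; it is a packaging of the two named results, and once $\tB$ has been built from a basis there is nothing left to verify. The only place where the field hypothesis is genuinely used is buried inside Proposition~\ref{basis exists}, in the step that upgrades a maximal $R$-independent subset of an $R$-spanning set to an $R$-spanning set: one must divide by a nonzero scalar $c\in R$ to convert a $B$-coherent relation $c\a-\sum_{i\in S}c_i\b_i$ into one of the form $\a-\sum_{i\in S}(c_i/c)\b_i$. For $R=\integers$ this division is unavailable, which is precisely why the corollary's existence statement does not extend to the integer case and why the paper separately studies the $R=\integers$ problem elsewhere.
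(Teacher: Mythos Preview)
Your proof is correct and matches the paper's approach exactly: the paper simply states that Theorem~\ref{basis univ} and Proposition~\ref{basis exists} combine to give the corollary, and your write-up fleshes out precisely that combination. Your additional remark about where the field hypothesis enters (the division by $c$ inside the proof of Proposition~\ref{basis exists}) is also accurate and anticipates the paper's Remark~\ref{noninteger}.
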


\begin{remark}\label{nonconstructive}
The proof of Proposition~\ref{basis exists} echoes the standard argument showing that a vector space has a (Hamel) basis.
As in the linear algebraic case, this proof provides no general way of constructing a basis, and thus Corollary~\ref{univ exists} provides no general way of constructing a universal extended exchange matrix.
\end{remark}

\begin{remark}\label{noninteger}
The definitions in Section~\ref{defs sec} are closest to the original definitions in~\cite{ca4} when $R=\integers$, and arguably it is most important to find $\integers$-bases for exchange matrices (and thus universal geometric coefficients over $\integers$).
Unfortunately, the second assertion of Proposition~\ref{basis exists} can fail when $R=\integers$, as shown in Example~\ref{rk1} below.
We have no proof of the assertion that every $B$ admits a $\integers$-basis, but also no counterexample.

In Section~\ref{rk2 sec}, we construct $R$-bases for any $B$ of rank 2 and any $R$.
In Section~\ref{Tits sec} we construct $R$-bases for any $B$ of finite type and any $R$.
We also conjecture a form for $R$-bases for $B$ of affine type and any $R$.
In~\cite{unisurface} and~\cite{unitorus}, we use laminations to construct an $R$-basis (with $R=\integers$ or $\rationals$) for certain exchange matrices arising from marked surfaces.
\end{remark}

\begin{example}\label{rk1}
Suppose $B=[0]$.
If $R$ is a field, then $\set{x,y}$ is an $R$-basis for $B$ if and only if $x$ and $y$ are elements of $R$ with strictly opposite signs.
The set $\set{\pm 1}$ is the unique $\integers$-basis for $B$.
In particular, the extended exchange matrix $\begin{bsmallmatrix*}[r]0\\1\\-1\end{bsmallmatrix*}$ is universal over any $R$.
The set $\set{-1,2,3}$ is a $\integers$-spanning set for $B$, but contains no $\integers$-basis for $B$.
In particular, the second assertion of Proposition~\ref{univ exists} may fail without the hypothesis that $R$ is a field.
\end{example}

The remainder of this section is devoted to further details on $B$-coherent linear relations and bases.
First, in the most important cases, condition~\eqref{piecewise eta} can be ignored when verifying that a linear relation is $B$-coherent.
\begin{prop}\label{no zero no piecewise}
Suppose $B$ has no column consisting entirely of zeros.
Then the formal expression $\sum_{i\in S}c_i\v_i$ is a $B$-coherent linear relation if and only if condition~\eqref{linear eta} holds for all sequences $\k$.
\end{prop}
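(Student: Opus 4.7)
The plan is to prove the nontrivial direction: assuming $B$ has no zero column and that \eqref{linear eta} holds for every finite sequence $\k$, show that \eqref{piecewise eta} holds for every $\k$. The forward implication follows at once from the definition of $B$-coherence.

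First I would fix a sequence $\k$, set $B' = \mu_\k(B)$ and $\w_i = \eta^B_\k(\v_i)$, so that $\sum_{i \in S} c_i \w_i = \mathbf{0}$ by the hypothesis applied to $\k$. The goal then becomes to show $\sum_i c_i \mathbf{min}(\w_i, \mathbf{0}) = \mathbf{0}$, which reduces to the coordinate-wise statement: for each $j \in [n]$, prove $\sum_i c_i \min(w_{ij}, 0) = 0$, where $w_{ij}$ denotes the $j$-th component of $\w_i$.

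The key idea is to extract sign information about the $j$-th coordinate by performing one additional mutation at the index $j$. By \eqref{mutation map def}, for $\ell \neq j$ the $\ell$-th coordinate of $\eta^{B'}_j(\w_i)$ equals $w_{i\ell} + \sgn(b'_{j\ell})[w_{ij}\, b'_{j\ell}]_+$. Applying \eqref{linear eta} to the sequence obtained by prepending $j$ to $\k$, and subtracting the relation $\sum_i c_i w_{i\ell} = 0$, I obtain $\sum_i c_i \sgn(b'_{j\ell})[w_{ij}\, b'_{j\ell}]_+ = 0$. For this to yield useful information, an index $\ell$ with $b'_{j\ell} \neq 0$ is needed, and this is where the no-zero-column hypothesis enters: by skew-symmetrizability, $B$ having no zero column is equivalent to $B$ having no zero row, and a short direct check using \eqref{b mut} shows that the presence of a zero row (equivalently zero column) is preserved under mutation in either direction. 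Consequently $B'$ also has no zero row, guaranteeing the desired $\ell$.

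Splitting on the sign of $b'_{j\ell}$ then reduces the displayed identity to either $\sum_i c_i [w_{ij}]_+ = 0$ (if $b'_{j\ell} > 0$) or $\sum_i c_i [-w_{ij}]_+ = 0$ (if $b'_{j\ell} < 0$). In either case, combining with $\sum_i c_i w_{ij} = 0$ via the identities $w_{ij} = [w_{ij}]_+ - [-w_{ij}]_+$ and $\min(w_{ij}, 0) = -[-w_{ij}]_+$ produces $\sum_i c_i \min(w_{ij}, 0) = 0$, completing the argument. The only obstacle of any substance is the verification that the no-zero-column condition survives mutation, and this amounts to a brief calculation using skew-symmetrizability.
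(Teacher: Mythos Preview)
Your proof is correct and follows essentially the same approach as the paper: compare \eqref{linear eta} for the sequences $\k$ and $j\k$, use the explicit formula~\eqref{mutation map def} for the extra mutation at $j$, and invoke the no-zero-column hypothesis (preserved under mutation, and equivalent to no-zero-row by skew-symmetrizability) to find a coordinate $\ell$ with $b'_{j\ell}\neq 0$. The paper organizes the computation by first splitting $S$ according to the sign of $w_{ij}$ and then comparing the two partial sums, whereas you write the $\ell$-th coordinate of $\eta^{B'}_j(\w_i)$ uniformly as $w_{i\ell}+\sgn(b'_{j\ell})[w_{ij}b'_{j\ell}]_+$ and subtract the two relations directly; this is a slightly cleaner packaging of the same argument.
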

\begin{proof}
Suppose condition~\eqref{linear eta} holds for all $\k$ for the formal expression $\sum_{i\in S}c_i\v_i$.
We need to show that condition~\eqref{piecewise eta} holds as well.
Given any sequence $\k$ of indices in $[n]$ and any additional integer $k\in[n]$, we show that~\eqref{piecewise eta} holds in the $k\th$ component.
Write $\w_i=\eta_\k^B(\v_i)$ for each $i\in S$, let $S_{>0}$ be the subset of $S$ consisting of indices $i$ such that the $k\th$ coordinate of $\w_i$ is positive, and let $S_{\le0}=S\setminus S_{>0}$.

Consider condition~\eqref{linear eta} for the sequences $\k$ and $k\k$.
These conditions are $\sum_{i\in S_{>0}}c_i\w_i=-\sum_{i\in S_{\le0}}c_i\w_i$ and $\sum_{i\in S_{>0}}c_i\eta_k^{\mu_\k(B)}(\w_i)=-\sum_{i\in S_{\le0}}c_i\eta_k^{\mu_\k(B)}(\w_i)$.
Let $\sum_{i\in S_{>0}}c_i\w_i=(a_1,\ldots,a_n)$, so that $\sum_{i\in S_{\le0}}c_i\w_i=(-a_1,\ldots,-a_n)$.
Since all of the vectors $\w_i$ for $i\in S_{>0}$ have positive $k\th$ coordinate, \eqref{mutation map def} says that $\sum_{i\in S_{>0}}c_i\eta_k^{\mu_\k(B)}(\w_i)$ is $(a_1',\ldots,a_n')$ given by:  
\[
a'_j=\left\lbrace\begin{array}{ll}
-a_k&\mbox{if }j=k;\\
a_j+a_kb_{kj}&\mbox{if $j\neq k$ and $b_{kj}\ge 0$};\\
a_j&\mbox{otherwise,}
\end{array}\right.
\]
where $b_{kj}$ is the $kj$-entry of $\mu_\k(B)$.
Similarly, $\sum_{i\in S_{\le0}}c_i\eta_k^{\mu_\k(B)}(\w_i)$ is $(a_1'',\ldots,a_n'')$, given by:
\[
a''_j=\left\lbrace\begin{array}{ll}
a_k&\mbox{if }j=k;\\
-a_j+a_kb_{kj}&\mbox{if $j\neq k$ and $b_{kj}\le 0$};\\
-a_j&\mbox{otherwise.}
\end{array}\right.
\]
The requirement that $\sum_{i\in S_{>0}}c_i\eta_k^{\mu_\k(B)}(\w_i)=-\sum_{i\in S_{\le0}}c_i\eta_k^{\mu_\k(B)}(\w_i)$ means that $(a_1',\ldots,a_n')=-(a_1'',\ldots,a_n'')$.
Therefore $a_kb_{kj}=0$ for all $j$.
The property of having a column consisting entirely of zeros is preserved under mutation, so $\mu_\k(B)$ has no column of zeros.
Since $\mu_\k(B)$ is skew-symmetrizable, it also has no row consisting entirely of zeros.
We conclude that $a_k=0$.
In particular, we have showed that the $k\th$ coordinate of $\sum_{i\in S_{\le0}}c_i\w_i$ is zero.
This is the $k\th$ component of \eqref{piecewise eta}.
\end{proof}

We record a simple but useful observation about $B$-coherent linear relations.
\begin{prop}\label{one positive or one negative}
Let $\sum_{i\in S}c_i\v_i$ be a $B$-coherent linear relation.
Suppose, for some $i\in S$, for some $j\in[n]$, and for some sequence $\k$ of indices in $[n]$, that the $j\th$ entry of $\eta_\k^B(\v_i)$ is strictly positive (resp. strictly negative) and that the $j\th$ entry of every vector $\eta_\k^B(\v_{i'})$ with $i'\in S\setminus\set{i}$ is nonpositive (resp. nonnegative).
Then $c_i=0$.
\end{prop}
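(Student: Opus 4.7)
The plan is to push the relation through the mutation map $\eta_\k^B$ and then extract information from a single coordinate, combining the two conditions \eqref{linear eta} and \eqref{piecewise eta} in the definition of a $B$-coherent linear relation. Set $\w_{i'} = \eta_\k^B(\v_{i'})$ for each $i' \in S$, and write $w_{i',j}$ for the $j$-th entry of $\w_{i'}$. By Definition~\ref{B coherent}, both identities
\[ \sum_{i' \in S} c_{i'}\,\w_{i'} = \mathbf{0} \quad\text{and}\quad \sum_{i' \in S} c_{i'}\,\mathbf{min}(\w_{i'},\mathbf{0}) = \mathbf{0} \]
hold; taking the $j$-th coordinate of each produces two scalar equations, call them (A) and (B).

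In the positive case, the hypothesis gives $w_{i,j} > 0$ and $w_{i',j} \le 0$ for all $i' \in S \setminus \{i\}$. Then $\min(w_{i,j},0) = 0$ while $\min(w_{i',j},0) = w_{i',j}$ for $i' \neq i$, so (B) collapses to $\sum_{i' \neq i} c_{i'} w_{i',j} = 0$. Subtracting this from (A) leaves $c_i w_{i,j} = 0$, and since $w_{i,j} > 0$ we conclude $c_i = 0$. The negative case is even quicker: if $w_{i,j} < 0$ and $w_{i',j} \ge 0$ for $i' \neq i$, then the only nonzero term in (B) is $c_i w_{i,j}$, so (B) directly gives $c_i w_{i,j} = 0$ and hence $c_i = 0$. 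Alternatively one can reduce the negative case to the positive case by multiplying the relation by $-1$ (both conditions \eqref{linear eta}, \eqref{piecewise eta} for $B$-coherence are preserved under scaling, since $\mathbf{min}(-\v,\mathbf{0}) = -\v + \mathbf{min}(\v,\mathbf{0}) - \mathbf{min}(-\v,\mathbf{0})$ — well, more simply, a sign change swaps ``strictly positive'' with ``strictly negative'' and ``nonpositive'' with ``nonnegative'').

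There is no real obstacle here; the proposition is essentially a structural consequence of having both \eqref{linear eta} and \eqref{piecewise eta} available at the same mutated coordinate. The only point worth emphasizing in the writeup is that it is crucial that the coordinate $j$ in question be strictly signed for the distinguished vector $\eta_\k^B(\v_i)$, since otherwise the $\min$ operation does not isolate $c_i$ from the remaining terms.
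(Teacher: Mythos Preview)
Your proof is correct and follows essentially the same approach as the paper: take the $j$th coordinate of both \eqref{linear eta} and \eqref{piecewise eta} for the sequence $\k$ and subtract to isolate $c_i w_{i,j}$. The only difference is in the negative case, where you observe that \eqref{piecewise eta} alone already gives $c_i w_{i,j}=0$ directly, while the paper instead prepends the index $j$ to $\k$ (using that $\eta_j$ negates the $j$th coordinate) to reduce to the positive case; your handling is slightly more direct.
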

\begin{proof}
Suppose the $j\th$ entry of $\eta_\k^B(\v_i)$ is strictly positive and the $j\th$ entries of the other vectors $\eta_\k^B(\v_{i'})$ are nonpositive.
Consider the $j\th$ coordinate of~\eqref{linear eta} and~\eqref{piecewise eta}, for the chosen sequence $\k$.
The difference between the two left-hand sides is the $j\th$ coordinate of $c_i\eta_\k^B(\v_i)$, which is therefore zero.
But $\eta_\k^B(\v_i)$ has a positive entry in its $j\th$ position, so $c_i=0$.

If the $j\th$ entry of $\eta_\k^B(\v_i)$ is strictly negative and the $j\th$ entries of the other vectors are nonnegative, then the $j\th$ entry of $\eta_{j\k}^B(\v_i)$ is strictly positive and the $j\th$ entries of the other vectors $\eta_{j\k}^B(\v_{i'})$ are nonpositive, and we conclude that $c_i=0$.
\end{proof}

To further simplify the task of finding universal geometric coefficients, we conclude this section with a brief discussion of reducibility of exchange matrices.

\begin{definition}[\emph{Reducible (extended) exchange matrices}]\label{def reducible}
Call an exchange matrix $B$ \newword{reducible} if there is some permutation $\pi$ of $[n]$ such that simultaneously permuting the rows and columns of $B$ results in a block-diagonal matrix.
Otherwise call $B$ \newword{irreducible}.
\end{definition}

The following proposition is immediate, and means that we need only construct bases for irreducible exchange matrices.

\begin{prop}\label{basis reducible}
Suppose $B$ is a $p\times p$ exchange matrix and $B'$ is a $q\times q$ exchange matrix.
If $(\b_i:i\in I)$ is an $R$-basis for $B_1$ and $(\b'_j:j\in J)$ is an $R$-basis for $B_2$, then $(\b_i\times\mathbf{0}_q:i\in I)\cup(\mathbf{0}_p\times \b'_j:j\in J)$ is an $R$-basis for $\begin{bsmallmatrix}B_1&\mathbf{0}\\\mathbf{0}&B_2\end{bsmallmatrix}$.
\end{prop}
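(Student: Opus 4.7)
The plan is to reduce the claim to the observation that, for a block-diagonal exchange matrix, the mutation maps act independently on the two blocks of coordinates, and then verify the spanning and independence conditions of Definition~\ref{basis B def} separately on each block. Write $\widehat B=\begin{bsmallmatrix}B&\mathbf{0}\\\mathbf{0}&B'\end{bsmallmatrix}$ and identify $\reals^{p+q}$ with $\reals^p\times\reals^q$.

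First I would establish a splitting lemma for the mutation maps. For $k\in[p]$, the formula~\eqref{mutation map def} shows that $\eta_k^{\widehat B}$ acts as $\eta_k^B$ on the first factor and as the identity on the second, since the entries $\widehat b_{kj}$ with $j>p$ vanish by block-diagonality. Symmetrically, for $k\in\set{p+1,\ldots,p+q}$, the map $\eta_k^{\widehat B}$ is the identity on the first factor and a mutation map for $B'$ on the second. Because the class of block-diagonal matrices with the given block sizes is preserved under mutation, an induction on the length of $\k$ yields
\[\eta_\k^{\widehat B}(\v,\v')=\bigl(\eta_{\k^1}^B(\v),\eta_{\k^2}^{B'}(\v')\bigr),\]
where $\k^1$ and $\k^2$ are the subsequences of $\k$ with entries in $[p]$ and in $\set{p+1,\ldots,p+q}$ respectively, the latter shifted to $[q]$.

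From this splitting I would extract a criterion for coherence: a formal expression $\sum_{i\in S}c_i(\b_i\times\mathbf{0}_q)+\sum_{j\in T}c'_j(\mathbf{0}_p\times\b'_j)$ is $\widehat B$-coherent if and only if $\sum_{i\in S}c_i\b_i$ is $B$-coherent and $\sum_{j\in T}c'_j\b'_j$ is $B'$-coherent. The forward direction comes from projecting conditions~\eqref{linear eta} and~\eqref{piecewise eta} onto the first $p$ and the last $q$ coordinates and varying $\k$ over sequences of indices in the corresponding block; the backward direction comes from assembling the two relations via the displayed splitting formula, using that $\mathbf{min}(\cdot,\mathbf{0})$ is computed coordinatewise.

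With this criterion, spanning and independence follow formally. For spanning, given $(\a,\a')\in R^{p+q}$, apply the spanning property of $(\b_i)$ and of $(\b'_j)$ separately to obtain coefficients making $\a-\sum c_i\b_i$ and $\a'-\sum c'_j\b'_j$ coherent for $B$ and $B'$ respectively, and then combine. For independence, a vanishing $\widehat B$-coherent relation splits, by the criterion, into a vanishing $B$-coherent relation and a vanishing $B'$-coherent relation, so the hypothesis forces every coefficient to vanish. The only step requiring actual care is the splitting lemma itself, which is a routine but slightly tedious unpacking of~\eqref{mutation map def}; once it is in place, the remainder of the argument is entirely formal.
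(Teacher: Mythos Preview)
Your argument is correct: the splitting lemma for $\eta_\k^{\widehat B}$ is exactly what makes the proposition work, and once it is in place the verification of spanning and independence proceeds as you describe. The paper does not actually give a proof---it simply declares the proposition ``immediate''---so your write-up is the natural unpacking of that claim, and there is nothing to compare against.
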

Here $\b_i\times\mathbf{0}_q$ represents the vector $\b_i\in R^p$ included into $R^n$ by adding $q$ zeros at the end, and $\mathbf{0}_p\times \b'_j$ is interpreted similarly.

Proposition~\ref{basis reducible} also allows us to stay in the case where $B$ has no column of zeros, so that the definition of $B$-coherent linear relations simplifies as explained in Proposition~\ref{no zero no piecewise}.
This is because an exchange matrix with a column of zeros is reducible and has the $1\times1$ exchange matrix $[0]$ as a reducible component.
This component is easily dealt with as explained in Example~\ref{rk1}.

\section{\texorpdfstring{$B$}{B}-cones and the mutation fan}\label{B cone sec}
In this section, we use the mutation maps associated to an exchange matrix $B$ to define a collection of closed convex real cones called $B$-cones.
These define a fan called the mutation fan for $B$.

\begin{definition}[\emph{Cones}]\label{cone def}
A \newword{convex cone} is a subset of $\reals^n$ that is closed under positive scaling and under addition.
A convex cone, by this definition, is also convex in the usual sense.
A \newword{polyhedral} cone is a cone defined by finitely many weak linear inequalities, or equivalently it is the nonnegative $\reals$-linear span of finitely many vectors.
A \newword{rational} cone is a cone defined by finitely many weak linear inequalities with integer coefficients, or equivalently it is the nonnegative $\reals$-linear span of finitely many integer vectors.
A \newword{simplicial} cone is the nonnegative span of a set of linearly independent vectors.
\end{definition}

\begin{definition}[\emph{$B$-classes and $B$-cones}]\label{B cones}
Let $B$ be an $n\times n$ exchange matrix.
Define an equivalence relation $\equiv^B$ on $\reals^n$ by setting $\a_1\equiv^B\a_2$ if and only if $\vsgn(\eta^B_\k(\a_1))=\vsgn(\eta^B_\k(\a_2))$ for every finite sequence $\k$ of indices in~$[n]$.
Recall that $\vsgn(\a)$ denotes the vector of signs of the entries of $\a$.
Thus $\a_1\equiv^B\a_2$ means that $\eta_\k^B(\a_1)\in H$ if and only if $\eta_\k^B(\a_2)\in H$ for every open coordinate halfspace $H$ of $\reals^n$ and every sequence $\k$.
The equivalence classes of $\equiv^B$ are called \newword{$B$-classes}.
The closures of $B$-classes are called \newword{$B$-cones}.
The latter term is justified in Proposition~\ref{convex}, below.
\end{definition}

\begin{prop}\label{linear}
Every mutation map $\eta_\k^B$ is linear on every $B$-cone.
\end{prop}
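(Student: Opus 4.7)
The plan is to prove the proposition by induction on the length $q$ of the sequence $\k = k_q,\ldots,k_1$, first establishing that each $\eta_\k^B$ is linear on every $B$-class, and then extending from $B$-classes to their closures (the $B$-cones) by continuity.

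Two preliminary observations drive the argument. First, inspection of \eqref{mutation map def} shows that each elementary mutation map $\eta_k^B$ is linear on each of the two closed halfspaces $\{\a\in\reals^n:a_k\ge 0\}$ and $\{\a\in\reals^n:a_k\le 0\}$: once the sign of $a_k$ is fixed, whether the formula adds, subtracts, or ignores the term $a_kb_{kj}$ depends only on $\sgn(b_{kj})$, not on $\a$, and the two resulting linear formulas agree on $\{a_k=0\}$, consistent with continuity. Second, $\eta_{k_1}^B$ carries each $B$-class into a single $\mu_{k_1}(B)$-class; this is a direct unpacking of the definition of $\equiv^B$, since for any sequence $\mathbf{m}$ one has $\eta_\mathbf{m}^{\mu_{k_1}(B)}\circ\eta_{k_1}^B=\eta_{\mathbf{m}\,k_1}^B$, so if $\a_1\equiv^B\a_2$ then the $\vsgn$ of $\eta_\mathbf{m}^{\mu_{k_1}(B)}(\eta_{k_1}^B(\a_j))$ is independent of $j\in\{1,2\}$ for every choice of $\mathbf{m}$.

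With these in hand the induction runs cleanly. The base case $q=0$ is the identity. For $q\ge 1$, factor $\eta_\k^B=\eta_{\k''}^{B_2}\circ\eta_{k_1}^B$ where $\k''=k_q,\ldots,k_2$ and $B_2=\mu_{k_1}(B)$. Let $C$ be any $B$-class. Taking $\k$ to be the empty sequence in the definition of $\equiv^B$ shows that all points of $C$ share a common value of $\sgn(a_{k_1})$, so $C$ lies in a single closed halfspace on which $\eta_{k_1}^B$ is linear by the first observation. By the second observation, $\eta_{k_1}^B(C)$ is contained in a single $B_2$-class, on which $\eta_{\k''}^{B_2}$ is linear by the inductive hypothesis. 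A composition of maps each linear on the relevant source set is linear, completing the inductive step.

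The extension from $B$-classes to $B$-cones is then immediate. By the induction there is a linear map $L:\reals^n\to\reals^n$ with $\eta_\k^B|_C=L|_C$, and since $\eta_\k^B$ and $L$ are both continuous on $\reals^n$ they must agree on the closure $\overline{C}$ as well, so $\eta_\k^B$ is linear on every $B$-cone. The only points of friction I foresee are bookkeeping around the composition order (which $k_i$ acts first, and with respect to which mutated exchange matrix) and the transport-of-$\equiv^B$ observation used in the inductive step; once those are in place the argument is essentially the standard piecewise-linear-plus-continuity reasoning, and I do not expect any hidden subtleties.
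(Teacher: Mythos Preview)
Your proof is correct and follows essentially the same approach as the paper: induction on the length of $\k$ to establish linearity on $B$-classes, followed by extension to $B$-cones via continuity. The only difference is that the paper peels off the \emph{last} map $\eta_{k_q}^{B_q}$ rather than the first, which lets the inductive hypothesis stay with the same $B$ and the same $B$-class $C$ throughout (the needed fact that $\vsgn$ is constant on $\eta_{\k'}^B(C)$ is then immediate from the definition of $\equiv^B$, without the transport-of-equivalence observation).
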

\begin{proof}
Let $C$ be a $B$-class and let $\k=k_q,\ldots,k_1$.
We show by induction on $q$ that the map $\eta^B_\k$ is linear on $C$.
The base case $q=0$ is trivial.
If $q>0$ then let $\k'=k_{q-1},\ldots,k_1$.
By induction, the map $\eta^B_{\k'}$ is linear on $C$.
Since $C$ is a $B$-class, $\eta^B_{\k'}$ takes $C$ to a set $C'$ on which the function $\vsgn$ is constant.
In particular, $C'$ is contained in one of the domains of linearity of $\eta_{k_q}^{\mu_{\k'}(B)}$, so $\eta^B_\k=\eta_{k_q}^{\mu_{\k'}(B)}\circ\eta^B_{\k'}$ is linear on $C$.
Since $\eta^B_\k$ is also a continuous map, it is linear on the closure of $C$.
\end{proof}

\begin{prop}\label{convex}
Each $B$-class is a convex cone containing no line and each $B$-cone is a closed, convex cone containing no line.
\end{prop}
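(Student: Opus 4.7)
The plan is to verify that each $B$-class is closed under positive scaling and under addition---making it a convex cone in the sense of Definition~\ref{cone def}, whose closure is then automatically a closed convex cone---and then to rule out lines separately for the class and for its closure, using that $\vsgn$ is constant on each class.

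First I would observe that every mutation map $\eta_\k^B$ is positively homogeneous, by a short induction on the length of $\k$: in~\eqref{mutation map def} the case selected depends only on $\sgn(a_k)$ (the matrix entries $b_{kj}$ being fixed), scaling $\a$ by $t>0$ preserves $\sgn(a_k)$, and the formula within each case is linear in $\a$. Since $\vsgn(t\mathbf{w})=\vsgn(\mathbf{w})$ for $t>0$, closure of each $B$-class under positive scaling is immediate. The main technical point is closure under addition. I would prove, jointly by induction on the length of $\k$, that whenever $\a_1\equiv^B\a_2$ one has $\eta_\k^B(\a_1+\a_2)=\eta_\k^B(\a_1)+\eta_\k^B(\a_2)$. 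In the inductive step, extending $\k$ on the left by a new index $k$, set $\mathbf{w}_i:=\eta_\k^B(\a_i)$. The hypothesis $\a_1\equiv^B\a_2$ says that $\mathbf{w}_1$ and $\mathbf{w}_2$ share a sign vector, hence so does $\mathbf{w}_1+\mathbf{w}_2$ entry by entry (the sum of two real numbers of equal weak sign has that sign). Consequently all three of $\mathbf{w}_1,\mathbf{w}_2,\mathbf{w}_1+\mathbf{w}_2$ fall into the same case of~\eqref{mutation map def} under $\eta_k^{\mu_\k(B)}$, and linearity within that case delivers the additivity after one more mutation. Combining additivity with $\a_1\equiv^B\a_2$ yields $\vsgn\eta_\k^B(\a_1+\a_2)=\vsgn\eta_\k^B(\a_1)$ for every $\k$, i.e.\ $\a_1+\a_2$ lies in the same $B$-class as $\a_1$.

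It remains to rule out lines. By taking the empty sequence in the definition of $\equiv^B$, the function $\vsgn$ is constant on each $B$-class $C$; call its common value $\mathbf{s}$. If $C$ contained a line $\{p+t\v:t\in\reals\}$ with $\v\neq\mathbf{0}$, then some coordinate $v_i\neq0$, and $p_i+tv_i$ would take values of both signs as $t$ varies, contradicting the constancy of $\mathbf{s}_i$. For the closure $\bar C$---a closed convex cone containing the origin---any line it contained would force $\v,-\v\in\bar C$ for some $\v\neq\mathbf{0}$ (divide a point $p+t\v$ on the line by $|t|$ and let $t\to\pm\infty$, using closure of $\bar C$ under positive scaling and under taking limits). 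Choosing $i$ with $v_i\neq0$ and approximating $\v$ and $-\v$ by sequences in $C$, the $i$-th coordinates of the two approximating sequences are eventually of opposite signs, forcing $\mathbf{s}_i$ to equal both $\sgn(v_i)$ and $-\sgn(v_i)$---a contradiction. The only step with real content is the additivity induction; the rest is bookkeeping.
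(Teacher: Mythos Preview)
Your proof is correct and follows essentially the same approach as the paper: the additivity induction you carry out is exactly the content of Proposition~\ref{linear} (which the paper proves just before and then cites), and your observation that $\vsgn$ is constant on a $B$-class is the paper's ``each $B$-class is contained in a coordinate orthant.'' Your separate limiting argument for the closure is fine but could be shortened by noting, as the paper does, that a closed coordinate orthant already contains the closure and contains no line.
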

\begin{proof}
First, notice that each $B$-class is closed under positive scaling because each map $\eta_k^B$ commutes with positive scaling, and because the function $\vsgn$ is unaffected by positive scaling.  
Furthermore, if $\a_1\equiv^B \a_2$ then $\vsgn(\eta^B_\k(\a_1))=\vsgn(\eta^B_\k(\a_2))$, for any finite sequence $\k$ of indices in~$[n]$.
Thus by Proposition~\ref{linear}, $\vsgn(\eta^B_\k(\a_1+\a_2))=\vsgn(\eta^B_\k(\a_1)+\eta^B_\k(\a_2))=\vsgn(\eta^B_\k(\a_1))$, so $\a_1\equiv^B \a_1+\a_2$.
Since $B$-classes are closed under positive scaling and addition, they are convex cones.
The requirement that $\vsgn(\eta^B_\k(\,\cdot\,))$ is constant within $B$-classes implies in particular (taking $\k$ to be the empty sequence) that each $B$-class is contained in a coordinate orthant.
In particular, no $B$-class contains a line.
We have verified the assertion for $B$-classes, and the assertion for $B$-cones follows.
\end{proof}

\begin{prop}\label{cones preserved}
Let $\k$ be a finite sequence of indices in~$[n]$.
Then a set $C$ is a $B$-class if and only if $\eta_\k^B(C)$ is a $\mu_\k(B)$-class.
A set $C$ is a $B$-cone if and only if $\eta_\k^B(C)$ is a $\mu_\k(B)$-cone.
\end{prop}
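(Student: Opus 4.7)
The plan is to show that the mutation map $\eta_\k^B$ carries $B$-classes bijectively onto $\mu_\k(B)$-classes, and then deduce the statement about cones from the fact that $\eta_\k^B$ is a homeomorphism. Writing $B' = \mu_\k(B)$ and $\bar{\k}$ for the reverse of $\k$, the two key facts are the composition formula
\[
\eta_{\k'}^{B'} \circ \eta_\k^B \;=\; \eta_{\k'\k}^{B},
\]
(where $\k'\k$ denotes concatenation, with entries of $\k$ on the right), and the inversion formula $(\eta_\k^B)^{-1} = \eta_{\bar\k}^{B'}$, both of which are immediate from Definition~\ref{map} together with the fact that $\mu_{k_i}$ is an involution on exchange matrices so that the matrix sequence appearing in the composition lines up correctly.

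First I would show the class statement, namely that for any $\a_1, \a_2 \in \reals^n$,
\[
\a_1 \equiv^B \a_2 \quad\Longleftrightarrow\quad \eta_\k^B(\a_1) \equiv^{B'} \eta_\k^B(\a_2).
\]
The forward direction is easy: if $\a_1 \equiv^B \a_2$, then for any sequence $\k'$ in $[n]$, applying the composition formula gives $\eta_{\k'}^{B'}(\eta_\k^B(\a_i)) = \eta_{\k'\k}^B(\a_i)$, and the sign-vectors of these agree by assumption (taking $\k'' = \k'\k$ in the definition of $\equiv^B$). For the converse, given an arbitrary sequence $\k''$ we cannot simply write $\k''$ as $\k'\k$, so we use the inversion formula: $\eta_{\k''}^B = \eta_{\k''\bar\k}^{B'} \circ \eta_\k^B$. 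Thus $\vsgn(\eta_{\k''}^B(\a_i)) = \vsgn(\eta_{\k''\bar\k}^{B'}(\eta_\k^B(\a_i)))$, and now the assumption $\eta_\k^B(\a_1) \equiv^{B'} \eta_\k^B(\a_2)$ applied to the sequence $\k''\bar\k$ gives equality.

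Since $\eta_\k^B$ is a bijection, the class equivalence above implies that $C$ is a $B$-class if and only if $\eta_\k^B(C)$ is a $B'$-class, which is the first assertion of the proposition. For the second assertion, recall from Definition~\ref{map} that $\eta_\k^B$ is a homeomorphism of $\reals^n$, hence commutes with topological closure: $\eta_\k^B(\overline{C}) = \overline{\eta_\k^B(C)}$ for any $C \subseteq \reals^n$. Therefore the closure of a $B$-class maps under $\eta_\k^B$ to the closure of the corresponding $B'$-class, which precisely says that $C$ is a $B$-cone if and only if $\eta_\k^B(C)$ is a $B'$-cone.

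The only subtle point is keeping the bookkeeping of sequence orderings (right-to-left) and matrix evolutions correct when verifying the composition formula; once that is in hand, the proof reduces to the two-line forward and backward arguments sketched above, so no substantial obstacle is expected.
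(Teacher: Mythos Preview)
Your proof is correct and follows essentially the same approach as the paper. The paper phrases the argument at the level of classes (showing $\eta_\k^B(C)$ lies in some $\mu_\k(B)$-class $C'$, then applying the symmetric argument to $C'$ via the reverse sequence and using that distinct classes are disjoint), whereas you work pointwise by showing $\eta_\k^B$ preserves the equivalence relation; both rely on the same composition and inversion identities for mutation maps, and both deduce the cone statement from the homeomorphism property.
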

\begin{proof}
Let $C$ be a $B$-class and let $\k'=k'_{q'},\ldots,k'_1$ be another finite sequence of indices in~$[n]$.
Then since $C$ is a $B$-class, the function $\vsgn$ is constant on $\eta^B_{\k'\k}(C)$, where $\k'\k$ is the concatenation of $\k'$ and $\k$.
Letting $\k'$ vary over all possible sequences, we see that $\eta_\k^B(C)$ is contained in some $\mu_\k(B)$-class $C'$.
Symmetrically, if $\k''$ is the reverse sequence of $\k$, $\eta_{\k''}^{\mu_\k(B)}(C')$ is contained in some $B$-class $C''$.
But  $\eta_{\k''}^{\mu_\k(B)}=(\eta_\k^B)^{-1}$, so $C\subseteq C''$.
By definition, distinct $B$-classes are disjoint, so $C''=C$, and thus $C'=\eta_\k^B(C)$.
The assertion for $B$-cones follows because $\eta_\k^B$ is a homeomorphism.
\end{proof}

The following is \cite[Definition~6.12]{ca4}.
\begin{definition}[\emph{Sign-coherent vectors}]\label{sign-coherent def}
A collection $X$ of vectors in $\reals^n$ is \newword{sign-coherent} if for any $k\in[n]$, the $k\th$ coordinates of the vectors in $X$ are either all nonnegative or all nonpositive.
\end{definition}

As pointed out in the proof of Proposition~\ref{convex}, each $B$-cone is contained in some coordinate orthant.
In other words:

\begin{prop}\label{sign-coherent}
Every $B$-cone is a sign-coherent set.
\end{prop}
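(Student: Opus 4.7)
The plan is to unwind the definition of a $B$-class and use the fact that the equivalence relation $\equiv^B$ already controls signs through the empty sequence $\k$. Specifically, I would first observe that taking $\k$ to be the empty sequence in Definition~\ref{B cones}, the relation $\a_1 \equiv^B \a_2$ implies $\vsgn(\a_1) = \vsgn(\a_2)$. Hence every $B$-class $C$ has a constant sign vector $\vsgn$, meaning there is a fixed function $\sigma : [n] \to \{-1, 0, +1\}$ such that for every $\a \in C$ and every $k \in [n]$, the $k$th coordinate $a_k$ satisfies $\sgn(a_k) = \sigma(k)$.

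Next I would pass to the closure. For each coordinate $k$, the condition $\sgn(a_k) = \sigma(k)$ defines an open half-line (or the origin) in the $k$th coordinate, and the closed condition ``$a_k \ge 0$'' (if $\sigma(k) \in \{0, +1\}$) or ``$a_k \le 0$'' (if $\sigma(k) \in \{0, -1\}$) is a closed half-space containing $C$. Since $C$ is contained in the intersection of these closed half-spaces (one for each $k$), and that intersection is a closed set, the closure $\overline{C}$ — which is the $B$-cone — is also contained in this intersection. The intersection is precisely a closed coordinate orthant, so every coordinate of every vector in the $B$-cone has a prescribed weak sign. This is the definition of sign-coherence from Definition~\ref{sign-coherent def}.

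There is essentially no obstacle here: the key observation (taking $\k$ empty) was already made inside the proof of Proposition~\ref{convex}, and sign-coherence of a set is weaker than containment in a single closed coordinate orthant — it only requires that, coordinate by coordinate, the entries do not straddle zero strictly. The proof is therefore a direct reading of the definitions, and I would write it as a short paragraph rather than a multi-step argument.
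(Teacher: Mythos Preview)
Your proposal is correct and follows exactly the paper's approach: the paper simply remarks that, as observed in the proof of Proposition~\ref{convex} (taking $\k$ empty), each $B$-class lies in a coordinate orthant, so each $B$-cone lies in a closed coordinate orthant and is therefore sign-coherent. You have spelled out the closure argument more explicitly, but the idea is identical.
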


Our understanding of $B$-cones allows us to mention the simplest kind of $B$-coherent linear relation.

\begin{definition}[\emph{$B$-local linear relations}]\label{B local}
Let $B$ be an $n\times n$ exchange matrix.
Let $S$ be a finite set, let $(\v_i:i\in S)$ be vectors in $\reals^n$ and let $(c_i:i\in S)$ be real numbers.
Then the formal expression $\sum_{i\in S}c_i\v_i$ is a \newword{$B$-local linear relation} if the equality $\sum_{i\in S}c_i\v_i=\mathbf{0}$ holds and if $\set{\v_i:i\in S}$ is contained in some $B$-cone.
\end{definition}

\begin{prop}\label{local coherent} 
A $B$-local linear relation is $B$-coherent.
\end{prop}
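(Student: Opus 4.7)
The plan is to verify the two defining conditions \eqref{linear eta} and \eqref{piecewise eta} of a $B$-coherent linear relation directly, exploiting the fact that all of the $\v_i$ lie in a common $B$-cone $C$.

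First I would verify \eqref{linear eta}. By Proposition~\ref{linear}, the map $\eta^B_\k$ restricted to $C$ agrees with some linear map $L\colon\reals^n\to\reals^n$. Since $\sum_{i\in S}c_i\v_i=\mathbf{0}$, applying $L$ coordinatewise gives
\[
\sum_{i\in S}c_i\,\eta^B_\k(\v_i)=\sum_{i\in S}c_i\,L(\v_i)=L\Bigl(\sum_{i\in S}c_i\v_i\Bigr)=L(\mathbf{0})=\mathbf{0},
\]
which is exactly \eqref{linear eta}. Note that this argument is valid even though $\mathbf{0}$ need not lie in $C$: the linearity of $L$ holds on all of $\reals^n$; only the identification $\eta^B_\k|_C=L|_C$ is restricted to $C$.

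Next I would verify \eqref{piecewise eta} one coordinate at a time. By Proposition~\ref{cones preserved}, the image $\eta^B_\k(C)$ is a $\mu_\k(B)$-cone, so in particular it contains each $\eta^B_\k(\v_i)$. By Proposition~\ref{sign-coherent}, the set $\{\eta^B_\k(\v_i):i\in S\}$ is then sign-coherent. Fix a coordinate $j\in[n]$. If the $j\th$ coordinates of all $\eta^B_\k(\v_i)$ are nonnegative, then the $j\th$ coordinate of $\mathbf{min}(\eta^B_\k(\v_i),\mathbf{0})$ is $0$ for every $i$, and the $j\th$ component of \eqref{piecewise eta} vanishes trivially. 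If instead the $j\th$ coordinates of all $\eta^B_\k(\v_i)$ are nonpositive, then $\mathbf{min}(\eta^B_\k(\v_i),\mathbf{0})$ and $\eta^B_\k(\v_i)$ agree in the $j\th$ coordinate, so the $j\th$ component of \eqref{piecewise eta} coincides with the $j\th$ component of \eqref{linear eta}, which we have already shown to be $\mathbf{0}$.

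There is essentially no obstacle here; the proposition is an immediate consequence of the three already-established facts that mutation maps are linear on $B$-cones (Proposition~\ref{linear}), that $B$-cones are preserved up to relabeling by the mutation maps (Proposition~\ref{cones preserved}), and that $B$-cones are sign-coherent (Proposition~\ref{sign-coherent}). The only mildly delicate point is the one flagged above: since the cone $C$ need not contain $\mathbf{0}$, one must invoke the actual linear extension $L$ rather than a naive linearity argument performed inside $C$.
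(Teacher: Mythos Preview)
Your proof is correct and follows essentially the same approach as the paper's: use Proposition~\ref{linear} for \eqref{linear eta}, then Propositions~\ref{cones preserved} and~\ref{sign-coherent} to reduce each coordinate of \eqref{piecewise eta} either to $0=0$ or to the corresponding coordinate of \eqref{linear eta}. One small remark: $B$-cones are closed convex cones (Proposition~\ref{convex}) and hence do contain $\mathbf{0}$; the genuine subtlety you are guarding against is rather that the $c_i$ may be negative, so intermediate partial sums need not lie in $C$, which is exactly why passing to the linear extension $L$ is the right move.
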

\begin{proof}
Let $\sum_{i\in S}c_i\v_i$ be a $B$-local linear relation.
Then by definition, \eqref{linear eta} holds for $\k$ empty.
Now Proposition~\ref{linear} implies that \eqref{linear eta} holds for all $\k$.
Propositions~\ref{cones preserved} and~\ref{sign-coherent} imply that, for any $\k$, each coordinate of \eqref{piecewise eta} is either the tautology $0=0$ or agrees with some coordinate of \eqref{linear eta}.
\end{proof}

In order to study the collection of all $B$-cones, we first recall some basic definitions from convex geometry.

\begin{definition}[\emph{Face}]\label{face def}
A subset $F$ of a convex set $C$ is a \newword{face} if $F$ is convex and if any line segment $L\subseteq C$ whose interior intersects $F$ has $L\subseteq F$.
In particular, the empty set is a face of $C$ and $C$ is a face of itself.
Also, if $H$ is any hyperplane such that $C$ is contained in one of the two closed halfspaces defined by $H$, then $H\cap C$ is a face of $C$.
The intersection of an arbitrary set of faces of $C$ is another face of $C$.
A face of a closed convex set is closed.
\end{definition}

\begin{definition}[\emph{Fan}]\label{fan def}
A \newword{fan} is a collection $\F$ of closed convex cones such that if $C\in\F$ and $F$ is a face of $C$, then $F\in\F$, and such that the intersection of any two cones in $\F$ is a face of each of the two.
In some contexts, a fan is required to have finitely many cones, but here we allow infinitely many cones.
A fan is \newword{complete} if the union of its cones is the entire ambient space.
A \newword{simplicial fan} is a fan all of whose cones are simplicial.
A \newword{subfan} of a fan $\F$ is a subset of $\F$ that is itself a fan.
If $\F_1$ and $\F_2$ are complete fans such that every cone in $\F_2$ is a union of cones in $\F_1$, then we say that $\F_1$ \newword{refines} $\F_2$ or equivalently that $\F_2$ \newword{coarsens} $\F_1$.
\end{definition}

\begin{definition}[\emph{The mutation fan for $B$}]\label{mut fan def}
Let $\F_B$ be the collection consisting of all $B$-cones, together with all faces of $B$-cones.
This collection is called the \newword{mutation fan for $B$}.
The name is justified by the following theorem.
\end{definition}

\begin{theorem}\label{fan}
The collection $\F_B$ is a complete fan.
\end{theorem}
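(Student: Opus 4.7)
The plan is to verify the four defining conditions of a complete fan for $\F_B$. Three of these are immediate or follow from earlier results: Proposition~\ref{convex} ensures every $B$-cone is closed and convex, and hence so is every face of a $B$-cone; face-closure is built into the definition of $\F_B$; and completeness holds because the $B$-classes partition $\reals^n$ and their closures are $B$-cones. The substantive content is the intersection condition: for cones $C,C'\in\F_B$, I must show $C\cap C'$ is a face of each. Since faces of faces are faces, and since the intersection of two faces of a convex set is a face, it suffices to handle the case in which $C$ and $C'$ are themselves $B$-cones.

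My strategy is to approximate $\F_B$ from above by simpler finite fans. For each finite sequence $\k$ of indices in $[n]$, I introduce the collection $\G_\k$ of all preimages $(\eta_\k^B)^{-1}(O)$ as $O$ ranges over the closed coordinate orthants and their faces. The main technical lemma, proved by induction on the length of $\k$, is that each such preimage is a closed convex polyhedral cone and that $\G_\k$ forms a complete fan on $\reals^n$. The base case is the coordinate orthant fan itself. For the inductive step, writing $\eta_\k^B=\eta_{k_q}^{B_q}\circ\eta_{\k'}^B$, the crucial observation from the single-step formula \eqref{mutation map def} is that the preimage of any closed orthant under $\eta_k^{B_q}$ lies in a single closed halfspace $\{a_k\ge 0\}$ or $\{a_k\le 0\}$ on which $\eta_k^{B_q}$ restricts to a linear map, so this preimage is a closed convex cone. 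Pulling back through $\eta_{\k'}^B$, which by Proposition~\ref{linear} is linear on each cone of $\G_{\k'}$, produces the pieces of $(\eta_\k^B)^{-1}(O)$ within each cone of $\G_{\k'}$ as convex cones. The main obstacle will be showing that these pieces assemble into a single convex cone across the distinct linearity regions of $\eta_\k^B$ rather than merely remaining a union, since in general preimages of convex cones under PL maps need not be convex; this is where the specific structure of mutation maps is essential.

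Granted the lemma, each $B$-cone $C$ is contained in a unique maximal cone $C^\k$ of $\G_\k$, namely $C^\k=(\eta_\k^B)^{-1}(\overline{O_{\epsilon(\k)}})$, where $\epsilon(\k)$ is the constant sign vector of $\vsgn\circ\eta_\k^B$ on the defining $B$-class of $C$. A short further argument, using linearity of $\eta_\k^B$ on $C^\k$ together with approximation from the relative interior, gives $C=\bigcap_\k C^\k$, decomposing each $B$-cone as the intersection of its ambient cones. Now for two $B$-cones $C_1,C_2$ with associated $C_1^\k,C_2^\k$, since $\G_\k$ is a fan the intersection $C_1^\k\cap C_2^\k$ is a face of $C_1^\k$, namely $C_1^\k\cap H_\k$ for some hyperplane $H_\k$ supporting $C_1^\k$. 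Because $C_1\subseteq C_1^\k$, the hyperplane $H_\k$ also supports $C_1$, so $C_1\cap H_\k$ is a face of $C_1$. Then
\[
C_1\cap C_2 \;=\; \bigcap_{\k}\bigl(C_1\cap C_2^\k\bigr) \;=\; \bigcap_{\k}\bigl(C_1\cap H_\k\bigr),
\]
an arbitrary intersection of faces of $C_1$, which is itself a face of $C_1$; by symmetry it is a face of $C_2$.
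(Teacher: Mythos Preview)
Your central lemma---that for each sequence $\k$ the preimages $(\eta_\k^B)^{-1}(O)$ of closed orthants form a fan $\G_\k$---is not merely unproved; it is false. You correctly identify the obstacle (``showing that these pieces assemble into a single convex cone'') but this obstacle cannot be overcome, because the preimage of an orthant under a composite mutation map need not be convex. Take $B=\begin{bsmallmatrix*}[r]0&2\\-2&0\end{bsmallmatrix*}$ and $\k=1,2$. One computes directly from~\eqref{mutation map def} that
\[
\eta^B_{1,2}(3/2,-1)=(1/2,0)\in O,\qquad \eta^B_{1,2}(0,2)=(0,-2)\in O,\qquad \eta^B_{1,2}(3/4,1/2)=(-3/4,-1/2)\notin O,
\]
where $O=\{a_1\ge 0,\,a_2\le 0\}$. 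Thus $(3/2,-1)$ and $(0,2)$ lie in $(\eta^B_{1,2})^{-1}(O)$ but their midpoint does not. Your single-step argument works precisely because the sign of the $k$th output coordinate pins down the linearity region of $\eta_k^{B_q}$; after pulling back once, the resulting cone $P$ generally straddles both linearity regions of the next map, and convexity is lost. Since the lemma fails, the approximation $C=\bigcap_\k C^\k$ by cones of fans $\G_\k$ collapses, and with it the supporting-hyperplane argument.

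The paper's proof avoids polyhedral approximation entirely. It first shows (Proposition~\ref{int of B-cones}) that an arbitrary intersection of $B$-cones is again a $B$-cone, by observing that the relative interior of the intersection lies in a single $B$-class (Proposition~\ref{relint class}). It then shows (Lemma~\ref{B-subcone face}) that whenever one $B$-cone is contained in another it is a face, using the separation theorem to force the relative interiors to meet. These two facts, together with Lemma~\ref{int faces} to reduce from arbitrary cones in $\F_B$ to $B$-cones, yield the intersection condition directly.
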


To prove Theorem~\ref{fan}, we introduce some additional background on convex sets and prove several preliminary results.

\begin{definition}[\emph{Relative interior}]\label{relint def}
The \newword{affine hull} of a convex set $C$ is the union of all lines defined by two distinct points of $C$.
The \newword{relative interior} of $C$, written $\relint(C)$, is its interior as a subset of its affine hull, and $C$ is \newword{relatively open} if it equals its relative interior.
The relative interior of a convex set is nonempty.  
(See e.g.\ \cite[Theorem~2.3.1]{Webster}.)
\end{definition}

We need several basic facts about convex sets.
Proofs of the first four are found, for example, in \cite[Theorem~2.3.4]{Webster}, \cite[Theorem~2.3.8]{Webster}, and \cite[Corollary~2.4.11]{Webster}.

\begin{lemma}\label{Web lem}
Let $C$ be a convex set in $\reals^n$.
Let $\x$ be in the closure of $C$, let $\y$ be in the relative interior of $C$ and let $\ep\in[0,1)$.
Then $\ep\x+(1-\ep)\y$ is in the relative interior of $C$.
\end{lemma}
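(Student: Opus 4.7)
The plan is to establish this classical ``line-segment'' property of convex sets directly, by exhibiting a relative neighborhood of $\z:=\ep\x+(1-\ep)\y$ inside $C$. Let $A$ denote the affine hull of $C$; since $A$ is a closed affine subspace of $\reals^n$, we have $\overline{C}\subseteq A$, so in particular $\x,\y\in A$ and $\z\in A$ as an affine combination. Since $\y\in\relint(C)$, there is some $r>0$ such that the relative ball $B_A(\y,r):=\{q\in A:\|q-\y\|<r\}$ is contained in $C$. The case $\ep=0$ is immediate, so I will assume $\ep\in(0,1)$.

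The core step is to take any $w$ in a small relative ball $B_A(\z,\rho)$ (with $\rho$ to be chosen) and rewrite $w$ as a convex combination $\ep\x'+(1-\ep)\y'$, where $\x'\in C$ is a point close to $\x$ (which exists because $\x\in\overline{C}$) and $\y'$ is forced by
\[
\y':=\frac{w-\ep\x'}{1-\ep}.
\]
Since $w,\x'\in A$ and the coefficients $\tfrac{1}{1-\ep}$ and $-\tfrac{\ep}{1-\ep}$ sum to $1$, the point $\y'$ is an affine combination of points of $A$ and hence lies in $A$, so it makes sense to ask whether $\y'\in B_A(\y,r)$.

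To close the loop I would subtract $\z=\ep\x+(1-\ep)\y$ to get
\[
\y'-\y=\frac{(w-\z)+\ep(\x-\x')}{1-\ep},
\]
so $\|\y'-\y\|\le\frac{\rho+\ep\|\x-\x'\|}{1-\ep}$. Choosing $\x'\in C$ with $\|\x-\x'\|<\delta$ and then shrinking both $\rho$ and $\delta$ so that $(\rho+\ep\delta)/(1-\ep)<r$ forces $\y'\in B_A(\y,r)\subseteq C$. Consequently $w=\ep\x'+(1-\ep)\y'$ is a convex combination of two points of $C$, so $w\in C$. This shows $B_A(\z,\rho)\subseteq C$, i.e.\ $\z\in\relint(C)$.

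The main obstacle is purely bookkeeping: one must verify that every auxiliary point, especially $\y'$ (whose defining expression involves a negative coefficient), stays inside the affine hull $A$, so the notion of ``relative ball'' is meaningful throughout. Once that is checked, the proof reduces to a single triangle-inequality estimate. The argument crucially uses $\ep<1$: the division by $1-\ep$ is what allows a small perturbation of $\z$ to be absorbed into a small perturbation of $\y$, where the relative-interior hypothesis provides the needed slack.
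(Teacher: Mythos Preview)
Your argument is correct and is essentially the standard textbook proof of this classical fact. Note, however, that the paper does not actually supply its own proof of this lemma: it is listed among several basic convexity facts whose proofs are simply cited from Webster's \emph{Convexity} (specifically \cite[Theorem~2.3.4]{Webster}). So there is no in-paper proof to compare against; your direct argument, exhibiting a relative ball around $\z$ by absorbing the perturbation into the $\y$-component where the relative-interior hypothesis provides slack, is exactly the sort of proof one finds in such a reference.
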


\begin{lemma}\label{relint clos}
The relative interior of a convex set $C$ in $\reals^n$ equals the relative interior of the closure of $C$.
\end{lemma}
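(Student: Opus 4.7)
The plan is to prove the two inclusions $\relint(C)\subseteq\relint(\cl(C))$ and $\relint(\cl(C))\subseteq\relint(C)$ separately. I would first note that every affine subspace of $\reals^n$ is closed, so the affine hull of $C$ contains $\cl(C)$ and therefore $\mathrm{aff}(C)=\mathrm{aff}(\cl(C))$. This ensures that ``relatively open'' refers to the same ambient affine subspace on both sides, which is what makes the statement meaningful in the first place.

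The forward inclusion is essentially immediate: if $\x\in\relint(C)$, then $\x$ has an open neighborhood in $\mathrm{aff}(C)$ contained in $C$, hence contained in $\cl(C)$, so $\x\in\relint(\cl(C))$.

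The reverse inclusion is where Lemma~\ref{Web lem} does the real work. The case $C=\emptyset$ is trivial, so fix $\y\in\relint(C)$ (nonempty by Definition~\ref{relint def}) and take $\x\in\relint(\cl(C))$. If $\x=\y$ there is nothing to prove, so assume $\x\neq\y$. Because $\x$ lies in the interior of $\cl(C)$ relative to $\mathrm{aff}(C)$, for sufficiently small $\delta>0$ the point $\z:=(1+\delta)\x-\delta\y$, which sits on the line through $\y$ and $\x$ just past $\x$, still belongs to $\cl(C)$. Rearranging gives the convex combination $\x=\tfrac{1}{1+\delta}\z+\tfrac{\delta}{1+\delta}\y$, and setting $\ep=\tfrac{1}{1+\delta}\in[0,1)$ allows me to apply Lemma~\ref{Web lem} with $\z\in\cl(C)$ and $\y\in\relint(C)$ to conclude $\x\in\relint(C)$.

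The main obstacle is simply choosing the right convex combination to feed into Lemma~\ref{Web lem}. The trick of stepping slightly past $\x$ along the ray from $\y$ through $\x$, and landing in $\cl(C)$, is what lets $\x$ itself be rewritten as a combination with coefficients in $[0,1)$ and $(0,1]$; after that, the lemma applies verbatim and no further analysis is needed.
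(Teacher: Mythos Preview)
Your proof is correct. Note, though, that the paper does not actually prove this lemma: it is listed among several ``basic facts about convex sets'' with proofs deferred to Webster's textbook (specifically \cite[Theorem~2.3.8]{Webster}). Your argument is the standard one and is essentially what appears in Webster: establish $\mathrm{aff}(C)=\mathrm{aff}(\cl(C))$, get the easy inclusion $\relint(C)\subseteq\relint(\cl(C))$ directly, and for the reverse inclusion overshoot $\x$ slightly along the ray from a point $\y\in\relint(C)$ to land at $\z\in\cl(C)$, then invoke Lemma~\ref{Web lem} to pull $\x$ back into $\relint(C)$. So there is no genuine methodological difference to discuss; you have supplied the proof the paper omitted.
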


\begin{lemma}\label{clos relint}
A closed convex set in $\reals^n$ is the closure of its relative interior.
\end{lemma}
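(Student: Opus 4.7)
The plan is to prove both containments $C \supseteq \overline{\relint(C)}$ and $C \subseteq \overline{\relint(C)}$ separately, with the first essentially immediate and the second following from a convex-combination argument using Lemma~\ref{Web lem}.

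First I would observe the easy direction: since $\relint(C) \subseteq C$ and $C$ is closed, we have $\overline{\relint(C)} \subseteq \overline{C} = C$. This disposes of one inclusion with no real work.

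For the reverse inclusion, I would fix an arbitrary point $\x \in C$ and show that $\x \in \overline{\relint(C)}$. By Definition~\ref{relint def}, the relative interior of any convex set is nonempty, so I can choose some $\y \in \relint(C)$. Now Lemma~\ref{Web lem} applies directly (taking the closure of $C$ to be $C$ itself, since $C$ is already closed): for every $\ep \in [0,1)$, the convex combination $\ep \x + (1-\ep)\y$ lies in $\relint(C)$. Letting $\ep \to 1^-$ gives a sequence of points in $\relint(C)$ converging to $\x$, so $\x \in \overline{\relint(C)}$. Since $\x$ was arbitrary, $C \subseteq \overline{\relint(C)}$, completing the proof.

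There is no real obstacle here; the lemma is essentially a direct corollary of Lemma~\ref{Web lem} combined with the nonemptiness of the relative interior. The only subtlety worth flagging is that Lemma~\ref{Web lem} is stated for $\x$ in the closure of $C$, which is why closedness of $C$ enters only implicitly (ensuring $\x \in C$ is automatically in the closure, and ensuring $\overline{\relint(C)} \subseteq C$ in the easy direction). If one wanted to state a more general version without the closedness hypothesis, the conclusion would read $\overline{C} = \overline{\relint(C)}$, which is effectively the content of Lemma~\ref{relint clos} together with the argument above.
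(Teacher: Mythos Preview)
The paper does not actually prove this lemma; it is one of the four basic facts about convex sets that the paper cites without proof from \cite{Webster} (see the sentence preceding Lemma~\ref{Web lem}). Your argument is correct and is the standard textbook proof: it uses exactly the tools the paper has made available (nonemptiness of the relative interior from Definition~\ref{relint def} and the segment lemma, Lemma~\ref{Web lem}), so it fits cleanly into the paper's framework.
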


\begin{lemma}\label{sep thm}
If $C$ and $D$ are nonempty convex sets in $\reals^n$ with disjoint relative interiors, then there exists a hyperplane $H$, defining halfspaces $H_+$ and $H_-$ such that $C\subseteq H_+$, $D\subseteq H_-$ and $C\cup D\not\subseteq H$.
\end{lemma}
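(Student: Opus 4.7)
The plan is to reduce to separating the origin from a single convex set via the Minkowski difference, and then invoke standard supporting hyperplane results from convex geometry (of the same flavor as the lemmas already quoted from Webster).

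First I would form the convex set $E = C - D = \set{c-d : c\in C,\, d\in D}$. A standard fact about relative interiors (in the spirit of the cited Webster results) gives $\relint(E) = \relint(C) - \relint(D)$, so the hypothesis that $\relint(C)\cap\relint(D)=\emptyset$ translates into $\mathbf{0}\notin\relint(E)$. The goal thus becomes: produce a nonzero vector $\mathbf{n}\in\reals^n$ with $\mathbf{n}\cdot e\ge 0$ for all $e\in E$ and $\mathbf{n}\cdot e_0 > 0$ for at least one $e_0\in E$.

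Second, I would obtain such an $\mathbf{n}$ by splitting into two cases. If $\mathbf{0}\notin\overline{E}$, strict separation of a point from a closed convex set (a consequence of the projection of $\mathbf{0}$ onto $\overline{E}$) furnishes $\mathbf{n}$ with $\mathbf{n}\cdot e \ge \delta > 0$ on $E$, so $\mathbf{n}\cdot e_0>0$ for any $e_0\in E$. If instead $\mathbf{0}$ lies in $\overline{E}\setminus\relint(E)$, then $\mathbf{0}$ is a relative boundary point of $E$, and the supporting hyperplane theorem at a relative boundary point (applied in the affine hull of $E$ and then extended to $\reals^n$) yields $\mathbf{n}$ with $\mathbf{n}\cdot e\ge 0$ on $E$ and $\mathbf{n}\cdot e_0>0$ for some $e_0\in E$; the existence of such an $e_0$ uses that $\mathbf{n}$ does not vanish on the affine hull of $E$ (otherwise $\mathbf{0}\in\relint(E)$, contradicting the choice).

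Third, I would pull $\mathbf{n}$ back to $C$ and $D$. Since $\mathbf{n}\cdot(c-d)\ge 0$ for all $c\in C$ and $d\in D$, we have $\inf_{c\in C}\mathbf{n}\cdot c \ge \sup_{d\in D}\mathbf{n}\cdot d$; choose any $\alpha$ between these and set $H=\set{x:\mathbf{n}\cdot x = \alpha}$ with $H_+=\set{x:\mathbf{n}\cdot x\ge \alpha}$ and $H_-=\set{x:\mathbf{n}\cdot x\le \alpha}$. Then $C\subseteq H_+$ and $D\subseteq H_-$. Finally, writing $e_0=c_0-d_0$ with $\mathbf{n}\cdot c_0 > \mathbf{n}\cdot d_0$, at least one of the strict inequalities $\mathbf{n}\cdot c_0 > \alpha$ or $\mathbf{n}\cdot d_0 < \alpha$ must hold, so $C\cup D\not\subseteq H$.

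The main obstacle is the case where $\mathbf{0}$ lies on the relative boundary of $E$: one must ensure not merely a weak separation (which is free from the supporting hyperplane theorem) but a proper one, i.e.\ that the separating functional $\mathbf{n}$ is not identically zero on the affine hull of $E$. This is exactly where the distinction between $\mathbf{0}\in\relint(E)$ and $\mathbf{0}\in\partial E$ is essential, and handling it cleanly is the technical heart of the proof. Everything else is bookkeeping translating $E$ back to $C$ and $D$.
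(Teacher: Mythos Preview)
Your argument is correct and is essentially the standard proof of the proper separation theorem via the Minkowski difference $E=C-D$ and the supporting hyperplane theorem; the case split on whether $\mathbf{0}\in\overline{E}$ and the care taken to work inside $\operatorname{aff}(E)$ to guarantee that the separating functional is nonconstant on $E$ are exactly the right points. The paper itself does not give a proof of this lemma: it lists it among several basic convexity facts and cites \cite[Corollary~2.4.11]{Webster} for the proof, so there is no ``paper's proof'' to compare against beyond noting that Webster's proof follows the same Minkowski-difference strategy you outline.
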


\begin{lemma}\label{int faces}
Let~$F$, $G$, $M$, and~$N$ be convex sets in $\reals^n$ such that~$F$ is a face of~$M$ and $G$ is a face of $N$.
Suppose $M\cap N$ is a face of $M$ and of $N$.
Then $F\cap G$ is a face of $F$ and of $G$.
\end{lemma}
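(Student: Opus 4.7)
The plan is to prove this by a direct chase through the definition of a face, using the hypothesis that $M \cap N$ is a face of $M$ as the bridge that promotes a line segment lying in $F$ to one lying in $N$.

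First I would note that $F \cap G$ is convex as an intersection of two convex sets, so we only need to verify the line-segment condition in Definition~\ref{face def}. By symmetry it suffices to show that $F \cap G$ is a face of $F$, so fix a line segment $L \subseteq F$ whose relative interior meets $F \cap G$, and our goal is to show $L \subseteq F \cap G$, which reduces to showing $L \subseteq G$.

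The key step is to first show $L \subseteq N$. Since the interior of $L$ meets $F \cap G \subseteq M \cap N$, and since $L \subseteq F \subseteq M$, we can apply the hypothesis that $M \cap N$ is a face of $M$: the line segment $L$ lies in $M$ and its interior meets the face $M \cap N$, so $L \subseteq M \cap N$, and in particular $L \subseteq N$. Now with $L \subseteq N$ and the interior of $L$ meeting $G$, the fact that $G$ is a face of $N$ gives $L \subseteq G$, completing the argument. The symmetric argument (interchanging the roles of $F, M$ with $G, N$) shows $F \cap G$ is a face of $G$.

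There is no real obstacle here; the only thing to be careful about is making sure one invokes the face property of $M \cap N$ in $M$ (rather than in $N$) in the first step, since the line segment $L$ is only known to live in $M$ at that point. No properties of $B$-cones, closures, or topology are needed, so the argument is a short, purely order-theoretic lemma about faces of convex sets.
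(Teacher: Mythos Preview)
Your proof is correct and is essentially the same as the paper's: both take a line segment $L\subseteq F$ whose interior meets $F\cap G$, use that $M\cap N$ is a face of $M$ to get $L\subseteq M\cap N\subseteq N$, then use that $G$ is a face of $N$ to conclude $L\subseteq G$, and finish by symmetry. The paper adds a one-line dismissal of the case $F\cap G=\emptyset$, but this is handled vacuously in your argument anyway.
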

\begin{proof}
If $F\cap G$ is empty, then we are done.
Otherwise, let $L$ be any line segment contained in $F$ whose relative interior intersects $F\cap G$.
Then $L$ is in particular a line segment in $M$ whose relative interior intersects $M\cap N$.
By hypothesis, $M\cap N$ is a face of $M$, so $L$ is contained in $M\cap N$.
Since $G$ is a face of $N$, since $L$ is contained in $N$, and since the relative interior of $L$ intersects $G$, we see that $L$ is contained in $G$.
Thus $L$ is contained in $F\cap G$.
We have shown that $F\cap G$ is a face of $F$, and the symmetric argument shows that $F\cap G$ is a face of $G$.
\end{proof}

\begin{lemma}\label{relint sign-coherent}
If $C$ is a sign-coherent convex set then $\vsgn$ is constant on $\relint(C)$.
\end{lemma}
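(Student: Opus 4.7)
The plan is to argue one coordinate at a time. Fix $k\in[n]$. By sign-coherence of $C$, either every point of $C$ has nonnegative $k$-th coordinate or every point of $C$ has nonpositive $k$-th coordinate; by symmetry assume the former, so $C\subseteq\set{\x\in\reals^n:x_k\ge 0}$. I split into two cases according to whether $C$ is contained in the hyperplane $H_k=\set{\x:x_k=0}$.

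If $C\subseteq H_k$, then trivially every point of $\relint(C)$ has $k$-th coordinate $0$, and the $k$-th entry of $\vsgn$ is constantly $0$ on $\relint(C)$.

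Otherwise there exists $\p\in C$ with $p_k>0$, and I claim every $\q\in\relint(C)$ has $q_k>0$. Since $\p$ and $\q$ both lie in the affine hull of $C$, the vector $\q-\p$ lies in the direction space of that affine hull. Because $\relint(C)$ is relatively open in the affine hull, there exists $t>0$ such that $\q'=\q+t(\q-\p)$ still lies in $C$. Solving for $\q$, we have $\q=\tfrac{1}{1+t}\q'+\tfrac{t}{1+t}\p$, a convex combination. Reading off the $k$-th coordinate and using $q'_k\ge 0$ (since $\q'\in C$) together with $p_k>0$, we get $q_k\ge\tfrac{t}{1+t}p_k>0$, as desired. (Alternatively, Lemma~\ref{Web lem} applied to the segment from $\p$ to $\q'$ gives that $\q$ is in $\relint(C)$ and the same sign calculation works.) Since this argument applies for every $k\in[n]$, the vector $\vsgn$ is constant on $\relint(C)$.

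There is really no serious obstacle here; the only point requiring a moment of thought is how to move from a single point $\p$ with $p_k>0$ to a uniform statement about $\relint(C)$, and the relative-openness of $\relint(C)$ in its affine hull handles this, allowing us to realize $\q$ as a convex combination of $\p$ and some other point of $C$. No additional properties of mutation fans or of $B$-cones are needed for this lemma.
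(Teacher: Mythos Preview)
Your proof is correct and uses essentially the same idea as the paper's: both exploit the relative openness of $\relint(C)$ in the affine hull to push past a point along a line segment and force a strict sign. The paper argues by contradiction (take two relative-interior points with differing $i$th sign and extend the segment to produce strictly opposite signs), whereas you argue directly coordinate by coordinate, determining the constant sign on $\relint(C)$ as $0$ exactly when $C\subseteq H_k$ and strictly positive (respectively negative) otherwise; this is a minor rephrasing rather than a genuinely different route.
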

\begin{proof}
Let $\x,\y\in\relint(C)$ and suppose $\sgn(x_i)\neq\sgn(y_i)$ for some index $i\in[n]$.
Since $\x$ and $\y$ are in $\relint(C)$, the relative interior of $C$ contains an open interval about $\x$ in the line containing $\x$ and~$\y$.
If $\sgn(x_i)=0$, then $\sgn(y_i)\neq0$, and thus this interval about $\x$ contains points whose $i\th$ coordinate have all possible signs.
Arguing similarly if $\sgn(y_i)=0$, we see that in any case, $\relint(C)$ contains points whose $i\th$ coordinates have opposite signs.
This contradicts the sign-coherence of~$C$.
\end{proof}

We now prove some preliminary results about $B$-cones.

\begin{prop}\label{relint class easy}
Every $B$-cone $C$ is the closure of a unique $B$-class, and this $B$-class contains $\relint(C)$.
\end{prop}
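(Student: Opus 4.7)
The plan is to first unpack the definition of $B$-cone, then invoke the convexity already established in Proposition~\ref{convex} together with Lemma~\ref{relint clos}, which controls the relation between a convex set and its closure.

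First, by Definition~\ref{B cones}, a $B$-cone $C$ is by construction the closure $\cl(D)$ of some $B$-class $D$, so existence of the claimed $B$-class is automatic. Proposition~\ref{convex} tells us that $D$ is a (nonempty) convex cone. Lemma~\ref{relint clos} says that the relative interior of a convex set coincides with the relative interior of its closure, so
\[
\relint(C) \;=\; \relint(\cl(D)) \;=\; \relint(D) \;\subseteq\; D.
\]
This already proves the assertion that the $B$-class contains $\relint(C)$.

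For uniqueness, suppose another $B$-class $D'$ also has $\cl(D')=C$. Applying the same argument, $\relint(C)\subseteq D'$. Since $\relint(D)$ is nonempty for any nonempty convex set (as noted in Definition~\ref{relint def}), and $D$ is a nonempty equivalence class, $\relint(C)$ is nonempty, so $D\cap D'\neq\emptyset$. Because $B$-classes are equivalence classes of $\equiv^B$, two intersecting classes must coincide, so $D=D'$.

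There is no real obstacle here; the statement is essentially a repackaging of Lemma~\ref{relint clos} in the language of $B$-classes, once we have observed via Proposition~\ref{convex} that $B$-classes are convex. The only thing to be a little careful about is noting that $\relint(C)$ is nonempty so that the uniqueness argument has something to bite on, but this is immediate because $D$ is a nonempty convex set and hence has nonempty relative interior.
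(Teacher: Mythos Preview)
Your proof is correct and follows essentially the same approach as the paper's own proof: both use the definition of $B$-cone, Proposition~\ref{convex} for convexity, Lemma~\ref{relint clos} to identify $\relint(C)$ with $\relint(D)$, and disjointness of distinct equivalence classes together with nonemptiness of $\relint(C)$ for uniqueness.
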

\begin{proof}
The $B$-cone $C$ is the closure of some $B$-class $C'$.
In light of Proposition~\ref{convex}, Lemma~\ref{relint clos} says that $\relint(C')=\relint(C)$.
In particular, $\relint(C)\subseteq C'$.
If $C$ is the closure of some other $B$-class $C''$, then also $C''$ contains $\relint(C)$.
Since $\relint(C)$ is nonempty and since distinct $B$-classes are disjoint, $C'=C''$.
\end{proof}

To further describe $B$-cones, we will use a partial order on sign vectors that appears in the description of the (``big'') face lattice of an oriented matroid.
(See \cite[Section~4.1]{OrientedMatroids}.)
\begin{definition}[\emph{A partial order on sign vectors}]\label{vsgn poset def}
A \newword{sign vector} is an $n$-tuple whose entries are in $\set{-1,0,1}$, or in other words, a vector that arises by applying the operator $\vsgn$ to a vector in $\reals^n$.
For sign vectors $\x$ and $\y$, say $\x\preceq\y$ if $\x$ agrees with $\y$, except possibly that some entries $1$ or $-1$ in $\y$ become $0$ in $\x$.
The point of this definition is to capture a notion of limits of sign vectors:
If $\v$ is the limit of a sequence or continuum of vectors all having the same sign vector $\y$, then $\vsgn(\v)\preceq\y$.
\end{definition}

\begin{prop}\label{Bcone in out}
Let $C'$ be a $B$-class whose closure is the $B$-cone $C$.
Let $\y$ be any point in $C'$.
Then $C$ is the set of points $\x$ such that $\vsgn(\eta_\k^B(\x))\preceq\vsgn(\eta_\k^B(\y))$ for all sequences~$\k$ of indices in $[n]$.
\end{prop}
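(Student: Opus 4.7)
The plan is to prove the two inclusions separately. Let $D$ denote the set of points $\x$ satisfying $\vsgn(\eta_\k^B(\x)) \preceq \vsgn(\eta_\k^B(\y))$ for every sequence $\k$ of indices in $[n]$; the goal is to show $C = D$.

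For the inclusion $C \subseteq D$, pick $\x \in C = \overline{C'}$ and choose a sequence $(\x_m)$ in $C'$ converging to $\x$. By continuity of each $\eta_\k^B$, we have $\eta_\k^B(\x_m) \to \eta_\k^B(\x)$ coordinatewise. Each $\eta_\k^B(\x_m)$ has sign vector equal to $\vsgn(\eta_\k^B(\y))$, and a coordinatewise real-number limit can only turn a strictly nonzero entry into zero; it cannot flip a strict sign. By Definition~\ref{vsgn poset def}, this is exactly $\vsgn(\eta_\k^B(\x)) \preceq \vsgn(\eta_\k^B(\y))$.

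For the reverse inclusion $D \subseteq C$, given $\x \in D$, consider the interpolated points $\x_\ep = (1-\ep)\x + \ep\y$. Since $\x_\ep \to \x$ as $\ep \to 0^+$, it suffices to prove $\x_\ep \in C'$ for every $\ep \in (0,1]$. I would establish by induction on the length $q$ of the sequence $\k$ the conjunction of the following two statements: (a) $\eta_\k^B(\x_\ep) = (1-\ep)\eta_\k^B(\x) + \ep\,\eta_\k^B(\y)$ for all $\ep \in [0,1]$, i.e.\ $\eta_\k^B$ is linear on the segment from $\x$ to $\y$; and (b) $\vsgn(\eta_\k^B(\x_\ep)) = \vsgn(\eta_\k^B(\y))$ for all $\ep \in (0,1]$. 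The base case $q=0$ reduces to the elementary observation that whenever $\vsgn(\a) \preceq \vsgn(\b)$, each coordinate of $(1-\ep)\a + \ep\b$ matches the sign of the corresponding coordinate of $\b$ for $\ep \in (0,1]$. For the inductive step from $\k$ to $k\k$, inspection of \eqref{mutation map def} shows that $\eta_k^{\mu_\k(B)}$ is separately linear on each of the closed halfspaces $\{a_k \ge 0\}$ and $\{a_k \le 0\}$, with the two formulas agreeing on $\{a_k = 0\}$. The hypothesis $\vsgn(\eta_\k^B(\x)) \preceq \vsgn(\eta_\k^B(\y))$ forces the $k$-th coordinates of $\eta_\k^B(\x)$ and $\eta_\k^B(\y)$ to lie on the same closed side of the hyperplane $\{a_k = 0\}$, so by the inductive form of (a), the entire segment traced out by $\eta_\k^B(\x_\ep)$ for $\ep \in [0,1]$ lies in a single halfspace of linearity of $\eta_k^{\mu_\k(B)}$. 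This yields (a) for $k\k$, and (b) for $k\k$ then follows by a second application of the base-case observation to the image vectors.

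The main obstacle is organizing the induction so that the combinatorial partial order $\preceq$ aligns with the domains of piecewise linearity of the successive mutation maps. The definition of $\preceq$ is precisely tailored so that $\eta_\k^B(\x)$ and $\eta_\k^B(\y)$ never straddle the breakpoint hyperplane $\{a_k = 0\}$, but this has to be extracted coordinate-by-coordinate; once this bookkeeping is in place, every remaining step is either a routine continuity argument or a direct application of \eqref{mutation map def}.
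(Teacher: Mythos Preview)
Your proof is correct and follows essentially the same strategy as the paper's: two inclusions, with the harder direction $D \subseteq C$ handled by an induction on the length of $\k$ establishing linearity of $\eta_\k^B$ along the segment from $\x$ to a point of the $B$-class. The organization differs only cosmetically. For $C \subseteq D$, you use a sequential-limit argument directly from the definition $C=\overline{C'}$, whereas the paper passes through a relative-interior point $\y'\in\relint(C)$ and invokes Lemma~\ref{Web lem} to keep the open segment $[\eta_\k^B(\y'),\eta_\k^B(\x))$ inside $\relint(\eta_\k^B(C))$; your version is arguably more elementary here. For $D \subseteq C$, the paper again works with $\y'\in\relint(C)$ rather than $\y$ itself, but your argument shows this is unnecessary: the inductive package of (a) linearity on the segment and (b) constancy of $\vsgn$ on its half-open interior goes through with any $\y\in C'$, since the hypothesis $\x\in D$ supplies $\vsgn(\eta_\k^B(\x))\preceq\vsgn(\eta_\k^B(\y))$ at every stage. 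The paper separates these two pieces---first asserting the constancy of $\vsgn$ on each image segment, then running the linearity induction---while you bundle them; the content is the same.
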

\begin{proof}
Let $\y'$ be any point in $\relint(C)$.
Proposition~\ref{relint class easy} says that $C'$ contains $\relint(C)$, so the vectors $\vsgn(\eta_\k^B(\y'))$ and $\vsgn(\eta_\k^B(\y))$ agree for any sequence $\k$.

Suppose $\x\in C$ and let $\k$ be a sequence of indices.
By Propositions~\ref{linear} and~\ref{cones preserved}, the point $\eta_\k^B(\y')$ is in the relative interior of $\eta_\k^B(C)$ and the point $\eta_\k^B(\x)$ is in $\eta^B_\k(C)$, which is the closure of $\eta_\k^B(C')$.
By Lemma~\ref{Web lem}, the entire line segment from $\eta_\k^B(\y')$ to $\eta_\k^B(\x)$, except possibly the point $\eta^B_\k(\x)$, is in the relative interior of $\eta_\k^B(C)$, and thus in the $\mu_\k(B)$-class $\eta_\k^B(C')$ by Proposition~\ref{relint class easy}.
Therefore $\vsgn(\,\cdot\,)$ is constant on the line segment, except possibly at $\eta^B_\k(\x)$, so $\vsgn(\eta_\k^B(\x))\preceq\vsgn(\eta_\k^B(\y'))=\vsgn(\eta_\k^B(\y))$.

On the other hand, suppose $\vsgn(\eta_\k^B(\x))\preceq\vsgn(\eta_\k^B(\y))$ for all sequences~$\k$ of indices in $[n]$.
Then the function $\vsgn$ is constant on the relative interior of line segment defined by $\eta_\k^B(\x)$ and $\eta_\k^B(\y')$, and this constant sign vector equals $\vsgn(\eta_\k^B(\y))$.
Let $L$ be the line segment defined by $\x$ and $\y'$.
We show by induction that $\eta_\k^B$ is linear on $L$, so that $\eta_\k^B(L)$ is the line segment defined by $\eta_{\k}^B(\x)$ and $\eta_{\k}^B(\y')$.
The base case where $\k$ is the empty sequence is easy because $\eta^B_\k$ is the identity map.
If $\k$ is not the empty sequence, then write $\k=j\k'$ for some index $j$ and some sequence $\k'$.
By induction on the length of the sequence $\k$, the map $\eta_{\k'}^B$ is linear on $L$, so $\eta_{\k'}^B(L)$ is the line segment defined by $\eta_{\k'}^B(\x)$ and $\eta_{\k'}^B(\y')$.
Since $\vsgn$ is constant on the relative interior of $\eta^B_{\k'}(L)$, the map $\eta_\k^B$ is also linear on $L$ and thus $\eta_\k^B(L)$ is the line segment defined by $\eta_{\k'}^B(\x)$ and $\eta_{\k'}^B(\y')$.

We have shown that $\vsgn(\eta^B_\k(\,\cdot\,))$ is constant and equals $\vsgn(\eta_\k^B(\y))$ on $\relint(L)$ for all $\k$.
Thus $\relint(L)$ is in $C'$.
All of $L$, including $\x$, is therefore in the closure of $C'$, which is the $B$-cone $C$.
\end{proof}

\begin{prop}\label{union of classes}
Every $B$-cone is a union of $B$-classes.
\end{prop}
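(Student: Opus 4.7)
The plan is to reduce the statement directly to the characterization of $B$-cones given in Proposition~\ref{Bcone in out}. Since the $B$-classes form the equivalence classes of $\equiv^B$, they partition $\reals^n$, so every point of a $B$-cone $C$ lies in some $B$-class. The substantive content of the proposition is therefore that if $\x \in C$ and $\x' \equiv^B \x$, then $\x' \in C$ as well.

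First I would apply Proposition~\ref{relint class easy} to obtain a $B$-class $C'$ whose closure is $C$, and fix any point $\y \in C'$. Then Proposition~\ref{Bcone in out} gives the explicit description
\[
C = \bigl\{\,\x \in \reals^n : \vsgn(\eta_\k^B(\x)) \preceq \vsgn(\eta_\k^B(\y)) \text{ for every finite sequence } \k \text{ of indices in } [n]\,\bigr\}.
\]

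Now suppose $\x \in C$ and $\x' \equiv^B \x$. By the definition of $\equiv^B$ in Definition~\ref{B cones}, we have $\vsgn(\eta_\k^B(\x')) = \vsgn(\eta_\k^B(\x))$ for every sequence $\k$. Since $\x \in C$, the displayed characterization yields $\vsgn(\eta_\k^B(\x)) \preceq \vsgn(\eta_\k^B(\y))$ for every $\k$, and combining these two facts gives $\vsgn(\eta_\k^B(\x')) \preceq \vsgn(\eta_\k^B(\y))$ for every $\k$. Applying Proposition~\ref{Bcone in out} in the reverse direction, we conclude $\x' \in C$. Hence the $B$-class containing any point of $C$ lies entirely inside $C$, so $C$ is a union of $B$-classes.

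There is no real obstacle here: all the work has been done in Proposition~\ref{Bcone in out}, which already encodes $C$ as a condition depending only on the $\equiv^B$-equivalence class of a point (since the characterization is stated in terms of $\vsgn \circ \eta_\k^B$, the very data defining $\equiv^B$). The only minor point to be careful about is that the characterization in Proposition~\ref{Bcone in out} is insensitive to the choice of representative $\y \in C'$, which is precisely what makes the argument go through.
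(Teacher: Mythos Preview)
Your proof is correct and takes essentially the same approach as the paper: both use the characterization of $C$ from Proposition~\ref{Bcone in out} and observe that membership in $C$ depends only on the data $\vsgn(\eta_\k^B(\cdot))$, hence only on the $\equiv^B$-class of a point. The paper compresses this into a single sentence, while you spell out the details explicitly.
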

\begin{proof}
For any $B$-class $C$, Proposition~\ref{Bcone in out} implies that a $B$-class $D$ is either completely contained in $C$ or disjoint from $C$.
\end{proof}

The following proposition generalizes Proposition~\ref{relint class easy}.

\begin{prop}\label{relint class}
Let $C$ be a $B$-cone and let $F$ be a nonempty convex set contained in $C$.
Then the relative interior of $F$ is contained in a $B$-class $D$ with $D\subseteq C$.
\end{prop}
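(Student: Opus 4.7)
The plan is to show that for every sequence $\k$ of indices, $\vsgn\circ\eta_\k^B$ is constant on $\relint(F)$; by the definition of $\equiv^B$ this places all of $\relint(F)$ in a single $B$-class $D$, after which the containment $D\subseteq C$ follows from Proposition~\ref{union of classes}.

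Fix a sequence $\k$. Since $F\subseteq C$ and $\eta_\k^B$ is linear on $C$ by Proposition~\ref{linear}, the restriction of $\eta_\k^B$ to $F$ agrees with an affine map on $\reals^n$. In particular $\eta_\k^B(F)$ is a convex subset of $\eta_\k^B(C)$, which by Proposition~\ref{cones preserved} is a $\mu_\k(B)$-cone and therefore sign-coherent by Proposition~\ref{sign-coherent}. Hence $\eta_\k^B(F)$ is itself a sign-coherent convex set, and Lemma~\ref{relint sign-coherent} gives that $\vsgn$ is constant on $\relint(\eta_\k^B(F))$.

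It remains to identify $\eta_\k^B(\relint(F))$ with $\relint(\eta_\k^B(F))$. The standard fact that an affine image of a convex set in $\reals^n$ satisfies $\relint(L(F))=L(\relint(F))$ applies to the linear map that agrees with $\eta_\k^B$ on $C$ (and hence on $F$). Therefore $\vsgn(\eta_\k^B(\,\cdot\,))$ is constant on $\relint(F)$. Since $\k$ was arbitrary, any two points of $\relint(F)$ are $\equiv^B$-equivalent, so $\relint(F)$ is contained in a single $B$-class $D$.

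Finally, by Proposition~\ref{union of classes}, the $B$-cone $C$ is a disjoint union of $B$-classes, so the class $D$ either lies in $C$ or is disjoint from $C$; since $\relint(F)\subseteq F\subseteq C$, the latter is impossible and $D\subseteq C$. The only delicate step is the affine-image identity $\eta_\k^B(\relint(F))=\relint(\eta_\k^B(F))$: although $\eta_\k^B$ is only piecewise linear on $\reals^n$, the fact that it coincides with a genuine linear map on the entire set $C\supseteq F$ is exactly what is needed to invoke the classical convex-geometric identity without alteration.
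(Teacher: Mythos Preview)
Your proof is correct and follows essentially the same route as the paper's: both use linearity of $\eta_\k^B$ on $C$ (Proposition~\ref{linear}) to deduce that $\eta_\k^B(F)$ is convex and that $\eta_\k^B(\relint(F))=\relint(\eta_\k^B(F))$, then apply sign-coherence (Propositions~\ref{cones preserved} and~\ref{sign-coherent}) and Lemma~\ref{relint sign-coherent} to conclude $\vsgn\circ\eta_\k^B$ is constant on $\relint(F)$, and finish with Proposition~\ref{union of classes}. Your explicit justification of the relative-interior identity via the restriction to a genuine linear map is a nice touch that the paper leaves implicit.
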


\begin{proof}
Let $\k$ be some sequence of indices in $[n]$.
By Proposition~\ref{cones preserved}, $\eta^B_\k(C)$ is a $\mu_\k(B)$-cone.
By Proposition~\ref{linear}, $\eta^B_\k(C)$ is the image of $C$ under a linear map.
Thus $\eta^B_\k(F)$ is a nonempty convex set contained in $\eta^B_\k(C)$, and $\eta^B_\k$ maps the relative interior of $F$ to the relative interior of $\eta^B_\k(F)$.

Proposition~\ref{sign-coherent} says that $\eta^B_\k(F)$ is sign-coherent, so Lemma~\ref{relint sign-coherent} says that $\vsgn$ is constant on the relative interior of $\eta^B_\k(F)$.
Equivalently, $\vsgn\circ\eta^B_\k$ is constant on the relative interior of $F$.
Since this was true for any $\k$, the relative interior of $F$ is contained in some $B$-class $D$.
Since $D$ is a $B$-class which intersects~$C$, Proposition~\ref{union of classes} says that $D$ is contained in $C$.
\end{proof}

\begin{prop}\label{int of B-cones}
An arbitrary intersection of $B$-cones is a $B$-cone.
\end{prop}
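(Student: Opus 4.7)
The plan is to identify the single $B$-class whose closure equals the intersection. Let $(C_\alpha : \alpha \in A)$ be a family of $B$-cones and set $C = \bigcap_\alpha C_\alpha$. By Proposition~\ref{convex}, each $C_\alpha$ is a closed convex cone, and since every $B$-class is nonempty (being an equivalence class) every $B$-cone contains the origin, so $C$ is a nonempty closed convex set whose relative interior $\relint(C)$ is nonempty by Definition~\ref{relint def}.

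Next, I would apply Proposition~\ref{relint class} for each index $\alpha$, taking $C_\alpha$ as the $B$-cone and $C$ itself as the convex subset $F$. This produces, for each $\alpha$, a $B$-class $D_\alpha$ with $\relint(C)\subseteq D_\alpha \subseteq C_\alpha$. Because $B$-classes are pairwise disjoint and $\relint(C)$ is nonempty, the $D_\alpha$ must all coincide with a single $B$-class $D$, and $D\subseteq C_\alpha$ for every $\alpha$, hence $D\subseteq C$.

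It then remains to verify that $C$ is exactly the closure of $D$. One inclusion is immediate: $C$ is closed and contains $D$, so $\overline{D}\subseteq C$. For the reverse, I would invoke Lemma~\ref{Web lem}: fix any $\y\in\relint(C)$, which lies in $D$, and let $\x\in C$ be arbitrary. Since $C$ is closed, $\x\in\overline{C}$, so Lemma~\ref{Web lem} gives $\ep\x+(1-\ep)\y\in\relint(C)\subseteq D$ for each $\ep\in[0,1)$. Letting $\ep\to 1$ presents $\x$ as a limit of points in $D$, so $\x\in\overline{D}$, establishing $C\subseteq\overline{D}$. Thus $C=\overline{D}$, which is a $B$-cone by definition.

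I do not anticipate a serious obstacle here; the whole argument is essentially bookkeeping on top of the previously established structural results. The one subtle step is ensuring the $D_\alpha$'s collapse to a common $B$-class, but this is forced the moment we know $\relint(C)$ is nonempty and $B$-classes are disjoint. The use of Lemma~\ref{Web lem} to pass from ``contains $D$'' to ``equals $\overline{D}$'' is the standard convex-geometric maneuver and is exactly where the nonemptiness of $\relint(C)$ pays off.
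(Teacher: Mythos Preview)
Your proof is correct and follows essentially the same route as the paper's: apply Proposition~\ref{relint class} to each $C_\alpha$ to get a single $B$-class $D$ with $\relint(C)\subseteq D\subseteq C$, then conclude $C=\overline{D}$. The only cosmetic difference is that the paper invokes Lemma~\ref{clos relint} (a closed convex set is the closure of its relative interior) to finish, whereas you unfold that step via Lemma~\ref{Web lem}; the content is identical.
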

\begin{proof}
Let $I$ be an arbitrary indexing set and, for each $i\in I$, let $C_i$ be a $B$-cone.
Then $C=\bigcap_{i\in I}C_i$ is non-empty because it contains the origin.
Furthermore, it is a closed convex cone because it is an intersection of closed convex cones.
Let $i\in I$.
Then Proposition~\ref{relint class} says that the relative interior of $C$ is contained in a $B$-class $D$ with $D\subseteq C_i$.
Since distinct $B$-classes are disjoint, applying Proposition~\ref{relint class} to each $i$, we obtain the same $D$, and we conclude that $D$ is contained in $C$.
By Lemma~\ref{clos relint}, $C$ is the closure of $\relint(C)$.
Since $\relint(C)\subseteq D\subseteq C$ and $C$ is closed, we conclude that $C$ is the closure of $D$.
Thus $C$ is a $B$-cone.
\end{proof}

\begin{lemma}\label{B-subcone face}
If $C$ and $D$ are $B$-cones with $C\subseteq D$, then $C$ is a face of $D$.
\end{lemma}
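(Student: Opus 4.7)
The plan is to verify the face condition from Definition~\ref{face def} directly, using the two key structural facts already in hand: that every $B$-cone is a union of $B$-classes (Proposition~\ref{union of classes}), and that the relative interior of a convex set sitting inside a $B$-cone lies in a single $B$-class (Proposition~\ref{relint class}).

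First I would record that $C$ is convex (Proposition~\ref{convex}), so to show $C$ is a face of $D$ it suffices to take an arbitrary line segment $L \subseteq D$ whose relative interior meets $C$ and conclude $L \subseteq C$. Apply Proposition~\ref{relint class} to the convex set $L$ sitting inside the $B$-cone $D$: the relative interior of $L$ is contained in some $B$-class $E$ with $E \subseteq D$. Now invoke the hypothesis $\relint(L) \cap C \neq \emptyset$, together with Proposition~\ref{union of classes}, which writes $C$ as a union of $B$-classes. Because distinct $B$-classes are disjoint (Definition~\ref{B cones}), the class $E$ must be one of the $B$-classes comprising $C$, so $E \subseteq C$, and hence $\relint(L) \subseteq C$. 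Since $C$ is closed (it is the closure of a $B$-class), this forces $L \subseteq C$, which is exactly the face condition.

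There is essentially no obstacle here; the lemma is a near-immediate consequence of Propositions~\ref{union of classes} and~\ref{relint class}. The only subtle point is making sure we invoke the correct containment: we need $E \subseteq D$ (not just that $E$ is some $B$-class) to legitimately speak of $E$ as one of the $B$-classes filling $D$, but really what we use is that $E$ meets $C$ and that distinct $B$-classes are pairwise disjoint, so $E$ coincides with one of the $B$-classes in the decomposition of $C$. This is the only place where the set-theoretic bookkeeping must be handled carefully, and Proposition~\ref{union of classes} makes it clean.
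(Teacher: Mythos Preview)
Your proof is correct and takes a genuinely different route from the paper's. The paper argues indirectly: it defines $F$ to be the smallest face of $D$ containing $C$, invokes the separating-hyperplane result (Lemma~\ref{sep thm}) to show $\relint(C)\cap\relint(F)\neq\emptyset$, and then uses the $B$-class structure (via Propositions~\ref{relint class easy} and~\ref{relint class}) to force $F=C$. You instead verify the face condition of Definition~\ref{face def} directly on an arbitrary line segment, applying Proposition~\ref{relint class} to the segment itself and then Proposition~\ref{union of classes} to pull it into $C$. Your approach is shorter and more elementary, since it sidesteps the separation theorem entirely; the paper's approach has the minor conceptual payoff of exhibiting $C$ explicitly as the intersection of all faces of $D$ containing it, but this is not needed anywhere downstream.
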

\begin{proof}
Let $F$ be the intersection of all faces of $D$ which contain $C$.
Since $D$ is itself a face of $D$, this intersection is well-defined.
Then $F$ is a face of $D$, and we claim that the relative interior of $C$ intersects the relative interior of $F$.

Suppose to the contrary that the relative interior of $C$ does not intersect the relative interior of $F$.
Then Lemma~\ref{sep thm} says that there exists a hyperplane $H$, defining halfspaces $H_+$ and $H_-$ such that $C\subseteq H_+$, $F\subseteq H_-$ and $C\cup F\not\subseteq H$.
Since $F$ contains $C$, we have $C\subseteq H_-$, so $C\subseteq H_+\cap H_-=H$.
Therefore $F\not\subseteq H$.
Now $H\cap F$ is a proper face of $F$ containing $C$, contradicting our choice of $F$ as the intersection of all faces of $D$ which contain $C$.
This contradiction proves the claim.

By Proposition~\ref{relint class}, there is some $B$-class $F'$ containing $\relint(F)$.
By Proposition~\ref{relint class easy}, $C$ is the closure of a $B$-class $C'$ containing $\relint(C)$.
By the claim, $\relint(C)\cap\relint(F)$ is nonempty, so $F'\cap C'$ is nonempty.
Since distinct $B$-classes are disjoint, we conclude that $F'=C'$.
Thus $\relint(F)\subseteq C'$.
Proposition~\ref{clos relint} says that $F$ is the closure of $\relint(F)$, so applying closures to the relation $\relint(F)\subseteq C'$, we see that $F\subseteq C$.
By construction $C\subseteq F$, so $C=F$, and thus $C$ is a face of~$D$.
\end{proof}

\begin{remark}\label{face B cone}
One can phrase a converse to Lemma~\ref{B-subcone face}:
If $D$ is a $B$-cone and $C$ is a face of $D$, then $C$ is a $B$-cone.
This statement is false; a counterexample appears in the proof of Proposition~\ref{rk2 wild basis}.
\end{remark}

We are now prepared to prove the main theorem of this section.

\begin{proof}[Proof of Theorem~\ref{fan}]
By Proposition~\ref{convex}, each element of $\F_B$ is a convex cone.
By the definition of $\F_B$, if $C\in \F_B$ then all faces of $C$ are also in $\F_B$.
Thus, to show that $\F_B$ is a fan, it remains to show that the intersection of any two cones in $\F_B$ is a face of each.
Since every cone in $\F_B$ is a face of some $B$-cone, Lemma~\ref{int faces} says that it is enough to consider the case where both cones are $B$-cones.
If $C$ and $D$ are distinct $B$-cones, then Proposition~\ref{int of B-cones} says that $C\cap D$ is a $B$-cone.
Lemma~\ref{B-subcone face} says that $C\cap D$ is a face of $C$ and that $C\cap D$ is a face of $D$.

Since the union of all $B$-classes is all of $\reals^n$, the union of the $B$-cones is also all of $\reals^n$.
Thus $\F_B$ is complete.
\end{proof}

We conclude the section with two more useful facts about $B$-cones.

\begin{prop}\label{smallest Bcone}
For any $\a\in\reals^n$, there is a unique smallest $B$-cone containing~$\a$.
This $B$-cone is the closure of the $B$-class of $\a$.
\end{prop}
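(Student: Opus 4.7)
The plan is to produce the smallest $B$-cone containing $\a$ as the closure of the $B$-class of $\a$, and then use Proposition~\ref{relint class} to show that every $B$-cone containing $\a$ must contain this closure.

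First I would note that since the relation $\equiv^B$ partitions $\reals^n$, the point $\a$ belongs to a unique $B$-class $D$. By Definition~\ref{B cones}, the closure $\overline{D}$ is a $B$-cone, and it contains $\a$. This produces a candidate for the smallest $B$-cone containing $\a$.

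Next I would show this candidate is contained in every $B$-cone containing $\a$. Let $C$ be any $B$-cone with $\a \in C$. Apply Proposition~\ref{relint class} to the (trivially convex) set $F = \{\a\} \subseteq C$. Since $\relint(\{\a\}) = \{\a\}$, the proposition hands us a $B$-class $D'$ with $\a \in D'$ and $D' \subseteq C$. Because distinct $B$-classes are disjoint and both $D$ and $D'$ contain $\a$, we have $D = D'$, and so $D \subseteq C$. Now $C$ is closed (as a $B$-cone), so passing to closures yields $\overline{D} \subseteq C$.

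This establishes both assertions at once: $\overline{D}$ is a $B$-cone containing $\a$, and it is contained in every such $B$-cone, hence it is the unique smallest one. No step looks like a real obstacle here; the work was all done in building up Proposition~\ref{relint class} and in verifying (via Proposition~\ref{relint class easy}) that the closure of a $B$-class is a $B$-cone. The proof is essentially a one-line application of the existing machinery.
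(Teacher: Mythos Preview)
Your proof is correct and follows essentially the same idea as the paper: identify the closure of the $B$-class of $\a$ as the smallest $B$-cone containing $\a$. The only difference is in which earlier result you invoke. The paper first appeals to Proposition~\ref{int of B-cones} to say that the intersection of all $B$-cones containing $\a$ is a $B$-cone, and then uses Proposition~\ref{union of classes} to identify that intersection with the closure of the $B$-class. You instead bypass Proposition~\ref{int of B-cones} entirely by directly showing, via Proposition~\ref{relint class} applied to the singleton $\{\a\}$, that any $B$-cone containing $\a$ contains the $B$-class of $\a$ and hence its closure. Your route is marginally more economical; either citation (Proposition~\ref{union of classes} or Proposition~\ref{relint class}) does the job of showing $D\subseteq C$.
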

\begin{proof}
By Proposition~\ref{int of B-cones}, the intersection of all $B$-cones containing $\a$ is a $B$-cone~$C$.
Proposition~\ref{union of classes} says that any $B$-cone containing $\a$ contains the entire $B$-class of~$\a$, so $C$ is the closure of the $B$-class of $\a$.
\end{proof}

\begin{prop}\label{contained Bcone}
A set $C\subseteq\reals^n$ is contained in some $B$-cone if and only if the set $\eta_\k^B(C)$ is sign-coherent for every sequence $\k$ of indices in $[n]$.
\end{prop}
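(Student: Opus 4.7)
The forward implication is immediate: if $C$ is contained in some $B$-cone $D$, then Proposition~\ref{cones preserved} says $\eta_\k^B(D)$ is a $\mu_\k(B)$-cone for every $\k$, which is sign-coherent by Proposition~\ref{sign-coherent}; hence so is its subset $\eta_\k^B(C)$.

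For the converse, suppose $\eta_\k^B(C)$ is sign-coherent for every $\k$. The strategy is to construct a single point $\y \in \reals^n$ such that $\vsgn(\eta_\k^B(\x)) \preceq \vsgn(\eta_\k^B(\y))$ for every $\x \in C$ and every sequence $\k$. Once such $\y$ is found, Proposition~\ref{Bcone in out} guarantees that the closure of the $B$-class of $\y$ is a $B$-cone containing all of $C$, as desired.

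The key ingredient, to be proved by induction on the length of $\k$, is the following additivity statement: for any finite $\{\a_1,\ldots,\a_m\} \subseteq C$ and any positive reals $c_1,\ldots,c_m$, one has $\eta_\k^B(c_1\a_1+\cdots+c_m\a_m) = c_1\eta_\k^B(\a_1)+\cdots+c_m\eta_\k^B(\a_m)$. In the inductive step with $\k=k_q\k'$, the sign-coherence of $\eta_{\k'}^B(C)$ forces the vectors $\eta_{\k'}^B(\a_i)$ (and any positive combination of them) into a common closed halfspace of the form $\{x_{k_q}\ge 0\}$ or $\{x_{k_q}\le 0\}$, on which $\eta_{k_q}^{\mu_{\k'}(B)}$ is linear by the definition in~\eqref{mutation map def}. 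A corollary is that $\vsgn(\eta_\k^B(\sum c_i\a_i))$ is the coordinatewise maximum (the join in $\preceq$) of the sign vectors $\vsgn(\eta_\k^B(\a_i))$.

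To construct $\y$, observe that the set of all sequences $\k$ is countable, and that for each $\k$ the pointwise supremum $V^*(\k) := \sup_{\a\in C}\vsgn(\eta_\k^B(\a))$ is a well-defined sign vector (sign vectors lie in the finite set $\{-1,0,1\}^n$, and sign-coherence ensures the join is realized coordinatewise), attained by the sum over some finite subset $F_\k \subseteq C$ by the additivity corollary. Unioning the $F_\k$'s over the countably many sequences yields a countable family $\{\a_1,\a_2,\ldots\} \subseteq C$, which we may assume consists of nonzero vectors, and I set $\y = \sum_{i\ge 1} 2^{-i}\|\a_i\|^{-1}\a_i$, a convergent series. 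Continuity of $\eta_\k^B$ applied to the partial sums, together with the additivity lemma, yields $\eta_\k^B(\y) = \sum_{i\ge 1} 2^{-i}\|\a_i\|^{-1}\eta_\k^B(\a_i)$; since the summands are sign-coherent and the weights positive, $\vsgn(\eta_\k^B(\y))$ is the coordinatewise join of the vectors $\vsgn(\eta_\k^B(\a_i))$, which equals $V^*(\k)$ by construction. Proposition~\ref{Bcone in out} then completes the proof. The main obstacle is bridging from the finite case (where summing suffices) to general $C$: a dominating vector need not lie in $C$, but the countability of the index set of sequences reduces the construction to a countable union, and geometrically decreasing weights ensure convergence while preserving the sign-vector data via continuity.
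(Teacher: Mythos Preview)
Your proof is correct, but it takes a more elaborate route than the paper's. Both arguments share the same key additivity lemma (that $\eta_\k^B$ acts linearly on finite nonnegative combinations of vectors from $C$, proved by the same halfspace-linearity induction), but they diverge in how they finish.

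The paper simply lets $D$ be the cone of all finite nonnegative linear combinations of vectors in $C$. Since $\eta_\k^B$ is linear on $D$, the image $\eta_\k^B(D)$ is the nonnegative span of the sign-coherent set $\eta_\k^B(C)$, hence itself sign-coherent. Lemma~\ref{relint sign-coherent} then forces $\vsgn\circ\eta_\k^B$ to be constant on $\relint(D)$ for every $\k$, so $\relint(D)$ lies in a single $B$-class, whose closure is a $B$-cone containing $D\supseteq C$.

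Your approach instead manufactures a specific dominating point $\y$ via a countable geometric series, exploiting the countability of the set of sequences $\k$ and the continuity of each $\eta_\k^B$. This works, but the infinite-series and countability machinery is unnecessary: any point of $\relint(D)$ already dominates (in the sense of Proposition~\ref{Bcone in out}) every point of $D$, precisely because $\vsgn\circ\eta_\k^B$ is constant on $\relint(D)$ and assumes the maximal sign vector there. The paper's route avoids choosing a specific $\y$, avoids countability, and avoids limits entirely by appealing to the relative interior of a convex cone---a cleaner path to the same conclusion.
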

\begin{proof}
The ``only if'' direction follows immediately from Proposition~\ref{Bcone in out}.
Suppose the set $\eta_\k^B(C)$ is sign-coherent for every sequence $\k$ of indices in $[n]$.
Then the set of finite nonnegative linear combinations of vectors in $\eta_\k^B(C)$ is sign-coherent for every~$\k$.
Let $D$ be the set of finite nonnegative linear combinations of vectors in~$C$.
Arguing by induction as in the second half of the proof of Proposition~\ref{Bcone in out}, we see that every map $\eta_\k^B$ is linear on $D$, so that $\eta_\k^B(D)$ is the set of finite nonnegative linear combinations of vectors in $\eta_\k^B(C)$.
Therefore, for all $\k$, the set $\eta_\k^B(D)$ is sign-coherent, so $\vsgn(\eta_\k^B(\,\cdot\,))$ is constant on $\relint(D)$ by Lemma~\ref{relint sign-coherent}.
Thus $\relint(D)$ is contained in some $B$-class and $D$ is contained in the closure of that $B$-class.
\end{proof}

Proposition~\ref{contained Bcone} suggests a method for computing approximations to the mutation fan $\F_B$.
The proposition implies that vectors $\x$ and $\y$ are \emph{not} contained in a common $B$-cone if and only if, for some $\k$ and $j$, they are strictly separated by the image, under $(\eta_\k^B)^{-1}$, of the $j\th$ coordinate hyperplane.
Thus we approximate $\F_B$ by computing these inverse images for all sequences $\k$ of length up to some $m$.
The inverse images define a decomposition of $\reals^n$ that may be coarser than $\F_B$ but that approaches $\F_B$ as $m\to\infty$.

\begin{example}\label{nonsimp example}
The approximation to $\F_B$, for $B=\begin{bsmallmatrix*}[r]0&-3&0\\2&0&2\\0&-3&\hspace{6pt}0\\\end{bsmallmatrix*}$ and $m=9$, is shown in Figure~\ref{nonsimp fig}.
\begin{figure}[ht]
\scalebox{1}{\includegraphics{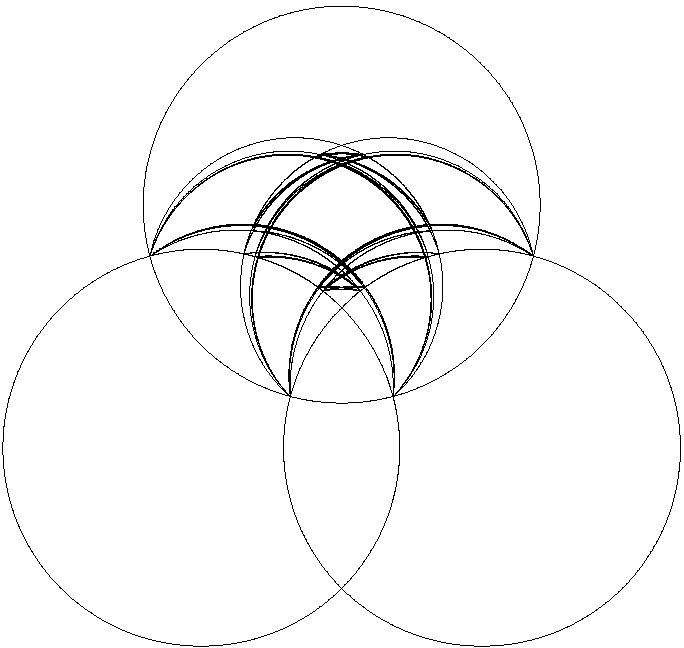}}
\begin{picture}(0,0)(164,-135)
\put(10,-12){$\e_1$}
\put(-7,19){$\e_2$}
\put(-25,-12){$\e_3$}
\end{picture}
\caption[The mutation fan $\F_B$]
{The mutation fan $\F_B$ for $B=\begin{bsmallmatrix*}[r]0&-3&0\\2&0&2\\0&-3&\hspace{6pt}0\\\end{bsmallmatrix*}$
}
\label{nonsimp fig}
\end{figure}
The picture is interpreted as follows: 
Intersecting each inverse image of a coordinate plane with a unit sphere about the origin, we obtain a collection of arcs of great circles.
These are depicted in the plane by stereographic projection.
The projections of the unit vectors $\e_1$, $\e_2$, and $\e_3$ are labeled.
We suspect that the differences between this approximation and $\F_B$ are unnoticeable at this resolution.
\end{example}

\section{Positive bases and cone bases}\label{pos cone sec}
In this section, we discuss two special properties that an $R$-basis for $B$ may have.
One of these properties is a notion of positivity.
It is not clear what consequences positivity has for cluster algebras, but it is a very natural notion for $R$-bases and for coefficient specializations.
Many of the bases that have been constructed, here and in \cite{unisurface,unitorus}, are positive.
The second special property, a condition on the interaction of the basis with the mutation fan, follows from the positivity property.

\begin{definition}[\emph{Positive basis and positive universal extended exchange matrix}]\label{positive univ tB}
An $R$-basis $(\b_i:i\in I)$ for $B$ is \newword{positive} if, for every $\a\in R^n$, there exists a finite subset $S\subseteq I$ and \emph{positive} elements $(c_i:i\in S)$ of $R$ such that $\a-\sum_{i\in S}c_i\b_i$ is a $B$-coherent linear relation.
The corresponding universal extended exchange matrix for $B$ is also called \newword{positive} in this case.
Looking back at Remark~\ref{explicit spec}, we see that a positive universal extended exchange matrix $\tB$ has a special property with respect to coefficient specializations:
Suppose $\tB'$ is another extended exchange matrix sharing the same exchange matrix with $\tB$.
When we describe the unique coefficient specialization $\varphi$ from $\A_R(\tB)$ to $\A_R(\tB')$ as in Proposition~\ref{continuous linear}, all of the $p_{ik}$ are nonnegative.
\end{definition}

\begin{prop}\label{positive unique}
For any fixed $R$, there is at most one positive $R$-basis for $B$, up to scaling each basis element by a positive unit in $R$.
\end{prop}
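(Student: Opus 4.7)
The plan is to take two positive $R$-bases $(\b_i:i\in I)$ and $(\b'_j:j\in J)$ for $B$ and show that they agree up to a bijection of index sets together with rescaling each basis element by a positive unit of $R$. By the positivity of the first basis, each $\b'_j$ admits a $B$-coherent relation $\b'_j-\sum_{i\in S_j}c_{ij}\b_i$ with $S_j\subseteq I$ finite and every $c_{ij}>0$; dually, each $\b_i$ admits a $B$-coherent relation $\b_i-\sum_{j\in T_i}d_{ji}\b'_j$ with every $d_{ji}>0$.

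The key observation is that $B$-coherent linear relations form an $R$-module under formal addition of coefficients, because conditions~\eqref{linear eta} and~\eqref{piecewise eta} are both linear in the scalars $c_i$ once the vectors $\eta_\k^B(\v_i)$ and $\mathbf{min}(\eta_\k^B(\v_i),\mathbf{0})$ are fixed. Hence the substituted expression
\[
\b_i-\sum_{j\in T_i}\sum_{k\in S_j}d_{ji}c_{kj}\b_k,
\]
obtained by adding $\sum_{j\in T_i}d_{ji}(\b'_j-\sum_k c_{kj}\b_k)$ to $\b_i-\sum_{j\in T_i}d_{ji}\b'_j$, is itself a $B$-coherent linear relation that involves only the vectors $(\b_i:i\in I)$. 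Applying the $R$-independence half of the basis property forces every coefficient to vanish: for $k\ne i$ this gives $\sum_{j\in T_i}d_{ji}c_{kj}=0$, and since every $d_{ji}$ is strictly positive and every $c_{kj}$ is nonnegative, each $c_{kj}$ with $j\in T_i$ and $k\ne i$ must be zero. Therefore for each $j\in T_i$ the expansion collapses to $\b'_j=c_{ij}\b_i$ with $c_{ij}>0$.

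Next I would show that $T_i$ is a singleton. If distinct $j,j'\in T_i$ both occurred, then $\b'_j$ and $\b'_{j'}$ would both be positive scalar multiples of $\b_i$, hence of each other. The mutation maps $\eta_\k^B$ and the operation $\mathbf{min}(\cdot,\mathbf{0})$ both commute with multiplication by positive scalars, so the literal identity $c_{ij'}\b'_j-c_{ij}\b'_{j'}=\mathbf{0}$ is automatically a nontrivial $B$-coherent linear relation, contradicting $R$-independence of $(\b'_j)$. Write $T_i=\{\sigma(i)\}$ and, symmetrically, $S_j=\{\tau(j)\}$. Substituting $\b'_{\sigma(i)}=c_{i,\sigma(i)}\b_i$ into $\b_i=d_{\sigma(i),i}\b'_{\sigma(i)}$ yields $d_{\sigma(i),i}c_{i,\sigma(i)}=1$ and $\tau\circ\sigma=\mathrm{id}_I$, so $\sigma$ and $\tau$ are mutually inverse bijections and each $d_{\sigma(i),i}$ is a positive unit of $R$ realizing the claimed rescaling.

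The step I expect to require the most care is verifying that $R$-linear combinations of $B$-coherent relations are again $B$-coherent, since this is what makes the substitution rigorous; together with the companion observation that $\mathbf{min}(\eta_\k^B(\lambda\v),\mathbf{0})=\lambda\,\mathbf{min}(\eta_\k^B(\v),\mathbf{0})$ for $\lambda>0$, which is what makes the single-vector relation between two positive scalar multiples automatically $B$-coherent. Once those are in place, positivity and $R$-independence do the rest.
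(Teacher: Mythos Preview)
Your proof is correct and follows essentially the same approach as the paper: expand a basis element of one basis positively in the other, substitute back, and use $R$-independence together with positivity to force the expansions to collapse to single positive-unit scalings. You are in fact more explicit than the paper on two points it handles tersely---the closure of $B$-coherent relations under $R$-linear combinations, and the separate argument that $T_i$ must be a singleton---so your write-up fills in details the paper leaves to the reader.
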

\begin{proof}
Suppose $(\a_i:i\in I)$ and $(\b_j:j\in J)$ are positive $R$-bases for $B$.
Given an index $i\in I$, there is a $B$-coherent linear relation $\a_i-\sum_{j\in S}c_j\b_j$ with positive coefficients $c_j\in R$.
For each $j\in S$, there is a $B$-coherent linear relation $\b_j-\sum_{k\in S_j}d_{jk}\a_k$ with positive coefficients $d_{jk}\in R$.
Combining these, we obtain a $B$-coherent relation $\a_i-\sum_{j\in S}\sum_{k\in S_j}c_jd_{jk}\a_k$.
Since $(\a_i:i\in I)$ is an $R$-basis for $B$, this $B$-coherent relation is trivial.
Thus the summation over $j\in S$ and $k\in S_j$ has only one term $\a_i$.
In particular, the set $S$ has exactly one element $j$, the set $S_j$ has only one element $i$, and $c_jd_{ji}=1$.
We see that $\a_i=c_j\b_j$ for some $j\in J$, where $c_j$ is a positive unit with inverse $d_{ji}$.
Thus every basis element $\a_i$ is obtained from a basis element $b_j$ by scaling by a positive unit.
\end{proof}

\begin{definition}[\emph{Cone basis for $B$}]\label{cone basis def}
Let $I$ be some indexing set and let $(\b_i:i\in I)$ be a collection of vectors in $R^n$.
Then $(\b_i:i\in I)$ is a \newword{cone $R$-basis} for $B$ if and only if the following two conditions hold.
\begin{enumerate}[(i)]
\item \label{cone basis span}
 If $C$ is a $B$-cone, then the $R$-linear span of $\set{\b_i:i\in I}\cap C$ contains $R^n\cap C$.
\item \label{cone basis indep}
 $(\b_i:i\in I)$ is an $R$-independent set for $B$.
\end{enumerate}
\end{definition}

\begin{prop}\label{cone basis basis}
If $(\b_i:i\in I)$ is a cone $R$-basis for $B$, then $(\b_i:i\in I)$ is an $R$-basis for $B$.
An $R$-basis is a cone $R$-basis if and only if its restriction to each $B$-cone $C$ is a basis (in the usual sense) for the $R$-linear span of the vectors in $R^n\cap C$.
\end{prop}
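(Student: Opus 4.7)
The plan is to prove both assertions by reducing everything to two facts already established: Proposition~\ref{smallest Bcone} (existence of a smallest $B$-cone through any vector) and Proposition~\ref{local coherent} ($B$-local relations are $B$-coherent). These let one translate between ordinary linear-algebraic statements within a single $B$-cone and the ``$B$-coherent'' framework used to define $R$-bases.

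For the first assertion, suppose $(\b_i:i\in I)$ is a cone $R$-basis and take $\a\in R^n$. I would first apply Proposition~\ref{smallest Bcone} to obtain the smallest $B$-cone $C$ containing $\a$. Since $\a\in R^n\cap C$, Definition~\ref{cone basis def}\eqref{cone basis span} yields a finite $S\subseteq I$ with each $\b_i\in C$ (for $i\in S$) and elements $(c_i:i\in S)$ of $R$ with $\a=\sum_{i\in S}c_i\b_i$. The formal expression $\a-\sum_{i\in S}c_i\b_i$ then involves only vectors in the single $B$-cone $C$ and equals $\mathbf{0}$, so it is a $B$-local linear relation in the sense of Definition~\ref{B local}; by Proposition~\ref{local coherent} it is therefore $B$-coherent. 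Combined with condition~\eqref{cone basis indep} (which is literally Definition~\ref{basis B def}\eqref{basis indep}), this gives both parts of the definition of an $R$-basis.

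For the second assertion, fix a $B$-cone $C$ and write $V_C$ for the $R$-linear span of $R^n\cap C$. In the forward direction, assume $(\b_i:i\in I)$ is a cone $R$-basis. Each $\b_i\in C$ lies in $R^n\cap C\subseteq V_C$, so the $R$-span of $\set{\b_i:i\in I}\cap C$ is contained in $V_C$, while condition \eqref{cone basis span} forces the reverse containment, so the two $R$-spans coincide with $V_C$. For ordinary $R$-linear independence of the restriction, any relation $\sum_{i\in T}c_i\b_i=\mathbf{0}$ with $T$ finite and each $\b_i\in C$ is again a $B$-local relation, hence $B$-coherent by Proposition~\ref{local coherent}, so $R$-independence for $B$ (Definition~\ref{basis B def}\eqref{basis indep}) forces each $c_i=0$. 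In the reverse direction, suppose $(\b_i:i\in I)$ is an $R$-basis whose restriction to each $B$-cone $C$ is an ordinary basis of $V_C$. Condition \eqref{cone basis indep} follows immediately from $R$-independence, and condition \eqref{cone basis span} is the containment $R^n\cap C\subseteq V_C$, which is spanned by $\set{\b_i:i\in I}\cap C$ by hypothesis.

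There is no real obstacle here: the statement is essentially a bookkeeping translation between ``cone-wise'' linear algebra and the ``$B$-coherent'' linear algebra of Section~\ref{bases sec}, and the entire bridge is supplied by Proposition~\ref{local coherent}. The only point requiring a moment of care is to verify that a single $B$-cone is large enough to house both $\a$ and the needed $\b_i$ simultaneously, which is exactly what Proposition~\ref{smallest Bcone} arranges.
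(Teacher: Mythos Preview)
Your proof is correct and follows essentially the same approach as the paper's: you invoke a $B$-cone containing $\a$ (the paper takes the closure of the $B$-class of $\a$, which is precisely the smallest $B$-cone of Proposition~\ref{smallest Bcone}), apply the cone-spanning condition there, and use Proposition~\ref{local coherent} to upgrade $B$-local relations to $B$-coherent ones, both for the spanning argument and for the independence of restrictions. The only cosmetic difference is that you cite Proposition~\ref{smallest Bcone} explicitly where the paper just names the $B$-cone directly.
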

\begin{proof}
Condition \eqref{basis indep} of Definition~\ref{basis B def} is identical to condition \eqref{cone basis indep} of Definition~\ref{cone basis def}.
Suppose $(\b_i:i\in I)$ is a cone $R$-basis for $B$.
Let $\a\in R^n$ and let $C$ be the $B$-cone that is the closure of the $B$-class of $\a$.
By condition \eqref{cone basis span} of Definition~\ref{cone basis def}, $\a$ is an $R$-linear combination $\sum_{i\in S}c_i\b_i$ of vectors in $\set{\b_i:i\in I}\cap C$.
The linear relation $\a-\sum_{i\in S}c_i\b_i$ is $B$-local and thus $B$-coherent by Proposition~\ref{local coherent}.
We have established condition \eqref{basis span} of Definition~\ref{basis B def}, thus proving the first assertion of the proposition.

Still supposing $(\b_i:i\in I)$ to be a cone $R$-basis for $B$, let $C$ now be any $B$-cone.
If there is some non-trivial linear relation among the vectors in $\set{\b_i:i\in I}\cap C$, then that relation is $B$-local and thus $B$-coherent. 
This contradicts condition \eqref{cone basis indep} of Definition~\ref{cone basis def}, so $\set{\b_i:i\in I}\cap C$ is linearly independent.
Thus $\set{\b_i:i\in I}\cap C$ is a basis for its $R$-linear span, which equals the $R$-linear span of $C$ by condition \eqref{cone basis span} in Definition~\ref{cone basis def}.

On the other hand, suppose $(\b_i:i\in I)$ is an $R$-basis for $B$ whose restriction to each $B$-cone is a basis (in the usual sense) for the span of that $B$-cone.
Then condition \eqref{cone basis span} of Definition~\ref{cone basis def} holds. 
\end{proof}

\begin{remark}\label{explicit spec cone basis}
Suppose $\tB$ is a universal extended exchange matrix over $R$ whose coefficient rows constitute a \emph{cone} $R$-basis.
Then coefficient specializations from $\tB$ can be found more directly than in the case where we don't necessarily have a cone basis.
(See Remark~\ref{explicit spec}.) 
Each $\a_k$ is in some $B$-cone $C$.
Knowing that we have a cone basis, we can choose the elements $(p_{ik}:i\in I)$ by taking $p_{ik}=0$ for $\b_i\not\in C$ and choosing the remaining $p_{ik}$ (uniquely) so that $\a_k-\sum_{i\in S}p_{ik}\b_i$ is a linear relation in the usual sense.

For every exchange matrix $B$ for which the author has constructed an $R$-basis, the $R$-basis is in fact a cone $R$-basis.
The following question thus arises. 
\begin{question}\label{basis cone basis?}
If $(\b_i:i\in I)$ is an $R$-basis for $B$ is it necessarily a cone $R$-basis?
\end{question}
\end{remark}

We now relate the notion of a cone $R$-basis to the notion of a positive $R$-basis.

\begin{prop}\label{positive cone basis}
Given a collection $(\b_i:i\in I)$ of vectors in $R^n$, the following conditions are equivalent.
\begin{enumerate}[(i)]
\item \label{positive cone basis pos}
$(\b_i:i\in I)$ is a positive $R$-basis for $B$.
\item \label{positive cone basis cone}
$(\b_i:i\in I)$ is a positive cone $R$-basis for $B$.
\item \label{positive cone basis property}
$(\b_i:i\in I)$ is an $R$-independent set for $B$ with the following property:
If $C$ is a $B$-cone, then the nonnegative $R$-linear span of $\set{\b_i:i\in I}\cap C$ contains $R^n\cap C$.
\end{enumerate}
\end{prop}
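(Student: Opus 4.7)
The plan is to prove the equivalence via the cycle (i)$\Rightarrow$(iii)$\Rightarrow$(ii)$\Rightarrow$(i). The implication (ii)$\Rightarrow$(i) is immediate from Proposition~\ref{cone basis basis}, since a cone $R$-basis is an $R$-basis and the positivity hypothesis is carried along unchanged. For (iii)$\Rightarrow$(ii), the nonnegative $R$-linear span sits inside the $R$-linear span, so the cone-spanning condition of Definition~\ref{cone basis def}\eqref{cone basis span} follows from the hypothesis of (iii); together with $R$-independence for $B$ this produces a cone $R$-basis. Positivity is then verified as follows: given $\a\in R^n$, let $C$ be the smallest $B$-cone containing $\a$ (Proposition~\ref{smallest Bcone}); by (iii) we can write $\a=\sum_{i\in S}c_i\b_i$ with $c_i>0$ and each $\b_i\in C$, and the relation $\a-\sum_{i\in S}c_i\b_i$ is then $B$-local, hence $B$-coherent by Proposition~\ref{local coherent}.

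The heart of the argument is (i)$\Rightarrow$(iii). The $R$-independence of $(\b_i:i\in I)$ for $B$ is part of being an $R$-basis. For the nonnegative-spanning condition, fix a $B$-cone $C$ and an element $\a\in R^n\cap C$, and use positivity to obtain a $B$-coherent linear relation $\a-\sum_{i\in S}c_i\b_i$ with each $c_i>0$. It suffices to show that each $\b_i$ (for $i\in S$) lies in the smallest $B$-cone $C_\a$ containing $\a$; then $C_\a\subseteq C$ by Proposition~\ref{smallest Bcone}, so the displayed expression realizes $\a$ as a strictly positive $R$-linear combination of elements of $\{\b_i:i\in I\}\cap C$.

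To constrain each $\b_i$, I will unpack both halves of Definition~\ref{B coherent}. For every sequence $\k$ of indices in $[n]$, condition \eqref{linear eta} gives $\eta_\k^B(\a)=\sum_{i\in S}c_i\eta_\k^B(\b_i)$ and condition \eqref{piecewise eta} gives $\mathbf{min}(\eta_\k^B(\a),\mathbf{0})=\sum_{i\in S}c_i\mathbf{min}(\eta_\k^B(\b_i),\mathbf{0})$; subtracting yields the analogous identity with $\mathbf{min}$ replaced by $\mathbf{max}$. Now inspect a coordinate $j$: if $\eta_\k^B(\a)_j\ge 0$ then the min identity forces a sum of nonpositive numbers with strictly positive coefficients to vanish, so $\eta_\k^B(\b_i)_j\ge 0$ for every $i\in S$, and the symmetric argument with $\mathbf{max}$ handles the case $\eta_\k^B(\a)_j\le 0$. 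Hence $\vsgn(\eta_\k^B(\b_i))\preceq\vsgn(\eta_\k^B(\a))$ for every $\k$, and Proposition~\ref{Bcone in out} places $\b_i$ in $C_\a$. The main obstacle is precisely this coordinate-wise sign argument, which depends essentially on combining the linear identity \eqref{linear eta} with the min identity \eqref{piecewise eta} together with the strict positivity of the $c_i$; without all three ingredients the sign vectors of the $\eta_\k^B(\b_i)$ cannot be pinned down.
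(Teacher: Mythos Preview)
Your proof is correct and follows essentially the same approach as the paper: the key step in both is showing, under (i), that each $\b_i$ appearing in the positive $B$-coherent expansion of $\a$ lies in the smallest $B$-cone containing $\a$, via the sign characterization of Proposition~\ref{Bcone in out}. Your execution of the sign argument is a slight variant---you subtract \eqref{piecewise eta} from \eqref{linear eta} to obtain a $\mathbf{max}$ identity and treat both sign cases symmetrically, whereas the paper uses only \eqref{piecewise eta} together with the trick of replacing $\k$ by $j\k$ to reduce to a single sign case---but the underlying idea is the same.
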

\begin{proof}
Condition \eqref{positive cone basis cone} implies condition \eqref{positive cone basis pos} trivially.
Conversely, suppose \eqref{positive cone basis pos} holds, let $C$ be a $B$-cone and let $\a\in R^n\cap C$.
To show that \eqref{positive cone basis cone} holds, we need to show that the $R$-linear span of $\set{\b_i:i\in I}\cap C$ contains $\a$.

Since $(\b_i:i\in I)$ is a positive $R$-basis for $B$, there is a $B$-coherent linear relation $-\a+\sum_{i\in S}c_i\b_i$ with the $c_i$ positive.
Proposition~\ref{smallest Bcone} says that there is a unique smallest $B$-cone $D$ containing~$\a$, and that $D$ is the closure of the $B$-class of $\a$.
We claim that $\set{\b_i:i\in S}$ is contained in~$D$.
Otherwise, for some $i\in S$, Proposition~\ref{Bcone in out} says that there is a sequence $\k$ such that $\vsgn(\eta_\k^B(\b_i))\not\preceq\vsgn(\eta_\k^B(\a))$.
That is, there is some index $j\in[n]$ such that the $j\th$ coordinate of $\vsgn(\eta_\k^B(\b_i))$ is nonzero and different from the $j\th$ coordinate of $\vsgn(\eta_\k^B(\a))$. 
Possibly replacing $\k$ by $j\k$, we can assume that the $j\th$ coordinate of $\vsgn(\eta_\k^B(\b_i))$ is negative and the $j\th$ coordinate of $\vsgn(\eta_\k^B(\a))$ is nonnegative.
Thus we write $S$ as a disjoint union $S_1\cup S_2$, with $S_1$ nonempty, such that $\eta^B_\k(\b_i)$ has negative $j\th$ coordinate for $i\in S_1$, such that $\eta^B_\k(\b_i)$ has nonnegative $j\th$ coordinate for $i\in S_2$, and such that $\a$ has nonnegative $j\th$ coordinate.  
Since $-\a+\sum_{i\in S}c_i\b_i$ is $B$-coherent, we appeal to \eqref{piecewise eta} to conclude that the $j\th$ coordinate of $\sum_{i\in S_1}c_i\eta^B_\k(\b_i)$ is zero.
But since the $c_i$ are all positive, we have reached a contradiction, thus proving the claim.
The claim shows that the $B$-coherent linear relation $-\a+\sum_{i\in S}c_i\b_i$ in particular writes $\a$ as a positive linear combination of elements of $D$.
Since $D\subseteq C$, we have established \eqref{positive cone basis cone}.
Since the relation is positive, we have also shown that \eqref{positive cone basis pos} implies \eqref{positive cone basis property}.

If \eqref{positive cone basis property} holds, then for any $\a$ in $R^n$, there exists a finite subset $S\subseteq I$ and positive elements $(c_i:i\in S)$ of $R$ such that $\a-\sum_{i\in S}c_i\b_i$ is a $B$-local linear relation.
Proposition~\ref{local coherent} implies that $(\b_i:i\in I)$ is a positive $R$-basis for $B$, so \eqref{positive cone basis pos} holds.
\end{proof}

\begin{remark}\label{cone and pos}
A positive basis is necessarily a cone basis, by Proposition~\ref{positive cone basis}, so explicit coefficient specializations from the corresponding universal extended exchange matrix are found as described in Remark~\ref{explicit spec cone basis}.
In Section~\ref{rk2 sec}, there are examples of exchange matrices $B$ having a cone $R$-basis but not a positive $R$-basis for $R=\integers$ or $R=\rationals$.
\end{remark}

The existence of a positive $R$-basis has implications for the structure of the mutation fan.
We describe these implications by constructing another fan closely related to $\F_B$.

\begin{definition}[\emph{$R$-part of a fan}]\label{R-part def}
Suppose $\F$ is a fan and $R$ is an underlying ring.
Suppose $\F'$ is a fan satisfying the following conditions:
\begin{enumerate}[(i)]
\item \label{R cones R part}
Each cone in $\F'$ is the nonnegative $\reals$-linear span of finitely many vectors in~$R^n$.
(For example, if $R=\integers$ or $\rationals$, then these are rational cones.)
\item \label{cone in cone R part}
Each cone in $\F'$ is contained in a cone of $\F$.
\item \label{max cone R part}
For each cone $C$ of $\F$, there is a unique largest cone (under containment) among cones of $\F'$ contained in $C$.
This largest cone contains $R^n\cap C$.
\end{enumerate}
Then $\F'$ is called the \newword{$R$-part} of $\F$.
The $R$-part of $\F$ might not exist.
For example, the fans discussed later in Proposition~\ref{rk2 wild basis} have no $\rationals$-part.
However, if the $R$-part of $\F$ exists, then it is unique:  
For each cone $C$ of $\F$, condition \eqref{max cone R part} implies that the largest cone of $\F'$ contained in $C$ is the nonnegative $\reals$-linear span of $R^n\cap C$.
Condition~\eqref{max cone R part} implies that every cone in $\F'$ is a face of one of these largest cones.
\end{definition}

\begin{prop}\label{R part prop}
Suppose $\F$ is a fan, $R$ is an underlying ring, and $\F'$ is the $R$-part of $\F$.
Every cone of $\F$ spanned by vectors in $R^n$ is a cone in $\F'$.
The full-dimensional cones in $\F$ are exactly the full-dimensional cones in $\F'$.
\end{prop}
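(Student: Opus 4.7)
The plan is to handle the two claims separately, using conditions~\eqref{R cones R part}--\eqref{max cone R part} of Definition~\ref{R-part def}.

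For the first claim, suppose $C\in\F$ is spanned by $v_1,\ldots,v_k\in R^n$. I would let $C'$ be the largest cone of $\F'$ contained in $C$ (condition~\eqref{max cone R part}) and observe that each $v_i$ lies in $R^n\cap C\subseteq C'$. Since $C'$ is a convex cone containing a spanning set for $C$, we get $C\subseteq C'$, and combined with condition~\eqref{cone in cone R part} this forces $C'=C$; hence $C\in\F'$.

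For the second claim I would prove both inclusions. For the forward direction, let $C\in\F$ be full-dimensional and let $C'$ be the largest cone of $\F'$ in $C$. I plan to show $C=C'$ by contradiction: if $C'\subsetneq C$, then $\Int(C)\not\subseteq C'$ (otherwise $C=\cl(\Int(C))\subseteq C'$, using that $C$ is a full-dimensional closed convex set, via Lemma~\ref{clos relint}), so $U=\Int(C)\setminus C'$ is a nonempty open subset of $\reals^n$. The crucial observation is that $U$ is closed under positive scaling, since both $\Int(C)$ and $\reals^n\setminus C'$ are (cones and their complements are stable under positive scaling, and so are their interiors). Hence $U$ must contain a point of $R^n$: if $R\supseteq\rationals$, density of $\rationals^n$ in $\reals^n$ suffices; if $R=\integers$, I would take any rational point in $U$ and scale by a sufficiently large positive integer, still landing in $U$ by scaling-invariance. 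But such a point lies in $R^n\cap C\subseteq C'$ by~\eqref{max cone R part}, contradicting its membership in $U\subseteq\reals^n\setminus C'$. Thus $C'=C\in\F'$. For the reverse inclusion, if $C'\in\F'$ is full-dimensional, then by~\eqref{cone in cone R part} $C'\subseteq C$ for some $C\in\F$, and $C$ is also full-dimensional. The forward direction gives $C\in\F'$, so in the fan $\F'$ the containment $C'\subseteq C$ makes $C'=C'\cap C$ a face of $C$; a full-dimensional face must equal $C$, so $C'=C\in\F$.

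The main obstacle is the forward direction of the second claim. The delicate point is noticing that $\Int(C)\setminus C'$ inherits closure under positive scaling, which is exactly what allows the $R=\integers$ case (where $\integers^n$ is not dense in $\reals^n$) to succeed via rescaling from a rational point; without scaling-invariance of $U$ a naive density argument would not suffice when $R=\integers$.
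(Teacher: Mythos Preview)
Your proof is correct and follows essentially the same approach as the paper's: both arguments use condition~\eqref{max cone R part} to trap $C$ between itself and the largest $\F'$-cone it contains, and both exploit density of $\rationals^n$ together with positive scaling to handle the $R=\integers$ case. You make explicit a couple of steps the paper leaves implicit (the closure argument via Lemma~\ref{clos relint} and the scaling-invariance of $\Int(C)\setminus C'$), and your reverse inclusion is phrased via the fan face property rather than via ``largest cone,'' but the substance is the same; note only that in the first claim the containment $C'\subseteq C$ comes directly from your choice of $C'$, not from condition~\eqref{cone in cone R part}.
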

\begin{proof}
Suppose $C$ is a cone of $\F$ spanned by vectors in $R^n$.
Condition \eqref{max cone R part} of Definition~\ref{R-part def} says that there is a cone $D$ of $\F'$ contained in $C$ and containing $R^n\cap C$.
But then $D$ contains the vectors spanning $C$, so $C=D$.

Suppose $C$ is a full-dimensional cone of $\F$.
Since $\rationals^n$ is dense in $\reals^n$, if $C$ is strictly larger than the largest cone $D$ of $\F'$ contained in $C$, then there are rational vectors (and thus integer vectors and thus vectors in $R^n$) in $C\setminus D$.
This is a contradiction to condition~\eqref{max cone R part} of Definition~\ref{R-part def}.
Thus $C$ equals the cone $D$ of $\F'$.

Conversely, suppose $D$ is a full-dimensional cone of $\F'$.
Then condition~\eqref{cone in cone R part} of Definition~\ref{R-part def} says that $D$ is contained in some full-dimensional cone $C$ of $\F$.
Since $\F'$ is a fan and $D$ is full dimensional, $D$ is the largest cone of $\F'$ contained in $C$, and as argued above, $D=C$.
Thus $D$ is a cone of $\F$.
\end{proof}

Now suppose a positive $R$-basis $(\b_i:i\in I)$ exists for $B$.
Let $\F^R_B$ be the collection of all cones spanned by sets $\set{\b_i:i\in I}\cap C$, where $C$ ranges over all $B$-cones, together with all faces of such cones.
In light of Proposition~\ref{positive unique}, the collection $\F^R_B$ does not depend on the choice of positive $R$-basis.

\begin{prop}\label{FRB}
If a positive $R$-basis $(\b_i:i\in I)$ exists for $B$, then $\F^R_B$ is a simplicial fan and is the $R$-part of $\F_B$.
\end{prop}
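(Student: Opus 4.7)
The plan is to verify, in sequence, simpliciality of each generating cone, the fan axioms, and the three conditions of Definition~\ref{R-part def}. Throughout, set $D_C := \reals_{\geq 0}\langle \set{\b_i : \b_i \in C}\rangle$ for each $B$-cone $C$, so that $\F^R_B$ consists of the cones $D_C$ together with their faces. Simpliciality is immediate: by Proposition~\ref{positive cone basis}, a positive $R$-basis is a cone $R$-basis, so Proposition~\ref{cone basis basis} gives that $\set{\b_i : \b_i \in C}$ is linearly independent for every $B$-cone $C$, making $D_C$ simplicial (and hence every face of $D_C$ simplicial).

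\textbf{Fan axioms.} Closure under faces holds by construction, so the task is to show that any two cones in $\F^R_B$ meet in a common face. The heart of the matter is to prove, for any two $B$-cones $C_1, C_2$,
\[
D_{C_1} \cap D_{C_2} = D_{C_1 \cap C_2},
\]
which is then automatically a common face of $D_{C_1}$ and $D_{C_2}$ since its generating set is a subset of each of theirs. Proposition~\ref{int of B-cones} ensures that $C_1 \cap C_2$ is a $B$-cone, and Lemma~\ref{B-subcone face} that it is a face of $C_1$. For the nontrivial inclusion $\subseteq$, given $\v \in D_{C_1} \cap D_{C_2}$ and a positive expansion $\v = \sum_{i \in S} c_i \b_i$ using only $\b_i \in C_1$, I would iterate the following convex-geometric fact: if $F$ is a face of a convex cone $E$ and $\u + \w \in F$ with $\u, \w \in E$, then $\u, \w \in F$. (The segment from $2\u$ to $2\w$ lies in $E$ with midpoint $\u+\w$ in $F$; the face property, together with the fact that faces of cones are cones, gives the conclusion.) Applied to $c_1\b_1$ and $\sum_{i \geq 2} c_i \b_i$, it peels off $\b_1 \in C_1 \cap C_2$ and leaves a shorter expansion in $C_1$ whose sum still lies in $C_1 \cap C_2$; induction then forces every $\b_i \in C_1 \cap C_2$.

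\textbf{$R$-part conditions.} Conditions (i) and (ii) of Definition~\ref{R-part def} are immediate. For (iii), given any cone $K \in \F_B$ contained in some $B$-cone $E$, set $D_K^* := \reals_{\geq 0}\langle \set{\b_i : \b_i \in K}\rangle$; this is a face of $D_E$ and hence lies in $\F^R_B$. Any cone of $\F^R_B$ inside $K$ is the nonnegative span of the basis vectors it contains (each of which must lie in $K$), so $D_K^*$ is the unique largest such cone. To prove $R^n \cap K \subseteq D_K^*$, given $\a \in R^n \cap K$ I would use the positive-basis property to obtain an expansion $\a = \sum c_i \b_i$ with $c_i > 0$ whose basis vectors lie in the smallest $B$-cone containing $\a$ (the claim internal to the proof of Proposition~\ref{positive cone basis}), hence in $E$; applying the same face-splitting fact with $F = K$ forces each $\b_i$ into $K$.

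The main obstacle I anticipate is handling cones $K \in \F_B$ that are faces of $B$-cones but not themselves $B$-cones, since the positive-basis hypothesis supplies expansions only inside $B$-cones. The convex-geometric face-splitting lemma bridges this gap and also furnishes the nontrivial inclusion in the fan-axiom step.
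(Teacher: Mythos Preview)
Your proof is correct and in places more careful than the paper's own argument. The simpliciality step is essentially the same (the paper argues directly that a dependence would be $B$-local, you cite Propositions~\ref{positive cone basis} and~\ref{cone basis basis}). The real difference is in the fan-intersection step.

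The paper verifies that $D_{C_1}\cap D_{C_2}$ is a common face by a $B$-coherence argument: since $C_1$ and $C_2$ are spanned by $R^n$-vectors, when $R$ is a field the intersection $D_{C_1}\cap D_{C_2}$ is cut out by $R^n$-normal halfspaces, so one can find $\x\in R^n$ in its relative interior, expand $\x$ positively over $U_1$ and over $U_2$, and subtract to get a $B$-coherent relation forcing the two expansions to coincide. The case $R=\integers$ is then reduced to $R=\rationals$. Your route is purely convex-geometric: the face-splitting lemma (if $F$ is a face of a cone $E$ and $\u+\w\in F$ with $\u,\w\in E$ then $\u,\w\in F$) peels the generators of a positive expansion into $C_1\cap C_2$ directly, using only that $C_1\cap C_2$ is a face of $C_1$ (Lemma~\ref{B-subcone face}). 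This avoids both the relative-interior rationality argument and the field/$\integers$ case split, and it works uniformly.

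For condition~\eqref{max cone R part} of Definition~\ref{R-part def} the paper simply cites Proposition~\ref{positive cone basis}, which literally covers only $B$-cones; you explicitly treat general cones $K\in\F_B$ (faces of $B$-cones that may not themselves be $B$-cones) by combining the claim from the proof of Proposition~\ref{positive cone basis} with the same face-splitting lemma. This is a genuine clarification. One small point you leave implicit: once $D_{C_1}\cap D_{C_2}$ is shown to be a common face for the generating cones, the extension to arbitrary faces in $\F^R_B$ follows from Lemma~\ref{int faces}, exactly as in the proof of Theorem~\ref{fan}.
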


\begin{proof}
If some cone $C$ in $\F^R_B$ is not simplicial, then $C$ is generated by a set $U$ of vectors in $R^n$ that is linearly dependent over $\reals$.
We conclude that $U$ is also linearly dependent over $R$.
(Suppose $U$ is a set of vectors in $R^n$ that is linearly independent over $R$.
Write a matrix whose rows are $U$.
Use row operations over $R$ to put the matrix into echelon form.
Since there are no nontrivial $R$-linear relations on the rows, the echelon form has full rank, so $U$ is linearly independent over $\reals$.)
The relation expressing this linear dependence is $B$-local by the definition of $\F^R_B$, so by Proposition~\ref{local coherent} it is a $B$-coherent linear relation over $R$ among the basis vectors.

Now suppose $C_1$ and $C_2$ are maximal cones in $\F^R_B$, spanned respectively by subsets $U_1$ and $U_2$ of the positive basis.
Since $C_1$ and $C_2$ are spanned by vectors in $R^n$, if $R$ is a field, then $C_1$ and $C_2$ are each intersections of halfspaces with normal vectors in $R^n$.
Thus $C_1\cap C_2$ is an intersection of halfspaces with normal vectors in $R^n$, and thus is spanned by vectors in $R^n$.
In particular, there exists a vector $\x\in R^n$ contained in the relative interior of $C_1\cap C_2$.
The vector $\x$ can be expressed both as a nonnegative $R$-linear combination of $U_1$ and as a nonnegative $R$-linear combination of $U_2$.
Both of these expressions are $B$-local and thus $B$-coherent by Proposition~\ref{local coherent}, so their difference is a $B$-coherent $R$-linear relation.
But since $U_1$ and $U_2$ are part of an $R$-basis (and in particular an $R$-independent set) for $B$, the two expressions must coincide, so that each writes $\x$ as a nonnegative $R$-linear combination of $U_1\cap U_2$.
We conclude that $C_1\cap C_2$ is contained in the cone spanned by $U_1\cap U_2$, and the opposite containment is immediate.
Thus, since $C_1$ and $C_2$ are simplicial, $C_1\cap C_2$ is a face of both.
If instead $R=\integers$, then the usual arguments by clearing denominators show that $(\b_i:i\in I)$ is also a positive $\rationals$-basis for $B$, so that $\F^\rationals_B=\F^\integers_B$ and thus $\F^\integers_B$ is a simplicial fan.

We now verify the conditions of Definition~\ref{R-part def}.
Conditions~\eqref{R cones R part} and~\eqref{cone in cone R part} hold by construction.
The first part of condition~\eqref{max cone R part} holds by construction and the second part follows from the implication \eqref{positive cone basis pos}$\implies$\eqref{positive cone basis property} in Proposition~\ref{positive cone basis}.
\end{proof}

The following corollary is immediate by Propositions~\ref{positive cone basis} and~\ref{FRB}.

\begin{cor}\label{FB basis ray}
If a positive $\integers$-basis exists for $B$, then the unique positive $\integers$-basis for $B$ consists of the smallest nonzero integer vector in each ray of the $\integers$-part of $\F_B$.
If $R$ is a field and a positive $R$-basis exists for $B$, then a collection of vectors is a positive $R$-basis for $B$ if and only if it consists of exactly one nonzero vector in each ray of the $R$-part of $\F_B$.
\end{cor}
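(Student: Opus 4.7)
The plan is to assemble the corollary from Proposition~\ref{FRB}, Proposition~\ref{positive unique}, Proposition~\ref{positive cone basis}, and Proposition~\ref{local coherent}. First I would invoke Proposition~\ref{FRB} to conclude that, given the existence of a positive $R$-basis $(\b_i:i\in I)$, the fan $\F^R_B$ is simplicial and equals the $R$-part of $\F_B$, with maximal cones spanned by subsets of $(\b_i:i\in I)$. This forces a bijection between the basis elements and the rays of $\F^R_B$: each $\b_i$ spans an extreme ray of some maximal cone (so lies on a ray of $\F^R_B$); no two distinct $\b_i$'s can share a ray (they would be positively proportional, violating $R$-independence for $B$); and every ray of $\F^R_B$ arises as a face of some maximal cone, so it is an extreme ray of a simplicial cone spanned by basis vectors and hence generated by some $\b_i$.

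For the field case, I would then cite Proposition~\ref{positive unique}, which says that any two positive $R$-bases differ by scaling each element by a positive unit of $R$; in a field these positive units are precisely the positive elements. I would then check that such positive scalings preserve the positive basis property: $R$-independence for $B$ is untouched by nonzero scaling, and the spanning characterization in Proposition~\ref{positive cone basis}\eqref{positive cone basis property} is preserved since in a field one may divide by positive scalars. Combined with the bijection above, this shows that the positive $R$-bases are exactly the collections consisting of exactly one nonzero vector in each ray of $\F^R_B$.

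For the $\integers$-case, the only positive unit is $1$, so Proposition~\ref{positive unique} already yields uniqueness of the positive $\integers$-basis. It remains to verify that each $\b_i$ is the smallest nonzero integer vector on its ray. Given any $\a\in\integers^n$ on the ray of $\b_i$, one has $\a=t\b_i$ for some $t\in\rationals_{>0}$. The identity $\a-t\b_i=\mathbf{0}$ is $B$-local and hence $B$-coherent by Proposition~\ref{local coherent}, and by $R$-independence of the basis for $B$ this is the unique $B$-coherent expression of $\a$ in the basis. Positivity of the basis then forces the coefficient $t$ to be a positive integer, so $t\geq 1$, which means $\b_i$ is the smallest nonzero integer vector on its ray.

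The main point I expect to need care is the bijection in the first paragraph: it depends on knowing that the extreme rays of a simplicial cone are spanned by individual generators and that distinct basis vectors cannot be positively proportional, both of which flow from Proposition~\ref{FRB} together with $R$-independence. The other subtle point is showing that in the $\integers$-case the unique $B$-coherent expression of a vector on a ray coincides with the obvious $B$-local one; this is where Proposition~\ref{local coherent} is doing the real work, and where failure of the ``smallest integer vector'' property would contradict positivity.
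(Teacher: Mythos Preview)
Your approach is essentially the paper's (the paper simply says ``immediate by Propositions~\ref{positive cone basis} and~\ref{FRB}''), and your added detail via Proposition~\ref{positive unique} is a reasonable shortcut. The bijection argument and the field case are fine.

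There is, however, a genuine gap in your $\integers$-case argument. You write that $\a-t\b_i=\mathbf{0}$ is $B$-coherent and then invoke ``$R$-independence of the basis'' with $R=\integers$ to conclude uniqueness. But $t\in\rationals_{>0}$, so this expression has rational coefficients, not integer coefficients, and $\integers$-independence only rules out nontrivial $B$-coherent relations with \emph{integer} coefficients. You therefore cannot directly compare the rational expression $\a=t\b_i$ with the positive integer expression $\a=\sum_{j\in S}c_j\b_j$ guaranteed by positivity.

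The fix is short. Write $t=p/q$ in lowest terms. Multiplying the $B$-coherent relation $\a-\sum_{j\in S}c_j\b_j$ by $q$ gives the $B$-coherent integer relation $p\b_i-\sum_{j\in S}qc_j\b_j$ (using $q\a=p\b_i$). Now $\integers$-independence applies, forcing $S=\{i\}$ and $p=qc_i$; since $\gcd(p,q)=1$ this gives $q=1$ and $t=c_i\in\integers_{>0}$. Alternatively, one can bypass Proposition~\ref{local coherent} entirely and argue directly from Proposition~\ref{positive cone basis}\eqref{positive cone basis property} together with the simpliciality in Proposition~\ref{FRB}: the positive integer expression of $\a$ uses only basis vectors in the smallest $B$-cone containing $\a$, these are linearly independent, and $\a$ lies on the ray of $\b_i$, so the expression is $\a=c_i\b_i$ with $c_i\in\integers_{>0}$.
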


Propositions~\ref{R part prop} and~\ref{FRB} imply that $\F_B^\reals=\F_B$.
Thus we have the following corollary, which we emphasize is specific to the case $R=\reals$.

\begin{cor}\label{pos reals FB}
If a positive $\reals$-basis for $B$ exists, then $\F_B$ is simplicial.  
The basis consists of exactly one vector in each ray of $\F_B$.
\end{cor}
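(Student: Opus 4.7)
The plan is to reduce Corollary~\ref{pos reals FB} to Proposition~\ref{FRB} and Corollary~\ref{FB basis ray}, specialized to $R=\reals$, together with the identification $\F_B^\reals=\F_B$ hinted at in the paragraph preceding the corollary statement. Assume a positive $\reals$-basis $(\b_i:i\in I)$ exists for $B$.

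First I would invoke Proposition~\ref{FRB} directly with $R=\reals$ to conclude that $\F_B^\reals$ is a simplicial fan and is the $\reals$-part of $\F_B$. The work of showing simpliciality and verifying Definition~\ref{R-part def} has already been carried out in the proof of Proposition~\ref{FRB}, so no extra effort is required here.

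The main step is to show that $\F_B^\reals$ actually coincides with $\F_B$ as a collection of cones. For one containment, every cone $C\in\F_B$ is trivially generated by vectors in $\reals^n$, so by Proposition~\ref{R part prop} such a $C$ is a cone in $\F_B^\reals$. For the reverse containment, I would invoke condition~\eqref{max cone R part} of Definition~\ref{R-part def}: for each $C\in\F_B$, the unique largest cone of $\F_B^\reals$ contained in $C$ must contain $\reals^n\cap C=C$, so this largest cone equals $C$ itself. Since every cone of $\F_B^\reals$ is a face of some such largest cone (as noted in the paragraph following Definition~\ref{R-part def}), and $\F_B$ is closed under taking faces, every cone of $\F_B^\reals$ lies in $\F_B$. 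Thus $\F_B=\F_B^\reals$, and the simpliciality of $\F_B^\reals$ yields the simpliciality of $\F_B$.

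The second assertion then follows by applying Corollary~\ref{FB basis ray} with the field $R=\reals$, using the equality $\F_B^\reals=\F_B$ just established: a positive $\reals$-basis consists of exactly one nonzero vector in each ray of $\F_B^\reals$, hence in each ray of $\F_B$. There is no genuine obstacle to this argument; every technical hurdle has been dealt with in the earlier propositions, and the only subtlety is verifying $\F_B^\reals=\F_B$, which is routine once one unfolds Definition~\ref{R-part def} in the special case where $R$ is the entire ambient field.
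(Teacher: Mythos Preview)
Your proposal is correct and follows the same approach as the paper. The paper's proof is the single sentence preceding the corollary, which simply asserts that Propositions~\ref{R part prop} and~\ref{FRB} imply $\F_B^\reals=\F_B$; you have unpacked this implication explicitly (using Proposition~\ref{R part prop} for $\F_B\subseteq\F_B^\reals$ and condition~\eqref{max cone R part} of Definition~\ref{R-part def} for the reverse inclusion), and then invoked Corollary~\ref{FB basis ray} for the second assertion, exactly as intended.
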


For any exchange matrix $B$, Proposition~\ref{local coherent} implies that a collection consisting of exactly one nonzero vector in each ray of $\F_B$ is an $\reals$-spanning set for $B$ and thus contains an $\reals$-basis for $B$ by Proposition~\ref{basis exists}.
In particular, the following proposition holds.
\begin{prop}\label{pos reals FB converse}
A collection consisting of exactly one nonzero vector in each ray of $\F_B$ is a positive $\reals$-basis for $B$ if and only if it is an $\reals$-independent set for~$B$.
\end{prop}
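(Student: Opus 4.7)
The forward direction is immediate from Definition~\ref{basis B def}\eqref{basis indep}: a positive $\reals$-basis is in particular an $\reals$-basis and hence an $\reals$-independent set. So the substance of the proposition lies in the converse.

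Assume $(\b_i:i\in I)$ is an $\reals$-independent set for $B$ consisting of one nonzero vector in each ray of $\F_B$. My plan is to verify condition~\eqref{positive cone basis property} of Proposition~\ref{positive cone basis} and then apply that proposition. Fix a $B$-cone $C$. First I would observe that the vectors of $(\b_i:i\in I)$ lying in $C$ correspond bijectively to the extreme rays of $C$: if $\rho$ is a ray of $\F_B$ with $\rho\subseteq C$, then the fan property (Theorem~\ref{fan}) gives $\rho=\rho\cap C$ is a face of $C$, hence a $1$-dimensional face; conversely, every $1$-dimensional face of $C$ is itself a cone of $\F_B$, since faces of $B$-cones belong to $\F_B$ by definition of the mutation fan.

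Next I would show that the vectors $\set{\b_i:i\in I}\cap C$ are $\reals$-linearly independent. Any linear relation among them has its support contained in the single $B$-cone $C$, so is a $B$-local relation in the sense of Definition~\ref{B local}, and hence $B$-coherent by Proposition~\ref{local coherent}; the hypothesis of $\reals$-independence then forces the relation to be trivial. In particular $C$ has at most $n$ extreme rays. Since $C$ is pointed by Proposition~\ref{convex}, I would invoke the standard convex-geometric fact that a pointed closed convex cone in $\reals^n$ with only finitely many extreme rays is polyhedral and equals the nonnegative $\reals$-linear span of those rays. (A brief justification: slicing $C$ transversally yields a compact convex set whose extreme points correspond to the finitely many extreme rays of $C$, so this slice is a polytope by Minkowski's theorem and $C$ is the cone over it.) Therefore $C=\reals^n\cap C$ coincides with the nonnegative $\reals$-linear span of $\set{\b_i:i\in I}\cap C$, establishing condition~\eqref{positive cone basis property}, and Proposition~\ref{positive cone basis} then delivers the conclusion.

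The only non-formal step is the convex-geometric observation that finitely many extreme rays forces polyhedrality of a pointed closed convex cone; everything else amounts to a repackaging of Propositions~\ref{local coherent} and~\ref{positive cone basis}. Note that this argument also produces, as a by-product, the simplicial structure of $\F_B$ already asserted in Corollary~\ref{pos reals FB}.
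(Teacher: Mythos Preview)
Your argument is correct and aligns with the paper's approach. The paper's own justification is the single sentence preceding the proposition, which asserts that such a collection is $\reals$-spanning via Proposition~\ref{local coherent} and then says ``in particular'' the proposition holds; the conic-Minkowski fact (that a pointed closed convex cone is the nonnegative span of its extreme rays) and the passage from ``basis'' to ``positive basis'' are left entirely implicit there, whereas you spell both out and route the conclusion cleanly through Proposition~\ref{positive cone basis}\eqref{positive cone basis property}.
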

In this case, $\F_B$ is simplicial by Corollary~\ref{pos reals FB}.
On the other hand, if $\F_B$ is simplicial, then a collection consisting of exactly one nonzero vector in each ray of $\F_B$ could conceivably fail to be an $\reals$-independent set for $B$, in which case no positive $\reals$-basis exists for $B$.
However, we know of no exchange matrix $B$ for which this happens.

\begin{example}\label{nonsimp example cont'd}
In Sections~\ref{rk2 sec} and~\ref{Tits sec} and in~\cite{unisurface,unitorus}, we encounter examples of mutation fans that are simplicial.
There also appear to exist exchange matrices $B$ such that $\F_B$ is not simplicial, and one such example appears to be $B=\begin{bsmallmatrix*}[r]0&-3&0\\2&0&2\\0&-3&\hspace{6pt}0\\\end{bsmallmatrix*}$.
This ``appearance'' is based on computing approximations to $\F_B$ as explained in the paragraph after Proposition~\ref{contained Bcone}.
See Figure~\ref{nonsimp fig}.
It appears that as $m\to\infty$, the quadrilateral region at the middle of the picture near the top will decrease in size but neither disappear nor lose its quadrilateral shape.
(Compare with the example $B=\begin{bsmallmatrix*}[r]0&2\\-3&0\end{bsmallmatrix*}$, depicted in Figure~\ref{wild mutation fans}.)
If $\F_B$ is indeed not simplicial, then Corollary~\ref{pos reals FB} says that no positive $\reals$-basis exists for $B$.
\end{example}

\section{Properties of the mutation fan}\label{fan prop sec}
In this section, we prove some properties of mutation fans that are useful in constructing $\F_B$ is some cases.
We begin by pointing out several symmetries.

It is apparent from Definition~\ref{map} that 
\begin{equation}\label{eta antipodal}
\eta_\k^B(\a)=-\eta_\k^{-B}(-\a)\quad\mbox{for any sequence $\k$.}
\end{equation}
Since also the antipodal map $\a\mapsto-\a$ commutes with the map $\vsgn$, we see that $\a_1\equiv^B\a_2$ if and only if $(-\a_1)\equiv^{-B}(-\a_2)$.
Thus we have the following proposition, in which $-\F_{B}$ denotes the collection of cones $-C=\set{-\a:\a\in C}$ such that $C$ is a cone in $\F_{B}$.
\begin{prop}\label{antipodal FB}
$\F_{-B}=-\F_B$.
\end{prop}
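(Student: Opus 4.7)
The plan is to show that the antipodal map $\a \mapsto -\a$ induces a bijection between the $B$-classes and the $(-B)$-classes, and then propagate this bijection through closures and faces to the full fans.

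The first step is to analyze how the equivalence relation $\equiv^B$ transforms under negation of $B$. Starting from the identity $\eta_\k^B(\a)=-\eta_\k^{-B}(-\a)$ recorded just before the proposition, and using the obvious fact that $\vsgn(-\x)=-\vsgn(\x)$, I obtain $\vsgn(\eta_\k^B(\a))=-\vsgn(\eta_\k^{-B}(-\a))$ for any sequence $\k$. Consequently, for two vectors $\a_1,\a_2\in\reals^n$, the equality $\vsgn(\eta_\k^B(\a_1))=\vsgn(\eta_\k^B(\a_2))$ holds if and only if $\vsgn(\eta_\k^{-B}(-\a_1))=\vsgn(\eta_\k^{-B}(-\a_2))$ holds. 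Since this equivalence holds for every finite sequence $\k$ of indices in $[n]$, the paper's text before the proposition already notes the conclusion: $\a_1\equiv^B \a_2$ if and only if $(-\a_1)\equiv^{-B}(-\a_2)$. Thus the map $\a\mapsto -\a$ restricts to a bijection between $B$-classes and $(-B)$-classes, sending a $B$-class $C'$ to the $(-B)$-class $-C'=\set{-\a:\a\in C'}$.

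The second step is to pass from classes to cones. The antipodal map is a homeomorphism of $\reals^n$, so it sends closures to closures. By Definition~\ref{B cones}, $B$-cones are exactly the closures of $B$-classes. Combining these two facts with the bijection from the first step, I conclude that $C$ is a $B$-cone if and only if $-C$ is a $(-B)$-cone.

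Finally, I extend the bijection to all of $\F_B$. Because the antipodal map is a linear automorphism of $\reals^n$, it sends faces of a convex cone to faces of the image cone; more precisely, $F$ is a face of $C$ if and only if $-F$ is a face of $-C$. Since $\F_B$ consists of all $B$-cones together with all their faces, and analogously for $\F_{-B}$, the second step now gives $\F_{-B}=-\F_B$. There is no significant obstacle in this argument; the work has already been done in establishing~\eqref{eta antipodal} and in Definition~\ref{B cones}, and the proof amounts to chasing these facts through the definitions.
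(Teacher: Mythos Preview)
Your proof is correct and follows essentially the same approach as the paper. The paper's argument is the paragraph immediately preceding the proposition: it records the identity~\eqref{eta antipodal}, observes how $\vsgn$ interacts with the antipodal map, deduces that $\a_1\equiv^B\a_2$ if and only if $(-\a_1)\equiv^{-B}(-\a_2)$, and declares the proposition; you simply make explicit the passage to closures and faces that the paper leaves to the reader.
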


Given any permutation $\pi$ of $[n]$, let $\pi(B)$ be the exchange matrix whose $ij$-entry is $b_{\pi(i)\pi(j)}$, where the entries of $B$ are $b_{ij}$.
Then $\mu_{\pi(k)}(\pi B)=\pi(\mu_k(B))$.
Let $\pi$ also denote the linear map sending $\e_{\pi(i)}$ to $\e_i$.
Then $\eta_{\pi(k)}^{\pi B}\circ\pi=\pi\circ\eta_k^B$.
Thus we have the following proposition, in which $\pi\F_B$ denotes the collection of cones $\pi C=\set{\pi\a:\a\in C}$ such that $C$ is a cone in $\F_B$.
\begin{prop}\label{pi FB}
$\F_{\pi B}=\pi\F_B$.
\end{prop}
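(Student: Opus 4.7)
The plan is to deduce the proposition directly from the two compatibilities stated in the paragraph preceding it, namely $\mu_{\pi(k)}(\pi B)=\pi(\mu_k B)$ and $\eta_{\pi(k)}^{\pi B}\circ\pi=\pi\circ\eta_k^B$. The strategy is to lift the second of these from a single index $k$ to an arbitrary sequence $\k=k_q,\ldots,k_1$, then transport $B$-classes to $\pi B$-classes via $\pi$, and finally pass from classes to cones and to the full fan.

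The first step is an induction on the length $q$ of $\k$. The base case $q=0$ is trivial. For the inductive step, write $\k=k_q\k'$ with $\k'=k_{q-1},\ldots,k_1$; using $\mu_{\pi(k)}(\pi B)=\pi(\mu_k B)$ iteratively gives $\mu_{\pi(\k')}(\pi B)=\pi(\mu_{\k'}(B))$, and then one computes
\[
\eta_{\pi(\k)}^{\pi B}\circ\pi
=\eta_{\pi(k_q)}^{\mu_{\pi(\k')}(\pi B)}\circ\eta_{\pi(\k')}^{\pi B}\circ\pi
=\eta_{\pi(k_q)}^{\pi(\mu_{\k'}B)}\circ\pi\circ\eta_{\k'}^{B}
=\pi\circ\eta_{k_q}^{\mu_{\k'}B}\circ\eta_{\k'}^{B}
=\pi\circ\eta_\k^B,
\]
where $\pi(\k):=\pi(k_q),\ldots,\pi(k_1)$. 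The second step is the elementary observation that because $\pi$ simply permutes coordinates, $\vsgn(\pi\v)=\pi\vsgn(\v)$ for all $\v\in\reals^n$, and $\pi$ acts as a bijection on sign vectors, so $\vsgn(\pi\v_1)=\vsgn(\pi\v_2)$ if and only if $\vsgn(\v_1)=\vsgn(\v_2)$.

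Combining these two steps: for any $\a_1,\a_2\in\reals^n$ and any sequence $\k$,
\[
\vsgn(\eta_{\pi(\k)}^{\pi B}(\pi\a_1))=\vsgn(\eta_{\pi(\k)}^{\pi B}(\pi\a_2))
\iff \vsgn(\eta_\k^B(\a_1))=\vsgn(\eta_\k^B(\a_2)).
\]
Since $\k\mapsto\pi(\k)$ is a bijection on finite sequences in $[n]$, this shows $\a_1\equiv^B\a_2$ if and only if $\pi\a_1\equiv^{\pi B}\pi\a_2$. Consequently, $\pi$ sends $B$-classes bijectively onto $\pi B$-classes.

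Finally, $\pi$ is a linear automorphism of $\reals^n$, hence a homeomorphism, so it sends closures to closures; combined with the previous step this gives a bijection between $B$-cones and $\pi B$-cones. A linear automorphism also sends faces of a convex cone to faces of its image, so adding in faces on both sides yields $\pi\F_B=\F_{\pi B}$. There is no real obstacle: once the intertwining $\eta_{\pi(\k)}^{\pi B}\circ\pi=\pi\circ\eta_\k^B$ is established by induction, the remainder is a bookkeeping exercise using the definitions of $\equiv^B$, $B$-cone, and $\F_B$.
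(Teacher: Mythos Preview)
Your proof is correct and follows essentially the same approach as the paper. The paper states the two compatibilities $\mu_{\pi(k)}(\pi B)=\pi(\mu_k(B))$ and $\eta_{\pi(k)}^{\pi B}\circ\pi=\pi\circ\eta_k^B$ in the paragraph preceding the proposition and then simply asserts ``Thus we have the following proposition,'' leaving the passage from these identities to the fan equality implicit; you have written out exactly those omitted details.
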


Proposition~\ref{cones preserved} implies the following proposition, in which $\eta_\k^B\F_B$ denotes the collection of cones $\eta_\k^B C=\set{\eta_\k^B(\a):\a\in C}$ such that $C$ is a cone in $\F_B$.
\begin{prop}\label{eta FB}
$\F_{\mu_\k(B)}=\eta_\k^B\F_B$.
\end{prop}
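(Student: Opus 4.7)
The plan is to bootstrap Proposition~\ref{eta FB} from Proposition~\ref{cones preserved}, which already handles the \emph{maximal} objects ($B$-cones themselves), together with the piecewise linearity of $\eta_\k^B$ recorded in Proposition~\ref{linear}. The only thing Proposition~\ref{cones preserved} does not immediately give us is that proper faces of $B$-cones are carried to faces of $\mu_\k(B)$-cones, and this is the crux of what needs to be verified.

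First I would show the inclusion $\eta_\k^B\F_B\subseteq\F_{\mu_\k(B)}$. Let $F\in\F_B$. By definition of $\F_B$, $F$ is a face of some $B$-cone $C$. Proposition~\ref{cones preserved} says $\eta_\k^B(C)$ is a $\mu_\k(B)$-cone, and Proposition~\ref{linear} says $\eta_\k^B$ restricts to a linear map on $C$. Since $\eta_\k^B$ is a homeomorphism of $\reals^n$ (Definition~\ref{map}), its restriction to $C$ is a linear \emph{bijection} onto $\eta_\k^B(C)$. Because faces of a convex cone are determined by the intrinsic convex structure, any linear bijection between two convex cones induces a bijection on their face lattices; in particular $\eta_\k^B(F)$ is a face of $\eta_\k^B(C)$, and so lies in $\F_{\mu_\k(B)}$.

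For the reverse inclusion, I would apply the same argument with the roles of $B$ and $\mu_\k(B)$ swapped. Writing $\k'$ for the reverse of $\k$, Definition~\ref{map} gives $\mu_{\k'}(\mu_\k(B))=B$ and $(\eta_\k^B)^{-1}=\eta_{\k'}^{\mu_\k(B)}$. The argument of the previous paragraph, applied to the exchange matrix $\mu_\k(B)$ and the sequence $\k'$, yields $\eta_{\k'}^{\mu_\k(B)}\F_{\mu_\k(B)}\subseteq \F_B$. Applying $\eta_\k^B$ to both sides produces $\F_{\mu_\k(B)}\subseteq \eta_\k^B\F_B$, completing the proof.

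The main potential obstacle is the face-preservation step, but it really reduces to the trivial fact that a linear bijection between two convex cones preserves faces. The temptation is to worry that $\eta_\k^B$ is only piecewise linear on $\reals^n$, not linear, so that faces could conceivably be ``broken'' across domains of linearity. Proposition~\ref{linear} defuses this: every face of a $B$-cone $C$ lies inside $C$, and on $C$ the map $\eta_\k^B$ is genuinely linear, so no such pathology can arise. Once this observation is in place the proof is essentially a bookkeeping exercise chaining together Propositions~\ref{linear} and~\ref{cones preserved}.
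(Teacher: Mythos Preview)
Your proposal is correct and follows the same route as the paper, which simply states that Proposition~\ref{cones preserved} implies this result. You supply the extra detail the paper omits---namely, that $\eta_\k^B$ is linear on each $B$-cone (Proposition~\ref{linear}) and hence carries faces to faces---so your argument is a careful expansion of what the paper treats as immediate.
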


A less obvious symmetry is a relationship called rescaling.

\begin{definition}[\emph{Rescaling of exchange matrices}]\label{rescale def}
Let $B$ and $B'$ be exchange matrices.
Then $B'$ is a \newword{rescaling} of $B$ if there exists a diagonal matrix $\Sigma$ with positive entries such that $B'=\Sigma^{-1} B\Sigma$.
In this case, if $\Sigma=\diag(\sigma_1,\ldots,\sigma_n)$, then the $ij$-entry of $B'$ is $\frac{\sigma_j}{\sigma_i}$ times the $ij$-entry of $B$.

Every matrix is a rescaling of itself, taking $\Sigma=cI$ for some positive $c$.
If $B'=\Sigma^{-1} B\Sigma$ for some $\Sigma$ not of the form $cI$, then $B'$ is a \newword{nontrivial} rescaling of~$B$.
A given exchange matrix may or may not admit any nontrivial rescalings.
\end{definition}

\begin{prop}\label{rescale iff}
If $B$ and $B'$ are exchange matrices, then $B'$ is a rescaling of $B$ if and only if $\sgn(b_{ij})=\sgn(b'_{ij})$ and $b_{ij}b_{ji}=b'_{ij}b'_{ji}$ for all $i,j\in[n]$.
\end{prop}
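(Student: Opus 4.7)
\medskip
\noindent\textbf{Proof proposal.} The plan is to treat the two directions separately, with the forward direction being a trivial entry-wise calculation and the real work concentrated in the reverse direction, where skew-symmetrizability of exchange matrices is the key ingredient.

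For the forward direction, suppose $B'=\Sigma^{-1}B\Sigma$ with $\Sigma=\diag(\sigma_1,\ldots,\sigma_n)$ and each $\sigma_i>0$. Then by direct computation $b'_{ij}=(\sigma_j/\sigma_i)b_{ij}$. Since $\sigma_j/\sigma_i>0$, we read off $\sgn(b'_{ij})=\sgn(b_{ij})$, and multiplying the $ij$- and $ji$-versions gives $b'_{ij}b'_{ji}=(\sigma_j/\sigma_i)(\sigma_i/\sigma_j)b_{ij}b_{ji}=b_{ij}b_{ji}$.

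For the reverse direction, assume $\sgn(b_{ij})=\sgn(b'_{ij})$ and $b_{ij}b_{ji}=b'_{ij}b'_{ji}$ for all $i,j$. Because $B$ and $B'$ are exchange matrices, they are skew-symmetrizable: fix positive reals $d_1,\ldots,d_n$ and $d'_1,\ldots,d'_n$ with $d_ib_{ij}=-d_jb_{ji}$ and $d'_ib'_{ij}=-d'_jb'_{ji}$. Solving $b_{ij}b_{ji}$ using $b_{ji}=-d_ib_{ij}/d_j$ gives $b_{ij}^2=-b_{ij}b_{ji}\cdot d_j/d_i$, hence
\[
b_{ij}=\sgn(b_{ij})\sqrt{-b_{ij}b_{ji}\cdot d_j/d_i},
\]
and the analogous identity holds for $b'_{ij}$. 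Since the signs and the products $b_{ij}b_{ji}$ agree with their primed counterparts, the ratio $b'_{ij}/b_{ij}$ (when $b_{ij}\neq 0$) collapses to $\sqrt{(d'_j/d'_i)/(d_j/d_i)}$. This is exactly $\sigma_j/\sigma_i$ if we set $\sigma_i:=\sqrt{d'_i/d_i}$, which is a well-defined positive real. When $b_{ij}=0$ the sign hypothesis forces $b'_{ij}=0$ as well, so the identity $b'_{ij}=(\sigma_j/\sigma_i)b_{ij}$ holds trivially. With $\Sigma=\diag(\sigma_1,\ldots,\sigma_n)$, this is precisely $B'=\Sigma^{-1}B\Sigma$.

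I expect the main obstacle to be essentially bookkeeping rather than conceptual: the skew-symmetrizers are not unique (they are determined only up to a positive scalar on each connected component of the graph with $i\sim j$ when $b_{ij}\neq 0$), so one must check that the formula $\sigma_i=\sqrt{d'_i/d_i}$ produces the right ratios regardless of that ambiguity. This is automatic, since any rescaling of $(d_i)$ or $(d'_i)$ by a component-wise constant cancels in the ratio $\sigma_j/\sigma_i$ within a component, and between components the identity $b'_{ij}=(\sigma_j/\sigma_i)b_{ij}$ only needs to hold for $b_{ij}=0$, where it holds trivially. The zero-entry bookkeeping is the one place that requires care, and it is handled by invoking the sign hypothesis directly.
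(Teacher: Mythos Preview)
Your proof is correct and takes essentially the same approach as the paper: both use the skew-symmetrizing constants $d_i$ and $d'_i$ to build $\Sigma$ with $\sigma_i=\sqrt{d'_i/d_i}$, the paper by passing through the common skew-symmetric conjugate $\bB$ with entries $\sgn(b_{ij})\sqrt{-b_{ij}b_{ji}}$, and you by computing the ratio $b'_{ij}/b_{ij}$ directly. Your explicit handling of the $b_{ij}=0$ case and the remark on non-uniqueness of skew-symmetrizers are nice touches that the paper leaves implicit.
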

\begin{proof}
If $\Sigma$ is a diagonal matrix such that $B'=\Sigma^{-1} B\Sigma$, then $b'_{ij}b'_{ji}=\frac{\sigma_j}{\sigma_i}b_{ij}\frac{\sigma_i}{\sigma_j}b_{ji}=b_{ij}b_{ji}$.
Conversely, suppose $\sgn(b_{ij})=\sgn(b'_{ij})$ and $b_{ij}b_{ji}=b'_{ij}b'_{ji}$ for all $i,j\in[n]$.
Let $d_1,\ldots d_n$ be the skew-symmetrizing constants for $B$, so that $d_ib_{ij}=-d_jb_{ji}$.
Let $D$ be the diagonal matrix $\diag(\sqrt{d^{-1}},\ldots,\sqrt{d_n^{-1}})$, and define $\bB=D^{-1}BD$.
The $ij$-entry of $\bB$ is $\sqrt{\frac{d_i}{d_j}}b_{ij}$.
Since the $d_i$ are skew-symmetrizing constants, we calculate $\sqrt{\frac{d_i}{d_j}}b_{ij}=\sqrt{\frac{1}{d_id_j}}d_ib_{ij}=\sqrt{\frac{1}{d_id_j}}d_jb_{ji}=\sqrt{\frac{d_j}{d_i}}d_jb_{ji}$.
Since $\sqrt{\frac{d_i}{d_j}}b_{ij}\sqrt{\frac{d_j}{d_i}}b_{ji}=b_{ij}b_{ji}$, the $ij$-entry of $\bB$ is $\sgn(b_{ij})\sqrt{-b_{ij}b_{ji}}$.  
Define matrices $D'$ and $\widebar{B'}$ similarly in terms of the skew-symmetrizing constants for $B'$.
Arguing similarly, we see that the $ij$-entry of $\bB$ is $\sgn(b'_{ij})\sqrt{-b'_{ij}b'_{ji}}$.  
Thus $\widebar{B'}=\bB$, so $B'=D'D^{-1}BD(D')^{-1}$.
\end{proof}

Proposition~\ref{rescale iff} relies on the assumption that $B$ and $B'$ are exchange matrices in the sense of Definition~\ref{seed}.
That is, they are skew-symmetrizable integer matrices.

\begin{prop}\label{rescale int}
If $B$ and $B'$ are exchange matrices such that $B'$ is a rescaling of $B$, then the diagonal matrix $\Sigma$ with $B'=\Sigma^{-1} B\Sigma$ can be taken to have integer entries.
\end{prop}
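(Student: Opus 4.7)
The plan is to exploit the freedom we have in choosing $\Sigma$ up to component-wise positive rescaling, and to show that within each ``connected component'' of the support of $B$ the ratios $\sigma_j/\sigma_i$ are already forced to be rational.

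First I would write $\Sigma=\diag(\sigma_1,\ldots,\sigma_n)$ with each $\sigma_i>0$, so that the equation $B'=\Sigma^{-1}B\Sigma$ becomes $b'_{ij}=\frac{\sigma_j}{\sigma_i}b_{ij}$ for all $i,j\in[n]$. For any pair $i,j$ with $b_{ij}\neq 0$, this gives $\frac{\sigma_j}{\sigma_i}=\frac{b'_{ij}}{b_{ij}}\in\rationals_{>0}$ (positivity of the ratio is guaranteed since $B'$ is a rescaling of $B$ and thus $\sgn(b'_{ij})=\sgn(b_{ij})$ by Proposition~\ref{rescale iff}). Hence, defining a graph $G$ on the vertex set $[n]$ by declaring $i\sim j$ whenever $b_{ij}\neq 0$ (equivalently $b_{ji}\neq 0$, again by Proposition~\ref{rescale iff}), all ratios $\sigma_j/\sigma_i$ for $i$ and $j$ in the same connected component of $G$ are rational.

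Next I would use this to produce a rational $\Sigma$. In each connected component $V$ of $G$, pick any vertex $v\in V$, set $\sigma_v:=1$, and then for every other $i\in V$ choose a path in $G$ from $v$ to $i$ and multiply the rational ratios along the path to define $\sigma_i\in\rationals_{>0}$. The resulting $\sigma_i$ are well-defined and yield the original ratios $\sigma_j/\sigma_i$ for every edge of $G$, which is all that is needed: indices $i,j$ in different components of $G$ satisfy $b_{ij}=0$, so the corresponding ratios $\sigma_j/\sigma_i$ play no role in the product $\Sigma^{-1}B\Sigma$.

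Finally, I would clear denominators, exploiting the block structure. Crucially, multiplying all $\sigma_i$ in one component of $G$ by the same positive constant leaves $\Sigma^{-1}B\Sigma$ unchanged, because every nonzero entry of $B$ lies in a component-diagonal block. So in each component $V$ independently, let $N_V$ be a common denominator of the $\sigma_i$ for $i\in V$, and replace $\sigma_i$ by $N_V\sigma_i$ for $i\in V$. The new $\Sigma$ has positive integer entries and still satisfies $B'=\Sigma^{-1}B\Sigma$. The only mild subtlety (and arguably the main thing to get right) is the observation that one may rescale different components of $G$ by different constants without affecting the conjugation; once this is noted, the argument is immediate.
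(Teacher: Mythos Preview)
Your proof is correct. Two small quibbles: the symmetry of the adjacency relation ($b_{ij}\neq 0\iff b_{ji}\neq 0$) follows from skew-symmetrizability of $B$ rather than from Proposition~\ref{rescale iff}; and the well-definedness of the propagated $\sigma_i$ (independence of the chosen path) deserves a word, though it is immediate since the product of edge-ratios along any path from $v$ to $i$ telescopes to $\sigma^{\mathrm{old}}_i/\sigma^{\mathrm{old}}_v$ for the original $\Sigma^{\mathrm{old}}$.

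The paper takes a different, more abstract route. It simply observes that $B'=\Sigma^{-1}B\Sigma$ is equivalent to the homogeneous linear system $\sigma_i b'_{ij}=b_{ij}\sigma_j$ with integer coefficients, so a real solution forces a rational solution, and one then clears denominators. Your graph-component argument is more constructive and makes transparent why positivity survives and why different components may be rescaled independently; the paper's two-line appeal to linear algebra over $\rationals$ is shorter but leaves the positivity of the rational solution implicit (relying tacitly on density of rational points in a rationally defined subspace). Both arguments rest on the same underlying observation that the constraints are rational; yours packages it as an explicit recipe, the paper's as an existence statement.
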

\begin{proof}
Fixing $B$ and $B'$, the matrix $\Sigma=\diag(\sigma_1,\ldots,\sigma_n)$ satisfies $B'=\Sigma^{-1} B\Sigma$ if and only if $\sigma_ib'_{ij}=b_{ij}\sigma_j$ for all $i$ and $j$.
If these equations can be solved for the $\sigma_i$, then there is a rational solution and therefore an integer solution.
\end{proof}

\begin{remark}\label{root rescaling}
The definition of rescaling is motivated by root system considerations.
The Cartan companion $A$ of $B$ (see Definition~\ref{Cartan comp def}) defines a root system, and in particular simple roots and simple co-roots, as well as a symmetric bilinear form $K$.
The matrix $A$ expresses $K$ in terms of the simple co-root basis (on the left) and the simple root basis (on the right).
If $\Sigma=\diag(\sigma_1,\ldots,\sigma_n)$, then $\Sigma^{-1}A\Sigma$ is the matrix expressing $K$ in the basis $\set{\sigma_i^{-1}\alpha_i\ck:i\in[n]}$ (on the left) and $\set{\sigma_i\alpha_i:i\in[n]}$ (on the right).
In other words, $\Sigma^{-1}A\Sigma$ is a Cartan matrix with simple roots $\set{\sigma_i^{-1}\alpha_i\ck:i\in[n]}$ and simple co-roots $\set{\sigma_i\alpha_i:i\in[n]}$.
In this situation, every root in the root system for the Cartan matrix $\Sigma^{-1}A\Sigma$ is a scaling of a corresponding root for $A$, and vice versa.
\end{remark}

\begin{prop}\label{rescale props}
Suppose $B'$ is a rescaling of $B$, specifically with $B'=\Sigma^{-1} B\Sigma$.
\begin{enumerate}
\item \label{Sigma mu}
$\mu_\k(B')$ is a rescaling of $\mu_\k(B)$ for any sequence $\k$ of indices in $[n]$.
Specifically, $\mu_\k(B')=\Sigma^{-1}\mu_\k(B)\Sigma$.
\item \label{Sigma eta}
If $\a\in\reals^n$ is a row vector, then $\eta_\k^{B'}(\a\Sigma)=\eta_\k^B(\a)\Sigma$ for any sequence $\k$ of indices in $[n]$.
\item \label{Sigma FB}
The mutation fan $\F_{B'}$ is the collection of all cones $C\Sigma=\set{\a\Sigma:\a\in C}$, where $C$ ranges over cones in $\F_B$.
\item \label{Sigma B coherent}
The expression $\sum_{i\in S}c_i\v_i$ is a $B$-coherent linear relation if and only if $\sum_{i\in S}c_i(\v_i\Sigma)$ is a $B'$-coherent linear relation.
\item \label{Sigma indep}
If $(\b_i:i\in I)$ is a $\integers$-independent set for $B$ and $\Sigma$ is taken to have integer entries, then $(\b_i\Sigma:i\in I)$ is a $\integers$-independent set for $B'$.
\item \label{Sigma basis}
Suppose the underlying ring $R$ is a field.
Then $(\b_i:i\in I)$ is an $R$-independent set, $R$-spanning set, $R$-basis, cone $R$-basis and/or positive $R$-basis for $B$ if and only if $(\b_i\Sigma:i\in I)$ is the same for~$B'$.
\end{enumerate}
\end{prop}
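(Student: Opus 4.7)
The plan is to prove the six parts in sequence, with (1) and (2) established by direct computation from the mutation formulas, and the remaining parts deduced as corollaries. For (1) I would verify the single step $\mu_k(B') = \Sigma^{-1}\mu_k(B)\Sigma$ by inspecting \eqref{b mut} in the two cases: when $i=k$ or $j=k$ the sign flip $-b_{ij}$ respects the rescaling factor $\frac{\sigma_j}{\sigma_i}$, and in the remaining case the factor $\sgn(b_{kj})\,[b_{ik}b_{kj}]_+$ picks up exactly one factor $\frac{\sigma_j}{\sigma_i}$ since the two internal factors $\frac{\sigma_k}{\sigma_i}$ and $\frac{\sigma_j}{\sigma_k}$ telescope. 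Induction on the length of $\k$ gives the general case. For (2) I would similarly verify $\eta_k^{B'}(\a\Sigma)=\eta_k^B(\a)\Sigma$ by a four-case check of \eqref{mutation map def}, using that $\sigma_k>0$ makes the sign conditions on $a_k$ and on $(\a\Sigma)_k$ equivalent, and the same for $b_{kj}$ versus $b'_{kj}$; the induction step combines the single-mutation identity with part (1) applied to each intermediate matrix $B_i$ in \eqref{eta def}.

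Parts (3) and (4) follow formally from (2). For (3), note that $\vsgn(\w\Sigma)=\vsgn(\w)$ for every $\w\in\reals^n$ because $\Sigma$ has strictly positive diagonal entries; combined with (2), this gives $\a_1\equiv^B\a_2$ if and only if $\a_1\Sigma\equiv^{B'}\a_2\Sigma$, so the map $\a\mapsto\a\Sigma$ sends $B$-classes bijectively to $B'$-classes, and taking closures extends this bijection to a bijection of $B$-cones with $B'$-cones; faces are preserved since $\a\mapsto\a\Sigma$ is a linear isomorphism. For (4) I would observe that $\mathbf{min}(\w\Sigma,\mathbf{0})=\mathbf{min}(\w,\mathbf{0})\Sigma$ (again because $\sigma_i>0$); combined with (2), this makes \eqref{linear eta} and \eqref{piecewise eta} for $\sum c_i(\v_i\Sigma)$ relative to $B'$ the right-$\Sigma$-image of the corresponding equations for $\sum c_i\v_i$ relative to $B$, and $\Sigma$ is invertible so these vanish together.

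Parts (5) and (6) are formal consequences of (4) plus elementary bookkeeping about $R^n$. Independence is immediate: any $B'$-coherent relation $\sum c_i(\b_i\Sigma)=\mathbf 0$ with $c_i\in R$ pulls back by (4) to a $B$-coherent relation $\sum c_i\b_i$. For the spanning direction (needed in (6)), given $\a'\in R^n$ set $\a=\a'\Sigma^{-1}$; since $R$ is a field containing $\rationals$ and $\Sigma$ may be taken to have integer entries by Proposition~\ref{rescale int}, $\Sigma^{-1}$ has rational entries and thus $\a\in R^n$; any $B$-coherent expression of $\a$ in the $\b_i$ transports by (4) to a $B'$-coherent expression of $\a'$ in the $\b_i\Sigma$. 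The cone-basis and positivity statements follow from (3) together with the observation that $\b_i\in C$ iff $\b_i\Sigma\in C\Sigma$ and that nonnegative $R$-linear combinations are preserved under the positive rescaling. For (5) over $R=\integers$, only the independence half is asserted, and it follows from the independence argument above once we note that integer entries of $\Sigma$ ensure $\b_i\Sigma\in\integers^n$ whenever $\b_i\in\integers^n$.

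The main obstacle I anticipate is merely bookkeeping: verifying (2) requires a careful four-case match of the sign conditions, and one must be scrupulous in (6) that $R$ being a field (not $\integers$) is exactly what lets $\Sigma^{-1}$ act on $R^n$, which is why the integer case in (5) is stated more weakly.
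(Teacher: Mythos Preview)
Your proposal is correct and follows essentially the same approach as the paper's proof: reduce (1) and (2) to a single mutation step via \eqref{b mut} and \eqref{mutation map def}, use (2) together with the $\vsgn$-preserving property of right-multiplication by $\Sigma$ to get (3) and (4), and deduce (5) and (6) formally from (4). Your write-up is in fact more detailed than the paper's in several places---the telescoping of the $\sigma$-factors in (1), the explicit reason $R$ being a field makes $\Sigma^{-1}$ act on $R^n$ in (6)---but the overall structure is the same.
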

\begin{proof}
For \eqref{Sigma mu} and \eqref{Sigma eta}, it is enough to replace the sequence $\k$ with a single index $k\in[n]$.
For \eqref{Sigma mu}, we use~\eqref{b mut} to verify that the $ij$-entry of $\mu_k(B')$ is $\frac{\sigma_j}{\sigma_i}$ times the $ij$-entry of $\mu_k(B)$.
For \eqref{Sigma eta}, we use~\eqref{mutation map def} to verify that the $j\th$ entry of $\eta_k^{B'}(\a\Sigma)$ is $\sigma_j$ times the $j\th$ entry of $\eta_k^B(\a)$.
To prove \eqref{Sigma FB}, we show that the map $\a\mapsto\a\Sigma$ takes $B$-classes to $B'$ classes.
Indeed, by \eqref{Sigma eta}, we have $\vsgn(\eta^{B'}_\k(\a_1\Sigma))=\vsgn(\eta^{B'}_\k(\a_2\Sigma))$ if and only if $\vsgn(\eta_\k^B(\a_1)\Sigma)=\vsgn(\eta_\k^B(\a_2)\Sigma)$, but since multiplication by $\Sigma$ preserves $\vsgn$, this is if and only if $\vsgn(\eta^B_\k(\a_1))=\vsgn(\eta^B_\k(\a_2))$.

For any $\k$, \eqref{Sigma eta} says that the sum $\sum_{i\in S}c_i\eta^{B'}_\k(\v_i\Sigma)$ equals $\left(\sum_{i\in S}c_i\eta^B_\k(\v_i)\right)\Sigma$ and that the sum $\sum_{i\in S}c_i\mathbf{min}(\eta^{B'}_\k(\v_i\Sigma),\mathbf{0})$ equals $\left(\sum_{i\in S}c_i\mathbf{min}(\eta^B_\k(\v_i),\mathbf{0})\right)\Sigma$.
We conclude that~\eqref{Sigma B coherent} holds.
Now~\eqref{Sigma indep} and \eqref{Sigma basis} follow.
\end{proof}

\begin{remark}\label{rescale integer basis}
If $R$ is not a field (that is, if $R=\integers$), then the conclusion of Proposition~\ref{rescale props}\eqref{Sigma basis} can fail.
For an example, we look ahead to Section~\ref{rk2 sec}.
Let $\tB$ be the third matrix in~\eqref{rk2 finite universal}, with exchange matrix $B=\begin{bsmallmatrix*}[r]0&1\\-2&0\end{bsmallmatrix*}$.
Then $B'=\begin{bsmallmatrix*}[r]0&2\\-1&0\end{bsmallmatrix*}$ is a rescaling of $B$ for $\Sigma$ any matrix of the form $\begin{bsmallmatrix*}[r]a&0\\0&2a\end{bsmallmatrix*}$ for positive integer $a$.
The coefficient rows $\b_i$ of $\tB$ are a positive $\integers$-basis for $B$, but there is no choice of $a$ such that the vectors $\b_i\Sigma$ are a $\integers$-basis for $B'$.
There is, however, a positive $\integers$-basis for $B'$ whose elements are positive rational multiples of the vectors $\b_i\Sigma$.
\end{remark}

The final proposition of this section concerns limits of $B$-cones.

\begin{definition}[\emph{Limits of rays and of cones}]\label{limit def}
Given a sequence $(\rho_m:m=1,2,\ldots)$ of rays in $\reals^n$, we say that the sequence converges if the sequence $(\v_m:m=1,2,\ldots)$ consisting of a unit vector $\v_m$ in each ray $\rho_m$ converges in the usual topology on $\reals^n$.
The limit of the sequence is the ray spanned by the limit of the vectors $\v_m$.
A sequence $(C_m:m=1,2,\ldots)$ of closed cones in $\reals^n$ \newword{converges} if, for some fixed $p$, each cone $C_m$ is the nonnegative linear span of some rays $\set{\rho_{m;i}:i=1,2,\ldots,p}$ and the sequence $(\rho_{m;i}:m=1,2,\ldots)$ converges for each $i\in[p]$.
The limit of the sequence is the nonnegative linear span of the limit rays.
\end{definition}

\begin{prop}\label{lim B cone}
Given a sequence of $B$-cones that converges in the sense of Definition~\ref{limit def}, the limit of the sequence is contained in a $B$-cone.
\end{prop}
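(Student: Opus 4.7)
The plan is to apply Proposition~\ref{contained Bcone}, which reduces the statement to showing that for every sequence $\k$ of indices in $[n]$, the image $\eta_\k^B(C)$ of the limit cone is sign-coherent. Since the nonnegative linear span of a sign-coherent set is sign-coherent, and since $\eta_\k^B$ is continuous, it suffices to show that the set $\set{\eta_\k^B(\v_i):i\in[p]}$ is sign-coherent, where $\v_i$ is a unit vector spanning the limit ray $\rho_i$.

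First I would fix $\k$ and write $\v_{m;i}$ for a unit vector in $\rho_{m;i}$, so that $\v_{m;i}\to\v_i$ as $m\to\infty$ by Definition~\ref{limit def}, and hence $\eta_\k^B(\v_{m;i})\to\eta_\k^B(\v_i)$ by continuity of $\eta_\k^B$. For each $m$, the vectors $\v_{m;i}$ lie in the $B$-cone $C_m$, so by Proposition~\ref{contained Bcone} the set $\set{\eta_\k^B(\v_{m;i}):i\in[p]}$ is sign-coherent.

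Next I would argue by contradiction: suppose $\set{\eta_\k^B(\v_i):i\in[p]}$ fails to be sign-coherent. Then there exist $i_1,i_2\in[p]$ and $j\in[n]$ such that the $j\th$ coordinate of $\eta_\k^B(\v_{i_1})$ is strictly positive and the $j\th$ coordinate of $\eta_\k^B(\v_{i_2})$ is strictly negative. By continuity, the same strict inequalities hold for $\eta_\k^B(\v_{m;i_1})$ and $\eta_\k^B(\v_{m;i_2})$ for all sufficiently large $m$, contradicting the sign-coherence of $\set{\eta_\k^B(\v_{m;i}):i\in[p]}$. This argument works because strict opposite signs are an open condition, and the set of $m$ for which such a violation occurs must be empty for large $m$.

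Finally, having established that $\set{\eta_\k^B(\v_i):i\in[p]}$ is sign-coherent for every $\k$, Proposition~\ref{contained Bcone} yields a $B$-cone $D$ containing $\set{\v_1,\ldots,\v_p\}$. Since $D$ is a convex cone, it contains the nonnegative $\reals$-linear span of these vectors, which is exactly the limit cone $C$. There is no serious obstacle here; the only subtlety is noticing that sign-coherence is preserved under limits of finite sets because a failure of sign-coherence in the limit is witnessed by a pair of strict sign conditions, which persist in a neighborhood.
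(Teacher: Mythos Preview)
Your proof is correct and follows essentially the same route as the paper: both argue by contradiction that a failure of sign-coherence in $\set{\eta_\k^B(\v_i):i\in[p]}$ would persist for large $m$ by continuity of $\eta_\k^B$, contradicting Proposition~\ref{contained Bcone} applied to $C_m$, and then both conclude by applying Proposition~\ref{contained Bcone} to the finite set $\set{\v_i}$ and using convexity of the resulting $B$-cone. One small remark: the reduction you sketch in the first paragraph (``nonnegative linear span of a sign-coherent set is sign-coherent, and $\eta_\k^B$ is continuous'') does not by itself justify passing from sign-coherence of $\set{\eta_\k^B(\v_i)}$ to sign-coherence of $\eta_\k^B(C)$, since $\eta_\k^B$ need not be linear on $C$ \emph{a priori}; but your final paragraph gives the correct argument (apply Proposition~\ref{contained Bcone} to the generators, then use that a $B$-cone is convex), which is exactly what the paper does.
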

\begin{proof}
Suppose $(C_m:m=1,2,\ldots)$ is a sequence of cones, each the nonnegative linear span of rays $\set{\rho_{m;i}:i=1,2,\ldots,p}$ such that $(\rho_{m;i}:m=1,2,\ldots)$ converges for each $i\in[p]$.
Write $C$ for the limiting cone.
For each $m\ge1$ and $i\in[p]$, let $\v_{m;i}$ be the unit vector in $\rho_{m;i}$, and write $\v_i$ for the limit of the sequence $(\v_{m;i}:m=1,2,\ldots)$ for each $i$.

If $\eta_\k^B(\v_i)$ and $\eta_\k^B(\v_j)$ have strictly opposite signs in some coordinate, for some sequence $\k$, then by the continuity of $\eta_\k^B$, for large enough $m$, the vectors $\eta_\k^B(\v_{m;i})$ and $\eta_\k^B(\v_{m;j})$ have strictly opposite signs in that coordinate.
This contradicts Proposition~\ref{contained Bcone} because $\v_{m;i}$ an $\v_{m;j}$ are both in $C_m$, so $\set{\eta_\k^B(\v_i):i=1,2,\ldots,p}$ is sign-coherent for each $\k$.
Proposition~\ref{contained Bcone} says that the set $\set{\v_i:i=1,2,\ldots,p}$ is contained in some $B$-cone.
Now Proposition~\ref{convex} implies that the set $C$ of nonnegative linear combinations of $\set{\v_i:i=1,2,\ldots,p}$ is contained in that $B$-cone.
\end{proof}

\section{\texorpdfstring{$\g$}{g}-Vectors and the mutation fan}\label{g vec sec}
In this section, we show that the mutation fan $\F_B$ contains an embedded copy of a fan defined by $\g$-vectors for $B^T$.
The result is conditional on the following conjecture, which shown in Proposition~\ref{H B} to be equivalent to a conjecture from~\cite{ca4}.
\begin{conj}\label{pos orth B}
The nonnegative orthant $(\reals_{\ge0})^n$ is a $B$-cone.
\end{conj}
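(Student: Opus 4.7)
The plan is to reduce Conjecture~\ref{pos orth B} to the Fomin--Zelevinsky sign-coherence conjecture for principal coefficients at $B$, since it is unlikely to be attacked directly from the $B$-class definition. First I would apply Proposition~\ref{contained Bcone} to the finite set $\{\e_1,\ldots,\e_n\}$: this set lies in a common $B$-cone if and only if $\{\eta_\k^B(\e_1),\ldots,\eta_\k^B(\e_n)\}$ is sign-coherent for every finite sequence $\k$ of indices in $[n]$. Comparing Definition~\ref{map} with Definition~\ref{mutation}, the vector $\eta_\k^B(\e_i)$ is exactly the $i\th$ coefficient row of $\mu_\k$ applied to the extended exchange matrix $\bigl(\begin{smallmatrix}B\\I_n\end{smallmatrix}\bigr)$ of principal coefficients. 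Hence the sign-coherence requirement above is literally the Fomin--Zelevinsky conjecture, and this identification is what Proposition~\ref{H B} should be articulating.

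Assuming the sign-coherence holds, I would then upgrade ``$\{\e_1,\ldots,\e_n\}$ lies in some $B$-cone $C$'' to ``the nonnegative orthant equals $C$.'' Since each $B$-cone is a closed convex cone (Proposition~\ref{convex}) and contains the $\e_i$'s, it must contain the entire nonnegative orthant. By Proposition~\ref{linear}, each mutation map $\eta_\k^B$ is linear on $C$, so for any $\a=\sum a_i\e_i$ with all $a_i>0$ we have $\eta_\k^B(\a)=\sum a_i\eta_\k^B(\e_i)$. Sign-coherence means that in every coordinate $j$, the numbers $(\eta_\k^B(\e_i))_j$ have a common weak sign; therefore the sign of the $j\th$ coordinate of $\eta_\k^B(\a)$ is determined solely by which $(\eta_\k^B(\e_i))_j$ vanish, independently of the particular positive coefficients $a_i$. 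Consequently all interior points of the nonnegative orthant lie in a single $B$-class $C'$.

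To finish, I would identify $C'$ with the open positive orthant. Taking $\k$ to be the empty sequence, every element of $C'$ has the sign vector $(1,\ldots,1)$, so $C'$ is contained in the open positive orthant. The reverse inclusion was obtained in the previous paragraph, so $C'$ is exactly the open orthant and its closure is the nonnegative orthant. By Definition~\ref{B cones}, this closure is a $B$-cone, proving Conjecture~\ref{pos orth B}.

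The main obstacle is of course the sign-coherence conjecture itself: the polyhedral reformulation via Propositions~\ref{contained Bcone} and~\ref{linear} is essentially formal, but extracting the sign-coherence of mutated principal coefficients requires substantial machinery (representation theory of quivers with potentials, scattering diagrams, or cluster categorification) and is external to the approach developed in this paper. The virtue of the proposed plan is that it localizes the difficulty entirely on the known conjecture, so that any future proof of sign-coherence will immediately yield Conjecture~\ref{pos orth B}.
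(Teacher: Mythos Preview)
Your approach is correct and matches the paper's treatment: the statement is left as a conjecture there, and the paper does not prove it outright but rather establishes in Proposition~\ref{H B} precisely the equivalence with sign-coherence that you outline, using Proposition~\ref{contained Bcone} in the same way. Your argument that the $B$-cone containing $\{\e_1,\ldots,\e_n\}$ must equal the nonnegative orthant is slightly more explicit than the paper's (which simply observes that no $B$-cone can strictly contain $O$ by applying sign-coherence to the empty sequence), but both arrive at the same reduction and leave the same external obstacle.
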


At the time of writing, the conjecture is known for some exchange matrices $B$ and not for others (Remark~\ref{when sign-coherence known}), so we need to be precise about what we require.

\begin{definition}[\emph{Standard Hypotheses on $B$}]\label{standard hyp}  
The \newword{Standard Hypotheses} on $B$ are that Conjecture~\ref{pos orth B} holds for every exchange matrix in the mutation class of $B$ and for every exchange matrix in the mutation class of $-B$.
We use the symbol~$O$ for the nonnegative orthant $(\reals_{\ge0})^n$.
In light of Proposition~\ref{antipodal FB}, we restate the Standard Hypotheses as the requirement that both $O$ and the nonpositive orthant $-O$ are $B'$-cones for every exchange matrix $B'$ in the mutation class of $B$. 
\end{definition}

The Standard Hypotheses allow us to relate the mutation fan for $B$ to the $\g$-vectors associated to cluster variables in a cluster algebra with principal coefficients.

\begin{definition}[\emph{Principal coefficients}]\label{prin def}
Given an exchange matrix $B$, consider the $2n\times n$ extended exchange matrix $\tB$ whose top $n$ rows are $B$ and whose bottom $n$ rows are the $n\times n$ identity matrix.
A cluster pattern or $Y$-pattern with $\tB$ in its initial seed is said to have \newword{principal coefficients} at the initial seed.
We write $\x^{B;t_0}_t$ for the cluster indexed by $t$ in the cluster pattern with exchange matrix $B$ and principal coefficients at the vertex $t_0$ of $\T_n$.
The notation $x^{B;t_0}_{i;t}$ denotes the $i\th$ cluster variable in the cluster $\x^{B;t_0}_t$.
\end{definition}

The $\g$-vectors are most naturally defined as a $\integers^n$-grading on the cluster algebra.
(See \cite[Section~6]{ca4}.)
For convenience, however, we take as the definition a recursion (in fact two recursions) on $\g$-vectors established in \cite[Proposition~6.6]{ca4}.

\begin{definition}[\emph{$\g$-Vectors}]\label{gvec def}
We define a \newword{$\g$-vector} $\g^{B;t_0}_{i;t}$ for each cluster variable $x^{B;t_0}_{i;t}$.
The $\g$-vector associated to the cluster variable $x^{B;t_0}_{\ell,t_0}$ in the initial seed is the standard unit basis vector $\e_\ell\in\reals^n$.
The remaining $\g$-vectors are defined by the following recurrence relation, in which we have suppressed the superscripts ${B;t_0}$.
\begin{equation}
\label{g recur}
\g_{\ell,t'}=\begin{cases}
\g_{\ell,t}&\text{if $\ell\neq k$,}\\
-\g_{k,t}+\sum_{i=1}^n[b_{ik}^t]_+\g_{i;t}-\sum_{i=1}^n[b^t_{n+i,k}]_+\col(i,B)&\text{if $\ell=k$.} 
\end{cases}
\end{equation}
for each edge $t\dashname{k}t'$ in $\T_n$.
The entries $b^t$ refer to the $Y$-pattern of geometric type with exchange matrix $B$ at $t_0$ and principal coefficients.
The notation $\col(j,B)$ refers to the $j\th$ column of the initial exchange matrix $B$.

The $\g$-vectors are also defined by the recurrence relation
\begin{equation}
\label{g recur alt}
\g_{\ell,t'}=\begin{cases}
\g_{\ell,t}&\text{if $\ell\neq k$,}\\
-\g_{k,t}+\sum_{i=1}^n[-b_{ik}^t]_+\g_{i;t}-\sum_{i=1}^n[-b^t_{n+i,k}]_+\col(i,B)&\text{if $\ell=k$.} 
\end{cases}
\end{equation}
The equivalence of~\eqref{g recur} and~\eqref{g recur alt} is not obvious in this context, but the non-recursive definition given in \cite[Section~6]{ca4} validates the equivalence and the well-definition of the recursive definitions.
See the discussion in the proof of \cite[Proposition~6.6]{ca4}.
\end{definition}

\begin{definition}[\emph{$\g$-Vector cone}]\label{gvec cone def}
For each cluster $\x^{B;t_0}_t$, the associated \newword{$\g$-vector cone} is the nonnegative linear span of the vectors $\set{\g^{B;t_0}_{i;t}:i\in[n]}$.
We write $\Cone^{B;t_0}_t$ for the $\g$-vector cone associated to $\x^{B;t_0}_t$.
\end{definition}

\begin{definition}[\emph{Transitive adjacency and the subfan $\F^\circ_B$}]\label{adj def}
Two full-dimensional cones are \newword{adjacent} if they have disjoint interiors but share a face of codimension~$1$.
Two full-dimensional cones $C$ and $D$ in a fan $\F$ are \newword{transitively adjacent in $\F$} if there is a sequence $C=C_0,C_1,\ldots,C_k=D$ (possibly with $k=0$) of full-dimensional cones in $\F$ such that $C_{i-1}$ and $C_i$ are adjacent for each $i\in[k]$.
If the nonnegative orthant $O$ is a $B$-cone, then the full-dimensional cones in $\F_B$ that are transitively adjacent to $O$ in $\F_B$ are the maximal cones of a subfan $\F^\circ_B$ of $\F_B$.
\end{definition}

The following is the main result of this section.
\begin{theorem}\label{g subfan}
Assume the Standard Hypotheses on $B$.
Then the subfan $\F_B^\circ$ of $\F_B$ is the set of $\g$-vector cones $\set{\Cone^{B^T;t_0}_t:t\in\T_n}$, together with their faces.  
\end{theorem}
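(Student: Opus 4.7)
The theorem asserts two things: (a) each $\g$-vector cone $\Cone^{B^T;t_0}_t$ is a $B$-cone lying in the subfan $\F_B^\circ$; and (b) every full-dimensional cone of $\F_B^\circ$ is one of these cones. My plan is to establish (a) by induction on the distance from $t_0$ in $\T_n$, and then derive (b) from a uniqueness property of neighbors in a fan. The base case is immediate: at $t=t_0$ the $\g$-vectors are $\e_1,\dots,\e_n$, so $\Cone^{B^T;t_0}_{t_0}=O$, which is a $B$-cone by the Standard Hypotheses.

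For the inductive step on an edge $t\dashname{k}t'$, the identity $\g^{B^T;t_0}_{i,t'}=\g^{B^T;t_0}_{i,t}$ for $i\ne k$ shows that $\Cone^{B^T;t_0}_t$ and $\Cone^{B^T;t_0}_{t'}$ share the codimension-$1$ face $F$ spanned by the common $\g$-vectors. The recursions (\ref{g recur}) and (\ref{g recur alt}) express $\g^{B^T;t_0}_{k,t'}$ as $-\g^{B^T;t_0}_{k,t}$ plus a linear combination of the other $\g^{B^T;t_0}_{i,t}$, so in the basis $\{\g^{B^T;t_0}_{i,t}:i\in[n]\}$ the vectors $\g^{B^T;t_0}_{k,t}$ and $\g^{B^T;t_0}_{k,t'}$ have $k\th$ coordinates $+1$ and $-1$ respectively; the two cones therefore lie strictly on opposite sides of $\Span(F)$. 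To conclude that $\Cone^{B^T;t_0}_{t'}$ is itself a $B$-cone, I would invoke Nakanishi--Zelevinsky tropical duality between $c$-vectors for $B$ at $t$ (which are precisely $\eta^B_\k(\e_i)$ for a path $\k$ from $t_0$ to $t$) and $\g$-vectors for $B^T$ at $t$. Under the Standard Hypotheses, this duality identifies $\Cone^{B^T;t_0}_t$ as the image of one of the orthants $\pm O$ under an explicit composition of mutation maps $\eta^{B'}_\ell$. Proposition~\ref{cones preserved}, combined with the Standard Hypothesis that $\pm O$ is a cone in every mutation-equivalent matrix's fan, then transfers the $B$-cone property to $\Cone^{B^T;t_0}_{t'}$ and shows it is adjacent to $\Cone^{B^T;t_0}_t$ across $F$.

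For part (b), I use that in any fan, a full-dimensional cone has at most one adjacent full-dimensional cone across each facet, because two full-dimensional cones sharing a codimension-$1$ face both fill a common neighborhood of the relative interior of that face on the appropriate side of its span, forcing them to coincide unless their interiors overlap. Given any full-dimensional $D\in\F_B^\circ$, I pick a chain of adjacent full-dimensional $B$-cones $O=C_0,C_1,\dots,C_m=D$, and by induction on $i$ identify each $C_i$ as a $\g$-vector cone: assuming $C_i=\Cone^{B^T;t_0}_s$, the facet shared with $C_{i+1}$ must be one of the facets $\{\g^{B^T;t_0}_{j,s}:j\ne k\}$ of the simplicial cone $C_i$, and part (a) produces the $B$-cone $\Cone^{B^T;t_0}_{s'}$ with $s\dashname{k}s'$ on the side opposite $\g^{B^T;t_0}_{k,s}$; uniqueness of the neighbor then forces $C_{i+1}=\Cone^{B^T;t_0}_{s'}$. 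The main obstacle is the tropical duality step in part (a): pinning down which orthant $\Cone^{B^T;t_0}_t$ corresponds to, and verifying that the Standard Hypothesis on the appropriate mutated matrix (possibly in the mutation class of $-B$) applies. This is where both halves of the Standard Hypotheses---sign-coherence throughout the mutation classes of both $B$ and $-B$---are essential.
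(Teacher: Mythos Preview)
Your approach is essentially the paper's. The paper isolates the core of your part (a) as a separate proposition (Proposition~\ref{Cone B}): for a path $t_0,\dots,t_q$ in $\T_n$ with edge labels $k_1,\dots,k_q$, one has $\Cone^{B^T;t_0}_{t_q}=\eta_{k_1,\dots,k_q}^{\mu_{k_q,\dots,k_1}(B)}(O)$, a full-dimensional $B$-cone. The induction proceeds by applying the $\g$-vector transformation rule (Conjecture~\ref{gvec mu}, i.e.\ \cite[Conjecture~7.12]{ca4}, which follows from the Standard Hypotheses via Theorem~\ref{NZ 4.1(v)}) to pass from $B$ to $\mu_{k_1}(B)$, then invoking Proposition~\ref{cones preserved}. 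This is exactly the identification you describe, and your concern about ``pinning down which orthant'' is unnecessary: it is always $O$, never $-O$. One terminological point: what you call ``tropical duality between $c$-vectors for $B$ and $\g$-vectors for $B^T$'' is not quite the statement used; the relevant input is the transformation rule $\g_{i;t}^{B_1;t_1}=\eta_k^{B_0^T}(\g_{i;t}^{B_0;t_0})$ for $B_1=\mu_k(B_0)$, which is a different (though closely related) consequence of sign-coherence proved in the same Nakanishi--Zelevinsky paper.

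For part (b), the paper simply says the theorem is ``an immediate corollary'' of Proposition~\ref{Cone B} together with the adjacency of consecutive $\g$-vector cones; you have correctly spelled out the uniqueness-of-neighbor argument in a fan that makes this immediate.
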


Before proving the theorem, we discuss the Standard Hypotheses further.
The following assertion appears in the proof of \cite[Proposition~5.6]{ca4} as condition (ii'), which is shown there to be equivalent to \cite[Conjecture~5.4]{ca4}.

\begin{conj}\label{H coherent}
For each extended exchange matrix in a $Y$-pattern with initial exchange matrix $B$ and principal coefficients, rows $n+1$ through $2n$ are a set of sign-coherent vectors.
\end{conj}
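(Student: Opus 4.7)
The plan is to reduce Conjecture~\ref{H coherent} to the polyhedral formulation of Conjecture~\ref{pos orth B}, which the paper presumably establishes in the (as-yet-unseen) Proposition~\ref{H B}. Once that reduction is in hand, proving Conjecture~\ref{H coherent} amounts to showing that the nonnegative orthant $O=(\reals_{\ge 0})^n$ is a $B$-cone, which by Proposition~\ref{contained Bcone} reduces in turn to showing that for every finite sequence $\k$ of indices in $[n]$, the image $\eta_\k^B(O)$ is sign-coherent. Equivalently, each mutation map $\eta_\k^B$ must be linear (not merely piecewise linear) when restricted to~$O$.

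First I would attempt an inductive argument on the length $q$ of $\k=k_q,\ldots,k_1$. The base case is trivial. For the inductive step, suppose $\eta_{k_{q-1},\ldots,k_1}^B(O)$ lies in a single orthant $O'$ of $\reals^n$; one must then show that $O'$ is contained in a single domain of linearity of the map $\eta_{k_q}^{B_q}$. Inspection of \eqref{mutation map def} shows that this holds precisely when the $k_q\th$ coordinates of every vector in $O'$ share a common sign, which is again sign-coherence of the same type we are trying to prove. Thus the polyhedral machinery alone does not bootstrap, and an external invariant controlling the signs is required.

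The main obstacle is constructing such an invariant. For $B$ skew-symmetric, I would adopt the Derksen--Weyman--Zelevinsky framework of quiver representations with potential: attach to the quiver $Q$ encoding $B$ a nondegenerate potential $W$, and interpret the coefficient rows $\b_{n+i}$ (up to sign) as dimension vectors of certain indecomposable decorated representations of $(Q,W)$ produced by iterated mutation. Since dimension vectors of modules are componentwise nonnegative, each row $\b_{n+i}$ is then either nonnegative or nonpositive entrywise, yielding sign-coherence. This argument requires a full mutation theory for decorated representations and a matching result identifying mutated coefficient rows with (signed) dimension vectors, both of which are substantial inputs. The skew-symmetrizable case could then, in principle, be reduced to the skew-symmetric case by unfolding $B$ as the fixed-point data of an admissible group action on a larger skew-symmetric quiver, and transferring sign-coherence along the folding.

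The hard part is that unfolding is not always available and the categorification step is delicate: one must show that every $c$-vector arising from an iterated mutation is realized as the dimension vector of a genuine indecomposable object, a nontrivial mutation-invariance statement. Historically this is exactly why the fully general skew-symmetrizable case required an entirely different tool (consistent scattering diagrams in the sense of Gross--Hacking--Keel--Kontsevich) rather than a direct argument from the mutation-fan framework of this paper. Absent one of these external inputs, I see no route to a proof purely within the polyhedral language developed here, which is consistent with the author's decision to leave the statement as a conjecture and to phrase subsequent results conditionally via the Standard Hypotheses.
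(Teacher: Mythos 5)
The statement you were asked to prove is explicitly labeled a \emph{conjecture} in the paper, and the paper offers no proof of it: Conjecture~\ref{H coherent} is the sign-coherence conjecture of Fomin and Zelevinsky (equivalent to \cite[Conjecture~5.4]{ca4}), stated here so that later results can be conditioned on it via the ``Standard Hypotheses'' of Definition~\ref{standard hyp}. Your proposal correctly recognizes this. The only content the paper supplies on this point is Proposition~\ref{H B}, which establishes the equivalence of Conjecture~\ref{H coherent} with Conjecture~\ref{pos orth B} (that the nonnegative orthant is a $B$-cone), and Remark~\ref{when sign-coherence known}, which surveys the cases proved elsewhere. You reproduce the reduction in Proposition~\ref{H B} accurately, and your observation that the polyhedral induction fails to bootstrap — sign-coherence of $\eta_{\k}^B(O)$ at one step is exactly what is needed to propagate linearity to the next, so nothing internal forces the signs — is the right diagnosis of why no proof is possible within the mutation-fan framework alone. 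Your sketch of the external inputs that close the gap (Derksen--Weyman--Zelevinsky categorification in the skew-symmetric case, and, historically, scattering diagrams for the general skew-symmetrizable case) matches the citations in Remark~\ref{when sign-coherence known} and the subsequent literature, though note that the paper's cited route to some non-skew-symmetric cases is Demonet's group-species construction rather than unfolding. In short: there is no paper proof to compare against, and your account of why the statement is left conjectural and what it would take to settle it is accurate.
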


Many cases of Conjecture~\ref{H coherent} are known.  See Remark~\ref{when sign-coherence known}.

\begin{prop}\label{H B}
Conjecture~\ref{H coherent} holds for a given $B$ if and only if Conjecture~\ref{pos orth B} holds for $B$.
\end{prop}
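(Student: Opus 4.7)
The plan is to prove both implications by reinterpreting the bottom $n$ coefficient rows of the extended exchange matrix at vertex $t$ as images of the standard unit basis vectors under the mutation map. Since the mutation rule~\eqref{b mut} for a coefficient row $b'_{ij}$ (with $i$ in the coefficient index set) depends only on that row, on $k$, and on the top part $B$, an extended exchange matrix with top part $B$ and a single coefficient row $\a\in\reals^n$ mutates via $\mu_k$ to one with top part $\mu_k(B)$ and single coefficient row $\eta^B_k(\a)$; this is precisely Definition~\ref{map}. Iterating, the $n$ coefficient rows of the extended exchange matrix at vertex $t$ in the $Y$-pattern with principal coefficients at $t_0$ are exactly $\eta^B_\k(\e_1),\ldots,\eta^B_\k(\e_n)$, where $\k$ is the sequence of edge labels along the path in $\T_n$ from $t_0$ to $t$.

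Next I would rephrase Conjecture~\ref{H coherent} in light of this identification: the conjecture becomes the statement that, for every finite sequence $\k$ of indices in $[n]$, the finite set $\{\eta^B_\k(\e_1),\ldots,\eta^B_\k(\e_n)\}$ is sign-coherent in the sense of Definition~\ref{sign-coherent def}. By Proposition~\ref{contained Bcone} applied to the finite set $\{\e_1,\ldots,\e_n\}$, this condition is equivalent to the assertion that $\{\e_1,\ldots,\e_n\}$ is contained in some $B$-cone.

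To finish, I would show that this last condition is equivalent to $O$ being a $B$-cone. One direction is trivial: if $O$ is a $B$-cone then of course it contains $\e_1,\ldots,\e_n$. For the other direction, suppose $\{\e_1,\ldots,\e_n\}\subseteq C$ for some $B$-cone $C$. By Proposition~\ref{convex}, $C$ is a closed convex cone, so it contains every nonnegative $\reals$-linear combination of $\e_1,\ldots,\e_n$, i.e.\ $O\subseteq C$. On the other hand, by Proposition~\ref{sign-coherent}, $C$ is sign-coherent and hence lies in some coordinate orthant; the only coordinate orthant containing all of $\e_1,\ldots,\e_n$ is $O$ itself, so $C\subseteq O$. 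Thus $C=O$, and $O$ is a $B$-cone.

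There is essentially no obstacle here; the content of the proposition lies in the row-by-row interpretation of mutation of coefficient rows combined with the already-established Proposition~\ref{contained Bcone}. The only minor subtlety worth stating carefully is the last paragraph, where one must invoke both convexity (Proposition~\ref{convex}) and sign-coherence (Proposition~\ref{sign-coherent}) to promote the containment $\{\e_1,\ldots,\e_n\}\subseteq C$ to the equality $C=O$.
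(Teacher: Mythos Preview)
Your proof is correct and follows essentially the same approach as the paper: both identify the coefficient rows with $\eta^B_\k(\e_i)$, invoke Proposition~\ref{contained Bcone} to reduce to whether $\{\e_1,\ldots,\e_n\}$ lies in a $B$-cone, and then argue that any such $B$-cone must equal $O$. The only cosmetic difference is that the paper phrases the last step as ``no set strictly larger than $O$ is sign-coherent'' (via Proposition~\ref{contained Bcone} with $\k$ empty), whereas you invoke Propositions~\ref{convex} and~\ref{sign-coherent} separately to sandwich $C$ between $O$ and $O$; these are the same argument.
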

\begin{proof}
Conjecture~\ref{H coherent} is equivalent to the following assertion: 
For each sequence~$\k$, the set $\set{\eta_{\k}(\e_i):i\in[n]}$ is sign-coherent.
Proposition~\ref{contained Bcone} says that the latter assertion is equivalent to the assertion that $\set{\e_i:i\in[n]}$ is contained in some $B$-cone.
Any set $O'$ strictly larger than $O$ contains a vector with a strictly negative entry and also a vector with all entries positive.
Thus Proposition~\ref{contained Bcone} (with $\k$ the empty sequence) implies that $O'$ is not a $B$-cone.
Since a $B$-cone containing $\set{\e_i:i\in[n]}$ is no larger than $O$, the set $\set{\e_i:i\in[n]}$ is contained in some $B$-cone if and only if $O$ is a $B$-cone.
\end{proof}

A crucial consequence of the Standard Hypotheses is another conjecture, \cite[Conjecture~7.12]{ca4}.
We quote the following weak version.

\begin{conj}\label{7.12}
Let $t_0\dashname{k} t_1$ be an edge in $\T_n$ and let $B_0$ and $B_1$ be exchange matrices such that $B_1=\mu_k(B_0)$.
Then, for any $t\in\T_n$ and $i\in[n]$, the $\g$-vectors $\g_{i;t}^{B_0;t_0}=(g_1,\ldots,g_n)$ and $\g_{i;t}^{B_1;t_1}=(g_1',\ldots,g_n')$ are related by
\begin{equation}\label{7.12 eq}
g'_j=\left\lbrace\!\!\!\begin{array}{ll}
-g_k&\mbox{if }j=k;\\[3 pt]
g_j+[b_{jk}]_+g_k-b_{jk}\min(g_k,0)&\mbox{if }j\neq k,
\end{array}\right.
\end{equation}
where the quantities $b_{jk}$ are the entries of the matrix $B_0$.
\end{conj}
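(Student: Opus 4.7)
The plan is to reformulate Conjecture~\ref{7.12} as a tropical coefficient-mutation statement and then derive it from the $\integers^n$-grading interpretation of $\g$-vectors. Applying formula~\eqref{b mut} in direction $k$ to a single coefficient row $(g_1,\ldots,g_n)$ attached to the principal part $B_0^T$ yields $g'_k=-g_k$ and $g'_j=g_j+\sgn(b_{jk})[g_k b_{jk}]_+$ for $j\neq k$; a four-case check on the signs of $b_{jk}$ and $g_k$ identifies this with the right-hand side of~\eqref{7.12 eq}. So the task reduces to showing that the passage from $\g^{B_0;t_0}_{i;t}$ to $\g^{B_1;t_1}_{i;t}$ is exactly this tropical coefficient mutation.

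To establish the reduction, I would invoke the non-recursive characterization of $\g$-vectors from \cite[Section~6]{ca4}: in the principal-coefficient cluster algebra at $t_0$, the cluster variable $x^{B_0;t_0}_{i;t}$ is homogeneous of degree $\g^{B_0;t_0}_{i;t}$ under the $\integers^n$-grading in which $\deg(x_j)=\e_j$ and $\deg(y_j)=-\col(j,B_0)$. Comparing this grading with the analogous one at $t_1$ amounts to re-expressing each initial $y$-variable at $t_1$ as a tropical Laurent monomial in the initial $y$-variables at $t_0$, and the coefficient-mutation rule~\eqref{b mut} supplies these exponents directly.

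The key step is then to evaluate the tropical substitution on the exponent vector $\g^{B_0;t_0}_{i;t}$. Sign-coherence of principal coefficients---guaranteed by the Standard Hypotheses, equivalent to Conjecture~\ref{pos orth B} via Proposition~\ref{H B}---ensures that each tropical sum of the form $y_k\oplus 1$ reduces, coordinate by coordinate in the $u$-exponents, to a pure monomial determined solely by the sign of the relevant exponent. This collapses the substitution to a single piecewise-linear rule on $\integers^n$, with precisely the two linear pieces appearing in~\eqref{7.12 eq}.

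The main obstacle will be rigorously linking the $\integers^n$-grading at $t_0$ to the one at $t_1$ through the exchange relation~\eqref{x mut} without a circular appeal to~\eqref{g recur} or~\eqref{g recur alt}. My strategy is to first verify the formula at $t=t_0$, where $\g^{B_0;t_0}_{j;t_0}=\e_j$ and $\g^{B_1;t_1}_{j;t_0}$ can be read off from a single exchange relation; then to propagate the identity across arbitrary $t\in\T_n$ by showing that the two $\g$-vector recursions at $t_0$ and at $t_1$ are intertwined by the tropical coefficient mutation, an intertwining that is again justified by sign coherence propagated through the entire mutation class of $B_0$.
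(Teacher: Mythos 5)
The paper does not prove this statement; it is stated as a conjecture (a weak form of \cite[Conjecture~7.12]{ca4}), and the paper handles it by (a)~rewriting \eqref{7.12 eq} as \eqref{7.12 eq rewrite} and observing, by comparison with \eqref{mutation map def}, that it is equivalent to Conjecture~\ref{gvec mu} ($\g_{i;t}^{B_1;t_1}=\eta^{B_0^T}_k(\g_{i;t}^{B_0;t_0})$), and then (b)~citing \cite[Proposition~4.2(v)]{NZ} for the conditional result stated as Theorem~\ref{NZ 4.1(v)} (the Standard Hypotheses imply Conjecture~\ref{gvec mu}). Your first paragraph, including the four-case sign check, reproduces step~(a) correctly and matches the paper's derivation of Conjecture~\ref{gvec mu}.

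The rest of your proposal attempts a direct proof, which the paper does not supply, and here there is a genuine gap. You write that comparing the $\integers^n$-gradings at $t_0$ and at $t_1$ ``amounts to re-expressing each initial $y$-variable at $t_1$ as a tropical Laurent monomial in the initial $y$-variables at $t_0$, and the coefficient-mutation rule~\eqref{b mut} supplies these exponents directly.'' But the cluster algebras with principal coefficients at $t_0$ and at $t_1$ are \emph{a priori} independent objects, each with its own free tropical generators, and \eqref{b mut} describes coefficient mutation \emph{within} a single $Y$-pattern; it does not by itself furnish a comparison map between the two coefficient semifields. What supplies that bridge in \cite{NZ} is the tropical duality between $\g$-vectors and $\mathbf{c}$-vectors (the bottom rows of the principal-coefficient extended exchange matrices), which is precisely where sign-coherence does its work. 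Your proposed base-case-plus-propagation scheme inherits the same difficulty: the recursions \eqref{g recur}, \eqref{g recur alt} at $t_0$ and at $t_1$ involve the respective $\mathbf{c}$-vectors, so showing they are intertwined by tropical mutation requires knowing how those $\mathbf{c}$-vectors compare across base points --- which is again the $\g$/$\mathbf{c}$ duality, not a consequence of \eqref{b mut} alone. As it stands, the grading observation plus the four-case check do not close the argument; you would need to import the duality statements from \cite{NZ} explicitly, at which point the argument becomes the one the paper is citing.
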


More to the point is a restatement of Conjecture~\ref{7.12} in the language of mutation maps.
First, we rewrite \eqref{7.12 eq} as 
\begin{equation}\label{7.12 eq rewrite}
g'_j=\left\lbrace\begin{array}{ll}
-g_k&\mbox{if }j=k;\\
g_j+b_{jk}g_k&\mbox{if $j\neq k$, $g_k\ge 0$ and $b_{jk}\ge 0$};\\
g_j-b_{jk}g_k&\mbox{if $j\neq k$, $g_k\le 0$ and $b_{jk}\le 0$};\\
g_j&\mbox{otherwise.}
\end{array}\right.
\end{equation}
Then, comparing with~\eqref{mutation map def}, we see that the following conjecture is equivalent to Conjecture~\ref{7.12}.
(See \cite[Remark~7.15]{ca4} for a related restatement of \cite[Conjecture~7.12]{ca4}.)

\begin{conj}\label{gvec mu}
Let $t_0\dashname{k} t_1$ be an edge in $\T_n$ and let $B_0$ and $B_1$ be exchange matrices such that $B_1=\mu_k(B_0)$.
Then, for any $t\in\T_n$ and $i\in[n]$, the $\g$-vectors $\g_{i;t}^{B_0;t_0}$ and $\g_{i;t}^{B_1;t_1}$ are related by $\g_{i;t}^{B_1;t_1}=\eta^{B_0^T}_k(\g_{i;t}^{B_0;t_0})$, where $B_0^T$ is the transpose of~$B_0$.
\end{conj}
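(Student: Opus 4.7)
The plan is to establish Conjecture~\ref{gvec mu} as a direct reformulation of Conjecture~\ref{7.12}, by matching the piecewise-linear formula \eqref{7.12 eq} term by term against the mutation-map formula \eqref{mutation map def} applied to the matrix $B_0^T$ at the single index $k$. Since the excerpt already derives \eqref{7.12 eq rewrite} from \eqref{7.12 eq}, the first step is really to justify that rewriting; the second step is to check that \eqref{7.12 eq rewrite} coincides with \eqref{mutation map def} for the transposed matrix.

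For the first step, I would evaluate the combination $[b_{jk}]_+\, g_k - b_{jk}\min(g_k,0)$ from \eqref{7.12 eq} in each of the four sign regions for $(g_k,b_{jk})$. In the region $g_k\ge 0$, $b_{jk}\ge 0$ the second summand vanishes and the first gives $b_{jk}g_k$; in the region $g_k\le 0$, $b_{jk}\le 0$ the first summand vanishes and the second gives $-b_{jk}g_k$; in the remaining two mixed-sign regions both summands vanish. Adding $g_j$ and handling the case $j=k$ separately recovers exactly \eqref{7.12 eq rewrite}.

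For the second step, I would compare \eqref{7.12 eq rewrite} to the definition of $\eta_k^{B_0^T}$. By Definition~\ref{map}, applied to the matrix $B_0^T$, the coefficient appearing in the non-trivial cases is the $kj$-entry of $B_0^T$, which is precisely the $jk$-entry of $B_0$, namely $b_{jk}$. With this substitution, the four cases of \eqref{mutation map def} applied to the vector $(g_1,\ldots,g_n)=\g_{i;t}^{B_0;t_0}$ and to the index $k$ agree case-by-case with the four cases of \eqref{7.12 eq rewrite} for $(g'_1,\ldots,g'_n)=\g_{i;t}^{B_1;t_1}$. The translation is reversible, so each instance of Conjecture~\ref{7.12} is equivalent to the corresponding instance of Conjecture~\ref{gvec mu}.

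There is no real obstacle: the argument is mechanical case analysis once the definitions are unpacked. The only substantive observation is the appearance of $B_0^T$ rather than $B_0$; this arises because \eqref{7.12 eq} pairs $b_{jk}$ with $g_k$ (i.e.\ it uses the $k\th$ column of $B_0$), whereas the mutation map $\eta_k^B$ is built from the $k\th$ row of its defining matrix, forcing the transposition in order for the two formulas to line up.
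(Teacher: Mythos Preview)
Your proposal is correct and follows essentially the same route as the paper: the paper simply rewrites \eqref{7.12 eq} as \eqref{7.12 eq rewrite} and then observes, by comparison with \eqref{mutation map def}, that Conjecture~\ref{gvec mu} is equivalent to Conjecture~\ref{7.12}. You have supplied the case-by-case verification that the paper leaves implicit, including the key observation that the transpose arises because \eqref{7.12 eq} uses the $k\th$ column of $B_0$ while the mutation map uses the $k\th$ row of its defining matrix.
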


Nakanishi and Zelevinsky proved \cite[Proposition~4.2(v)]{NZ} that if Conjecture~\ref{H coherent} is true for all $B$, then \cite[Conjecture~7.12]{ca4} (the strong form of Conjecture~\ref{7.12}) is true for all $B$.
Their argument also proves the following statement, with weaker hypotheses and weaker conclusions, and with Conjecture~\ref{7.12} replaced by the equivalent Conjecture~\ref{gvec mu}.

\begin{theorem}\label{NZ 4.1(v)}
If the Standard Hypotheses hold for $B$, then Conjecture~\ref{gvec mu} holds for all exchange matrices $B_0$ and $B_1$ mutation equivalent to $B$.
\end{theorem}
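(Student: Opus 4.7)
The plan is to follow the argument of Nakanishi and Zelevinsky from~\cite[Proposition~4.2(v)]{NZ}, checking that it goes through under our weaker hypothesis that sign-coherence (Conjecture~\ref{pos orth B}) holds only for matrices in the mutation classes of $B$ and $-B$. The central device is the NZ tropical duality between the matrix $G_t^{B;t_0}$ whose columns are the $\g$-vectors $\g_{i;t}^{B;t_0}$ and the matrix $C_t^{B;t_0}$ whose columns are the $c$-vectors (the rows $n+1$ through $2n$ of the extended exchange matrix at $t$ in the principal $Y$-pattern based at $t_0$). Once the duality is in hand, the mutation formula~\eqref{7.12 eq} for $\g$-vectors under a change of initial seed from $t_0$ to $t_1$ becomes equivalent to the mutation rule for $c$-vectors, which is read off directly from~\eqref{b mut}.

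I would proceed in three steps. First, observe that the Standard Hypotheses propagate: they apply to every matrix mutation equivalent to $B$, hence to every matrix occurring in the two relevant $Y$-patterns (based at $t_0$ and at $t_1$), and by Proposition~\ref{H B} this yields sign-coherence of all $c$-vectors appearing in the argument. Second, reproduce the tropical duality (for instance in the form $G_t^{B;t_0}\,D\,(C_t^{B^T;t_0})^T=D$, with $D$ the diagonal matrix of skew-symmetrizing constants) by induction on the distance from $t_0$ to $t$ in~$\T_n$; sign-coherence is used at each inductive step to collapse the piecewise-linear mutation rule to a single linear expression on the relevant sign-coherent block and to reconcile the two forms~\eqref{g recur} and~\eqref{g recur alt} of the $\g$-vector recursion. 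Third, apply~\eqref{b mut} to compute the transformation $C_t^{B_0;t_0}\mapsto C_t^{B_1;t_1}$ induced by the change of initial vertex $t_0\dashname{k}t_1$, and transport this transformation through the duality to obtain~\eqref{7.12 eq} for $\g$-vectors, which by~\eqref{7.12 eq rewrite} is exactly the assertion of Conjecture~\ref{gvec mu}.

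The main obstacle is verifying that the NZ argument never invokes sign-coherence for any matrix outside the combined mutation classes of $B$ and $-B$. The appearance of $B_0^T$ in the target formula $\g_{i;t}^{B_1;t_1}=\eta^{B_0^T}_k(\g_{i;t}^{B_0;t_0})$ is superficially worrisome, but the tropical duality couples $\g$-vectors of $B$ with $c$-vectors of $B^T$, so once the duality is established the $B^T$ side is handled without any separate sign-coherence input. A routine bookkeeping check is that the piecewise-linear formula~\eqref{mutation map def} defining $\eta^{B_0^T}_k$ and the formula~\eqref{7.12 eq rewrite} for the $\g$-vector transformation agree case by case; this structural coincidence is precisely what motivates the reformulation of \cite[Conjecture~7.12]{ca4} as Conjecture~\ref{gvec mu} and what makes the whole approach succeed.
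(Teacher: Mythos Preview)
Your proposal is correct and takes essentially the same approach as the paper: the paper does not write out a proof of this theorem at all, but simply asserts that the argument of Nakanishi and Zelevinsky in \cite[Proposition~4.2(v)]{NZ} goes through under the weaker hypothesis, yielding the correspondingly weaker conclusion. Your proposal is a reasonable expansion of that assertion, sketching the NZ tropical duality and noting where sign-coherence enters; the one point worth flagging is that the duality involves $c$-vectors for $B^T$, so you should be explicit that the Standard Hypotheses for $B$ imply those for $B^T$ (via the rescaling observation that $-B^T$ is a rescaling of $B$, as in Propositions~\ref{rescale iff} and~\ref{standard rescale}), which is logically independent of the theorem being proved.
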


The recursive definition of $\g$-vectors implies that if $t$ and $t'$ are connected by an edge in $\T_n$, then $\Cone^{B^T;t_0}_t$ and $\Cone^{B^T;t_0}_{t'}$ are adjacent.
Thus Theorem~\ref{g subfan} is an immediate corollary to the following proposition.

\begin{prop}\label{Cone B}
Assume the Standard Hypotheses on $B$.
If $t_0,t_1,\ldots,t_q$ is a path in $\T_n$, with the edge from $t_{i-1}$ to $t_i$ labeled $k_i$, then $\Cone^{B^T;t_0}_{t_q}$ is the full-dimensional $B$-cone $\eta_{k_1,\ldots,k_q}^{\mu_{k_q,\ldots,k_1}(B)}(O)$, where $O$ is the nonnegative orthant $(\reals_{\ge0})^n$.
\end{prop}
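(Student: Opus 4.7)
The plan is to iterate Conjecture~\ref{gvec mu} (available here by Theorem~\ref{NZ 4.1(v)} and the Standard Hypotheses) $q$ times to express each $\g$-vector $\g^{B^T;t_0}_{j;t_q}$ as the image of $\e_j$ under a composition of mutation maps, and then to invoke Proposition~\ref{linear} to upgrade this pointwise description to the claimed description of $\Cone^{B^T;t_0}_{t_q}$ as the image of the full orthant $O$. Set $B_i = \mu_{k_i,\ldots,k_1}(B)$ for $i=0,\ldots,q$, so that $B_0=B$, $B_q = \mu_{k_q,\ldots,k_1}(B)$, and $B_i = \mu_{k_i}(B_{i-1})$. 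A preliminary observation, obtained by a direct case check from~\eqref{b mut}, is that transpose commutes with mutation: $(\mu_k(B))^T = \mu_k(B^T)$. Hence $B_i^T = \mu_{k_i}(B_{i-1}^T)$ as well, so the pair $(B_{i-1}^T,B_i^T)$ is a legitimate input to Conjecture~\ref{gvec mu} along the edge $t_{i-1}\dashname{k_i}t_i$.

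Feeding that input into Conjecture~\ref{gvec mu} yields
\[
\g_{j;t}^{B_i^T;t_i} \;=\; \eta^{(B_{i-1}^T)^T}_{k_i}\bigl(\g_{j;t}^{B_{i-1}^T;t_{i-1}}\bigr) \;=\; \eta^{B_{i-1}}_{k_i}\bigl(\g_{j;t}^{B_{i-1}^T;t_{i-1}}\bigr),
\]
where the transpose in the exponent of $\eta$ cancels precisely because we replaced $B$ with $B^T$ in the $\g$-vector superscripts. Composing these identities for $i=1,\ldots,q$ with $t=t_q$, and using that $\g_{j;t_q}^{B_q^T;t_q}=\e_j$ since the $\g$-vectors of the initial cluster are the standard basis vectors, I obtain
\[
\e_j \;=\; \eta^{B_{q-1}}_{k_q}\circ\cdots\circ\eta^{B_0}_{k_1}\bigl(\g_{j;t_q}^{B^T;t_0}\bigr) \;=\; \eta^{B}_{k_q,\ldots,k_1}\bigl(\g_{j;t_q}^{B^T;t_0}\bigr).
\]
Applying the inverse given in Definition~\ref{map}, this rearranges to $\g_{j;t_q}^{B^T;t_0} = \eta^{B_q}_{k_1,\ldots,k_q}(\e_j)$.

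To finish, I must pass from the individual vectors $\eta^{B_q}_{k_1,\ldots,k_q}(\e_j)$ to the nonnegative cone they span, and this is where the Standard Hypotheses do real work. They guarantee that $O$ is a $B_q$-cone (since $B_q$ is mutation-equivalent to $B$), so by Proposition~\ref{linear} the map $\eta^{B_q}_{k_1,\ldots,k_q}$ is linear on $O$; consequently the positive linear span of its values on the $\e_j$ equals its image on $O$. Thus $\Cone^{B^T;t_0}_{t_q} = \eta^{B_q}_{k_1,\ldots,k_q}(O)$. By Proposition~\ref{cones preserved} this image is a $\mu_{k_1,\ldots,k_q}(B_q)$-cone, and that matrix telescopes to $B$ by involutivity of each $\mu_{k_i}$. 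Full-dimensionality follows because $O$ is full-dimensional and $\eta^{B_q}_{k_1,\ldots,k_q}$ is a homeomorphism. The main subtlety, rather than any single hard step, is the bookkeeping with transposes: Conjecture~\ref{gvec mu} involves $B_0^T$ in the mutation map, which is exactly what is needed so that after substituting $B^T$ for $B$ in the $\g$-vector superscripts, the mutation maps in the statement of Proposition~\ref{Cone B} come out in terms of the $B_i$ themselves.
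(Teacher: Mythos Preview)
Your argument is correct and uses the same ingredients as the paper: Conjecture~\ref{gvec mu} via Theorem~\ref{NZ 4.1(v)}, Proposition~\ref{linear}, and Proposition~\ref{cones preserved}, together with the Standard Hypotheses to ensure $O$ is a $B_q$-cone. The only organizational difference is that the paper proceeds by induction on $q$, applying linearity once per step on the inductively known $\mu_{k_1}(B)$-cone, whereas you unroll the induction into a single composition and invoke linearity just once on $O$; this is a cosmetic difference, not a substantive one.
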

\begin{proof}
We argue by induction on $q$. 
In the course of the induction, we freely replace $B$ by elements of its mutation class. 
If $q=0$ then the proposition follows by the base of the inductive definition of $\g$-vectors and by Conjecture~\ref{pos orth B} for $B$.
If $q>0$, then take $B_0=B^T$ and $B_1=\mu_{k_1}(B^T)$.
Matrix mutation commutes with matrix transpose, so $B_1=(\mu_{k_1}(B))^T$.
By Theorem~\ref{NZ 4.1(v)}, Conjecture~\ref{gvec mu} holds for $B_0$ and $B_1$, so $\eta_{k_1}^B$ takes the extreme rays of $\Cone_{t_q}^{B^T;t_0}$ to the extreme rays of $\Cone_{t_q}^{\mu_{k_1}\!(B^T);t_1}$.
Recalling that $\eta_{k_1}^{\mu_{k_1}(B)}$ is the inverse of $\eta_{k_1}^B$, we see that $\eta_{k_1}^{\mu_{k_1}(B)}$ takes the extreme rays of $\Cone_{t_q}^{\mu_{k_1}\!(B^T);t_1}$ to the extreme rays of $\Cone_{t_q}^{B^T;t_0}$.
By induction (because the path from $t_1$ to $t_q$ has length $q-1$), the $\g$-vector cone $\Cone_{t_q}^{\mu_{k_1}\!(B^T);t_1}$ equals $\eta_{k_2,\ldots,k_q}^{\mu_{k_q,\ldots,k_2}(\mu_{k_1}(B))}(O)$, and this cone is a $\mu_{k_1}(B)$-cone.
Since $\Cone_{t_q}^{\mu_{k_1}\!(B^T);t_1}$ is a $\mu_{k_1}(B)$-cone, the map $\eta_{k_1}^{\mu_{k_1}(B)}$ is linear on it by Proposition~\ref{linear}, so this map takes the entire cone $\Cone_{t_q}^{\mu_{k_1}\!(B^T);t_1}$ to the cone $\Cone_{t_q}^{B^T;t_0}$ (rather than only mapping extreme rays to extreme rays).
Thus 
\[\Cone_{t_q}^{B^T;t_0}=\eta_{k_1}^{\mu_{k_1}(B)}\eta_{k_2,\ldots,k_q}^{\mu_{k_q,\ldots,k_2}(\mu_{k_1}(B))}(O).\]
Referring to \eqref{eta def}, we see that 
\[\eta_{k_2,\ldots,k_q}^{\mu_{k_q,\ldots,k_2}(\mu_{k_1}(B))}(O)=\eta_{k_2}^{\mu_{k_2k_1}(B))}\circ\eta_{k_3}^{\mu_{k_3k_2k_1}(B))}\circ\cdots\circ\eta_{k_q}^{\mu_{k_qk_{q-1}\cdots k_1}(B))}(O),\]
so that 
\[\Cone_{t_q}^{B^T;t_0}=\eta_{k_1}^{\mu_{k_1}(B)}\circ\eta_{k_2}^{\mu_{k_2k_1}(B))}\circ\cdots\circ\eta_{k_q}^{\mu_{k_qk_{q-1}\cdots k_1}(B))}(O)=\eta_{k_1,\ldots,k_q}^{\mu_{k_q,\ldots,k_1}(B)}(O).\]
Since $\eta_{k_2,\ldots,k_q}^{\mu_{k_q,\ldots,k_2}(\mu_{k_1}(B))}(O)$ is a $\mu_{k_1}(B)$-cone, Proposition~\ref{cones preserved} says that $\Cone_{t_q}^{B^T;t_0}$ is a $\mu_{k_1}(\mu_{k_1}(B))$-cone, or in other words, a $B$-cone.
\end{proof}

\smallskip

\begin{remark}~\label{when sign-coherence known}
Conjecture~\ref{H coherent} (and thus Conjecture~\ref{pos orth B}) is known in many cases, but currently not in full generality.
In particular, it was proved in \cite{QP2} for skew-symmetric exchange matrices.
(See also \cite{Nagao,Plamondon}.)
(In particular, since skew-symmetry is preserved under matrix mutation, Conjecture~\ref{gvec mu} is true whenever $B_0$ and $B_1$ are skew-symmetric.)
Conjecture~\ref{H coherent} is not specifically mentioned in \cite{QP2}, but \cite[Theorem~1.7]{QP2} establishes \cite[Conjecture~5.4]{ca4}, which is shown in the proof of \cite[Proposition~5.6]{ca4} to be equivalent to Conjecture~\ref{H coherent}.
In \cite{Demonet}, the construction of~\cite{QP2} is extended to some non-skew-symmetric exchange matrices.
In particular, \cite[Proposition~11.1]{Demonet} establishes Conjecture~\ref{H coherent} for a class of exchange matrices including all matrices of the form $DS$, where $D$ is an integer diagonal matrix and $S$ is an integer skew-symmetric matrix.
(This is a strictly smaller class than the class of all skew-symmetrizable matrices, which are the integer matrices of the form $D^{-1}S$ where $D$ is an integer diagonal matrix and $S$ is a skew-symmetric matrix.)
In a personal communication to the author \cite{Demonet personal}, Demonet indicates that his results can be extended to prove Conjecture~\ref{H coherent} for all exchange matrices that are mutation equivalent to an acyclic exchange matrix.  
See also \cite[Remark~7.2]{Demonet} and the beginning of \cite[Section~11]{Demonet}.
\end{remark}

As a corollary to Theorem~\ref{g subfan}, we see that any positive $R$-basis for $B$ must involve the $\g$-vectors for $B^T$.
\begin{cor}\label{pos basis g cor}
Assuming the Standard Hypotheses on $B$, if a positive $R$-basis exists for $B$, then the positive basis includes a positive scaling of the $\g$-vector associated to each cluster variable for $B^T$.
If $R=\integers$, then the positive basis includes the $\g$-vector associated to each cluster variable for $B^T$.
\end{cor}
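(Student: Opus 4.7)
The plan is to combine Theorem~\ref{g subfan} with Corollary~\ref{FB basis ray}. By Theorem~\ref{g subfan}, under the Standard Hypotheses on $B$, the maximal cones of the subfan $\F^\circ_B$ of $\F_B$ are precisely the $\g$-vector cones $\Cone^{B^T;t_0}_t$ for $t\in\T_n$. In particular, each $\g$-vector $\g^{B^T;t_0}_{i;t}$ spans a ray of $\F^\circ_B$, and hence a ray of $\F_B$.

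Next, since a positive $R$-basis is assumed to exist for $B$, Proposition~\ref{FRB} supplies the simplicial fan $\F^R_B$ as the $R$-part of $\F_B$. Each $\g$-vector cone is a $B$-cone spanned by integer vectors (and thus by vectors in $R^n$), so Proposition~\ref{R part prop} implies that each $\g$-vector cone is itself a cone of $\F^R_B$. In particular, each $\g$-vector spans a ray of $\F^R_B$.

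Now invoke Corollary~\ref{FB basis ray}. When $R$ is a field, the positive $R$-basis for $B$ consists of exactly one nonzero vector in each ray of $\F^R_B$; since the basis is positive and the $\g$-vector already lies in its own ray, the corresponding basis element must be a positive scalar multiple of $\g^{B^T;t_0}_{i;t}$. When $R=\integers$, the positive $\integers$-basis consists of the smallest nonzero integer vector in each ray of $\F^\integers_B$, so the basis element in the ray spanned by $\g^{B^T;t_0}_{i;t}$ is that $\g$-vector itself, provided the $\g$-vectors are primitive integer vectors.

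The main subtlety, and the one place the argument appeals to external input rather than the results assembled in this paper, is the primitivity of $\g$-vectors in the $R=\integers$ statement; this is a standard property in the cluster algebra literature and is the only non-structural ingredient needed. Apart from that, the proof is a direct composition of Theorem~\ref{g subfan}, Propositions~\ref{FRB} and~\ref{R part prop}, and Corollary~\ref{FB basis ray}.
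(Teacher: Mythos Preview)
Your argument is correct and follows essentially the same route as the paper's proof: combine Theorem~\ref{g subfan}, Proposition~\ref{R part prop}, and Corollary~\ref{FB basis ray}. The one point worth correcting is your claim that primitivity of the $\g$-vectors requires ``external input'': the paper already records this as Theorem~\ref{NZ 4.1(iv)}, which says that the Standard Hypotheses (via Conjecture~\ref{H coherent}) imply Conjecture~\ref{g Z basis}, namely that the $\g$-vectors $(\g^{B^T;t_0}_{i;t}:i\in[n])$ form a $\integers$-basis for $\integers^n$, hence each is the shortest integer vector in its ray.
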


To prove Corollary~\ref{pos basis g cor} (in particular the second statement), we consider the following conjecture, which is \cite[Conjecture~7.10(2)]{ca4}.
\begin{conj}\label{g Z basis}
For each $t\in\T_n$, the vectors $(\g_{i;t}^{B;t_0}:i\in[n])$ are a $\integers$-basis for~$\integers^n$.
\end{conj}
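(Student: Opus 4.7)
The plan is to establish Conjecture~\ref{g Z basis} conditional on the Standard Hypotheses for $B^T$, by identifying the $n$-tuple of $\g$-vectors as the image of the standard basis $(\e_1,\ldots,\e_n)$ under a linear map given by an integer matrix of determinant $\pm 1$. Fix a path $t_0\dashname{k_1}t_1\dashname{k_2}\cdots\dashname{k_q}t$ in $\T_n$, set $B^{(0)}=B$, and $B^{(j)}=\mu_{k_j}(B^{(j-1)})$.

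First I would iterate Conjecture~\ref{gvec mu}, which holds under the Standard Hypotheses by Theorem~\ref{NZ 4.1(v)} (applied with ambient matrix $B^T$), to express each $\g$-vector via mutation maps. Using that the $\g$-vectors at $t$ computed in the pattern rooted at $t$ are the standard unit vectors $\e_i$, and that $(\eta_k^{B'})^{-1}=\eta_k^{\mu_k(B')}$, the iteration gives
\[
\g_{i;t}^{B;t_0}=\eta_{k_1}^{(B^{(1)})^T}\circ\eta_{k_2}^{(B^{(2)})^T}\circ\cdots\circ\eta_{k_q}^{(B^{(q)})^T}(\e_i)
\]
for each $i\in[n]$. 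This is precisely the composition appearing in the proof of Proposition~\ref{Cone B} with $B$ replaced by $B^T$, and under the Standard Hypotheses the nonnegative orthant $O$ is a $B^T$-cone, so by Proposition~\ref{linear} the composition is linear when restricted to $O$.

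The key technical step would be a direct inspection lemma: for any exchange matrix $B'$, any $k\in[n]$, and either closed half-space $H_k^{\pm}=\{\a\in\reals^n:\pm a_k\ge 0\}$, the restriction of $\eta_k^{B'}$ to $H_k^{\pm}$ is a linear map whose matrix has integer entries and determinant $-1$. Reading off \eqref{mutation map def}, in $H_k^+$ the transformation sends $\e_k\mapsto -\e_k$ and $\e_j\mapsto \e_j+[b_{kj}]_+\,\e_k$ for $j\neq k$, and analogously in $H_k^-$ with $[b_{kj}]_+$ replaced by $[-b_{kj}]_+$; in either case the matrix is lower triangular (after moving row and column $k$ to the last position) with diagonal $(1,\ldots,1,-1)$ and integer off-diagonal entries in the $k\th$ column, hence determinant $-1$. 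Composing $q$ such restrictions, the map $\eta_{k_1}^{(B^{(1)})^T}\circ\cdots\circ\eta_{k_q}^{(B^{(q)})^T}$ is represented on $O$ by an integer matrix of determinant $(-1)^q=\pm 1$.

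Combining these two ingredients, the matrix whose columns are $\g_{1;t}^{B;t_0},\ldots,\g_{n;t}^{B;t_0}$ is an integer matrix of determinant $\pm 1$, so its columns form a $\integers$-basis of $\integers^n$. The main obstacle is that this route rests on the Standard Hypotheses for $B^T$, which are not unconditionally known; the argument thus settles Conjecture~\ref{g Z basis} precisely for those $B$ covered by Remark~\ref{when sign-coherence known} (including skew-symmetric $B$ and $B$ mutation equivalent to an acyclic matrix). A fully unconditional proof would require extracting the determinant $\pm 1$ property directly from the recursions \eqref{g recur} and \eqref{g recur alt}, without passing through the mutation fan.
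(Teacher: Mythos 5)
The statement you are asked to prove is stated in the paper as a conjecture (quoted from \cite[Conjecture~7.10(2)]{ca4}), and the paper's own treatment consists only of Theorem~\ref{NZ 4.1(iv)}, which asserts the conditional implication from Conjecture~\ref{H coherent} for $B$ and is cited from \cite[Proposition~4.2(iv)]{NZ} without reproducing the argument. So there is no ``paper's own proof'' to compare against; your write-up supplies an explicit conditional argument, and it is correct. Under the Standard Hypotheses (for $B$, equivalently for $B^T$ by Proposition~\ref{Stand Hyp pmBT}), Theorem~\ref{NZ 4.1(v)} gives Conjecture~\ref{gvec mu} throughout the mutation class, and iterating it along the path $t_0,\ldots,t_q=t$ does yield
$\g_{i;t}^{B;t_0}=\eta_{k_1}^{(B^{(1)})^T}\circ\cdots\circ\eta_{k_q}^{(B^{(q)})^T}(\e_i)$
since $\g_{i;t}^{B^{(q)};t}=\e_i$ and $(\eta_k^{B'})^{-1}=\eta_k^{\mu_k(B')}$. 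The one place you should be more careful is that the relevant hypothesis is not that $O$ is a $B^T$-cone but that $O$ is a $\bigl(B^{(q)}\bigr)^T$-cone; this is covered by the Standard Hypotheses since $\bigl(B^{(q)}\bigr)^T$ is mutation-equivalent to $B^T$, and it is exactly what makes the proof of Proposition~\ref{linear} applicable (each intermediate image is a sign-coherent cone, hence lies in a single half-space $H_{k_j}^{\pm}$, so the composite on $O$ decomposes into the half-space restrictions you analyze). Your determinant computation for each half-space restriction, giving an integer matrix of determinant $-1$, is right, so the columns $\g_{1;t}^{B;t_0},\ldots,\g_{n;t}^{B;t_0}$ form a $\integers$-basis. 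One small caveat worth recording: your argument assumes the full Standard Hypotheses, whereas Theorem~\ref{NZ 4.1(iv)} is stated assuming only Conjecture~\ref{H coherent} for the single matrix $B$; the two are not obviously identical as stated, although they coincide on every known case (Remark~\ref{when sign-coherence known}) and the paper only ever invokes Theorem~\ref{NZ 4.1(iv)} under the Standard Hypotheses anyway. Overall your route buys a self-contained, elementary argument in place of the citation, at the cost of a nominally stronger hypothesis.
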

In \cite[Proposition~4.2(iv)]{NZ}, it is shown that if Conjecture~\ref{H coherent} is true for all $B$, then Conjecture~\ref{g Z basis} is true for all $B$.
Again, the argument from \cite{NZ} also proves a statement with weaker hypothesis and weaker conclusion:
\begin{theorem}\label{NZ 4.1(iv)}
If Conjecture~\ref{H coherent} is true for some $B$, then Conjecture~\ref{g Z basis} is also true for $B$.
\end{theorem}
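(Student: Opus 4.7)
The plan is to adapt the proof of \cite[Proposition~4.2(iv)]{NZ} to the present weaker hypothesis/conclusion setting. Fix $B$ satisfying Conjecture~\ref{H coherent}, and work in the cluster pattern with exchange matrix $B$ and principal coefficients at $t_0$. For each $t\in\T_n$, let $G_t$ be the $n\times n$ integer matrix whose $i\th$ column is $\g_{i;t}^{B;t_0}$, and let $C_t$ be the $n\times n$ integer matrix whose $i\th$ column $\mathbf{c}_{i;t}$ is the transpose of the $i\th$ column of rows $n+1$ through $2n$ of $\tB_t$.

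First I would show that $\det C_t = \pm 1$ for every $t$, by induction on the graph distance from $t_0$ in $\T_n$. Given an edge $t\dashname{k}t'$, Conjecture~\ref{H coherent} applied at $t$ implies that the nonzero entries of $\mathbf{c}_{k;t}$ share a common sign $\epsilon\in\set{+1,-1}$. Inspecting~\eqref{b mut}, this sign-coherence forces the piecewise-linear $c$-vector mutation to collapse to an ordinary integer linear step of the form $C_{t'}=C_tE_k^{\epsilon}$ for an explicit integer matrix $E_k^\epsilon$ with $\det E_k^\epsilon=-1$. Starting from $C_{t_0}=I$, the induction gives $\det C_t=\pm 1$, so the $c$-vectors at every vertex form a $\integers$-basis of $\integers^n$.

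Next I would establish the tropical duality identity $G_t^T D C_t = D$, where $D=\diag(d_1,\ldots,d_n)$ is the skew-symmetrizer of $B$. Again the argument is by induction from the base case $G_{t_0}=C_{t_0}=I$, comparing the recurrence for $G_t$ given by Conjecture~\ref{gvec mu} with the sign-coherent recurrence for $C_t$ established above. At the edge $t\dashname{k}t'$, Conjecture~\ref{gvec mu} reduces (again via sign-coherence of $\mathbf{c}_{k;t}$) to a linear step $G_{t'}=G_tH_k^\epsilon$ for an explicit integer matrix $H_k^\epsilon$, and a direct computation gives the compatibility $(H_k^\epsilon)^T D E_k^\epsilon = D$, which propagates the duality across the edge. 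Taking determinants of $G_t^T D C_t = D$ and using $\det C_t=\pm 1$ yields $\det G_t=\pm 1$, so the columns of $G_t$ form a $\integers$-basis.

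The main obstacle is to verify that the argument genuinely requires only Conjecture~\ref{H coherent} for the single matrix $B$, rather than for its whole mutation class or for $-B$. In particular, I must check that Conjecture~\ref{gvec mu} along an individual edge $t\dashname{k}t'$ follows from sign-coherence at $t$ in the principal-coefficients pattern rooted at $t_0$ alone. This is the same localization that is already implicit in Theorem~\ref{NZ 4.1(v)}: a careful tracking of \cite[Proposition~4.2(v)]{NZ} shows that each application of the mutation rule for $\g$-vectors uses sign-coherence only at the vertex where it is applied, so the single hypothesis on $B$ suffices to drive the inductions above.
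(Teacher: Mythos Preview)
Your overall plan matches the paper's: the paper simply observes that the \cite[Proposition~4.2(iv)]{NZ} argument goes through with only Conjecture~\ref{H coherent} for the fixed $B$, and you are reconstructing that argument. The first step (showing $\det C_t=\pm 1$ from sign-coherence at each vertex of the $t_0$-rooted principal pattern) is correct and uses only the hypothesis on $B$.

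There is, however, a genuine confusion in your second step. Conjecture~\ref{gvec mu} does \emph{not} give a recurrence relating $G_t$ to $G_{t'}$ for a fixed initial seed $t_0$; it relates $\g_{i;t}^{B_0;t_0}$ to $\g_{i;t}^{B_1;t_1}$, i.e.\ it changes the \emph{initial} seed. So the claimed reduction ``$G_{t'}=G_tH_k^\epsilon$ via Conjecture~\ref{gvec mu}'' is not what that conjecture says, and invoking it would in any case require the Standard Hypotheses (as in Theorem~\ref{NZ 4.1(v)}), which is strictly more than Conjecture~\ref{H coherent} for $B$. The correct input is the pair of unconditional recursions~\eqref{g recur} and~\eqref{g recur alt}. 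If the $k$th $c$-vector at $t$ has sign $\epsilon=+1$, apply~\eqref{g recur alt}: the term $\sum_i[-b^t_{n+i,k}]_+\col(i,B)$ vanishes and $\g_{k,t'}=-\g_{k,t}+\sum_i[-b^t_{ik}]_+\g_{i;t}$. If $\epsilon=-1$, apply~\eqref{g recur} and the analogous term vanishes. In either case $G_{t'}=G_tH_k^\epsilon$ with $H_k^\epsilon$ equal to the identity outside column $k$ and having $-1$ in its $(k,k)$ entry, so $\det H_k^\epsilon=-1$ and $\det G_t=\pm 1$ follows directly by induction from $G_{t_0}=I$. This uses only Conjecture~\ref{H coherent} for $B$, and makes your final paragraph's worry about localizing Conjecture~\ref{gvec mu} unnecessary: that conjecture is not needed at all, nor is the tropical duality detour for this particular conclusion.
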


\begin{proof}[Proof of Corollary~\ref{pos basis g cor}]
Corollary~\ref{FB basis ray} says that the positive basis consists of one nonzero vector in each ray in the $R$-part of $\F_B$.
Thus by Proposition~\ref{R part prop}, the positive basis contains one nonzero vector in each ray of $\F^\circ_B$.
Theorem~\ref{g subfan} implies that the basis contains a positive scalar multiple of the $\g$-vector of each cluster variable for $B^T$.
If $R=\integers$, then in particular each vector in the positive basis is the shortest integer vector in the ray it spans.
By Theorem~\ref{NZ 4.1(iv)}, the same is true of each $\g$-vector for $B^T$.
\end{proof}

We now present some results about rescalings of exchange matrices as they relate to the Standard Hypotheses and to $\g$-vectors.
The first result is immediate from Proposition~\ref{rescale props}\eqref{Sigma mu} and~\eqref{Sigma FB}.
\begin{prop}\label{standard rescale}
Suppose $B'$ is a rescaling of $B$.
The Standard Hypotheses hold for $B'$ if and only if they hold for $B$.
\end{prop}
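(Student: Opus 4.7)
The plan is to read both directions of the biconditional directly off the two cited parts of Proposition~\ref{rescale props}, using the elementary fact that the diagonal matrix $\Sigma$ witnessing the rescaling $B' = \Sigma^{-1}B\Sigma$ has strictly positive entries and therefore fixes both the nonnegative and the nonpositive orthants under right-multiplication.

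First, I would rewrite the hypotheses so that the input of Proposition~\ref{rescale props}\eqref{Sigma mu} becomes visible. Iterating the identity $\mu_\k(B') = \Sigma^{-1}\mu_\k(B)\Sigma$ along an arbitrary finite sequence $\k$ of indices in $[n]$ shows that the mutation class of $B'$ is exactly the set of matrices $\Sigma^{-1}C\Sigma$ as $C$ ranges over the mutation class of $B$. Consulting Definition~\ref{standard hyp}, the Standard Hypotheses for $B'$ therefore amount to the assertion that $O$ and $-O$ are $(\Sigma^{-1}C\Sigma)$-cones for every $C$ in the mutation class of $B$.

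Next, I would transport cones using Proposition~\ref{rescale props}\eqref{Sigma FB}, which identifies $\F_{\Sigma^{-1}C\Sigma}$ with $\{D\Sigma : D \in \F_C\}$. Thus $O$ is a $(\Sigma^{-1}C\Sigma)$-cone if and only if $O\Sigma^{-1}$ is a $C$-cone, and similarly with $-O$ in place of $O$. Because $\Sigma$ has strictly positive diagonal entries, right-multiplication by $\Sigma^{\pm 1}$ scales each coordinate by a positive number and therefore maps $O$ to $O$ and $-O$ to $-O$; in particular $O\Sigma^{-1} = O$ and $(-O)\Sigma^{-1} = -O$. Combining this with the reformulation of the hypotheses from the previous paragraph immediately yields the biconditional.

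There is no real obstacle here; the only nontrivial ingredient beyond Proposition~\ref{rescale props} is the observation that $\Sigma$ has positive diagonal entries, which is built into Definition~\ref{rescale def}. Once the two cited parts of Proposition~\ref{rescale props} are available, the argument is pure bookkeeping, which is precisely why the proposition was flagged as immediate.
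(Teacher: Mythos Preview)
Your proof is correct and is precisely the unpacking of what the paper means by ``immediate from Proposition~\ref{rescale props}\eqref{Sigma mu} and~\eqref{Sigma FB}'': part~\eqref{Sigma mu} identifies the mutation classes via $\Sigma$, part~\eqref{Sigma FB} transports the cones, and positivity of the diagonal entries of $\Sigma$ guarantees $O\Sigma = O$ and $(-O)\Sigma = -O$. There is nothing to add.
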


\begin{prop}\label{Stand Hyp pmBT}
The Standard Hypotheses for $B$, for $-B$, for $B^T$, and for $-B^T$ are all equivalent. 
\end{prop}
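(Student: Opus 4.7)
Plan. The proposition reduces, via a few easy observations, to a single sign-coherence claim about $\g$-vectors. First, Definition~\ref{standard hyp} quantifies symmetrically over the mutation classes of $B$ and $-B$, so the Standard Hypotheses on $B$ and on $-B$ are literally the same condition; likewise for $B^T$ and $-B^T$. Second, since transposition is an involution, once the implication $B\Rightarrow B^T$ is proved, the converse $B^T\Rightarrow B$ follows by applying the same implication to $B^T$. Third, mutation commutes with transposition and the Standard Hypotheses propagate through the mutation class by definition; it therefore suffices to show that, assuming the Standard Hypotheses on $B$, the orthant $O$ is a $B^T$-cone. (The corresponding statement for $-O$ follows by applying the result to $-B$ and using Proposition~\ref{antipodal FB}; the statement for each element of the mutation class of $B^T$ follows by applying the argument with $B$ replaced by the corresponding element of the mutation class of $B$.)

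Next I would use Proposition~\ref{contained Bcone} to restate the goal: $O$ is a $B^T$-cone if and only if $\{\eta_{\k}^{B^T}(\e_i):i\in[n]\}$ is sign-coherent for every finite sequence $\k$ of indices in $[n]$. I would identify each such vector with a $\g$-vector for a mutation of $B$. Since $\e_i=\g_{i;t_0}^{B;t_0}$ is an initial $\g$-vector, and Theorem~\ref{NZ 4.1(v)} provides Conjecture~\ref{gvec mu} throughout the mutation class of $B$ under the Standard Hypotheses, iterating the base-change formula along the edges of $\k$ yields
\[\eta_{\k}^{B^T}(\e_i)\;=\;\g_{i;t_0}^{\,\mu_{\k}(B);\,t_{\k}},\]
where $t_{\k}$ is the vertex obtained from $t_0$ along the path labeled by $\k$ in $\T_n$. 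The remaining task is therefore to show that, for every $B'$ in the mutation class of $B$ and every vertex $t'$ of $\T_n$, the collection $\{\g_{i;t_0}^{B';t'}:i\in[n]\}$ is sign-coherent.

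This last step is the heart of the matter and the main obstacle. I would invoke the tropical duality of Nakanishi--Zelevinsky: under the Standard Hypotheses on $B'$ (which hold throughout the mutation class of $B$), the $\g$-matrix $G$ and the $c$-matrix $C$ at any vertex satisfy $G^T C=I$, and combined with sign-coherence of the $c$-vectors (which is the Standard Hypotheses itself, via Proposition~\ref{H B}), this forces sign-coherence of the $\g$-vectors. This is essentially item (vii) of \cite[Proposition~4.1]{NZ}, proved by the same inductive scheme that underlies the author's Theorems~\ref{NZ 4.1(v)} and~\ref{NZ 4.1(iv)}, though it is not isolated as a separate lemma in the present excerpt. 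Once this sign-coherence is in hand, the identification of the previous paragraph shows that $O$ is a $B^T$-cone, and the reductions of the first paragraph complete the proof.
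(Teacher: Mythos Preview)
Your argument is essentially correct, but it takes a much longer route than the paper's proof, and your final step is imprecise. The paper's proof is three lines: the Standard Hypotheses on $B$ and $-B$ are syntactically identical (as you also note), and then one observes that $-B^T$ is a \emph{rescaling} of $B$. Indeed, by skew-symmetrizability $\sgn(b_{ij})=\sgn(-b_{ji})$ and trivially $(-b_{ji})(-b_{ij})=b_{ij}b_{ji}$, so Proposition~\ref{rescale iff} applies; then Proposition~\ref{standard rescale} (just proved) says rescaling preserves the Standard Hypotheses. This is purely elementary linear algebra on the matrix entries and uses no $\g$-vector machinery at all.

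Your approach instead passes through Theorem~\ref{NZ 4.1(v)} and then needs sign-coherence of the collection $\{\g_{i;t_0}^{B';t'}:i\in[n]\}$. That last fact is true and non-circular, but your justification ``$G^T C=I$ plus sign-coherence of $C$'' is not quite right: the inverse-transpose of a column-sign-coherent matrix is not column-sign-coherent in general, and the relation you write mixes up which $G$ and $C$ are meant. The correct NZ duality to invoke is the one exchanging base points, namely $\g_{i;t_0}^{B';t'}=\c_{i;t_0}^{B'_{t'};t'}$ (up to convention), so that sign-coherence of the $\g$-vectors at $t_0$ for $(B',t')$ is literally sign-coherence of the $c$-vectors at $t_0$ for $(B'_{t'},t')$, which is covered by the Standard Hypotheses on $B$ since $B'_{t'}$ lies in the mutation class of $B$. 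With that correction your argument goes through, but it relies on a statement not isolated in the paper and is considerably heavier than the rescaling observation.
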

\begin{proof}
The Standard Hypotheses for $B$ and $-B$ are syntactically equivalent.
Proposition~\ref{rescale iff} implies that $-B^T$ is a rescaling of $B$.
Proposition~\ref{standard rescale} completes the argument.
\end{proof}

\begin{prop}\label{g rescale}
Suppose $B'$ is a rescaling of $B$, specifically with $B'=\Sigma^{-1}B\Sigma$.
If the Standard Hypotheses hold for $B$, then $\g^{B';t_0}_{i;t}$ is the smallest nonzero integer vector in the ray spanned by $\g^{B;t_0}_{i,t}\Sigma$.
\end{prop}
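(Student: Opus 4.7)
The plan is to reduce the claim to a scaling identity between $\g$-vector cones via Proposition~\ref{Cone B}, match extreme rays index-by-index through Conjecture~\ref{gvec mu}, and then upgrade to integrality using Theorem~\ref{NZ 4.1(iv)}.

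First I would propagate the Standard Hypotheses. Proposition~\ref{Stand Hyp pmBT} transfers them to $B^T$, and since $\Sigma$ is diagonal we have $(B')^T=\Sigma B^T\Sigma^{-1}$, so $(B')^T$ is a rescaling of $B^T$ and Proposition~\ref{standard rescale} transfers them to $(B')^T$ as well. Proposition~\ref{Cone B}, applied with $B$ replaced in turn by $B^T$ and by $(B')^T$, then provides, for any path $t_0,t_1,\ldots,t_q=t$ in $\T_n$ with edges labeled $k_1,\ldots,k_q$, the two presentations
\[\Cone^{B;t_0}_t=\eta^{\mu_{k_q,\ldots,k_1}(B^T)}_{k_1,\ldots,k_q}(O),\qquad \Cone^{B';t_0}_t=\eta^{\mu_{k_q,\ldots,k_1}((B')^T)}_{k_1,\ldots,k_q}(O).\]

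Second, I would transfer between these presentations using Proposition~\ref{rescale props}: part \eqref{Sigma mu} guarantees that every intermediate matrix in the $(B')^T$-composition is obtained from its $B^T$-counterpart by the same diagonal conjugation, and part \eqref{Sigma eta} translates each individual mutation map accordingly. Because $O$ is invariant under right-multiplication by any diagonal matrix with positive entries, iterating these identities shows that $\Cone^{B';t_0}_t$ and $\Cone^{B;t_0}_t$ differ by a single right-scaling by a diagonal matrix determined by $\Sigma$. An induction along the path from $t_0$ to $t$, applying Conjecture~\ref{gvec mu} (valid here by Theorem~\ref{NZ 4.1(v)}) at each edge, then matches the $i\th$ extreme ray of $\Cone^{B;t_0}_t$ with the $i\th$ extreme ray of $\Cone^{B';t_0}_t$, so $\g^{B';t_0}_{i;t}$ spans the claimed ray. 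Finally, Theorem~\ref{NZ 4.1(iv)} applied to $(B')^T$ shows that $(\g^{B';t_0}_{i;t}:i\in[n])$ is a $\integers$-basis for $\integers^n$, so each $\g^{B';t_0}_{i;t}$ is a primitive integer vector, hence the smallest nonzero integer vector in the ray it spans.

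The main obstacle is the careful bookkeeping of transposes and inverses: $\g$-vectors for $B$ live in $\F_{B^T}^\circ$ rather than $\F_B^\circ$, so one must translate the hypothesis $B'=\Sigma^{-1}B\Sigma$ into the rescaling governing $(B')^T$ versus $B^T$ and then into the correct action on $\reals^n$ sending $\Cone^{B;t_0}_t$ to $\Cone^{B';t_0}_t$. A cleaner alternative avoiding Proposition~\ref{Cone B} entirely is to induct directly on the length of the path from $t_0$ to $t$ using the recursion~\eqref{g recur}; the inductive step then reduces to the elementary identities $b_{ij}^{\prime t}=\tfrac{\sigma_j}{\sigma_i}b_{ij}^t$ (for entries of the extended exchange matrix of the principal-coefficient $Y$-pattern), $\col(i,B')=\sigma_i\,\Sigma^{-1}\col(i,B)$, and $[\tfrac{\sigma_k}{\sigma_i}x]_+=\tfrac{\sigma_k}{\sigma_i}[x]_+$ for $\sigma_k,\sigma_i>0$.
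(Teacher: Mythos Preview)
Your approach is essentially the paper's: propagate the Standard Hypotheses via Propositions~\ref{standard rescale} and~\ref{Stand Hyp pmBT}, compare the $\g$-vector cones through Proposition~\ref{Cone B} together with Proposition~\ref{rescale props}, and finish with Theorem~\ref{NZ 4.1(iv)}; your explicit index-by-index matching via Conjecture~\ref{gvec mu} fills in a detail the paper's terse proof leaves implicit. One small slip: Theorem~\ref{NZ 4.1(iv)} should be applied to $B'$ rather than to $(B')^T$, since it is the vectors $\g^{B';t_0}_{i;t}$ whose primitivity you need, and the required hypothesis (Conjecture~\ref{H coherent} for $B'$) is available because the Standard Hypotheses transfer to $B'$ by Proposition~\ref{standard rescale}.
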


\begin{proof}
Using Propositions~\ref{standard rescale} and~\ref{Stand Hyp pmBT} to obtain the Standard Hypotheses for all the relevant matrices, we appeal to Propositions~\ref{Cone B} and~\ref{rescale props}\eqref{Sigma FB} to conclude that $\g^{B';t_0}_{i;t}$ and $\g^{B;t_0}_{i,t}\Sigma$ span the same ray.
Theorem~\ref{NZ 4.1(iv)} implies that $\g^{B';t_0}_{i;t}$ is the smallest nonzero integer vector in that ray.
\end{proof}

We next discuss two conjectures suggested by Corollary~\ref{pos basis g cor}.
The first is a separation property for cluster variables.
\begin{conj}\label{g compat conj}
Suppose $x^{B;t_0}_{i;t'}$ and $x^{B;t_0}_{j;t''}$ are cluster variables in the cluster algebra with principal coefficients.
Then the following are equivalent.
\begin{enumerate}[(i)]
\item \label{not contained}
 $x^{B;t_0}_{i;t'}$ and $x^{B;t_0}_{j;t''}$ are \emph{not} contained in any common cluster.
\item \label{strictly opp}
There exists $t\in \T_n$ and $k\in[n]$ such that $\g^{B_t;t}_{i;t'}$ and $\g^{B_t;t}_{j,t''}$ have strictly opposite signs in their $k\th$ entry.
\end{enumerate}
\end{conj}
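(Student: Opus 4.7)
The plan is to prove Conjecture~\ref{g compat conj} under the Standard Hypotheses on $B$ by reducing it to a question about how cluster $\g$-vector cones sit inside the mutation fan $\F_{B^T}$. The convention for the composition of mutation maps in~\eqref{eta def}, together with the fact that matrix mutation commutes with transpose, allows one to iterate Theorem~\ref{NZ 4.1(v)} along any path from $t_0$ to $t$ with edge-label sequence $\k$, yielding $\g^{B_t;t}_{i;t'}=\eta_\k^{B^T}(\g^{B;t_0}_{i;t'})$ and similarly for $j$ and $t''$. Thus the failure of condition~(ii) is the statement that the pair $\{\eta_\k^{B^T}(\g^{B;t_0}_{i;t'}),\,\eta_\k^{B^T}(\g^{B;t_0}_{j;t''})\}$ is sign-coherent for every $\k$, which by Proposition~\ref{contained Bcone} is equivalent to the two $\g$-vectors at $t_0$ lying in a common $B^T$-cone. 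This is the central reformulation.

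For the implication (i)$\,\Rightarrow\,$not~(ii) (the easy direction): if $x^{B;t_0}_{i;t'}$ and $x^{B;t_0}_{j;t''}$ are both contained in the cluster $\x^{B;t_0}_t$, then the corresponding $\g$-vectors are two of the extreme rays of the $\g$-vector cone $\Cone^{B;t_0}_t$, which by Theorem~\ref{g subfan} applied to $B^T$ (using Proposition~\ref{Stand Hyp pmBT} to transport the Standard Hypotheses) is a single $B^T$-cone. The reformulation then yields not~(ii).

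For the reverse implication not~(ii)$\,\Rightarrow\,$(i): assume the two $\g$-vectors lie in a common $B^T$-cone $C$. The goal is to exhibit a vertex $t\in\T_n$ such that both $\g$-vectors are extreme rays of $\Cone^{B;t_0}_t$; the known injectivity of the cluster-variable-to-$\g$-vector assignment under the Standard Hypotheses then recovers a common cluster containing both cluster variables. By Theorem~\ref{g subfan} the two $\g$-vectors span rays of the subfan $\F^\circ_{B^T}$, so the question reduces to whether a $B^T$-cone that meets two rays of $\F^\circ_{B^T}$ forces those rays to belong to a common maximal cone of $\F^\circ_{B^T}$.

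This last step is the main obstacle. When $\F^\circ_{B^T}=\F_{B^T}$ (as happens in finite type) the implication is immediate: Proposition~\ref{smallest Bcone} gives a unique smallest $B^T$-cone $C'\subseteq C$ containing both $\g$-vectors, and Lemma~\ref{B-subcone face} makes $C'$ a face of some maximal cone of $\F^\circ_{B^T}$, forcing both $\g$-vectors to be among its extreme rays. In general, however, $\F^\circ_{B^T}$ may be a proper subfan of $\F_{B^T}$, and one must rule out a $B^T$-cone $C$ that touches $\F^\circ_{B^T}$ only along disjoint $\g$-vector rays while itself failing to be contained in any single $\g$-vector cone. I expect overcoming this to require either a strengthening of Theorem~\ref{g subfan} --- showing that any $B^T$-cone whose extreme rays are all spanned by cluster $\g$-vectors is a face of a $\g$-vector cone --- or external input governing adjacency of $\g$-vector cones, for instance through scattering diagrams or maximal green sequences.
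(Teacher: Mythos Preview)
The paper does not prove this statement; it is stated as a conjecture and left open. What the paper does prove is one direction only, recorded as Proposition~\ref{g compat one direction}: assuming the Standard Hypotheses, condition~(\ref{strictly opp}) implies condition~(\ref{not contained}). The argument there is exactly the contrapositive of your ``easy direction'': if~(\ref{strictly opp}) holds, then by Proposition~\ref{contained Bcone} and Theorem~\ref{NZ 4.1(v)} the two $\g$-vectors at $t_0$ do not lie in a common $B^T$-cone, and since by Theorem~\ref{g subfan} (applied with $B$ replaced by $B^T$, using Proposition~\ref{Stand Hyp pmBT}) every $\g$-vector cone for $B$ is a $B^T$-cone, they cannot lie in a common $\g$-vector cone, whence~(\ref{not contained}). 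Your reformulation via $\eta_\k^{B^T}$ and Proposition~\ref{contained Bcone} is the same mechanism.

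For the remaining direction, the paper explicitly says, immediately after Proposition~\ref{g compat one direction}: ``We cannot reverse the argument for Proposition~\ref{g compat one direction} because conceivably the two $\g$-vectors are contained in some $B$-cone that is not in $\F^\circ_B$.'' This is precisely the obstacle you identify in your final paragraph. So your proposal does not overreach the paper: you prove the same direction the paper proves, by the same route, and you correctly isolate the same gap for the other direction. Your suggestion that closing the gap would require either a strengthening of Theorem~\ref{g subfan} or external input is reasonable, but the paper offers nothing further here.

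One minor point: your labeling of the two implications is garbled. What you call ``(i)$\,\Rightarrow\,$not~(ii)'' is actually not~(\ref{not contained})$\,\Rightarrow\,$not~(\ref{strictly opp}), i.e.\ the contrapositive of (\ref{strictly opp})$\,\Rightarrow\,$(\ref{not contained}); and what you call ``not~(ii)$\,\Rightarrow\,$(i)'' is actually not~(\ref{strictly opp})$\,\Rightarrow\,$not~(\ref{not contained}), i.e.\ the contrapositive of (\ref{not contained})$\,\Rightarrow\,$(\ref{strictly opp}). The mathematics is right but the bookkeeping should be fixed.
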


\begin{prop}\label{g compat one direction}
Assuming the Standard Hypotheses on $B$, condition \eqref{strictly opp} of Conjecture~\ref{g compat conj} implies condition \eqref{not contained}.
\end{prop}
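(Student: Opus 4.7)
The plan is to prove the contrapositive. Assume $x^{B;t_0}_{i;t'}$ and $x^{B;t_0}_{j;t''}$ lie in a common cluster $\x^{B;t_0}_s$, so that with appropriate relabeling they become $x^{B;t_0}_{a;s}$ and $x^{B;t_0}_{b;s}$ for some $a,b\in[n]$. Since $\g$-vectors are cluster-variable invariants (so that the labels $(i,t')$ and $(a,s)$ attached to the same cluster variable yield equal $\g$-vectors, as established via the non-recursive definition of Definition~\ref{gvec def}), both $\g^{B;t_0}_{i;t'}$ and $\g^{B;t_0}_{j;t''}$ lie in the $\g$-vector cone $\Cone^{B;t_0}_s$. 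My goal is then to show that for every $t\in\T_n$ and every $k\in[n]$, the $k$th entries of $\g^{B_t;t}_{i;t'}$ and $\g^{B_t;t}_{j;t''}$ cannot have strictly opposite signs.

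First, I would use Proposition~\ref{Stand Hyp pmBT} to transfer the Standard Hypotheses from $B$ to $B^T$, and then apply Proposition~\ref{Cone B} (with the roles of $B$ and $B^T$ interchanged) to conclude that $\Cone^{B;t_0}_s$ is a $B^T$-cone. Now fix $t\in\T_n$ and let $\k=k_q,\ldots,k_1$ be the sequence of edge labels on the unique path from $t_0$ to $t$ in $\T_n$, so that $B_t=\mu_\k(B)$. Using that matrix transpose commutes with matrix mutation (noted in the proof of Proposition~\ref{Cone B}) and applying Theorem~\ref{NZ 4.1(v)} at each edge of the path, the single-step identity of Conjecture~\ref{gvec mu} telescopes to
\[\g^{B_t;t}_{i;t'}=\eta^{B^T}_\k\bigl(\g^{B;t_0}_{i;t'}\bigr), \qquad \g^{B_t;t}_{j;t''}=\eta^{B^T}_\k\bigl(\g^{B;t_0}_{j;t''}\bigr),\]
because the composition of single-step maps matches exactly the definition of $\eta^{B^T}_\k$ in Definition~\ref{map}.

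Finally, Proposition~\ref{cones preserved} says that $\eta^{B^T}_\k$ maps the $B^T$-cone $\Cone^{B;t_0}_s$ to a $\mu_\k(B^T)$-cone, which equals a $B_t^T$-cone. Both transported vectors therefore lie in this single $B_t^T$-cone, and Proposition~\ref{sign-coherent} guarantees that every $B_t^T$-cone is sign-coherent. Hence no coordinate of $\g^{B_t;t}_{i;t'}$ and $\g^{B_t;t}_{j;t''}$ can have strictly opposite signs, which contradicts condition~\eqref{strictly opp} and completes the proof of the contrapositive.

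The main obstacle is essentially bookkeeping: one must verify that the iterated application of the single-step Conjecture~\ref{gvec mu} assembles correctly into the mutation map $\eta^{B^T}_\k$ of Definition~\ref{map}, which requires checking at each step that $\mu_{k_i}$ applied to $B^T_{s_{i-1}}$ produces $B^T_{s_i}$ (transpose commutes with mutation) and that the Standard Hypotheses for $B$ furnish the required hypothesis of Theorem~\ref{NZ 4.1(v)} for each consecutive pair of matrices along the path. Once this indexing is pinned down, the geometric content—that sign-coherence within a $B^T$-cone is preserved by $\eta^{B^T}_\k$—is immediate from the results of Section~\ref{B cone sec}.
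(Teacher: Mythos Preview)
Your proof is correct and follows essentially the same approach as the paper's. The paper argues directly (condition~\eqref{strictly opp} forces the two $\g$-vectors at $t_0$ out of any common $B^T$-cone via Proposition~\ref{contained Bcone}, hence out of any common $\g$-vector cone by Theorem~\ref{g subfan}), while you argue the contrapositive and unpack Proposition~\ref{contained Bcone} into its constituent pieces (Propositions~\ref{cones preserved} and~\ref{sign-coherent}); the underlying content is the same.
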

\begin{proof}
By Proposition~\ref{contained Bcone} and Theorem~\ref{NZ 4.1(v)}, condition \eqref{strictly opp} implies that $\g^{B;t_0}_{i;t'}$ and $\g^{B;t_0}_{j;t''}$ are not contained in any common $B^T$-cone.
By Proposition~\ref{Stand Hyp pmBT}, the Standard Hypotheses hold for $B^T$, so all of the $\g$-vector cones for $B$ are $B^T$-cones by Theorem~\ref{g subfan}.
Thus $\g^{B_t;t}_{i;t'}$ and $\g^{B_t;t}_{j,t''}$ are not contained in any common $\g$-vector cone, and condition \eqref{not contained} follows.
\end{proof}
We cannot reverse the argument for Proposition~\ref{g compat one direction} because conceivably the two $\g$-vectors are contained in some $B$-cone that is not in $\F^\circ_B$.

The second conjecture suggested by Theorem~\ref{g subfan} is an independence property of $\g$-vectors.
\begin{conj}\label{g B coher conj}
Suppose $t_1,\ldots,t_m$ are vertices of $\T_n$, suppose $i_i,\ldots,i_m$ are indices in $[n]$, and suppose $c_1,\ldots,c_m$ are integers.
If $\sum_{j=1}^mc_j\g^{B_t;t}_{i_j;t_j}=\mathbf{0}$ for all vertices $t$ of $\T_n$, then $c_j=0$ for all $j=1,\ldots,m$.
\end{conj}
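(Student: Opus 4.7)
The plan is to reformulate the hypothesis as a $B^T$-coherent linear relation on $\g$-vectors for $B$ and then exploit the fan structure from Theorem~\ref{g subfan}. Some reduction is needed first: by Proposition~\ref{basis reducible} it suffices to treat irreducible $B$, and when $B=[0]$ the conjecture actually fails, since the $\g$-vectors $\e_1$ and $-\e_1$ of the two cluster variables (computed from~\eqref{g recur}) satisfy $\e_1+(-\e_1)=\mathbf{0}$ at both vertices of $\T_1$. An extra hypothesis ruling out isolated indices therefore seems necessary, and I will assume from now on that $B$ has no zero column.

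Under the Standard Hypotheses, Theorem~\ref{NZ 4.1(v)} makes Conjecture~\ref{gvec mu} available, and iterating along a path from $t_0$ to $t$ gives $\g^{B_t;t}_{i_j;t_j}=\eta^{B^T}_\k(\g^{B;t_0}_{i_j;t_j})$ for the sequence $\k$ of edge labels. Hence the hypothesis of the conjecture is exactly condition~\eqref{linear eta} for $\sum_j c_j\g^{B;t_0}_{i_j;t_j}$ with respect to $B^T$. Since $B^T$ inherits the absence of zero columns from $B$, Proposition~\ref{no zero no piecewise} promotes this to a full $B^T$-coherent linear relation. After collecting terms whose $\g$-vectors happen to coincide, the task reduces to showing that any $B^T$-coherent relation among distinct vectors of the form $\g^{B;t_0}_{i;t}$ is trivial.

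By Theorem~\ref{g subfan} applied to $B^T$ (valid because Proposition~\ref{Stand Hyp pmBT} transfers the Standard Hypotheses), each $\g^{B;t_0}_{i_j;t_j}$ spans a ray of $\F^\circ_{B^T}$, and the $n$ $\g$-vectors in any one $\g$-vector cone $\Cone^{B;t_0}_t$ form a $\integers$-basis of $\integers^n$ by Theorem~\ref{NZ 4.1(iv)}. If the $\g$-vectors with nonzero coefficient all lie in a common cone, triviality is immediate. Otherwise, for each $j_0$ with $c_{j_0}\neq 0$ I would seek a vertex $s\in\T_n$ and index $k\in[n]$ so that $x^{B;t_0}_{i_{j_0};t_{j_0}}\in\x^{B;t_0}_s$ (forcing $\g^{B_s;s}_{i_{j_0};t_{j_0}}=\e_k$) while every other $\g^{B_s;s}_{i_j;t_j}$ has nonpositive $k\th$ entry; an application of Proposition~\ref{one positive or one negative} then yields $c_{j_0}=0$, contradicting the choice of $j_0$.

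The hard part is producing this separating pair $(s,k)$. The natural approach is an induction on the number of pairs of cluster variables among $\{x^{B;t_0}_{i_j;t_j}\}$ that do not share a cluster: mutating toward a cluster containing $x^{B;t_0}_{i_{j_0};t_{j_0}}$ pushes the other $\g$-vectors into cones of $\F^\circ_{B^T}$ that, by the initial-step version of~\eqref{g recur}, lie in the closed halfspaces bounded by the walls of $\Cone^{B;t_0}_s$. Pushing this induction through in full generality appears to require (an approximation of) the converse direction of Conjecture~\ref{g compat conj}, beyond what Proposition~\ref{g compat one direction} supplies, so Conjecture~\ref{g B coher conj} may have to be proved jointly with Conjecture~\ref{g compat conj}, or by first constructing a large enough piece of an $R$-basis for $B^T$ to directly witness the required independence.
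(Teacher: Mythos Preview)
This statement is a \emph{conjecture} in the paper and is not proved there unconditionally. The paper's contribution is Proposition~\ref{g B coher pos basis}: assuming the Standard Hypotheses and the existence of a positive $R$-basis for $B^T$ (for some $R$), the conjecture holds. The argument first restates the conjecture as asserting that no nontrivial $B^T$-coherent linear relation over $\integers$ exists among the $\g$-vectors for $B$, then invokes Corollary~\ref{pos basis g cor} to see that each such $\g$-vector is a positive scaling of a basis element, so any such relation is trivial by $R$-independence of the basis.

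Your reformulation via Theorem~\ref{NZ 4.1(v)} is exactly this first step. Your observation that Proposition~\ref{no zero no piecewise}---and hence the no-zero-column hypothesis---is needed to pass from condition~\eqref{linear eta} to full $B^T$-coherence is correct and in fact sharper than the paper, which glosses over this point in the proof of Proposition~\ref{g B coher pos basis}. Your $B=[0]$ counterexample is valid: the two $\g$-vectors $\e_1$ and $-\e_1$ sum to zero at both vertices of $\T_1$, yet $\e_1+(-\e_1)$ is \emph{not} $B^T$-coherent because $\min(1,0)+\min(-1,0)=-1\neq 0$, so the restatement genuinely requires the extra hypothesis you supply.

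Where you go beyond the paper is the attempted direct separation argument via Proposition~\ref{one positive or one negative} and the geometry of $\F^\circ_{B^T}$. As you recognize, producing the separating pair $(s,k)$ in general would need roughly the unproved direction of Conjecture~\ref{g compat conj}, so this route is currently blocked. The fallback you name at the end---witnessing independence through an $R$-basis for $B^T$---is precisely the paper's strategy, except that the paper assumes a positive basis exists rather than constructing one and draws only a conditional conclusion. An unconditional proof remains open.
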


\begin{prop}\label{g B coher pos basis}
Assuming the Standard Hypotheses on $B$, if there exists an underlying ring $R$ such that a positive $R$-basis exists for $B^T$, then Conjecture~\ref{g B coher conj} holds for $B$.
\end{prop}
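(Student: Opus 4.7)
The plan is to transform the hypothesis of Conjecture~\ref{g B coher conj} into a $B^T$-coherent linear relation and then to invoke the $R$-independence of the positive $R$-basis for~$B^T$. Set $\v_m := \g^{B;t_0}_{i_m;t_m}$. By Proposition~\ref{Stand Hyp pmBT} the Standard Hypotheses hold for $B^T$ as well, so Theorem~\ref{NZ 4.1(v)} supplies Conjecture~\ref{gvec mu} for every matrix mutation-equivalent to $B^T$. Iterating that formula along a path in $\T_n$ from $t_0$ to~$t$ with edge-label sequence~$\k$, we obtain $\g^{B_t;t}_{i_m;t_m}=\eta^{B^T}_{\k}(\v_m)$. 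The hypothesis therefore translates into
\[\sum_m c_m\,\eta^{B^T}_{\k}(\v_m)=\mathbf{0}\]
for every sequence~$\k$, which is precisely condition~\eqref{linear eta} in the definition of a $B^T$-coherent linear relation for $\sum_m c_m\v_m$.

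To promote this to genuine $B^T$-coherence we still need condition~\eqref{piecewise eta}. When $B$ (equivalently $B^T$) has no zero column, Proposition~\ref{no zero no piecewise} delivers~\eqref{piecewise eta} automatically. The complementary case in which $B$ has a zero column is handled by decomposing $B$ into irreducible blocks via Proposition~\ref{basis reducible}: the blocks coming from zero columns are $A_1$ factors, handled separately, and the remaining blocks have no zero column so the previous step applies.

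By Corollary~\ref{pos basis g cor} applied to~$B^T$, the positive $R$-basis $(\b_i:i\in I)$ for $B^T$ contains a positive $R$-multiple of the $\g$-vector of each cluster variable for $(B^T)^T=B$. Thus $\v_m=\alpha_m\b_{i(m)}$ for some positive $\alpha_m\in R$. Assuming (as intended in the conjecture) that the labels $(i_m,t_m)$ index distinct cluster variables, the $\g$-vector injectivity that follows from the Standard Hypotheses makes the $\v_m$ distinct, so the basis indices $i(m)$ are distinct. The relation $\sum_m c_m\v_m=\sum_m (c_m\alpha_m)\b_{i(m)}$ is therefore a $B^T$-coherent linear relation on distinct basis vectors. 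Definition~\ref{basis B def}\eqref{basis indep} forces each $c_m\alpha_m=0$, and the positivity of~$\alpha_m$ yields $c_m=0$.

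The main obstacle is obtaining condition~\eqref{piecewise eta}: Conjecture~\ref{gvec mu} delivers~\eqref{linear eta} directly, but the piecewise condition is not automatic. The cleanest workaround, Proposition~\ref{no zero no piecewise}, requires $B^T$ to have no zero column; in its absence (the $A_1$ regime, flagged in Remark~\ref{FZ def}), one must argue separately. A secondary subtlety is the distinctness of the $\g$-vectors~$\v_m$, needed to pass from an annihilated combination of the $\b_{i(m)}$ to the individual conclusion $c_m=0$.
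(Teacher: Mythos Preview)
Your approach is the paper's: translate the hypothesis of the conjecture into condition~\eqref{linear eta} for~$B^T$ via Proposition~\ref{Stand Hyp pmBT} and Theorem~\ref{NZ 4.1(v)}, identify each $\g$-vector with a positive multiple of a basis element via Corollary~\ref{pos basis g cor}, and invoke $R$-independence. The paper does not mention condition~\eqref{piecewise eta} at all---it simply asserts that the conjecture ``restates'' as the nonexistence of a nontrivial $B^T$-coherent relation---so your explicit appeal to Proposition~\ref{no zero no piecewise} in the no-zero-column case is a genuine improvement in rigor.

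Your treatment of the zero-column case, however, cannot be completed. For $B=[0]$ the two cluster variables have $\g$-vectors $1$ and $-1$, and the relation $1\cdot 1 + 1\cdot(-1)=0$ satisfies~\eqref{linear eta} for every~$\k$ (the only nontrivial mutation map $\eta^{[0]}_1$ is negation) yet violates~\eqref{piecewise eta}. Thus Conjecture~\ref{g B coher conj} itself fails for $B=[0]$, while a positive $R$-basis $\{1,-1\}$ for $B^T=[0]$ does exist (cf.\ Example~\ref{rk1}). So the $A_1$ factors cannot be ``handled separately''; the proposition as stated already fails in that degenerate case, and the paper's proof shares the gap. Your secondary worry about distinctness is also legitimate: $\g$-vector injectivity is not established from the Standard Hypotheses anywhere in the paper, so that step is not justified here either---though again the paper's argument implicitly relies on the same thing when it passes from $\g$-vectors to basis elements.
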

\begin{proof}
Given the Standard Hypotheses on $B$, Proposition~\ref{Stand Hyp pmBT} and Theorem~\ref{NZ 4.1(v)} allow us to restate Conjecture~\ref{g B coher conj}:
There is no nontrivial $B^T$-coherent linear relation over $\integers$ among the $\g$-vectors for cluster variables associated to $B$.

Suppose there exists an underlying ring $R$ such that a positive $R$-basis exists for~$B^T$.
In light of Proposition~\ref{Stand Hyp pmBT}, Corollary~\ref{pos basis g cor} says that the positive basis contains a positive scalar multiple of the $\g$-vector of each cluster variable for $B$.
In particular, a $B^T$-coherent linear relation over $\integers$ among these $\g$-vectors can be rewritten as a $B^T$-coherent linear relation among basis elements.
Thus any $B^T$-coherent linear relation over $\integers$ among these $\g$-vectors is trivial.
\end{proof}

\section{The rank-2 case}\label{rk2 sec}
In this section, we construct $R$-bases for exchange matrices of rank $2$.
We prove the following theorem and give explicit descriptions of the additional vectors mentioned.
The details are given in Propositions~\ref{rk2 finite basis}, \ref{rk2 affine basis} and~\ref{rk2 wild basis}.
\begin{theorem}\label{rk 2 thm}
Let $B$ be an exchange matrix of rank $2$.
Then an $R$-basis for $B$ can be constructed by taking the $\g$-vectors associated to $B^T$ and possibly adding one or two additional vectors.
\end{theorem}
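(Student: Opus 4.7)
The plan is to handle rank $2$ by case analysis on the product $bc$, where $B = \begin{bsmallmatrix*}[r] 0 & b \\ -c & 0 \end{bsmallmatrix*}$ and skew-symmetrizability forces $b, c$ to have the same sign. After Proposition~\ref{basis reducible} and Example~\ref{rk1} dispose of the reducible/zero cases, we may assume $b, c > 0$, giving the trichotomy: finite type ($bc \leq 3$), affine type ($bc = 4$), and wild type ($bc \geq 5$). As a preliminary step, I would verify the Standard Hypotheses directly in rank $2$ by a short induction on the length of $\k$, so that Theorem~\ref{g subfan} and Proposition~\ref{Cone B} become available.

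By Proposition~\ref{Cone B}, the $\g$-vector cones $\Cone^{B^T;t_0}_t$ are precisely the images of the nonnegative orthant $O$ under the mutation maps $\eta^B_\k$. Since $n=2$ and only alternating sequences of mutations produce new cones, the $B$-cones in $\F^\circ_B$ fall into two infinite families: the images of $O$ under $\eta^B_1, \eta^{\mu_1(B)}_2 \circ \eta^B_1, \ldots$ and the symmetric family starting with $\eta^B_2$. A direct computation of these iterates reduces to iterating a linear map on each coordinate quadrant and yields the classical trichotomy: the orbit closes up after finitely many steps in finite type; in affine type both families of rays converge to a single common limit ray, the repeated eigendirection of the iterated linear map; and in wild type the two families converge to two \emph{distinct} limit rays, the two real eigendirections.

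In finite type the $\g$-vector cones for $B^T$ exhaust $\reals^2$, so $\F_B$ is already a complete simplicial fan with rays spanned by $\g$-vectors, and Propositions~\ref{local coherent} and~\ref{one positive or one negative} show that these $\g$-vectors form an $R$-basis with zero extra vectors. In affine and wild type I would use Proposition~\ref{lim B cone} to identify the complement of the union of $\g$-vector cones in $\reals^2$: a single limit ray (a one-dimensional $B$-cone) in the affine case, and a single two-dimensional ``wild'' $B$-cone bounded by the two eigendirections in the wild case. The claimed basis is the collection of $\g$-vectors for $B^T$, augmented by one nonzero vector on each missing limit ray --- one extra vector in affine type, two in wild type.

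The main obstacle is the wild case. For the spanning condition~\eqref{basis span} of Definition~\ref{basis B def}, a vector $\a$ in the interior of the wild cone must be expressed as a $B$-coherent combination of the two limit-ray generators; proving that the relations~\eqref{linear eta} and~\eqref{piecewise eta} hold for \emph{every} sequence $\k$ requires a limiting argument, transferring $B$-local relations from the approximating $\g$-vector cones across the wild boundary and using the continuity of each $\eta^B_\k$. For the independence condition~\eqref{basis indep}, I would apply Proposition~\ref{one positive or one negative} to each $\g$-vector in a chart where it uniquely dominates one coordinate, and treat the two limit vectors via the hyperbolic eigenvalue dynamics on each limit ray to find sequences $\k$ that isolate them. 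Because the limit eigendirections are generically irrational algebraic numbers, handling $R = \integers$ cleanly is delicate --- this is precisely the source of the motivation in Section~\ref{rk2 sec} for allowing $R$ to be larger than $\rationals$.
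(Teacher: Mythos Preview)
Your overall structure matches the paper's: the same finite/affine/wild trichotomy, the same use of Theorem~\ref{g subfan} and Proposition~\ref{Cone B} to identify $\g$-vector cones as $B$-cones, and the same plan to add zero, one, or two vectors. Two points deserve attention, the second of which is a genuine gap.

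First, your ``limiting argument'' for the spanning condition in the wild cone is unnecessary, and your reference to Proposition~\ref{lim B cone} does not by itself show the wild cone is a $B$-cone. The paper's direct route is this: since $\mu_k(B)=-B$ in rank~$2$, each $\eta^B_k$ maps the wild cone $C_\infty$ for $B$ bijectively onto the analogous wild cone for $-B$, and both sit inside a single open coordinate orthant. Hence $\vsgn\circ\eta^B_\k$ is constant on $C_\infty\setminus\{\mathbf{0}\}$ for every $\k$, so $C_\infty$ is a $B$-cone. Once this is known, any linear relation among vectors in $C_\infty$ is $B$-local and hence $B$-coherent by Proposition~\ref{local coherent}; no limits are needed. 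Likewise, your ``hyperbolic eigenvalue dynamics'' for independence of the extra vectors is overkill: after Proposition~\ref{one positive or one negative} kills the $\g$-vector coefficients (exactly as you describe), the remaining relation is supported on at most two linearly independent vectors, and condition~\eqref{linear eta} for the empty sequence already forces it to be trivial.

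Second, for $R=\integers$ or $\rationals$ in the wild case your stated recipe ``one nonzero vector on each missing limit ray'' fails outright, since those rays are irrational and contain no nonzero vector of $R^2$. You acknowledge the difficulty but do not resolve it, and enlarging $R$ is not a proof of the theorem for $R=\integers$. The paper's fix is to observe that the extreme rays of $C_\infty$ are \emph{not} themselves $B$-cones (their nonzero points already lie in the $B$-class $C_\infty\setminus\{\mathbf{0}\}$), so the cone-basis condition of Definition~\ref{cone basis def} only requires two vectors in $R^2\cap C_\infty$ spanning $R^2$. For $R$ a field this is immediate; for $R=\integers$ one checks that $C_\infty$ contains a $\integers$-basis of $\integers^2$ in the two minimal cases $(a,b)=(1,-5)$ and $(2,-3)$, and monotonicity of $C_\infty$ in $|a|,|b|$ handles the rest. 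The resulting basis is a cone $R$-basis rather than a positive one when $R\subseteq\rationals$, and indeed the paper shows no positive $\integers$- or $\rationals$-basis exists in the wild case.
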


The exchange matrices of rank $2$ are the matrices $B$ of the form $\begin{bsmallmatrix}0&a\\b&0\end{bsmallmatrix}$ where $a$ and $b$ are integers with $\sgn(b)=-\sgn(a)$.
(This is skew-symmetrizable with $d_1=|b|$ and $d_2=|a|$.)  
We label the vertices of the infinite $2$-regular tree $\T_2$ as $\ldots,t_{-1},t_0,t_1,\ldots$ with $t_i$ adjacent to $t_{i-1}$ by an edge labeled $k\in\set{1,2}$ with $k\equiv i\mod 2$.
The Standard Hypotheses (Definition~\ref{standard hyp}) are known to hold for exchange matrices of rank $2$.
(For example, one may apply \cite[Proposition~11.1]{Demonet} as explained in Remark~\ref{when sign-coherence known}.)
Since $B$ is of rank $2$, the fan $\F_B$ is a fan in $\reals^2$, and since no $B$-cone contains a line, $\F_B$ is therefore simplicial.

The following proposition accomplishes most of the work towards proving Theorem~\ref{rk 2 thm}.
We again write $\R(B)$ for the set of rays of the mutation fan $\F_B$.
We write $\R^\circ(B)$ for the subset of $\R(B)$ consisting of rays of $\F_B^\circ$.
(For the definition of $\F_B^\circ$, see the paragraph before Theorem~\ref{g subfan}.)

\begin{prop}\label{rk2 gvec b-coherent}
Suppose $B$ is of rank $2$ and, for each ray $\rho\in\R(B)$, choose a nonzero vector $\v_\rho$ contained in $\rho$.
Then any $B$-coherent linear relation supported on the set $\set{\v_\rho:\rho\in\R(B)}$ is in fact supported on $\set{\v_\rho:\rho\in\R(B)\setminus\R^\circ(B)}$.
\end{prop}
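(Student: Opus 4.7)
The plan is to apply Proposition~\ref{one positive or one negative} to each ray $\rho^*\in\R^\circ(B)$ in the support of the given relation, concluding $c_{\rho^*}=0$. After reducing via Proposition~\ref{basis reducible} to the case $B$ irreducible, and using Proposition~\ref{antipodal FB} together with a possible swap of indices, I assume $B=\begin{pmatrix}0&a\\b&0\end{pmatrix}$ with $a>0$ and $b<0$.

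The first key step is a structural lemma: every ray of $\F_B$ is either a coordinate ray $\reals_{\ge 0}(\pm\e_1)$ or $\reals_{\ge 0}(\pm\e_2)$, or else lies in the open quadrant $Q_4=\set{(x_1,x_2):x_1>0,\,x_2<0}$. The Standard Hypotheses (known in rank 2) give that $O$ and $-O$ are $B$-cones, ruling out rays in the interiors of $Q_1$ and $Q_3$. To rule out rays in the interior of $Q_2$, I apply $\eta_1^B$ to a hypothetical vector $(a_1,a_2)$ with $a_1<0<a_2$: using \eqref{mutation map def} with $b_{12}=a>0$, the image is $(-a_1,a_2)$, which lies in the interior of $O$ inside $\F_{\mu_1(B)}=\F_{-B}$, contradicting the Standard Hypotheses for $-B$. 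Consequently $\reals_{\ge 0}\e_2$ is the unique ray of $\F_B$ with strictly positive second coordinate, and by the antipodal statement for $\F_{-B}$, the ray $\reals_{\ge 0}\e_1$ is the unique ray of $\F_{-B}$ with strictly positive first coordinate.

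Given $\rho^*\in\R^\circ(B)$, I use Theorem~\ref{g subfan} to view $\rho^*$ as a $\g$-vector for $B^T$, and Proposition~\ref{Cone B} to produce a sequence $\k$ of some length $|q|$ with $\eta_\k^B(\rho^*)=\reals_{\ge 0}\e_i$ for some $i\in\set{1,2}$. In rank 2 every single mutation negates $B$, so $\mu_\k(B)=(-1)^{|\k|}B$. Since $\rho^*$ occurs at two consecutive vertices of $\T_2$ with the same index $i$, its two choices of $|q|$ have opposite parities, and I select the one making $|q|$ even if $i=2$ or $|q|$ odd if $i=1$. In both resulting cases, the structural lemma applied to $\F_{\mu_\k(B)}$ (which equals $\F_B$ or $\F_{-B}$ respectively) shows that $\eta_\k^B(\v_{\rho^*})$ is the unique image in $\set{\eta_\k^B(\v_\rho):\rho\in\R(B)}$ with strictly positive $i$-th coordinate. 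Proposition~\ref{one positive or one negative} then yields $c_{\rho^*}=0$.

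The main obstacle is the structural lemma: ruling out rays of $\F_B$ in the interior of $Q_2$ must be done without appeal to the explicit classifications of Propositions~\ref{rk2 finite basis}, \ref{rk2 affine basis}, and~\ref{rk2 wild basis}, which depend on the present result. The single-mutation argument together with the Standard Hypotheses for $-B$ accomplishes this, and once the lemma is in hand the parity tracking in $\T_2$ is routine, provided care is taken to handle the initial $\g$-vectors $\pm\e_1,\pm\e_2$ alongside the non-initial ones produced by mutating away from $t_0$.
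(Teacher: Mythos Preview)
Your proof is correct and follows the same overall strategy as the paper's: map $\rho^*$ to a coordinate ray via some $\eta^B_\k$, argue that this is the unique ray of $\F_{\mu_\k(B)}$ in an open coordinate halfspace, and invoke Proposition~\ref{one positive or one negative}. The difference is in how uniqueness is established. The paper stays local: having identified the image as $\reals_{\ge0}\e_j$, it uses the $\g$-vector recursion~\eqref{g recur alt} to compute that the two $\g$-vector cones through that ray cover the closed halfspace $\{x_j\ge0\}$, choosing between the two candidate sequences according to the sign of the $ij$-entry of the mutated matrix. You instead prove a global structural lemma first---all non-coordinate rays of $\F_B$ lie in a single open quadrant---using one mutation together with the Standard Hypotheses for $\pm B$, and then track parity in $\T_2$ to arrange $\mu_\k(B)=\pm B$ compatibly with the index $i$. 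Your route cleanly separates the geometry of $\F_B$ from the $\g$-vector bookkeeping and avoids invoking~\eqref{g recur alt}; the paper's route buries the parity/index correspondence inside the sign choice but is slightly more self-contained within the $\g$-vector framework. One minor quibble: the appeal to Proposition~\ref{basis reducible} for the reducible case is not quite the right citation, since that proposition concerns bases rather than $B$-coherent relations; the only reducible rank-$2$ matrix is $B=0$, which you can dispose of directly by four applications of Proposition~\ref{one positive or one negative}.
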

\begin{proof}
Suppose $\rho$ is in $\R^\circ(B)$, so that $\rho$ is an extreme ray of a $\g$-vector cone for $B^T$ by Theorem~\ref{g subfan}.
In fact, $\rho$ is an extreme ray of exactly two $\g$-vector cones for $B^T$, and these cones are $\Cone_{t_q}^{B^T;t_0}$ and $\Cone_{t_{q-1}}^{B^T;t_0}$ for some $q\in\integers$.
We argue the case $q\ge0$; the other case is the same except for notation.
Proposition~\ref{Cone B} says that $\Cone^{B^T;t_0}_{t_{q-1}}=\eta_{k_1,\ldots,k_{q-1}}^{\mu_{k_{q-1},\ldots,k_1}(B)}(O)$ and that $\Cone^{B^T;t_0}_{t_q}=\eta_{k_1,\ldots,k_q}^{\mu_{k_q,\ldots,k_1}(B)}(O)$.
Inverting the maps $\eta$ in these two expressions, we see that the nonnegative orthant $O$ equals $\eta^B_{k_{q-1},\ldots,k_1}\left(\Cone^{B^T;t_0}_{t_{q-1}}\right)$ and also equals 
\[\eta^B_{k_q,\ldots,k_1}\left(\Cone^{B^T;t_0}_{t_q}\right)=\eta_{k_q}^{\mu_{k_{q-1},\ldots,k_1}(B)}\eta^B_{k_{q-1},\ldots,k_1}\left(\Cone^{B^T;t_0}_{t_q}\right).\]
By Proposition~\ref{eta FB}, the cones $\eta^B_{k_{q-1},\ldots,k_1}\left(\Cone^{B^T;t_0}_{t_{q-1}}\right)$ and $\eta^B_{k_{q-1},\ldots,k_1}\left(\Cone^{B^T;t_0}_{t_q}\right)$ intersect in a ray of $\F_{\mu_{k_{q-1},\ldots,k_1}(B)}$, namely the ray $\eta^B_{k_{q-1},\ldots,k_1}(\rho)=\eta^B_{k_q,\ldots,k_1}(\rho)$.
But the map $\eta_{k_q}^{\mu_{k_{q-1},\ldots,k_1}(B)}$ fixes one extreme ray of $O$ and sends the other outside of~$O$.
We conclude that $\eta^B_{k_{q-1},\ldots,k_1}(\rho)$ and $\eta^B_{k_q,\ldots,k_1}(\rho)$ both equal $\reals_{\ge0}\e_j$, where $j$ is the unique element of $\set{1,2}\setminus\set{k_q}$.
In rank $2$, every single mutation operation $\mu_k$ is simply negation of the matrix.
Thus, either $\mu_{k_{q-1},\ldots,k_1}(B)$ or  $\mu_{k_q,\ldots,k_1}(B)$ has a nonnegative $ij$-entry.
If $\mu_{k_{q-1},\ldots,k_1}(B)$ has a nonnegative $ij$-entry, then write $\k=k_{q-1},\ldots,k_1$ and $t=t_{q-1}$ and $t'=t_q$.
Otherwise, write $\k=k_q,\ldots,k_1$ and $t=t_q$ and $t'=t_{q-1}$.
Write $i=k_q$.
The edge connecting $t$ to $t'$ is labeled $t\dashname{i}t'$.

The $\g$-vectors $\g^{\mu_\k(B^T);t}_{i;t}=\e_i$ and $\g^{\mu_\k(B^T);t}_{j;t}=\e_j$ span the cone $O=\Cone_t^{\mu_\k(B^T);t}$.
The matrix $\mu_\k(B^T)$ has a nonnegative $ji$-entry, so the recursion \eqref{g recur alt} says that $\g^{\mu_\k(B^T);t}_{i;t'}=-\e_i$ and $\g^{\mu_\k(B^T);t}_{j;t'}=\e_j$.
Thus the cones $\Cone_t^{\mu_\k(B^T);t}$ and $\Cone_{t'}^{\mu_\k(B^T);t}$ cover the closed halfspace consisting of vectors with non-negative $j\th$ entry.
These are cones in the fan $\F_{\mu_\k(B)}$ by Theorem~\ref{g subfan}, so their common ray $\eta^B_\k(\rho)$ is the unique ray of $\F_{\mu_\k(B)}$ that intersects the open halfspace consisting of vectors with positive $j\th$ entry.

Now suppose $\sum_{i\in S}c_i\v_i$ is a $B$-coherent linear relation with $S$ a finite subset of $\R(B)$.
By Proposition~\ref{eta FB}, for each $\rho\in S$, the vector $\eta_\k(\v_{\rho})$ spans a ray of $\F_{\mu_\k(B)}$.
By the argument above, if $\rho\in S\cap\R^\circ(B)$, then the $j\th$ entry of $\eta_\k^B(\v_\rho)$ is strictly positive and every vector $\v_{\rho'}$ with $\rho'\neq\rho$ has non-positive $j\th$ entry.
We conclude from Proposition~\ref{one positive or one negative} that $c_\rho=0$.
\end{proof}

To complete the proof of Theorem~\ref{rk 2 thm}, we explicitly construct the mutation fan for $B$ and to prove that the remaining vectors $\set{\v_\rho:\rho\in\R(B)}$ beyond the rays in $\g$-vector cones must also appear with coefficient zero in any $B$-coherent linear relation.
We consider several cases separately.

\begin{definition}[\emph{Cluster algebra/exchange matrix of finite type}]\label{fin def}
A cluster algebra is of \newword{finite type} if and only if its associate cluster pattern contains only finitely many distinct seeds.
Otherwise it is of \newword{infinite type}.
\end{definition}

The classification \cite{ga,ca2} of cluster algebras of finite type says in particular that $B=\begin{bsmallmatrix}0&a\\b&0\end{bsmallmatrix}$ is of finite type if and only if $ab\ge-3$.
The possibilities for $(a,b)$ are $(0,0)$, $(\pm 1,\mp 1)$, $(\pm 1,\mp 2)$, $(\pm 2,\mp 1)$, $(\pm 1,\mp 3)$, and $(\pm 3,\mp 1)$.

\begin{prop}\label{rk2 finite basis}
If $B$ is a rank-$2$ exchange matrix of finite type, then for any $R$, the $\g$-vectors for $B^T$ constitute a positive $R$-basis for $B$.
\end{prop}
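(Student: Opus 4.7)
The plan is to combine Proposition~\ref{rk2 gvec b-coherent} with the fact that, in rank-$2$ finite type, the $\g$-vector fan $\F^\circ_B$ already exhausts the mutation fan $\F_B$, and then to verify the criterion for a positive cone $R$-basis given in Proposition~\ref{positive cone basis}.

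First I would show that $\F^\circ_B=\F_B$. By the classification $ab\in\{0,-1,-2,-3\}$ there are only finitely many cluster variables for $B^T$; iterating the recursion~\eqref{g recur} directly in each case shows that the two-dimensional $\g$-vector cones $\Cone^{B^T;t_0}_t$ spiral once around the origin and tile $\reals^2$, producing $4$, $5$, $6$, or $8$ maximal cones. Since the Standard Hypotheses are known in rank $2$, Theorem~\ref{g subfan} identifies $\F^\circ_B$ with this complete tiling, so $\F^\circ_B$ is a complete subfan of $\F_B$. Any extra ray of $\F_B$ would lie in the relative interior of some two-dimensional cone $C\in\F^\circ_B\subseteq\F_B$, contradicting the requirement that the intersection of two cones in a fan be a face of each. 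Hence $\F^\circ_B=\F_B$, and in particular $\R(B)=\R^\circ(B)$.

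Given this, Proposition~\ref{rk2 gvec b-coherent} immediately implies that every $B$-coherent $R$-linear relation among the $\g$-vectors for $B^T$ is trivial, so these vectors form an $R$-independent set for $B$. To apply Proposition~\ref{positive cone basis}, it remains to verify condition~\eqref{positive cone basis property}: for every $B$-cone $C$, the set $R^2\cap C$ lies in the nonnegative $R$-linear span of the $\g$-vectors contained in $C$. Since $\F_B=\F^\circ_B$, every such $C$ is either $\{\mathbf{0}\}$, a single $\g$-vector ray, or a two-dimensional $\g$-vector cone $\Cone^{B^T;t_0}_t$. In the last case Theorem~\ref{NZ 4.1(iv)} (valid because the Standard Hypotheses hold for $B^T$ as well, by Proposition~\ref{Stand Hyp pmBT}) says the pair of spanning $\g$-vectors is a $\integers$-basis of $\integers^2$, hence an $R$-basis of $R^2$; the unique $R$-coordinates in this basis of any $\a\in R^2\cap C$ are then nonnegative because $\a$ lies in $C$. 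Proposition~\ref{positive cone basis} now finishes the proof.

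The main obstacle is the first step: the explicit case-by-case verification that the $\g$-vector cones tile $\reals^2$. This is a short finite calculation in each of the four types, but it requires care to track the orientation change that arises in passing from $B$ to $B^T$ and to confirm the closure condition that the spiral of $\g$-vectors returns exactly to its starting pair after finitely many steps.
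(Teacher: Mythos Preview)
Your proposal is correct and follows essentially the same route as the paper: establish that the $\g$-vector fan for $B^T$ is complete (the paper cites Theorem~\ref{g complete} but also notes this can be verified directly in rank $2$, as you do), conclude $\F_B=\F^\circ_B$, and then combine Proposition~\ref{rk2 gvec b-coherent} with Theorem~\ref{NZ 4.1(iv)} to obtain the positive $R$-basis. The paper compresses the final step into a one-line citation, whereas you spell out the verification of condition~\eqref{positive cone basis property} of Proposition~\ref{positive cone basis}; this is the same argument made explicit.
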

\begin{proof}
In the case where $B$ is of finite type, it is known that the $\g$-vector cones are the maximal cones of a complete fan.
For example, in Section~\ref{Tits sec}, we explain how this fact (Theorem~\ref{g complete}), for arbitrary rank and finite type, follows from results of \cite{camb_fan,framework,YZ}.  
(See Remark~\ref{me and ca4}.)
This can also be verified directly in all rank-$2$ finite-type cases.
In particular, $\F_B$ consists of the $\g$-vector cones associated to $B^T$, and their faces.
The collection of all $\g$-vectors is a positive $R$-basis for $B$ by Theorem~\ref{NZ 4.1(iv)} and Proposition~\ref{rk2 gvec b-coherent}.
\end{proof}

One can calculate these bases explicitly in all cases.
Some of the resulting universal extended exchange matrices are shown below in~\eqref{rk2 finite universal}.
\begin{equation}
\label{rk2 finite universal}
\begin{bsmallmatrix*}[r]
0&0\\
0&0\\
1&0\\
0&1\\
-1&0\\
0&-1\\
\end{bsmallmatrix*}
\qquad
\begin{bsmallmatrix*}[r]
0&1\\
-1&0\\
1&0\\
0&1\\
-1&0\\
0&-1\\
1&-1\\
\end{bsmallmatrix*}
\qquad
\begin{bsmallmatrix*}[r]
0&1\\
-2&0\\
1&0\\
0&1\\
-1&0\\
0&-1\\
1&-1\\
2&-1\\
\end{bsmallmatrix*}
\qquad
\begin{bsmallmatrix*}[r]
0&1\\
-3&0\\
1&0\\
0&1\\
-1&0\\
0&-1\\
1&-1\\
3&-2\\
2&-1\\
3&-1\\
\end{bsmallmatrix*}
\end{equation}
In light of Propositions~\ref{antipodal FB} and~\ref{pi FB}, the remaining cases are obtained from the cases shown by applying one or both of the following operations: (a) negating all entries and/or (b) swapping the columns and then swapping the first two rows.
The mutation fans corresponding to the four cases in~\eqref{rk2 finite universal} are shown in Figure~\ref{finite mutation fans}.
The standard unit basis vector $\e_1$ points to the right in the pictures and $\e_2$ points upwards.
Black lines indicate the rays.
\begin{figure}[ht]
\begin{tabular}{ccccccc}
\includegraphics{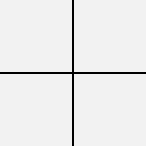}&&
\includegraphics{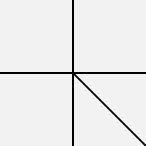}&&
\includegraphics{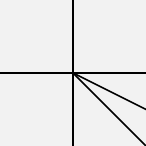}&&
\includegraphics{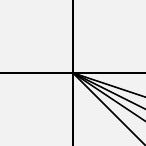}
\\[2 pt]
$B=\begin{bsmallmatrix*}[r]0&0\\0&0\end{bsmallmatrix*}$&&
$B=\begin{bsmallmatrix*}[r]0&1\\-1&0\end{bsmallmatrix*}$&&
$B=\begin{bsmallmatrix*}[r]0&1\\-2&0\end{bsmallmatrix*}$&&
$B=\begin{bsmallmatrix*}[r]0&1\\-3&0\end{bsmallmatrix*}$
\end{tabular}
\caption{Mutation fans $\F_B$ for the finite examples in~\eqref{rk2 finite universal}}
\label{finite mutation fans}
\end{figure}

\begin{example}\label{B2 in detail}
We now consider a detailed example based on the universal extended exchange matrix $\tB$ in~\eqref{rk2 finite universal} whose underlying exchange matrix is $B=\begin{bsmallmatrix*}[r]0&1\\-2&0\end{bsmallmatrix*}$.
We write the indexing set $I$ as $\set{a,b,c,d,e,f}$, so that the matrix is indexed as 
\begin{equation}
\begin{array}{c}
\mbox{\hspace{14 pt}\tiny 1 \hspace{4 pt} 2}\\[1 pt]
\begin{smallmatrix}1\\2\\a\\b\\c\\d\\e\\f\end{smallmatrix}\begin{bsmallmatrix*}[r]
0&1\\
-2&0\\
1&0\\
0&1\\
-1&0\\
0&-1\\
1&-1\\
2&-1\\
\end{bsmallmatrix*}
\end{array}
\end{equation}
We continue to label the vertices of $\T_2$ as $\ldots,t_{-1},t_0,t_1,\ldots$ with $t_i$ adjacent to $t_{i-1}$ by an edge labeled $k\in\set{1,2}$ such that $k\equiv i\mod 2$.
In this case, the labeled seed at $t_i$ depends only on $i\mod 6$.
The extended exchange matrices, coefficients, and cluster variables in this cluster algebra are shown in Tables~\ref{univ tB table} and~\ref{univ x table}.
\begin{table}[p]
\begin{tabular}{|c||c|c|c|c|c|c|}
\hline
$t$&$t_0$&$t_1$&$t_2$&$t_3$&$t_4$&$t_5$\\
\hline
&&&&&&\\[-10pt]
\hline
&&&&&&\\[-10pt]
$\tB_t$&
$\begin{bsmallmatrix*}[r]
0&1\\
-2&0\\
1&0\\
0&1\\
-1&0\\
0&-1\\
1&-1\\
2&-1\\
\end{bsmallmatrix*}$
&
$\begin{bsmallmatrix*}[r]
0&-1\\
2&0\\
-1&1\\
0&1\\
1&0\\
0&-1\\
-1&0\\
-2&1\\
\end{bsmallmatrix*}$
&
$\begin{bsmallmatrix*}[r]
0&1\\
-2&0\\
1&-1\\
2&-1\\
1&0\\
0&1\\
-1&0\\
0&-1\\
\end{bsmallmatrix*}$
&
$\begin{bsmallmatrix*}[r]
0&-1\\
2&0\\
-1&0\\
-2&1\\
-1&1\\
0&1\\
1&0\\
0&-1\\
\end{bsmallmatrix*}$
&
$\begin{bsmallmatrix*}[r]
0&1\\
-2&0\\
-1&0\\
0&-1\\
1&-1\\
2&-1\\
1&0\\
0&1\\
\end{bsmallmatrix*}$
&
$\begin{bsmallmatrix*}[r]
0&-1\\
2&0\\
1&0\\
0&-1\\
-1&0\\
-2&1\\
-1&1\\
0&1\\
\end{bsmallmatrix*}$\\[22pt]
\hline
&&&&&&\\[-10pt]
$y_{1,t}$&$\frac{u_au_eu_f^2}{u_c}$&$\frac{u_c}{u_1u_eu_f^2}$&$\frac{u_au_b^2u_c}{u_e}$&$\frac{u_e}{u_au_b^2u_c}$&$\frac{u_cu_d^2u_e}{u_a}$&$\frac{u_a}{u_cu_d^2u_e}$ \\[5 pt]\hline
&&&&&&\\[-10pt]
$y_{2,t}$&$\frac{u_b}{u_du_eu_f}$&$\frac{u_au_bu_f}{u_d}$&$\frac{u_d}{u_au_bu_f}$&$\frac{u_bu_cu_d}{u_f}$&$\frac{u_f}{u_bu_cu_d}$&$\frac{u_du_eu_f}{u_b}$ \\[5 pt]\hline
\end{tabular}\\
\captionsetup{width=\textwidth}
\caption{Extended exchange matrices and coefficients for Example~\ref{B2 in detail} (universal)}
\label{univ tB table}
\vspace{-1.55pt}
\begin{tabular}{|c||c|c|}
\hline
$t$&$x_{1;t}$&$x_{2;t}$\\ \hline
&&\\[-10pt]
\hline
$t_0$&$x_1$&$x_2$	\\ \hline
&&\\[-10pt]
$t_1$&$\frac{x_2^2u_c+u_au_eu_f^2}{x_1}$&$x_2$	\\[4pt] \hline
&&\\[-10pt]
$t_2$&$\frac{x_2^2u_c+u_au_eu_f^2}{x_1}$&$\frac{x_1u_au_bu_f+x_2^2u_cu_d+u_au_du_eu_f^2}{x_1x_2}$	\\[4pt] \hline
&&\\[-10pt]
$t_3$&$\frac{x_1^2u_au_b^2+2x_1u_au_bu_du_eu_f+x_2^2u_cu_d^2u_e+u_au_d^2u_e^2u_f^2}{x_1x_2^2}$&$\frac{x_1u_au_bu_f+x_2^2u_cu_d+u_au_du_eu_f^2}{x_1x_2}$	\\[4pt] \hline
&&\\[-10pt]
$t_4$&$\frac{x_1^2u_au_b^2+2x_1u_au_bu_du_eu_f+x_2^2u_cu_d^2u_e+u_au_d^2u_e^2u_f^2}{x_1x_2^2}$&$\frac{x_1u_b+u_du_eu_f}{x_2}$\\[4pt] \hline
&&\\[-10pt]
$t_5$&$x_1$&$\frac{x_1u_b+u_du_eu_f}{x_2}$	\\[4pt] \hline
\end{tabular}\\
\caption{Cluster variables for Example~\ref{B2 in detail} (universal)}
\label{univ x table}
\vspace{-1.55pt}
\begin{tabular}{|c||c|c|c|c|c|c|}
\hline
$t$&$t_0$&$t_1$&$t_2$&$t_3$&$t_4$&$t_5$\\
\hline
&&&&&&\\[-10pt]
\hline
&&&&&&\\[-10pt]
$\tB'_t$&
$\begin{bsmallmatrix*}[r]
0&1\\
-2&0\\
3&-2\\
1&2\\
-1&1\\
\end{bsmallmatrix*}$
&
$\begin{bsmallmatrix*}[r]
0&-1\\
2&0\\
-3&1\\
-1&3\\
1&1\\
\end{bsmallmatrix*}$
&
$\begin{bsmallmatrix*}[r]
0&1\\
-2&0\\
-1&-1\\
5&-3\\
3&-1\\
\end{bsmallmatrix*}$
&
$\begin{bsmallmatrix*}[r]
0&-1\\
2&0\\
1&-1\\
-5&2\\
-3&2\\
\end{bsmallmatrix*}$
&
$\begin{bsmallmatrix*}[r]
0&1\\
-2&0\\
1&1\\
-1&-2\\
1&-2\\
\end{bsmallmatrix*}$
&
$\begin{bsmallmatrix*}[r]
0&-1\\
2&0\\
-1&2\\
1&-2\\
-1&-1\\
\end{bsmallmatrix*}$\\[14pt]
\hline
&&&&&&\\[-10pt]
$y_{1,t}$&$\frac{u_\alpha^3u_\beta}{u_\gamma}$&$\frac{u_\gamma}{u_\alpha^3u_\beta}$&$\frac{u_\beta^5u_\gamma^3}{u_\alpha}$&$\frac{u_\alpha}{u_\beta^5u_\gamma^3}$&$\frac{u_\alpha u_\gamma}{u_\beta}$&$\frac{u_\beta}{u_\alpha u_\gamma}$ \\[5 pt]\hline
&&&&&&\\[-10pt]
$y_{2,t}$&$\frac{u_\beta^2u_\gamma}{u_\alpha^2}$&\small$u_\alpha u_\beta^3u_\gamma$&$\frac{1}{u_\alpha u_\beta^3u_\gamma}$&$\frac{u_\beta^2u_\gamma^2}{u_\alpha}$&$\frac{u_\alpha}{u_\beta^2u_\gamma^2}$&$\frac{u_\alpha^2}{u_\beta^2u_\gamma}$ \\[5 pt]\hline
\end{tabular}\\
\caption{Extended exchange matrices and coefficients for Example~\ref{B2 in detail}}
\label{other tB table}
\vspace{-1.55pt}
\begin{tabular}{|c||c|c|}
\hline
$t$&$x_{1;t}$&$x_{2;t}$\\ \hline
&&\\[-10pt]
\hline
$t_0$&$x'_1$&$x'_2$	\\ \hline
&&\\[-10pt]
$t_1$&$\frac{(x'_2)^2u_\gamma+u_\alpha^3u_\beta}{x'_1}$&$x'_2$	\\[4pt] \hline
&&\\[-10pt]
$t_2$&$\frac{(x'_2)^2u_\gamma+u_\alpha^3u_\beta}{x'_1}$&$\frac{x'_1u_\alpha u_\beta^3u_\gamma+(x_2')^2u_\gamma+u_\alpha^3u_\beta}{x'_1x'_2}$	\\[4pt] \hline
&&\\[-10pt]
$t_3$&$\frac{(x'_1)^2u_\beta^5u_\gamma^2+2x'_1u_\alpha^2u_\beta^3u_\gamma+(x'_2)^2u_\alpha u_\gamma+u_\alpha^4 u_\beta}{x'_1(x'_2)^2}$&$\frac{x'_1u_\alpha u_\beta^3u_\gamma+(x_2')^2u_\gamma+u_\alpha^3u_\beta}{x'_1x'_2}$	\\[4pt] \hline
&&\\[-10pt]
$t_4$&$\frac{(x'_1)^2u_\beta^5u_\gamma^2+2x'_1u_\alpha^2u_\beta^3u_\gamma+(x'_2)^2u_\alpha u_\gamma+u_\alpha^4 u_\beta}{x'_1(x'_2)^2}$&$\frac{x'_1u^2_\beta u_\gamma+u_\alpha^2}{x'_2}$\\[4pt] \hline
&&\\[-10pt]
$t_5$&$x'_1$&$\frac{x'_1u^2_\beta u_\gamma+u_\alpha^2}{x'_2}$	\\[4pt] \hline
\end{tabular}\\
\caption{Cluster variables for Example~\ref{B2 in detail}}
\label{other x table}
\end{table}

Now consider another extended exchange matrix $\tB'$ with the same underlying exchange matrix $B=\begin{bsmallmatrix*}[r]0&1\\-2&0\end{bsmallmatrix*}$.
This time, the indexing set $I'$ is $\set{\alpha,\beta,\gamma}$, so that $\tB'$ is indexed as 
\begin{equation}
\begin{array}{c}
\mbox{\hspace{14 pt}\tiny 1 \hspace{4 pt} 2}\\[1 pt]
\begin{smallmatrix}1\\2\\\alpha\\\beta\\\gamma\end{smallmatrix}\begin{bsmallmatrix*}[r]
0&1\\
-2&0\\
3&-2\\
1&2\\
-1&1\\
\end{bsmallmatrix*}
\end{array}
\end{equation}
The extended exchange matrices, coefficients, and cluster variables in this cluster algebra are shown in Tables~\ref{other tB table} and~\ref{other x table}.

The coefficient rows of $\tB$, indexed by $a,b,c,d,e,f$, are vectors contained in the rays of $\F_B$.
We name these vectors $\v_a$, $\v_b$, etc.
Let $\v_\alpha$, $\v_\beta$, and $\v_\gamma$ similarly name the coefficient rows of $\tB'$.
We find the unique coefficient specialization from $\A_R(\x,\tB)$ to $\A_R(\x',\tB')$ as described in Remarks~\ref{explicit spec cone basis} and~\ref{cone and pos}.
First, $\v_\alpha$ is in the $B$-cone spanned by $\v_e$ and $\v_f$, with $\v_\alpha=\v_e+\v_f$.
Similarly, $\v_\beta$ is in the $B$-cone spanned by $\v_a$ and $\v_b$, with $\v_\beta=\v_a+2\v_b$, and $\v_\gamma$ is in the $B$-cone spanned by $\v_b$ and $\v_c$, with $\v_\gamma=\v_b+\v_c$.
Accordingly, we obtain a coefficient specialization mapping
\begin{equation}\label{coef spec example}
\begin{array}{llll}
x_1\mapsto x'_1&\qquad u_a\mapsto u_\beta&\qquad u_c\mapsto u_\gamma&\qquad u_e\mapsto u_\alpha\\[2pt]
x_2\mapsto x'_2&\qquad u_b\mapsto u_\beta^2 u_\gamma&\qquad u_d\mapsto 1&\qquad u_f\mapsto u_\alpha.
\end{array}
\end{equation}
\end{example}

We have dealt with the rank-$2$ exchange matrices of finite type.
It remains to consider the cases with $ab\le-4$.
For $m=0,1,2,\ldots$, define
\begin{equation}\label{Pm def}
P_m=(-1)^{\lfloor m/2\rfloor}\sum_{i\ge 0}\binom{m-i}{i}(ab)^{\lfloor m/2\rfloor-i}.
\end{equation}
This is a polynomial in $-ab$.
The first several polynomials $P_m$ are shown in Table~\ref{pm table}.
\begin{table}[ht]
\begin{tabular}{|l|r|}
\hline
$m$&$P_m$\\\hline
$0$&	$1$	\\\hline
$1$&$1$	\\\hline
$2$&	$-ab-1$\\\hline
$3$&	$-ab-2$\\\hline
$4$&	$a^2b^2+3ab+1$	\\\hline
$5$&	$a^2b^2+4ab+3$	\\\hline
\end{tabular}\\[3pt]
\caption{The polynomials $P_m$}
\label{pm table}
\end{table}

\begin{prop}\label{rk2 inf FB g}
Suppose $a$ and $b$ are integers with $ab\le-4$ and write $B=\begin{bsmallmatrix}0&a\\b&0\end{bsmallmatrix}$.
Then the $\g$-vectors associated to $B^T$ are $\begin{bsmallmatrix}\pm1\\0\end{bsmallmatrix}$, $\begin{bsmallmatrix}0\\\pm1\end{bsmallmatrix}$, and 
\begin{eqnarray}
\label{rays 1 even}
\begin{bsmallmatrix}\sgn(a)P_m\\-aP_{m+1}\end{bsmallmatrix}&&\qquad\text{for $m$ even and $m\ge 0$,}\\
\label{rays 1 odd}
\begin{bsmallmatrix}-bP_m\\\sgn(b)P_{m+1}\end{bsmallmatrix}&&\qquad\text{for $m$ odd and $m\ge 1$,}\\
\label{rays 2 even}
\begin{bsmallmatrix}-bP_{m+1}\\\sgn(b)P_m\end{bsmallmatrix}&&\qquad\text{for $m$ even and $m\ge 0$, and}\\
\label{rays 2 odd}
\begin{bsmallmatrix}\sgn(a)P_{m+1}\\-aP_m\end{bsmallmatrix}&&\qquad\text{for $m$ odd and $m\ge 1$.}
\end{eqnarray}
The rays of $\F_B^\circ$ are spanned by the $\g$-vectors given above.
\end{prop}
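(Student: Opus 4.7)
The plan is to verify the explicit formula for the $\g$-vectors by induction along the bi-infinite path $\T_2$, and then deduce the statement about $\R^\circ(B)$ directly from Theorem~\ref{g subfan}. The Standard Hypotheses hold for any rank-$2$ exchange matrix (as recalled at the start of this section), so Theorem~\ref{g subfan} applies to $B^T$ as well.

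I would label the vertices of $\T_2$ as $(t_m:m\in\integers)$ as in the preamble to Proposition~\ref{rk2 gvec b-coherent}, so that consecutive vertices are joined by edges alternately labeled $1$ and $2$. Starting from $\g^{B^T;t_0}_{1;t_0}=\e_1$ and $\g^{B^T;t_0}_{2;t_0}=\e_2$, exactly one of the two $\g$-vectors at $t_m$ is inherited unchanged from $t_{m-1}$, while the other is ``fresh''; I would compute these new vectors at $t_1,t_2,t_{-1},t_{-2}$ directly from~\eqref{g recur} to identify the pattern. The initial vectors $\pm\e_1,\pm\e_2$ appear at $t_0,t_1,t_{-1}$, with the precise assignment depending on $\sgn(a)$ and $\sgn(b)$; beyond these, the new $\g$-vectors for $|m|\ge2$ fall into the four families~\eqref{rays 1 even}--\eqref{rays 2 odd} according to the parity of $m$ and the sign of~$m$.

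The inductive step is a short computation built on the fact that in rank~$2$ each mutation $\mu_k$ simply negates the exchange matrix, so the entries of $B_{t_{m-1}}$ are $\pm a,\pm b$ with predictable signs. Plugging the inductive formulas for the two $\g$-vectors at $t_{m-1}$ into the recursion~\eqref{g recur}, the coordinates of the new $\g$-vector at $t_m$ become linear combinations of $P_{m-1}$, $P_m$, and $P_{m+1}$, and the identity that makes everything close is the three-term recurrence
\[
  P_{m+2}=(-ab-2)\,P_m-P_{m-2}\qquad(m\ge 2),
\]
with base cases $P_0=P_1=1$, $P_2=-ab-1$, $P_3=-ab-2$, which one checks from the binomial sum~\eqref{Pm def} using the Pascal identity $\binom{m-i}{i}=\binom{m-1-i}{i}+\binom{m-2-i}{i-1}$. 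The argument for $m<0$ is parallel.

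The main obstacle is purely bookkeeping: four families indexed by parity and direction, together with the sign functions $\sgn(a)$ and $\sgn(b)$, produce several cases to track, and one must match the correct family to the correct vertex $t_m$. Each individual verification is short, but the notation demands care, especially at the base cases where the ``new'' $\g$-vector at $t_{\pm 1}$ may or may not coincide with $\mp\e_1$ or $\mp\e_2$. Once the list of $\g$-vectors is confirmed, Theorem~\ref{g subfan} identifies $\F_B^\circ$ as the fan whose maximal cones are the $\g$-vector cones associated to $B^T$, so its rays are spanned precisely by the $\g$-vectors just computed, completing the proof.
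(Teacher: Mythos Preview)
Your overall plan—induct along $\T_2$ and close the recursion via a three-term identity for the $P_m$—is reasonable, but there is a genuine gap. The recursion~\eqref{g recur} you invoke is not a recursion purely in $\g$-vectors: the correction term $-\sum_i [b^t_{n+i,k}]_+\,\col(i,B)$ depends on the coefficient rows $b^t_{n+i,k}$ of the principal-coefficient extended exchange matrix at $t$ (the $c$-vectors), which you never compute or track. Knowing the two $\g$-vectors at $t_{m-1}$ together with the $2\times 2$ exchange matrix $B_{t_{m-1}}$ is therefore not enough to evaluate the right-hand side. The paper avoids this by computing $\g$-vectors through Conjecture~\ref{gvec mu} (valid here by Theorem~\ref{NZ 4.1(v)}): that formulation applies the mutation map $\eta_k$ directly to a $\g$-vector and branches only on the sign of its $k$th coordinate, so no $c$-vectors enter. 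Either route, however, requires a sign determination at every step, and that comes down to knowing $P_m>0$ for all $m\ge 0$ when $ab\le -4$. You never assert or prove this. It does follow from your recurrence $P_{m+2}=(-ab-2)P_m-P_{m-2}$ by an easy induction (since $-ab-2\ge 2$, one gets $P_{m+2}\ge P_m>0$), but you must state it and use it; otherwise the branch of the piecewise-linear recursion at each step is undetermined and the induction does not run.

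The paper's route to positivity is different: it identifies $P_m$ with a Chebyshev polynomial of the second kind, namely $P_m=U_m(\tfrac{\sqrt{-ab}}{2})$ for $m$ even and $P_m=\tfrac{1}{\sqrt{-ab}}\,U_m(\tfrac{\sqrt{-ab}}{2})$ for $m$ odd, and reads off $P_m>0$ from the known zeros of $U_m$. That identification is not merely a convenience here—it is reused immediately afterward to compute the limiting rays $\v_{\pm\infty}(a,b)$ in closed form. Your more elementary recurrence argument would establish the present proposition once you switch to Conjecture~\ref{gvec mu} and insert the positivity step, but a separate asymptotic computation would then be needed for Proposition~\ref{rk2 inf FB}.
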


To prove Proposition~\ref{rk2 inf FB g}, one matches \eqref{Pm def} with a well-known formula for $U_m(x)$, the Chebyshev polynomials of the second kind, to see that 
\begin{equation}\label{Pm cheby}
P_m=\begin{cases}
U_m(\frac{\sqrt{-ab}}{2})&\text{if $m$ is even,}\\
\frac{1}{\sqrt{-ab}}\,U_m(\frac{\sqrt{-ab}}{2})&\text{if $m$ is odd.}\\
\end{cases}
\end{equation}
Using known formulas for the roots of $U_m(x)$, one shows that $P_m$ is positive for all $m\ge 0$.
One then computes $\g$-vectors using Conjecture~\ref{gvec mu}, which holds in this case by Theorem~\ref{NZ 4.1(v)}.
The second assertion follows by Theorem~\ref{g subfan}.
We omit further details.
(The calculation of $\g$-vectors draws on notes shared with the author by Speyer~\cite{Speyer personal} in connection with joint work on~\cite{framework}.)

The rays described in \eqref{rays 1 even} and \eqref{rays 1 odd} interlace and approach a limit.
Using \eqref{Pm cheby} and a well-known formula for $U_m(x)$, we see that the limiting ray is spanned by 
\begin{equation}\label{vinf eq}
\v_\infty(a,b)=\begin{bsmallmatrix}2\sgn(a)\sqrt{-ab}\\-a(\sqrt{-ab}+\sqrt{-ab-4})\end{bsmallmatrix}.
\end{equation}
Similarly, the rays described in \eqref{rays 2 even} and \eqref{rays 2 odd} interlace and approach the limit 
\begin{equation}
\label{v-inf eq}
\v_{-\infty}(a,b)=\begin{bsmallmatrix}-b(\sqrt{-ab}+\sqrt{-ab-4})\\2\sgn(b)\sqrt{-ab}\end{bsmallmatrix}.
\end{equation}
Let $C_\infty(a,b)$ be the closed cone whose extreme rays are $\v_\infty(a,b)$ and $\v_{-\infty}(a,b)$.
Then $C_\infty(a,b)\setminus\set{\mathbf{0}}$ is the set of all points in $\reals^2$ not contained in any $2$-dimensional cone transitively adjacent to $O$ in $\F_B$.
For $k\in\set{1,2}$, the mutation map $\eta^B_k$ takes $C_\infty(a,b)$ to $C_\infty(-a,-b)$.
Similarly, $\eta^{-B}$ takes $C_\infty(-a,-b)$ to $C_\infty(a,b)$.
(Recall that $\mu_k(B)=-B$ for $k=1$ or $k=2$.)
Both of these cones are contained in an open coordinate orthant, and we conclude that $C_\infty(a,b)\setminus\set{\mathbf{0}}$ is contained in a $B$-class.
That $B$-class cannot be any larger than $C_\infty(a,b)\setminus\set{\mathbf{0}}$ without overlapping a $B$-cone that is transitively adjacent to $O$.
Thus $C_\infty(a,b)\setminus\set{\mathbf{0}}$ is a $B$-class and $C_\infty(a,b)$ is a $B$-cone.
We have proven the following proposition.

\begin{prop}\label{rk2 inf FB}
Suppose $a$ and $b$ are integers with $ab\le-4$ and write $B=\begin{bsmallmatrix}0&a\\b&0\end{bsmallmatrix}$.
Then the mutation fan $\F_B$ consists of $\F_B^\circ$ and the cone $C_\infty(a,b)$.
\end{prop}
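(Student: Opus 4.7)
The plan follows the outline sketched in the paragraph preceding the statement. First, Proposition~\ref{rk2 inf FB g} shows that the $\g$-vectors for $B^T$ span an interlaced family of rays that accumulate on the two limit rays $\v_\infty(a,b)$ and $\v_{-\infty}(a,b)$. By Theorem~\ref{g subfan}, these $\g$-vectors span the rays of $\F_B^\circ$, and the full-dimensional cones of $\F_B^\circ$ tile the closed complement of the interior of $C_\infty(a,b)$. So the only thing still to understand is what happens inside $C_\infty(a,b)$.

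Next I would verify that $C_\infty(a,b)$ is itself a single $B$-cone by direct computation. In rank~$2$, mutation acts by $\mu_k(B) = -B$ for $k\in\{1,2\}$, and Definition~\ref{map} gives explicit formulas for each piecewise-linear map $\eta^B_k$. Using the closed forms \eqref{vinf eq} and \eqref{v-inf eq} for $\v_\infty(a,b)$ and $\v_{-\infty}(a,b)$, I would check that each extreme ray of $C_\infty(a,b)$ lies in a single linearity domain of $\eta^B_k$ and that $\eta^B_k$ carries the ordered pair of extreme rays of $C_\infty(a,b)$ to the ordered pair of extreme rays of $C_\infty(-a,-b)$. Because $ab\le -4$, each of $C_\infty(\pm a, \pm b)$ sits in the interior of a single coordinate quadrant; hence every iterated image $\eta^B_{\k}(C_\infty(a,b))$ (alternating between $C_\infty(a,b)$ and $C_\infty(-a,-b)$) is sign-coherent. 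Proposition~\ref{contained Bcone} then places $C_\infty(a,b)$ inside some $B$-cone $D$.

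Finally I would argue that $D = C_\infty(a,b)$. If $D$ were strictly larger, its relative interior would meet the relative interior of some full-dimensional cone $C'$ of $\F_B^\circ$. But $C'$ is itself a $B$-cone, so by Proposition~\ref{int of B-cones} the intersection $D\cap C'$ is a $B$-cone, and by Lemma~\ref{B-subcone face} it is a face of both $D$ and $C'$; being full-dimensional, it must equal both, forcing $D = C' \subseteq \F_B^\circ$ and contradicting $C_\infty(a,b)\subseteq D$. Hence $D = C_\infty(a,b)$, which together with the first paragraph gives the full description of $\F_B$ (the two extreme rays of $C_\infty(a,b)$ are already rays of $\F_B^\circ$ as limits of the interlacing sequences, and lower-dimensional faces of $C_\infty(a,b)$ are accounted for automatically).

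The main obstacle will be the explicit verification that $\eta^B_k$ maps $C_\infty(a,b)$ bijectively onto $C_\infty(-a,-b)$. One must plug the irrational vectors $\v_{\pm\infty}(a,b)$ into the piecewise formula \eqref{mutation map def}, carefully determine which branch applies (tracking the signs of the first and second coordinates), and simplify the resulting expressions using identities involving $\sqrt{-ab}$ and $\sqrt{-ab-4}$. Once this computation is done, the sign-coherence argument and the fan-theoretic rigidity step are essentially automatic.
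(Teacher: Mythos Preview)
Your approach is essentially the paper's: verify that $\eta^B_k$ carries $C_\infty(a,b)$ onto $C_\infty(-a,-b)$, note that both cones lie in open coordinate quadrants so every iterated image is sign-coherent, and conclude that $C_\infty(a,b)$ is a $B$-cone that cannot be strictly enlarged without colliding with a $\g$-vector cone. The paper phrases this last step at the level of $B$-classes (the class cannot be larger than $C_\infty(a,b)\setminus\{\mathbf{0}\}$ without overlapping a cone of $\F_B^\circ$) rather than invoking Proposition~\ref{int of B-cones} and Lemma~\ref{B-subcone face}, but the content is the same.

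One correction to your final parenthetical: the extreme rays of $C_\infty(a,b)$ are \emph{not} rays of $\F_B^\circ$. They are limits of such rays but do not themselves arise from $\g$-vectors; when $ab\le -5$ they are irrational, and as noted later in the proof of Proposition~\ref{rk2 wild basis} they are not even $B$-cones on their own. They enter $\F_B$ only as faces of the newly established $B$-cone $C_\infty(a,b)$, not via $\F_B^\circ$.
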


For each ray $\rho$ in $\R^\circ(B)$, choose $\v_\rho$ to be the corresponding vector in Proposition~\ref{rk2 inf FB g}.
Each other ray $\rho$ in $\R(B)$ is spanned by $\v_\infty(a,b)$ or $\v_{-\infty}(a,b)$ or both, and we thus choose $\v_\rho$ to be $\v_\infty(a,b)$ or $\v_{-\infty}(a,b)$.
Now, Proposition~\ref{rk2 gvec b-coherent} tells us that in any $B$-coherent linear relation supported on $\set{\v_\rho:\rho\in\R(B)}$, all of the vectors appearing in Proposition~\ref{rk2 inf FB g} appear with coefficient zero.
The one or two remaining vectors form a linearly independent set, so they also appear with coefficient zero.
We conclude that there exists no nontrivial $B$-coherent linear relation supported on a finite subset of $\set{\v_\rho:\rho\in\R(B)}$.
Theorem~\ref{NZ 4.1(iv)} implies that each pair of vectors $\v_\rho$ and $\v_{\rho'}$ spanning a $\g$-vector cone for $B^T$ are a $\integers$-basis for $\integers^2$.

When $\v_\infty(a,b)$ and $\v_{-\infty}(a,b)$ are related by a positive scaling, the cone  $C_\infty$ degenerates to a limiting ray spanned by $\v_\infty(a,b)=\begin{bsmallmatrix}2\sgn(a)\\-a\end{bsmallmatrix}$.
This happens if and only if $ab=-4$, so that $(a,b)$ is $(\pm 1,\mp 4)$, $(\pm 2,\mp 2)$, or $(\pm 4,\mp 1)$.
These are exactly the rank-$2$ cases where $B$ is of affine type, as we define later in Definition~\ref{def affine type}.
In these cases, the mutation fan $\F_B$ consists of the $\g$-vector cones for $B^T$, together with the limiting ray $\rho_\infty$.
We have proved the following.

\begin{prop}\label{rk2 affine basis}
Suppose $a$ and $b$ are integers with $ab=-4$ and write $B=\begin{bsmallmatrix}0&a\\b&0\end{bsmallmatrix}$.
Then for any $R$, the $\g$-vectors for $B^T$, together with the shortest integer vector that is a positive rational scaling of $\begin{bsmallmatrix}2\sgn(a)\\-a\end{bsmallmatrix}$, constitute a positive $R$-basis for $B$.
\end{prop}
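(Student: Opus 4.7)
The plan is to invoke Proposition~\ref{positive cone basis}, whose condition~\eqref{positive cone basis property} I would verify directly. Let $(\b_i : i \in I)$ denote the proposed collection: one chosen nonzero vector in each ray of $\F_B$, where for rays of $\R^\circ(B)$ the chosen vector is the $\g$-vector from Proposition~\ref{rk2 inf FB g}, and for the one additional ray $\rho_\infty$ the chosen vector is the shortest integer vector $\v$ positively proportional to $\begin{bsmallmatrix}2\sgn(a)\\-a\end{bsmallmatrix}$.

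First I would dispatch $R$-independence. The discussion immediately following Proposition~\ref{rk2 inf FB}, combined with Proposition~\ref{rk2 gvec b-coherent}, already shows that no nontrivial $B$-coherent linear relation is supported on any finite subset of $\{\v_\rho : \rho \in \R(B)\}$, and this is valid over any underlying ring $R$, since Proposition~\ref{rk2 gvec b-coherent} was established without restriction on $R$ and the surviving ``remaining vectors'' form a linearly independent set.

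Next I would verify condition~\eqref{positive cone basis property} case by case on the $B$-cones, which by Proposition~\ref{rk2 inf FB} specialized to $ab=-4$ are: the two-dimensional $\g$-vector cones $\Cone^{B^T;t_0}_t$, their one-dimensional faces (the $\g$-vector rays themselves), the degenerate cone $C_\infty(a,b)=\rho_\infty$, and $\{\mathbf{0}\}$. For each two-dimensional $\g$-vector cone $C$, its two extreme rays carry $\g$-vectors forming a $\integers$-basis of $\integers^2$ by Theorem~\ref{NZ 4.1(iv)}; hence any $\a\in R^2\cap C$ can be written as a nonnegative $R$-linear combination of these two $\g$-vectors, both of which lie in our collection and in $C$. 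The face cases are immediate. For $\rho_\infty$, every integer vector in $\rho_\infty$ is a nonnegative integer multiple of $\v$ by the choice of $\v$ as shortest, and when $R$ is a field containing $\rationals$ every vector in $R^2\cap\rho_\infty$ is a nonnegative rational, hence nonnegative $R$-scalar, multiple of $\v$.

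The main mild obstacle is checking that the independence argument in the text genuinely applies in the degenerate $ab=-4$ case. There $C_\infty(a,b)$ collapses to the single ray $\rho_\infty$, so the ``one or two remaining vectors'' in that discussion reduces to the single vector $\v$, and the step asserting linear independence of these remaining vectors becomes trivial. The rest is bookkeeping tied to the explicit description of $\F_B$ in Propositions~\ref{rk2 inf FB g} and~\ref{rk2 inf FB}.
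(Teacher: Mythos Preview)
Your proposal is correct and follows essentially the same approach as the paper. The paper's proof is the discussion immediately preceding the statement of Proposition~\ref{rk2 affine basis}: it establishes $R$-independence via Proposition~\ref{rk2 gvec b-coherent} (noting that in the degenerate case only the single vector $\v$ remains, whence independence is automatic), invokes Theorem~\ref{NZ 4.1(iv)} for the $\integers$-basis property of each pair of $\g$-vectors spanning a $\g$-vector cone, and observes that $\F_B$ consists of the $\g$-vector cones together with the single ray $\rho_\infty$; you have simply made the concluding appeal to Proposition~\ref{positive cone basis}\eqref{positive cone basis property} explicit where the paper leaves it implicit.
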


Some of the rank-$2$ universal extended exchange matrices of affine type are given below in~\eqref{rk2 affine universal}.
\begin{equation}
\label{rk2 affine universal}
\begin{bsmallmatrix*}[r]
0&1\\
-4&0\\[1pt]\hline\\
-1&0\\
1&-1\\
3&-2\\
\cdot&\cdot\\[-3pt]
\cdot&\cdot\\[-3pt]
\cdot&\cdot\\[1pt]\hline\\
0&-1\\
4&-3\\
8&-5\\
\cdot&\cdot\\[-3pt]
\cdot&\cdot\\[-3pt]
\cdot&\cdot\\[1pt]\hline\\
0&1\\
4&-1\\
8&-3\\
\cdot&\cdot\\[-3pt]
\cdot&\cdot\\[-3pt]
\cdot&\cdot\\[1pt]\hline\\
1&0\\
3&-1\\
5&-2\\
\cdot&\cdot\\[-3pt]
\cdot&\cdot\\[-3pt]
\cdot&\cdot\\[1pt]\hline\\
2&-1\\\end{bsmallmatrix*}
\qquad\qquad
\begin{bsmallmatrix*}[r]
0&2\\
-2&0\\[1pt]\hline\\
-1&0\\
0&-1\\
1&-2\\
2&-3\\
3&-4\\
4&-5\\
\cdot&\cdot\\[-3pt]
\cdot&\cdot\\[-3pt]
\cdot&\cdot\\[1pt]\hline\\
0&1\\
1&0\\
2&-1\\
3&-2\\
4&-3\\
5&-4\\
\cdot&\cdot\\[-3pt]
\cdot&\cdot\\[-3pt]
\cdot&\cdot\\[1pt]\hline\\
1&-1
\end{bsmallmatrix*}
\end{equation}
Each matrix contains several infinite sequences of coefficient rows, separated by horizontal lines for the sake of clarity.
The calculations are easy when $-ab=4$, because it is known that $U_m(1)=m+1$, so that $P_m=m+1$ for $m$ even and $P_m=\frac{1}{2}(m+1)$ for $m$ odd.
As before, Propositions~\ref{antipodal FB} and~\ref{pi FB} ensure that the remaining cases are obtained from the cases shown by negating all entries and/or swapping the columns and then swapping the first two rows.
The mutation fans for the two cases in~\eqref{rk2 affine universal} are shown in Figure~\ref{affine mutation fans}.
\begin{figure}[ht]
\begin{tabular}{ccc}
\includegraphics{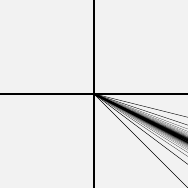}&&
\includegraphics{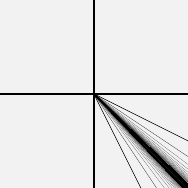}
\\[2 pt]
$B=\begin{bsmallmatrix*}[r]0&1\\-4&0\end{bsmallmatrix*}$&&
$B=\begin{bsmallmatrix*}[r]0&2\\-2&0\end{bsmallmatrix*}$
\end{tabular}
\caption{Mutation fans $\F_B$ for the affine examples in~\eqref{rk2 affine universal}}
\label{affine mutation fans}
\end{figure}

When $ab\le-5$, the set $C_\infty(a,b)$ is a $2$-dimensional cone whose extreme rays are algebraic, but not rational.
(Consider~\eqref{vinf eq} and~\eqref{v-inf eq} and note that the only pair of perfect squares differing by $4$ is $\set{0,4}$.)
Two of these cases are shown in Figure~\ref{wild mutation fans}, with $C_\infty(a,b)$ shaded gray.
The additional points drawn in the figure are explained in the proof of Proposition~\ref{rk2 wild basis}.
\begin{figure}[ht]
\begin{tabular}{ccc}
\includegraphics{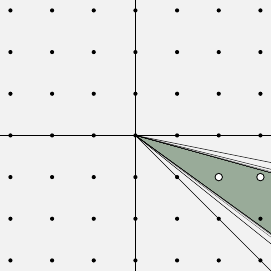}&&
\includegraphics{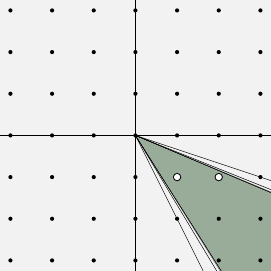}
\\[2 pt]
$B=\begin{bsmallmatrix*}[r]0&1\\-5&0\end{bsmallmatrix*}$&&
$B=\begin{bsmallmatrix*}[r]0&2\\-3&0\end{bsmallmatrix*}$
\end{tabular}
\caption{Mutation fans $\F_B$ in two infinite non-affine rank-$2$ cases}
\label{wild mutation fans}
\end{figure}

\begin{prop}\label{rk2 wild basis}
Suppose $a$ and $b$ are integers with $ab\le-5$ and write $B=\begin{bsmallmatrix}0&a\\b&0\end{bsmallmatrix}$.
\begin{enumerate}
\item\label{wild pos basis}
 If $R$ is a field containing $\rationals[\sqrt{-ab},\sqrt{-ab-4}]$, then the $\g$-vectors for $B^T$, together with $\v_\infty(a,b)$ and $\v_{-\infty}(a,b)$, constitute a positive $R$-basis for~$B$.
\item\label{wild no pos basis}
No positive $\integers$-basis or $\rationals$-basis exists for $B$.
\item\label{wild cone basis}
If $R$ is a field, then the $\g$-vectors for $B^T$, together with any two linearly independent vectors in $C_\infty(a,b)\cap R^2$, constitute a cone $R$-basis for $B$.
\item\label{wild int basis}
There exist integer vectors in $C_\infty(a,b)$ that, together with the $\g$-vectors for $B^T$, constitute a cone $\integers$-basis for $B$.
\end{enumerate}
\end{prop}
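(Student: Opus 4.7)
The plan is to handle the four parts in sequence, drawing on the explicit description of $\F_B$ in Propositions~\ref{rk2 inf FB g} and~\ref{rk2 inf FB} and on the rigidity of $B$-coherent relations established in Proposition~\ref{rk2 gvec b-coherent}.

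For parts~\eqref{wild pos basis} and~\eqref{wild cone basis}, the only rays of $\F_B$ outside $\R^\circ(B)$ are the two rays spanned by $\v_\infty(a,b)$ and $\v_{-\infty}(a,b)$. In part~\eqref{wild pos basis}, the hypothesis that $R$ contains $\rationals[\sqrt{-ab},\sqrt{-ab-4}]$ places both $\v_{\pm\infty}(a,b)$ in $R^2$, and each $B$-cone is nonnegatively $R$-spanned by the two proposed basis vectors on its extreme rays: for the $\g$-vector cones this uses Theorem~\ref{NZ 4.1(iv)} (the two defining $\g$-vectors form a $\integers$-basis of $\integers^2$, hence an $R$-basis of $R^2$), and for $C_\infty(a,b)$ it uses linear independence of $\v_{\pm\infty}$ over the field $R$ together with the fact that the expansion coefficients of a point of $C_\infty(a,b)$ in that basis are nonnegative reals, hence nonnegative elements of $R$. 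Condition~\eqref{positive cone basis property} of Proposition~\ref{positive cone basis} then yields the positive $R$-basis property, with $R$-independence coming from Proposition~\ref{rk2 gvec b-coherent} (any $B$-coherent relation is supported on $\{\v_\infty,\v_{-\infty}\}$, a linearly independent pair). Part~\eqref{wild cone basis} is the same argument with any two linearly independent vectors in $C_\infty(a,b)\cap R^2$ replacing $\v_{\pm\infty}$, yielding a cone $R$-basis per Definition~\ref{cone basis def}.

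For part~\eqref{wild no pos basis}, I argue by contradiction. If a positive $\integers$- or $\rationals$-basis for $B$ existed, Proposition~\ref{FRB} would produce the $R$-part $\F_B^R$ of $\F_B$, and condition~\eqref{max cone R part} of Definition~\ref{R-part def} applied to $C_\infty(a,b)$ would yield a cone $C'\in\F_B^R$ with $C'\subseteq C_\infty(a,b)$ and $C'\supseteq R^2\cap C_\infty(a,b)$; by condition~\eqref{R cones R part}, $C'$ has rational extreme rays. The main obstacle is the arithmetic fact that the extreme rays of $C_\infty(a,b)$ are irrational whenever $ab\le -5$: by~\eqref{vinf eq} the slope of $\v_\infty(a,b)$ is a rational multiple of $1+\sqrt{(-ab-4)/(-ab)}$, and $(-ab)(-ab-4)$ is a perfect square only for $-ab\in\{0,4\}$, since the Pell-type equation $(n-2)^2-m^2=4$ forces $n\in\{0,4\}$. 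Hence $C'$ is strictly contained in $C_\infty(a,b)$ and cannot contain rational points of $C_\infty(a,b)$ whose directions approach its irrational extreme rays, contradicting $C'\supseteq R^2\cap C_\infty(a,b)$.

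For part~\eqref{wild int basis}, I will exhibit two integer vectors $\b_1,\b_2\in C_\infty(a,b)$ with $\det[\b_1,\b_2]=\pm 1$. Such a pair is a $\integers$-basis of $\integers^2$, so its $\integers$-span automatically contains $\integers^2\cap C_\infty(a,b)$, which verifies the cone-spanning condition; $\integers$-independence then follows exactly as in parts~\eqref{wild pos basis} and~\eqref{wild cone basis}. To construct the pair I pick any primitive integer vector $\b_1\in C_\infty(a,b)$ (available because rational directions are dense in the full-dimensional real cone $C_\infty(a,b)$), choose $\b_2'\in\integers^2$ with $\det[\b_1,\b_2']=\pm 1$ via Bezout, and set $\b_2=\b_2'+n\b_1$ for a positive integer $n$ large enough that $\b_2$ points in a direction close enough to $\b_1$ to lie in $C_\infty(a,b)$; then $\det[\b_1,\b_2]=\det[\b_1,\b_2']=\pm 1$ as required.
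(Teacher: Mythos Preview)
Your overall strategy is sound, and parts~\eqref{wild pos basis}, \eqref{wild no pos basis}, and~\eqref{wild int basis} go through. In particular, your Bezout/density construction for part~\eqref{wild int basis} is a clean general argument; the paper instead uses the monotonicity $C_\infty(a,b)\subseteq C_\infty(a',b')$ (for $|a'|\ge|a|$, $|b'|\ge|b|$, signs fixed) to reduce to the two minimal cases $(a,b)=(1,-5)$ and $(2,-3)$, where explicit lattice bases inside $C_\infty$ are exhibited by inspection. Your approach is more uniform; the paper's is more concrete.

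There is, however, a genuine gap in your part~\eqref{wild cone basis}. Saying ``the same argument'' as in part~\eqref{wild pos basis} hides the following issue: condition~\eqref{cone basis span} of Definition~\ref{cone basis def} must be checked for \emph{every} $B$-cone, and the extreme rays of $C_\infty(a,b)$ are faces of a $B$-cone and hence cones of $\F_B$. In part~\eqref{wild pos basis} this does not matter, because $\v_{\pm\infty}$ sit on those very rays. But in part~\eqref{wild cone basis}, if the two chosen vectors lie in the interior of $C_\infty(a,b)$ and $R$ is large enough to meet the extreme rays nontrivially (e.g.\ $R\supseteq\rationals[\sqrt{-ab},\sqrt{-ab-4}]$), then your proposed basis meets each extreme ray only in $\{\mathbf 0\}$, and condition~\eqref{cone basis span} would \emph{fail} on that ray---\emph{if} it were a $B$-cone. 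The paper closes this gap by observing that the extreme rays of $C_\infty(a,b)$ are \emph{not} $B$-cones: since $C_\infty(a,b)\setminus\{\mathbf 0\}$ is a single $B$-class, Proposition~\ref{relint class easy} forces any $B$-cone containing a nonzero point of such a ray to be all of $C_\infty(a,b)$. This is precisely the counterexample advertised in Remark~\ref{face B cone}, so it is not a triviality you can elide. (For part~\eqref{wild int basis} this particular issue is harmless anyway, since the rays are irrational and hence meet $\integers^2$ only at the origin.)

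A smaller point: your appeal to Proposition~\ref{rk2 gvec b-coherent} for $R$-independence in part~\eqref{wild cone basis} is not literal, since that proposition is stated for vectors lying on rays of $\F_B$, whereas your two vectors may lie in the interior of $C_\infty(a,b)$. The proof does extend---after the relevant mutation, the entire image of $C_\infty(a,b)$ lies in the closed halfplane $\{x_j\le 0\}$, not just its extreme rays, so Proposition~\ref{one positive or one negative} still applies---but you should say so.
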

\begin{proof}
Assertion~\eqref{wild pos basis} is immediate from the discussion above.
Assertion \eqref{wild no pos basis} also follows in light of Corollary~\ref{pos basis g cor}.
To establish Assertions~\eqref{wild cone basis} and~\eqref{wild int basis}, we point out that the extreme rays of $C_\infty(a,b)$ are not themselves $B$-cones.
(If an extreme ray of $C_\infty(a,b)$ were a $B$-cone, then the nonzero points on that ray would be a $B$-class, by Proposition~\ref{relint class easy}.
But $C_\infty(a,b)\setminus\set{\mathbf{0}}$ is already a $B$-class.)
To satisfy Definition~\ref{cone basis def}, we therefore don't need vectors in the extreme rays of $C_\infty(a,b)$.  
We only need two vectors in $C_\infty(a,b)$ that span $R^2$.
If $R$ is a field, these can be any linearly independent vectors in $C_\infty(a,b)$.

If $R=\integers$, then we need to find an $\integers$-basis for $\integers^2$ in $C_\infty(a,b)$.
If $a'\ge a\ge0$ and $0\ge b\ge b'$, then $C_\infty(a,b)\subseteq C_\infty(a',b')$.
Thus, by symmetry, it is enough to check the minimal cases $(a,b)=(1,-5)$ and $(a,b)=(2,-3)$.
In both cases, $C_\infty(a,b)$ contains a $\integers$-basis for $\integers^n$, shown by the white dots in Figure~\ref{wild mutation fans}.
(The black and white dots in the pictures are the points $\integers^2$.)
\end{proof}

\section{Rays in the Tits cone}\label{Tits sec}
In this section, we define a subset $\R^{\pm\Tits}(B)$ of the rays $\R(B)$ of $\F_B$.
These are the rays spanned by $\g$-vectors for $B^T$ that are contained in the Tits cone or in its antipodal cone, in a sense that we make precise below.
We prove the following proposition.

\begin{prop}\label{Tits b-coherent}
Suppose $B$ is acyclic and satisfies the Standard Hypotheses.
Suppose also that every submatrix $B_\br{j}$ of $B$ is of finite Cartan type.
Choose a nonzero vector $\v_\rho$ in each ray $\rho\in\R(B)$.
Then any $B$-coherent linear relation supported on $\set{\v_\rho:\rho\in\R(B)}$ is in fact supported on $\set{\v_\rho:\rho\in\R(B)\setminus\R^{\pm\Tits}(B)}$.
\end{prop}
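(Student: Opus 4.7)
The plan is to mimic the proof strategy of Proposition~\ref{rk2 gvec b-coherent}: for each $\rho\in\R^{\pm\Tits}(B)$, produce a finite sequence $\k$ of indices in $[n]$ and an index $j\in[n]$ such that $\eta_\k^B(\v_\rho)$ has strictly positive $j$-th coordinate while $\eta_\k^B(\v_{\rho'})$ has non-positive $j$-th coordinate for every other ray $\rho'\in\R(B)$. Once this separation is achieved, Proposition~\ref{one positive or one negative} forces the coefficient of $\v_\rho$ in any $B$-coherent linear relation to vanish, which is exactly what the proposition asserts.

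I would first handle rays in $\R^\Tits(B)$ and then reduce the rays in $\R^{-\Tits}(B)$ to this case using Proposition~\ref{antipodal FB} together with the identity~\eqref{eta antipodal}, which shows that $B$-coherent linear relations correspond bijectively, under the antipodal map $\a\mapsto-\a$, to $(-B)$-coherent linear relations. So fix $\rho\in\R^\Tits(B)$, spanned by a $\g$-vector $\g^{B^T;t_0}_{i;t}$ whose cone $\Cone^{B^T;t_0}_t$ lies in the Tits cone. Let $\k=k_q,\ldots,k_1$ label the path from $t_0$ to $t$ in $\T_n$. Then Proposition~\ref{Cone B} says $\eta_\k^B$ carries $\Cone^{B^T;t_0}_t$ linearly onto the nonnegative orthant $O$, so $\eta_\k^B(\v_\rho)$ is a positive scalar multiple of some coordinate vector $\e_j$.

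The technical heart is to establish that $\reals_{\ge 0}\e_j$ is the \emph{only} ray of $\F_{\mu_\k(B)}$ with strictly positive $j$-th coordinate. I would verify this in two steps. First, for rays of $\F^\circ_{\mu_\k(B)}$: the acyclicity of $B$ and the finite-Cartan-type hypothesis on each submatrix $B_\br{j}$ let one invoke the framework of \cite{framework,camb_fan,YZ} (pointed to in Remark~\ref{me and ca4}), which identifies $\F^\circ_{\mu_\k(B)}$ with a Cambrian-type fan in which $O$ plays the role of the fundamental chamber, the coordinate rays are the simple extreme rays, and the coordinate hyperplane $\set{x_j=0}$ is a wall separating $\reals_{\ge 0}\e_j$ from every other ray of $\F^\circ_{\mu_\k(B)}$. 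Second, for rays of $\F_{\mu_\k(B)}\setminus\F^\circ_{\mu_\k(B)}$: such rays, if they exist, must lie outside the Tits cone; the same Coxeter-theoretic framework confines them to the closed half-space $\set{x_j\le 0}$.

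The hard part will be the second step: rays of $\F_B$ outside $\F^\circ_B$ definitely occur in non-finite type (witness the irrational rays bounding $C_\infty(a,b)$ in Proposition~\ref{rk2 wild basis}) and require careful control. One must show that under the finite-Cartan-type hypothesis on proper submatrices, no such ray, after transport through $\eta_\k^B$, can cross into the open half-space $\set{x_j>0}$. I expect this to follow from a ``the Tits cone dominates $\F_B^\circ$'' statement in the acyclic locally-finite-type setting, combined with Proposition~\ref{lim B cone} to handle rays arising as limits of $\g$-vector rays. Once both steps are in place, Proposition~\ref{one positive or one negative} applies to each $\rho\in\R^\Tits(B)$, and the antipodal reduction completes the proof.
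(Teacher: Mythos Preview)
Your overall strategy---isolate each ray $\rho\in\R^{\pm\Tits}(B)$ via some mutation, apply Proposition~\ref{one positive or one negative}, and handle $-\Tits(A)$ by the antipodal symmetry~\eqref{eta antipodal}---matches the paper's.  But your implementation has a genuine gap in the ``technical heart.''

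You propose to mutate along the arbitrary path $\k$ from $t_0$ to $t$ and then argue that $\reals_{\ge0}\e_j$ is the unique ray of $\F_{\mu_\k(B)}$ with positive $j$-th coordinate.  The trouble is that $\mu_\k(B)$ need not be acyclic, and its submatrices need not be of finite Cartan type (look at the mutation class in~\eqref{mut class particular B}).  So the Cambrian/framework machinery you want to invoke does not apply to $\mu_\k(B)$, and your ``Tits cone dominates $\F_B^\circ$'' statement has no matrix to anchor it to.  The paper's Lemma~\ref{unique ray below with hypotheses}, which is exactly the separation statement you need, requires that column $j$ of the matrix be nonnegative and that $B_\br{j}$ be of finite type; these are the hypotheses on $B$ itself (after acyclic reindexing), not on an arbitrary mutation of $B$.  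A second, smaller gap: you assume every ray in $\R^\Tits(B)$ is spanned by a $\g$-vector, but this is not given---it is a consequence of the argument, not an input.

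The paper avoids both problems by never leaving the class of matrices satisfying the hypotheses.  It reindexes $B$ acyclically so column $n$ is nonnegative, and then runs an induction on the Coxeter length $\ell(\rho)$ of the ray (Definition~\ref{length of ray}).  If the $n$-th coordinate of $\v_\rho$ is positive, Lemma~\ref{unique ray below with hypotheses} already separates $\rho$ from all other rays and Proposition~\ref{one positive or one negative} finishes.  If it is negative, one applies the single mutation $\mu_n$; because column $n$ is nonnegative, $\mu_n$ merely flips signs in row and column $n$, so $\mu_n(B)$ is again acyclic with the same Cartan companion, and $\eta_n^B$ agrees with the simple reflection $s_n$ on $\v_\rho$, decreasing $\ell(\rho)$.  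If the $n$-th coordinate is zero, one passes to $\mu_n(B)$ (which now has column $n-1$ nonnegative) and repeats with index $n-1$, and so on.  The point is that only source mutations are used, and these preserve exactly the structure you need; this is the standard length-and-rank induction familiar from sortable elements (cf.\ Remark~\ref{cambrian approach}).
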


Proposition~\ref{Tits b-coherent} will allow us to construct positive bases for exchange matrices of finite type, generalizing Proposition~\ref{rk2 finite basis}.
To illustrate the usefulness of the proposition beyond finite type, we also construct positive bases for a rank-$3$ exchange matrix of affine type.
A similar construction can be carried out for any rank-$3$ exchange matrix of affine type.
Based on these constructions and on insight from \cite{afframe}, we make a general conjecture about bases for exchange matrices of affine type.

We now proceed to fully explain and then prove Proposition~\ref{Tits b-coherent}.
For Standard Hypotheses, see Definition~\ref{standard hyp}.
An exchange matrix $B$ is \newword{acyclic} if, possibly after reindexing by a permutation of $[n]$, it has the following property:
If $b_{ij}>0$ then $i<j$.
Define $B_\br{j}$ to be the matrix obtained from $B$ by deleting row $j$ and column~$j$.

\begin{definition}[\emph{Cartan companion of $B$}]\label{Cartan comp def}
Recall from Definition~\ref{seed} that $B$ is an $n\times n$ skew-symmetrizable integer matrix, and specifically that $\delta(i) b_{ij}=-\delta(j) b_{ji}$ for all $i,j\in[n]$.
The \newword{Cartan companion} of $B$ is the $n\times n$ matrix $A$ with diagonal entries $2$ and off-diagonal entries $a_{ij}=-|b_{ij}|$.
Then $\delta(i) a_{ij}=\delta(j) a_{ji}$ for all $i,j\in[n]$, and accordingly $A$ is \newword{symmetrizable}.
In fact, $A$ is a \newword{Cartan matrix} in the usual sense.
(See \cite{Kac}, or for a treatment specific to the present purposes, see \cite[Section~2.2]{typefree}.)
\end{definition}

\begin{definition}[\emph{Tits cone and $\R^{\pm\Tits}(B)$}]\label{Tits def}
Let $V$ be a real vector space of dimension $n$ with a basis $\Pi=\set{\alpha_i:i\in[n]}$ and write $V^*$ for its dual vector space.
The basis vectors $\alpha_i$ are called the \newword{simple roots}.
The \newword{simple co-roots} associated to $A$ are $\alpha_i\ck= \delta(i)^{-1} \alpha_i$.
These are the simple roots associated to the transpose $A^T$.
Let $\br{\cdot,\cdot}:V^*\times V\to\reals$ denote the canonical pairing.
We identify $V^*$ with $\reals^n$ by identifying dual basis to $\Pi$ (called the \newword{fundamental co-weights}) with the standard unit basis.
Specifically, we identify the dual basis vector dual to $\alpha_i$ with~$\e_i$.

Associated to the Cartan matrix $A$ is a Coxeter group $W$, defined as a group generated by reflections on $V$.
For each $i\in[n]$, we define a reflection $s_i$ by specifying its action on the simple roots: $s_i(\alpha_j)=\alpha_j-a_{ij} \alpha_i$.
The group $W$ is the group generated by all of these reflections, and it is a Coxeter group.
We define a symmetric bilinear form $K$ on $V$ by setting $K(\alpha\ck_i, \alpha_j)=a_{ij}$.
The element $s_i$ is a reflection with respect to the form $K$, and thus $W$ acts by isometries on $V$ with respect to the form $K$.
The action of $W$ on $V$ induces a dual action on $V^*$ in the usual way.  
The action of $s_i$ on $V^*$ is a reflection fixing the hyperplane $\alpha_i^\perp$.
More specifically, the action of $s_i$ fixes $\e_j$ for $j\neq i$, and sends $\e_i$ to $-\e_i+\sum_{j\neq i}a_{ij}\e_j$,

Define $D = \bigcap_{i\in[n]} \set{x\in V^*: \br{x,\alpha_i}\ge 0}\subset V^*$.
Under the identification of $V^*$ with $\reals^n$, the cone $D$ is the nonnegative orthant $O$.
The cones $wD$ are distinct, for distinct $w\in W$.
The union of these cones is called the \newword{Tits cone} $\Tits(A)$.
As the name suggests, the Tits cone is a cone.
By definition, it is preserved under the action of $W$ on $V^*$.
We write $-\Tits(A)$ for the image of $\Tits(A)$ under the antipodal map.

Continuing to identify $V^*$ with $\reals^n$ as above, we define $\R^{\pm\Tits}(B)$ to be the set of rays in $\R(B)$ that are contained in $\Tits(A)\cup(-\Tits(A))$.
(Since $\Tits(A)$ is a cone and contains $\mathbf{0}$, any ray is either completely contained in $\Tits(A)\cup(-\Tits(A))$ or intersects $\Tits(A)\cup(-\Tits(A))$ only at $\mathbf{0}$.)
\end{definition}

\begin{definition}[\emph{Cartan type of $B$}]\label{def Cartan type}
A Cartan matrix $A$ is of \newword{finite type} if the associated Coxeter group $W$ is finite.
When $A$ is of finite type, then $\Tits(A)$ is all of $V^*$.
Cartan matrices of finite type are classified (e.g.\ as ``type $D_n$,'' etc.).
We define Cartan matrices of \newword{affine type} in Definition~\ref{def affine type}.
If $A$ is the Cartan companion of $B$, then we use the phrase \newword{Cartan type of $B$} to refer to the type of~$A$.
\end{definition}

The following result \cite[Theorem~1.4]{ca2} links Definition~\ref{def Cartan type} to Definition~\ref{fin def}.
\begin{theorem}\label{finite and Cartan}
An exchange matrix $B$ is of finite type if and only if it is mutation equivalent to an exchange matrix of finite Cartan type.
\end{theorem}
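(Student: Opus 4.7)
The plan is to prove the two directions separately. For sufficiency (if $B$ is mutation equivalent to a matrix of finite Cartan type, then $B$ is of finite type), I would pass along the mutation class to assume $B$ itself has Cartan companion $A$ of finite Cartan type, so that the associated Weyl group $W$ is finite and the root system $\Phi$ of $A$ has finitely many roots. The strategy is to set up a bijection between cluster variables and the set $\Phi_{\ge -1}$ of \emph{almost positive roots} (positive roots together with the negatives of the simple roots). Using the Laurent phenomenon, each cluster variable has a well-defined denominator vector in $\integers^n$; I would show by induction on mutation distance from the initial seed that this denominator vector, viewed in the simple root basis, is always an almost positive root, and conversely that every almost positive root arises this way. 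Finiteness of $\Phi_{\ge -1}$ then forces finitely many cluster variables and hence, because every cluster is determined by its variables, finitely many seeds.

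For necessity (if $B$ is of finite type, then some representative of its mutation class has Cartan companion of finite type), the first and hardest step is to produce an acyclic representative $B'$ in the mutation class of $B$. My approach here would be to introduce a notion of complexity on the mutation class (for instance, the number of oriented cycles in the quiver of $B$, weighted by edge multiplicities) and argue that if $B$ is cyclic, then some mutation strictly reduces the complexity, using the fact that the cluster pattern contains only finitely many extended exchange matrices so the orbit cannot grow forever. Once an acyclic $B'$ is in hand, I would show that the Cartan companion $A(B')$ must be of finite type: if not, then $A(B')$ contains a rank-2 principal submatrix $A_{\br{j}}$ with $|a_{ij}a_{ji}|\ge 4$, and restricting the cluster pattern to mutations at indices outside $j$ exhibits infinitely many distinct seeds, using the rank-2 analysis already carried out in Section~\ref{rk2 sec}.

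The main obstacle will be the existence of an acyclic representative in the necessity direction. Without additional machinery this is delicate, because a priori the complexity could oscillate or stay constant under mutation. I would hope to circumvent this by exploiting that in a finite-type cluster algebra the $Y$-pattern takes only finitely many values; since there are only finitely many exchange matrices $\tB_t$, an acyclic one must appear or else the directed graph on the finite mutation class (with edges recording whether an oriented cycle gets broken by a mutation) has no sink, which can be ruled out combinatorially. An alternative route would be to characterize finite type matrices by positivity of a quadratic form built from the Cartan companion, in parallel with the classical proof that a Coxeter group is finite iff its bilinear form is positive definite.
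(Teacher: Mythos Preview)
The paper does not prove this theorem at all: it is simply quoted as \cite[Theorem~1.4]{ca2}, i.e.\ the main classification theorem of Fomin and Zelevinsky's \emph{Cluster Algebras~II}. So any comparison is really between your sketch and the original Fomin--Zelevinsky argument.

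Your sufficiency direction (finite Cartan type $\Rightarrow$ finite cluster type via the bijection between cluster variables and almost positive roots) is indeed the Fomin--Zelevinsky strategy, though carrying it out rigorously is a substantial piece of their paper, not a short induction.

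Your necessity direction has a genuine gap. The claim ``if $A(B')$ is not of finite type, then $A(B')$ contains a rank-$2$ principal submatrix with $|a_{ij}a_{ji}|\ge 4$'' is false. Take the acyclic exchange matrix
\[
B'=\begin{pmatrix}0&1&1\\-1&0&1\\-1&-1&0\end{pmatrix},
\]
whose Cartan companion is the affine matrix of type $\tilde A_2$. Every $2\times 2$ principal submatrix has $|a_{ij}a_{ji}|=1$, yet $A(B')$ is not of finite type. So having all $2\times 2$ products at most $3$ in a single acyclic representative does not force finite Cartan type. What \emph{is} true, and what Fomin--Zelevinsky actually use, is the much stronger ``$2$-finite'' condition: $|b'_{ij}b'_{ji}|\le 3$ for every $i,j$ and \emph{every matrix $B'$ in the entire mutation class}. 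Passing from $2$-finiteness to the existence of a representative of finite Cartan type is the hard combinatorial core of \cite{ca2} (their analysis of diagrams), and neither your complexity-reduction idea for finding an acyclic representative nor your $2\times 2$ check substitutes for it. Your alternative suggestion about positivity of a quadratic form is closer in spirit, but you would still need to explain why finiteness of the cluster pattern forces that form to be positive definite, which is again essentially the content of the Fomin--Zelevinsky classification.
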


Having explained Proposition~\ref{Tits b-coherent}, we now prepare to prove it.
We begin with a known result on the $\g$-vector cones in finite type.
\begin{theorem}\label{g complete}
If $B$ is of finite type, then the $\g$-vector cones associated to $B$ are the maximal cones of a complete simplicial fan.
\end{theorem}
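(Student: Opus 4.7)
The plan is to reduce to the case where $B$ is acyclic of finite Cartan type and then invoke the identification of the $\g$-vector fan with a Cambrian fan from \cite{camb_fan, framework, YZ}. By Theorem~\ref{finite and Cartan} there is a sequence $\k$ such that $B' = \mu_\k(B)$ has finite Cartan type, hence is acyclic. Standard Hypotheses hold throughout finite type (see Remark~\ref{when sign-coherence known}), and by Proposition~\ref{Stand Hyp pmBT} they also hold for $B^T$. Applying Theorem~\ref{NZ 4.1(v)} iteratively, the $\g$-vectors for $B$ are the images of the $\g$-vectors for $B'$ under a composition of mutation maps for $B^T$; hence the $\g$-vector cones for $B$ are the images of the $\g$-vector cones for $B'$ under a piecewise-linear homeomorphism that carries $(B')^T$-cones bijectively to $B^T$-cones (Proposition~\ref{cones preserved}). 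It therefore suffices to prove the theorem assuming $B$ is acyclic of finite Cartan type.

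Under that assumption, simpliciality of each $\g$-vector cone is immediate from Theorem~\ref{NZ 4.1(iv)}: the $n$ $\g$-vectors in any cluster form a $\integers$-basis of $\integers^n$, so they are linearly independent over $\reals$, and each $\g$-vector cone is the nonnegative span of $n$ independent vectors. That the cones together with their faces form a fan is also essentially free: by Theorem~\ref{g subfan} each $\g$-vector cone is a $B^T$-cone, so the whole collection sits inside the subfan $\F_{B^T}^\circ$ of the mutation fan $\F_{B^T}$ and inherits its fan structure from Theorem~\ref{fan}.

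The main obstacle is \emph{completeness} -- showing that the $\g$-vector cones exhaust $\reals^n$. In finite Cartan type the Coxeter group $W$ attached to the Cartan companion is finite, so the Tits cone $\Tits(A)$ equals all of $V^* \cong \reals^n$, and moreover $\R^{\pm\Tits}(B)=\R(B)$. The key external input is the theorem of Yang and Zelevinsky \cite{YZ}: for an acyclic exchange matrix of finite type, the $\g$-vector fan coincides with the Cambrian fan of \cite{camb_fan} associated to the Coxeter element read from the acyclic orientation of $B$. The Cambrian fan is known to be a complete simplicial fan coarsening the Coxeter fan of $W$, with maximal cones in bijection with the $c$-sortable elements of $W$, which exhaust $W$. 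Composing this identification with the reduction in the first paragraph yields the result. The technical crux is thus the Yang-Zelevinsky correspondence; once it is in hand, the remaining pieces follow directly from the machinery developed in Sections~\ref{B cone sec}--\ref{g vec sec}.
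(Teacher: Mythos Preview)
Your argument is correct and follows essentially the same route as the paper's discussion following Theorem~\ref{g complete}: identify the $\g$-vector fan with the Cambrian fan via \cite{YZ} in the finite Cartan type case, then reduce the general finite-type case to that one by mutation using Theorems~\ref{finite and Cartan} and~\ref{NZ 4.1(v)}. You supply a bit more detail than the paper does---separately deducing simpliciality from Theorem~\ref{NZ 4.1(iv)} and the fan property from Theorem~\ref{g subfan}---but these are consistent elaborations rather than a different strategy. One small imprecision: in the reduction step, the mutation maps carrying $\g$-vectors for $B'$ to $\g$-vectors for $B$ are maps of the form $\eta_{\bar\k}^{(B')^T}$ rather than literally ``for $B^T$,'' but this does not affect the argument since linearity on the relevant cones follows from Proposition~\ref{linear} applied to $(B')^T$.
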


One way to obtain Theorem~\ref{g complete} from the literature is the following:
If $B$ is of finite Cartan type, then the $\g$-vector cones coincide with the maximal cones of the Cambrian fan of \cite{camb_fan}, which is complete by definition and simplicial by \cite[Theorem~1.1]{camb_fan}.
This was conjectured (and proved in a special case) in \cite[Section~10]{camb_fan} and proved (for all $B$ of finite Cartan type) in~\cite{YZ}.
To obtain the theorem for $B$ of finite type but not of finite Cartan type, one applies Theorems~\ref{NZ 4.1(v)} and~\ref{finite and Cartan} and appeals to the case of finite Cartan type.

Next, we need a known result on $\g$-vectors.
\begin{prop}\label{g restrict}
Suppose the entries in column $n$ of $B$ are nonpositive and suppose $t$ is a vertex of $\T_n$ connected to $t_0$ by a path with no edges labeled $n$.
Then the $\g$-vector cone $\Cone_t^{B;t_0}$ is the nonnegative linear span of $\e_n$ and $\Cone_t^{B_\br{n};t_0}$.
\end{prop}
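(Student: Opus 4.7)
The plan is to prove the stronger statement that $\g_{n;t}^{B;t_0}=\e_n$ and that $\g_{\ell;t}^{B;t_0}=\iota(\g_{\ell;t}^{B_\br{n};t_0})$ for $\ell\neq n$, where $\iota\colon\reals^{n-1}\to\reals^n$ inserts a $0$ as the $n$-th coordinate; the claimed cone identity is then immediate from the definition of $\Cone_t^{B;t_0}$.  The first identity is trivial since the recursion \eqref{g recur} only updates $\g_k$ when mutating at $k$, so $\g_n$ keeps its initial value $\e_n$ along any path avoiding edges labeled $n$.  The second identity splits into two pieces, each proved by induction on path length in $\T_n$: the projection $\pi\colon\reals^n\to\reals^{n-1}$ dropping the $n$-th coordinate satisfies $\pi(\g_{\ell;t}^{B;t_0})=\g_{\ell;t}^{B_\br{n};t_0}$, and $(\g_{\ell;t}^{B;t_0})_n=0$.

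The projection piece relies on an auxiliary induction showing that row $n$ of the principal coefficient matrix $H^t=(h^t_{ij})$ stays at $\e_n^T$: mutation at $k\neq n$ gives $h^{t'}_{nk}=-h^t_{nk}=0$, and for $j\neq k$ the correction to $h^{t'}_{nj}$ is a multiple of $h^t_{nk}=0$.  Applying $\pi$ to the recursion for $\g_{k;t'}^{B;t_0}$ then kills both the $i=n$ summand (since $\pi(\e_n)=0$) and the term $[h^t_{nk}]_+\col(n,B)$ (since $h^t_{nk}=0$); what remains is exactly the $B_\br{n}$-recursion for $\g_{k;t'}^{B_\br{n};t_0}$, since $B^t$ and $H^t$ restricted to the $[n-1]\times[n-1]$ block mutate identically to $B_\br{n}$ and its principal coefficient matrix, the mutation formulas never referencing row or column $n$.

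The main obstacle is the claim $(\g_{\ell;t}^{B;t_0})_n=0$.  Subtracting the two equivalent forms \eqref{g recur} and \eqref{g recur alt} of the recursion yields the identity $\sum_i b^t_{ik}\g_{i;t}=\sum_i h^t_{ik}\col(i,B)$, valid at every $t$ and $k$; its $n$-th coordinate, after using $(\g_{n;t})_n=1$, $b_{nn}=0$, $h^t_{nk}=0$, and the inductive hypothesis $(\g_{i;t})_n=0$ for $i\neq n$, becomes $b^t_{nk}=\sum_{i\neq n}h^t_{ik}b_{ni}$.  Adding the two forms instead gives $2(\g_{k;t'})_n=|b^t_{nk}|-\sum_{i\neq n}|h^t_{ik}|b_{ni}$ under the same simplifications.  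The hypothesis that column $n$ of $B$ is nonpositive together with skew-symmetrizability forces $b_{ni}\ge 0$ for $i\neq n$, and the Standard Hypotheses in effect in Section~\ref{Tits sec} provide, via Proposition~\ref{H B} and Theorem~\ref{NZ 4.1(v)}, sign-coherence of column $k$ of $H^t$.  If $h^t_{ik}\ge 0$ for all $i$, then $\sum_{i\neq n}|h^t_{ik}|b_{ni}=\sum_{i\neq n}h^t_{ik}b_{ni}=b^t_{nk}\ge 0$, so $|b^t_{nk}|=b^t_{nk}$ and the right-hand side vanishes; the $h^t_{ik}\le 0$ case is symmetric.  This closes the induction, and combining the three identities gives the asserted description of $\Cone_t^{B;t_0}$.
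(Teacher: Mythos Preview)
Your proof is correct and takes a genuinely different route from the paper's.  The paper argues via the framework machinery of \cite{framework}: it shows that $\e_n$ persists as an inward-facing normal (a $c$-vector) of $\Cone_t^{B;t_0}$ along paths avoiding label~$n$, using the Transition condition, and then observes that restricting the framework to the facet defined by $\e_n$ reproduces the framework for $B_\br{n}$.  You instead work directly with $\g$-vectors, exploiting the identity $\sum_i b^t_{ik}\g_{i;t}=\sum_i h^t_{ik}\col(i,B)$ that falls out of the equivalence of \eqref{g recur} and \eqref{g recur alt}, together with sign-coherence of the columns of $H^t$, to show the $n$\th\ coordinate of each $\g_{\ell;t}$ with $\ell\neq n$ vanishes.

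The trade-off is this: your argument is entirely self-contained within the paper and avoids importing the framework technology, but it explicitly invokes the Standard Hypotheses (via Conjecture~\ref{H coherent}), whereas the paper states Proposition~\ref{g restrict} unconditionally and defers any such dependence to \cite{framework}.  This is harmless for the paper's purposes, since the proposition is only applied in Lemma~\ref{unique ray below with hypotheses} and downstream, all of which already assume the Standard Hypotheses.  One small point worth making explicit: your claim that the $[n-1]\times[n-1]$ blocks of $B^t$ and $H^t$ mutate identically to the principal $Y$-pattern for $B_\br{n}$ is correct, but relies on the observation that mutation at $k\neq n$ of the entry $b^{t'}_{ij}$ (or $h^{t'}_{ij}$) with $i,j\in[n-1]$ only references indices in $\{i,j,k\}\subseteq[n-1]$; this deserves a sentence.
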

Here, we realize $\T_{n-1}$ by deleting all edges labeled $n$ from $\T_n$ and taking the connected component of $t_0$.
The cone $\Cone_t^{B_\br{n};t_0}\subset\reals^{n-1}$ is embedded into $\reals^n$ by appending an $n\th$ entry $0$ to all vectors in the cone.

The fact that $\Cone_t^{B;t_0}$ has $\e_n$ as an extreme ray is obvious from Definition~\ref{gvec def}.
The rest of Proposition~\ref{g restrict} is not obvious from Definition~\ref{gvec def}.
We sketch how the rest of the proposition can be obtained from \cite{framework}.

By \cite[Theorem~3.27]{framework} we associate a \newword{framework} to any exchange matrix $B$; this is an assignment of $n$ vectors to each vertex of $\T_n$ satisfying certain conditions.
By \cite[Theorem~3.24(3)]{framework}, the vectors assigned to $t\in\T_n$ are inward-facing normals to the facets of the cone $\Cone_t^{B;t_0}$.
A framework in particular satisfies the Transition condition and the Sign condition.
The Sign condition says that each inward-facing normal $\beta$ has a sign $\sgn(\beta)\in\set{\pm1}$ such that every entry of $\beta$ weakly agrees in sign with $\sgn(\beta)$.
If $t$ and $t'$ are adjacent vertices of $\T_n$ then the cones $\Cone_t^{B;t_0}$ and $\Cone_{t'}^{B;t_0}$ share a facet.
Say $\beta$ is normal to that facet, facing inward with respect to $\Cone_t^{B;t_0}$, and suppose $\gamma$ is some other inward-facing normal to $\Cone_t^{B;t_0}$.
The Transition condition is assertion that $\gamma+[\sgn(\beta)\omega(\beta\ck,\gamma)]_+\,\beta$ is an inward facing normal to $\Cone_{t'}^{B;t_0}$.
Here $\beta\ck$ is a certain nonzero scaling of $\beta$ and $\omega$ is bilinear form whose matrix is essentially $B$.

Now $\Cone_{t_0}^{B;t_0}$ has $\e_n$ as an inward-facing normal.
Taking $\beta$ to be some other inward-facing normal of $\Cone_{t_0}^{B;t_0}$, the assumption that the entries in column $n$ of $B$ are nonpositive translates to the assertion that $\sgn(\beta)\omega(\beta\ck,\e_n)$ is nonpositive, so the Transition condition implies that the $\g$-vector cones adjacent to $\Cone_{t_0}^{B;t_0}$ also have $\e_n$ as an inward-facing normal.
Repeating the argument, we see that $\e_n$ is an inward-facing normal to any cone $\Cone_t^{B;t_0}$ with $t$ connected to $t_0$ by a path with no labels $n$.
Restricting the Transition condition for cones $\Cone_t^{B;t_0}$ to the facets defined by $\e_n$, we obtain exactly the Transition condition for cones $\Cone_t^{B_\br{n};t_0}$, and we conclude by induction that the facet of $\Cone_t^{B;t_0}$ defined by $\e_n$ is exactly $\Cone_t^{B_\br{n};t_0}$.  

Theorem~\ref{g complete} and Proposition~\ref{g restrict} allow us to prove the following Lemma which is the key step in the proof of Proposition~\ref{Tits b-coherent}.

\begin{lemma}\label{unique ray below with hypotheses}
Let $B$ be an exchange matrix satisfying the Standard Hypotheses.
Suppose the entries in column $n$ of $B$ are nonnegative and suppose $B_\br{n}$ is of finite type.
Then the unique ray of $\F_B$ intersecting $\set{\x\in\reals^n:x_n>0}$ is $\reals_{\ge0}\e_n$.
\end{lemma}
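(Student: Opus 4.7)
The plan is to cover $\{x_n\ge 0\}$ by certain simplicial $\g$-vector cones for $B^T$ lying in $\F_B$, each having $\reals_{\ge 0}\e_n$ as one extreme ray and its remaining extreme rays in $\{x_n=0\}$, and then argue that any ray of $\F_B$ meeting $\{x_n>0\}$ must appear as an extreme ray of one of these cones. The key inputs will be Proposition~\ref{g restrict}, Theorem~\ref{g complete}, and Theorem~\ref{g subfan}.

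First, I would observe that skew-symmetrizability converts the hypothesis ``column $n$ of $B$ is nonnegative'' into ``column $n$ of $B^T$ is nonpositive,'' which is exactly what Proposition~\ref{g restrict} requires of $B^T$. Thus, for every vertex $t$ connected to $t_0$ by a path in $\T_n$ with no edges labeled $n$, we obtain $\Cone_t^{B^T;t_0}=\reals_{\ge 0}\e_n+\Cone_t^{(B_\br{n})^T;t_0}$. Because $(B_\br{n})^T$ is still of finite type, Theorem~\ref{g complete} gives a complete simplicial fan of $\g$-vector cones $\Cone_t^{(B_\br{n})^T;t_0}$ in $\reals^{n-1}$. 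Embedding this complete fan into $\{x_n=0\}$ and adjoining $\reals_{\ge 0}\e_n$, the simplicial cones $C_t:=\Cone_t^{B^T;t_0}$ together cover $\{x_n\ge 0\}$, and by the Standard Hypotheses combined with Theorem~\ref{g subfan}, each $C_t$ lies in $\F_B$ as a $B$-cone.

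Now suppose $\rho$ is a ray of $\F_B$ containing a point $v$ with $v_n>0$ (in particular, $v\ne\mathbf{0}$). The covering produces some $t_*$ with $v\in C_{t_*}$. Let $C(v)$ be the smallest $B$-cone containing $v$ (Proposition~\ref{smallest Bcone}); then $C(v)\subseteq C_{t_*}$, and Lemma~\ref{B-subcone face} promotes this containment to $C(v)$ being a face of $C_{t_*}$. Because $\F_B$ is a fan and $v$ lies in $\relint(\rho)$, the ray $\rho$ is the unique cone of $\F_B$ whose relative interior contains $v$, so it must coincide with the unique face of $C(v)$ whose relative interior contains $v$. Consequently $\rho$ is a face of $C(v)$, hence also a face of the simplicial cone $C_{t_*}$; being one-dimensional, it is an extreme ray of $C_{t_*}$. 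The only extreme ray of $C_{t_*}$ not contained in $\{x_n=0\}$ is $\reals_{\ge 0}\e_n$, which forces $\rho=\reals_{\ge 0}\e_n$.

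The main subtlety will be that a ray of $\F_B$ need not itself be a $B$-cone (Remark~\ref{face B cone}), so Proposition~\ref{smallest Bcone} cannot be applied directly to $\rho$. The workaround is to pass through the auxiliary $B$-cone $C(v)$ for a chosen nonzero $v\in\rho$ and then use the fan structure of $\F_B$ to identify $\rho$ as the unique face of $C(v)$ whose relative interior contains $v$, rather than trying to identify $\rho$ as a $B$-cone in its own right.
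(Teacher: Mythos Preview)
Your proof is correct and follows essentially the same approach as the paper: both arguments apply Proposition~\ref{g restrict} (to $B^T$) and Theorem~\ref{g complete} (to $(B_{\br{n}})^T$) to cover $\{x_n\ge 0\}$ by $\g$-vector cones $C_t=\Cone_t^{B^T;t_0}$, each having $\reals_{\ge 0}\e_n$ as its unique extreme ray outside $\{x_n=0\}$, and then invoke Theorem~\ref{g subfan} to place these cones in $\F_B$.

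Your final step is more elaborate than necessary. The paper simply observes that these cones lie in $\F_B$ and all contain $\reals_{\ge 0}\e_n$, and concludes. The implicit reasoning is just the fan axiom: if $\rho$ is a ray of $\F_B$ and $v\in\rho$ has $v_n>0$, then $v\in C_{t_*}$ for some $t_*$; since both $\rho$ and $C_{t_*}$ are cones of the fan $\F_B$, their intersection $\rho\cap C_{t_*}$ is a face of each, and as it contains $v\ne\mathbf 0$ it must equal $\rho$; hence $\rho$ is already a one-dimensional face of $C_{t_*}$. Your detour through the smallest $B$-cone $C(v)$ and Lemma~\ref{B-subcone face} is valid but superfluous---the ``subtlety'' you flag (that $\rho$ need not be a $B$-cone) is not actually an obstacle, because you only need $\rho$ to be a cone of $\F_B$, which it is by hypothesis.
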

\begin{proof}
Theorem~\ref{g complete} (applied to $B^T$) says that $\reals^{n-1}$ is covered by cones $\Cone_t^{B^T_\br{n};t_0}$ for $t\in\T_{n-1}$.
Thus Proposition~\ref{g restrict} implies that the closed halfspace of $\reals^n$ consisting of vectors with nonnegative $n\th$ entry is covered by cones $\Cone_t^{B^T;t_0}$ with $t$ connected to $t_0$ by paths with no labels $n$.
By Theorem~\ref{g subfan}, each of these cones is in $\F_B$.
Each of these cones contains the ray $\reals_{\ge0}\e_n$, which is therefore the unique ray of $\F_B$ intersecting $\set{\x\in\reals^n:x_n>0}$.
\end{proof}

\begin{definition}[\emph{Length of a ray in $\Tits(A)$}]\label{length of ray}
The \newword{length} of an element $w\in W$ is the number of letters in a shortest expression for $w$ as a product of generators $s_i$.
We write $\ell(w)$ for this length.
Given an element $w$ and a generator $s_i$, it is well-known that $\ell(s_iw)<\ell(w)$ if and only if $wD$ is contained in the halfspace $\set{x\in V^*: \br{x,\alpha_i}\le 0}$.
This is identified with the set of points in $\reals^n$ with non-positive $i\th$ coordinate.
We define the \newword{length} $\ell(\rho)$ of a ray $\rho$ contained in $\Tits(A)$ to be the minimum length of $w$ such that $\rho\subset wD$.
If $s_i\rho$ is the image of $\rho$ under the reflection $s_i$, then we have $\ell(s_i\rho)<\ell(\rho)$ if and only if the $i\th$ coordinate of nonzero vectors in $\rho$ is negative.
\end{definition}

We now prove Proposition~\ref{Tits b-coherent}.

\begin{proof}[Proof of Proposition~\ref{Tits b-coherent}]
Since $B$ is acyclic, we may as well assume that $B$ is indexed so that if $b_{ij}>0$ then $i<j$.
The skew-symmetrizability of $B$ means that if $b_{ji}<0$ then $i<j$.
In particular, the entries in column $n$ of $B$ are all nonnegative.

Consider a $B$-coherent linear relation $\sum_{\rho\in S}c_\rho\v_\rho$ with $S$ a finite subset of $\R(B)$.
Let $\rho$ be a ray in $S$ contained in $\Tits(A)$.
(Rays in $-\Tits(A)$ are dealt with later.)
We argue by induction on $\ell(\rho)$, letting $B$ vary, that $c_\rho=0$.

First, suppose the $n\th$ coordinate of $\v_\rho$ is positive.
Then Lemma~\ref{unique ray below with hypotheses} implies that $\v_\rho$ is the unique vector in $\set{\v_{\rho'}:\rho'\in S}$ whose $n\th$ entry is positive.
Now Proposition~\ref{one positive or one negative} implies that $c_\rho=0$.

Next suppose the $n\th$ coordinate of $\v_\rho$ is negative.
Write $\v_\rho=\sum_{i\in[n]}v_i\e_i$.
The entries $b_{nj}$ are all nonpositive, so $\eta_n^B(\v_\rho)=\v_\rho-2v_n\e_n-\sum_{j\in[n-1]}v_nb_{nj}\e_j$.
Since the $b_{nj}$ are nonpositive, they equal the entries $a_{nj}$ of the Cartan companion of $B$, and we observe that $\eta_n^B(\v_\rho)=s_n(\v_\rho)$.
Proposition~\ref{eta FB} implies that the rays $\R(\mu_n(B))$ of $\F_{\mu_n(B)}$ are obtained by applying $\eta_n^B$ to each ray in $\R(B)$.
Thus $\sum_{\rho\in\R(B)}c_\rho\eta_n^B(\v_\rho)$ is a $\mu_n(B)$-coherent linear relation.
Since all entries in column $n$ are nonnegative, all entries in row $n$ are nonpositive.
It is thus apparent that the operation $\mu_n$ does nothing to $B$ other than reverse signs in row $n$ and column $n$.
In particular, $\mu_n(B)$ is acyclic and the Cartan type of submatrices of $\mu_n(B)$ is the same as the Cartan type of submatrices of $B$.
We see that $\mu_n(B)$ satisfies the hypotheses of the proposition.
Since the $i\th$ coordinate of $\v_\rho$ is negative, we have $\ell(\eta_n^B(\rho))=\ell(s_i\rho)<\ell(\rho)$, so by induction on $\ell(\rho)$, we conclude that $c_\rho=0$ as desired.

Finally, suppose the $n\th$ coordinate of $\v_\rho$ is zero.
As before, $\sum_{\rho\in\R(B)}c_\rho\eta_n^B(\v_\rho)$ is a $\mu_n(B)$-coherent linear relation, and as before, $\mu_n(B)$ is obtained from $B$ by reversing signs in row $n$ and column $n$.
We observe that $\eta^B_n(\rho)=\rho$.
The matrix $\mu_n(B)$ has nonnegative entries in column $n-1$.
Now we consider the sign of the $(n-1)\st$ coordinate of $\v_\rho$.
If it is positive or negative, then we conclude as above that $c_\rho=0$.
If it is zero, the we replace $\mu_n(B)$ with $\mu_{(n-1)n}(B)$ and consider the sign of the $(n-2)\nd$ coordinate of $\v_\rho$.
Continuing in this manner, we eventually find a positive or negative coefficient, since $\v_\rho$ is nonzero, and when we do, we conclude that $c_\rho=0$.

The base case $\ell(\rho)=0$ is handled as part of the above argument:
If $\ell(\rho)=0$ then all coordinates of $\v_\rho$ are nonnegative and we eventually complete the argument without induction.

This completes the proof that $c_\rho=0$ for a ray $\rho$ in $\R(B)$ contained in $\Tits(A)$.
If $\rho$ is in $-\Tits(A)$, then \eqref{eta antipodal} implies that $\sum_{\rho\in\R(B)}c_\rho(-\v_\rho)$ is a $(-B)$-coherent linear relation.
Proposition~\ref{antipodal FB} implies that the vectors $-\v_\rho$ span the rays of $\F_{-B}$.
The argument above shows that $c_\rho=0$.
\end{proof}

\begin{remark}\label{cambrian approach}
Proposition~\ref{Tits b-coherent} can also be proved using the Cambrian frameworks of \cite[Section~5]{cambrian}.
Readers familiar with Cambrian lattices and sortable elements will recognize that the proof given here is patterned after the usual induction on length and rank common to proofs involving sortable elements.
\end{remark}

\begin{remark}\label{hypotheses}
It is natural to ask whether Proposition~\ref{Tits b-coherent} can be proved with weaker hypotheses.
Indeed, it was stated without assuming the Standard Hypotheses and without the hypotheses on submatrices in an earlier version of this paper, but an error was later found in the proof.
\end{remark}

We now use Proposition~\ref{Tits b-coherent} to prove the following theorem.

\begin{theorem}\label{finite g univ}  
Let $B$ be a skew-symmetrizable exchange matrix of finite type satisfying the Standard Hypotheses and let $R$ be any underlying ring.
Then the $\g$-vectors associated to $B^T$ constitute a positive $R$-basis for $B$.
\end{theorem}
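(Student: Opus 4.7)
The plan is to verify condition~\eqref{positive cone basis property} of Proposition~\ref{positive cone basis} for the collection of $\g$-vectors associated to $B^T$, first in the special case when $B$ is acyclic and of finite Cartan type, and then transferring to arbitrary $B$ of finite type by mutation.

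First I would assume $B$ is acyclic with Cartan companion $A$ of finite type. Then every principal submatrix $B_\br{j}$ is also of finite Cartan type, and together with the Standard Hypotheses on $B$ this activates Proposition~\ref{Tits b-coherent}. Because $W(A)$ is finite, $\Tits(A)$ is all of $V^*$ and hence $\R^{\pm\Tits}(B)=\R(B)$, so Proposition~\ref{Tits b-coherent} says that \emph{every} $B$-coherent linear relation supported on a choice of one nonzero vector per ray of $\F_B$ is trivial. On the other hand, Theorem~\ref{g complete} says that the $\g$-vector cones for $B^T$ are the maximal cones of a complete simplicial fan, while Theorem~\ref{g subfan} says that each such cone lies in $\F_B$; combining the two, $\F_B$ equals the fan of $\g$-vector cones for $B^T$ together with their faces, and the rays of $\F_B$ are precisely the rays spanned by the $\g$-vectors for $B^T$. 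Consequently, the $\g$-vectors form an $R$-independent set for~$B$. For the spanning half of condition~\eqref{positive cone basis property}, Theorem~\ref{NZ 4.1(iv)} gives that the $\g$-vectors generating each maximal cone form a $\integers$-basis of $\integers^n$; since the change of basis and its inverse are integral, any vector in $R^n$ lying in a maximal $B$-cone is a nonnegative $R$-linear combination of the $\g$-vectors in that cone, for any underlying ring~$R$. This yields a positive $R$-basis in the acyclic finite-Cartan-type case via Proposition~\ref{positive cone basis}.

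For a general $B$ of finite type, Theorem~\ref{finite and Cartan} produces an acyclic $B_0$ of finite Cartan type and a sequence $\k$ with $B=\mu_\k(B_0)$, and the Standard Hypotheses for $B$ and $B_0$ are equivalent by mutation equivalence. The previous paragraph gives a positive $R$-basis for $B_0$, and I would transport it to $B$ via the three compatible transformations effected by $\eta_\k^{B_0}$. First, Proposition~\ref{eta FB} carries $B_0$-cones bijectively to $B$-cones under $\eta_\k^{B_0}$. Second, using $\eta_{\k'}^B\circ\eta_\k^{B_0}=\eta_{\k'\k}^{B_0}$ and the fact that the conditions \eqref{linear eta} and \eqref{piecewise eta} in Definition~\ref{B coherent} quantify over all sequences, the expression $\sum c_i\v_i$ is a $B_0$-coherent linear relation if and only if $\sum c_i\,\eta_\k^{B_0}(\v_i)$ is a $B$-coherent linear relation with the same coefficients. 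Third, Conjecture~\ref{gvec mu}, which holds for the pair $B_0^T$, $B^T$ by Theorem~\ref{NZ 4.1(v)} under the Standard Hypotheses, ensures that $\eta_\k^{B_0}$ sends each $\g$-vector for $B_0^T$ to the corresponding $\g$-vector for $B^T$. Since $\eta_\k^{B_0}$ is piecewise linear with integer matrices on each linear piece, it also restricts to a bijection of $R^n$, so both clauses of Proposition~\ref{positive cone basis}\eqref{positive cone basis property} transport from $B_0$ to $B$. The main obstacle in writing this proof cleanly is the simultaneous bookkeeping: one must track that $\g$-vectors, $B$-cones, and $B$-coherent relations move compatibly under the single mutation map $\eta_\k^{B_0}$, and one must invoke the Standard Hypotheses in exactly the right place to activate Theorem~\ref{g subfan} for the base case and Conjecture~\ref{gvec mu} for the transport step.
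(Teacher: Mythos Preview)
Your proposal is correct and follows essentially the same approach as the paper's proof: establish the result for $B$ of finite Cartan type using Theorems~\ref{g subfan} and~\ref{g complete} to identify $\F_B$ with the $\g$-vector fan, invoke Proposition~\ref{Tits b-coherent} (with $\Tits(A)=V^*$) for $R$-independence, and use Theorem~\ref{NZ 4.1(iv)} for the positive spanning condition; then transport to arbitrary finite type via Theorem~\ref{finite and Cartan} and the mutation maps, using Theorem~\ref{NZ 4.1(v)} to track the $\g$-vectors. Your write-up is somewhat more explicit than the paper's in the transport step---the paper condenses this to the one-line observation that $\eta_\k^B$ carries an $R$-basis for $B$ to an $R$-basis for $\mu_\k(B)$---but the underlying argument is the same.
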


We expect that every skew-symmetrizable exchange matrix of finite type satisfies the Standard Hypotheses.
(Indeed, as indicated in Remark~\ref{when sign-coherence known}, since every such matrix is mutation equivalent to an acyclic matrix, the result may soon appear in print.)

\begin{proof}
Theorem~\ref{g subfan} says that the $\g$-vector cones associated to $B^T$ are the maximal cones in a subfan of $\F_B$.
Theorem~\ref{g complete} says that this subfan is complete, and therefore it must coincide with the entire fan $\F_B$.
When $B$ is of finite Cartan type with Cartan companion $A$, the Tits cone $\Tits(A)$ is all of $\reals^n$.
Thus Proposition~\ref{Tits b-coherent} says that there is no non-trivial $B$-coherent linear relation supported on the $\g$-vectors.
By Theorem~\ref{NZ 4.1(iv)} and Proposition~\ref{positive cone basis}, we conclude that the $\g$-vectors are a positive $R$-basis for $B$, for any $R$.
The case where $B$ is of finite type but not of finite Cartan type now follows by Theorems~\ref{NZ 4.1(v)} and~\ref{finite and Cartan} and by the observation that a mutation map $\eta^B_\k$ takes an $R$-basis for $B$ to an $R$ basis for $\mu_\k(B)$.
\end{proof}

The $\g$-vectors for $B$ of finite Cartan type can be found explicitly in various ways, including using sortable elements and Cambrian lattices as described in~\cite{framework}, or by the methods of~\cite{YZ}.

\begin{remark}\label{me and ca4}
Theorems~\ref{basis univ} and~\ref{finite g univ} say that the $\g$-vectors for $B^T$ are the coefficient rows of a positive universal extended exchange matrix.
Another construction of universal coefficients, not conditioned on any conjectures, was already given in \cite[Theorem~12.4]{ca4}.
Furthermore, the construction from \cite{ca4} yields cluster algebras with completely universal coefficients, rather than only universal geometric coefficients.
That is, an arbitrary cluster algebra (not necessarily of geometric type) with initial exchange matrix $B$ admits a unique coefficient specialization from 
the universal cluster algebra.
(The relevant definition of coefficient specialization is \cite[Definition~12.1]{ca4}.)
We now explain the connection between Theorem~\ref{finite g univ} and \cite[Theorem~12.4]{ca4}.
This connection provided the original motivation for this research.

A more detailed description of \cite[Theorem~12.4]{ca4}, in the language of this paper, is the following:
Let $B$ be a bipartite exchange matrix of finite Cartan type with Cartan companion $A$.
(An exchange matrix $B$ is \newword{bipartite} if there is a function $\ep:[n]\to\set{\pm 1}$ such that $b_{ij}>0$ implies that $\ep(i)=1$ and that $\ep(j)=-1$.)
The \newword{co-roots} associated to $A$ are the vectors in the $W$-orbits of the simple co-roots.
Since $A$ is of finite type, there are finitely many co-roots.
A co-root is \newword{positive} if it is in the nonnegative linear span of the simple co-roots, and \newword{almost positive} if it is positive or if it is the negative of a simple co-root.
We construct an integer extended exchange matrix extending $B$ whose coefficient rows are indexed by almost positive co-roots.
The coefficient row indexed by a co-root $\beta\ck$ has entries $\ep(i)[\beta\ck:\alpha\ck_i]$ for $i=1,\ldots,n$, where $[\beta\ck:\alpha\ck_i]$ stands for the coefficient of $\alpha_i\ck$ in the expansion of $\beta\ck$ in the basis of simple co-roots.
The assertion of \cite[Theorem~12.4]{ca4} is that this extended exchange matrix defines a universal cluster algebra for $B$.

Let $L$ be the linear map taking a positive root $\alpha_i$ to $-\ep(i)\e_i$.
(Recall that we have identified $\e_i$ with an element in the basis for $V^*$ dual to the basis of simple roots in $V$.)
One can define almost positive roots by analogy to the definitions in the previous paragraph, so that the almost positive co-roots for $A$ are the almost positive roots for $A^T$.
As conjectured in \cite[Conjecture~1.4]{cambrian} and proved in \cite[Theorem~9.1]{camb_fan}, the map $L$ takes almost positive roots into rays in the Cambrian fan for $B$ and induces a bijection between almost positive roots and rays of the Cambrian fan.
(The difference in signs between \cite[Conjecture~1.4]{cambrian} and \cite[Theorem~9.1]{camb_fan} is the result of a difference in sign conventions.  See the end of \cite[Section~1]{cyclic}.)

It now becomes possible to relate \cite[Theorem~12.4]{ca4} to Theorem~\ref{finite g univ}.
To apply \cite[Theorem~12.4]{ca4} to $B^T$, we pass from co-roots to roots (thus exchanging $A$ with $A^T$) and we introduce a global sign change to the function $\ep$.
Thus we write a universal extended exchange matrix for $B^T$ by taking, for each almost positive root of $A$, a coefficient row with entries $-\ep(i)[\beta:\alpha_i]$ for $i=1,\ldots,n$, where $[\beta:\alpha_i]$ stands for the coefficient of $\alpha_i$ in the expansion of $\beta$ in the basis of simple roots.
In other words, the coefficient row indexed by an almost positive root $\beta$ is $L(\beta)$.
This means we have chosen a nonnegative vector in each ray of the Cambrian fan for $B^T$.
But as mentioned above, the Cambrian fan for $B^T$ is the fan whose maximal cones are the $\g$-vector cones for $B^T$, and thus we have essentially recovered Theorem~\ref{finite g univ} for the case of bipartite $B$.
\end{remark}

We conclude by sketching the construction of an $R$-basis for the rank-$3$ exchange matrix $B=\begin{bsmallmatrix*}[r]0&2&0\\-1&0&1\\0&-2&\hspace{6pt}0\\\end{bsmallmatrix*}$.
This exchange matrix is of affine Cartan type in the sense of the following definition.

\begin{definition}[\emph{Affine type}]\label{def affine type}
A Cartan matrix is of affine type if the associated symmetric bilinear form is positive semidefinite and every proper principal submatrix is of finite type.
For our purposes, the key property of a Cartan matrix $A$ of affine type is that the closure of $\Tits(A)$ is a half-space.
More specifically, $A$ has a $0$-eigenvector $\t=(t_1,\ldots,t_n)$ with nonnegative entries.
We think of $\t$ as the simple roots coordinates of a vector $\beta=\sum_{i\in[n]}t_i\alpha_i$ in $V$.
The closure of $\Tits(A)$ is then $\set{\x\in V^*:\br{\x,\beta}\ge0}$, where, under our identification of $\reals^n$ with $V^*$, we interpret $\br{\x,\beta}$ as the usual pairing of $\x$ with $\t$.
The Tits cone is the union of the open halfspace $\set{\x\in V^*:\br{\x,\beta}>0}$ with the singleton $\set{\mathbf{0}}$.
The Cartan matrices of affine type are classified, for example, in~\cite{Macdonald}.

As in Definition~\ref{def Cartan type}, an exchange matrix $B$ is of affine Cartan type if its Cartan companion is of affine type. 
By analogy with Theorem~\ref{finite and Cartan}, we say that an exchange matrix $B$ is of \newword{affine type} if it is mutation equivalent to an acyclic exchange matrix of affine Cartan type.
(The classification of Cartan matrices of finite type implies that an exchange matrix of finite Cartan type is acyclic.
A non-acyclic exchange matrix of affine Cartan type is of finite type.)
\end{definition}

Examples including Proposition~\ref{rk2 affine basis} and the example worked out below, together with insights from~\cite{afframe}, suggest the following conjecture.
More partial results towards the conjecture are described in the introduction. 

\begin{conj}\label{affine univ conj} 
Suppose $B$ is an exchange matrix of affine type.
There is a unique integer vector $\v_\infty$ such that the $\g$-vectors associated to $B^T$, together with $\v_\infty$, constitute a positive $R$-basis for $B$ for every $R$.
\end{conj}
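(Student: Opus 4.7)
The plan is to extend the finite-type argument of Theorem~\ref{finite g univ} to the affine case, using the affine analog of the Cambrian framework. Without loss of generality, I would assume $B$ is acyclic; otherwise, mutate to an acyclic representative, since mutation maps carry $R$-bases to $R$-bases by Proposition~\ref{eta FB} together with Theorem~\ref{basis univ}. The Cartan companion $A$ is then an affine Cartan matrix. Let $\beta = \sum_{i\in[n]} t_i\alpha_i \in V$ be the imaginary root determined by the positive null eigenvector $\t$ of $A$, so that the closure of $\Tits(A)$ equals $\set{\x \in V^* : \br{\x,\beta} \ge 0}$, and write $H = \set{\x : \br{\x,\beta}=0}$ for the boundary hyperplane. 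By Proposition~\ref{antipodal FB}, the region $V^* \setminus (\Tits(A)\cup(-\Tits(A)))$ is precisely $H \setminus \set{\mathbf{0}}$.

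First I would identify $\v_\infty$ using the Coxeter element $c = s_1 \cdots s_n$: its dual action $c^*$ preserves $H$, and on an appropriate quotient of $H$ acts as translation in a canonical direction. Take $\v_\infty$ to be the shortest nonzero integer vector on the ray of $H$ aligned with this translation. This mirrors the rank-$2$ formulas~\eqref{vinf eq} and~\eqref{v-inf eq}, which in the case $ab=-4$ collapse to a single ray (cf.\ Proposition~\ref{rk2 affine basis}), and is motivated by the affine Cambrian framework of~\cite{afframe}.

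The main technical work is to describe $\F_B$ completely. Theorem~\ref{g subfan} gives $\F_B^\circ \subseteq \F_B$, with $\F_B^\circ$ consisting of the $\g$-vector cones for $B^T$ together with their faces; by the affine extension of~\cite{framework} provided in~\cite{afframe}, these cones cover $\pm\Tits(A)$. I would then show that every direction in $H$ other than the $\v_\infty$-direction lies in the relative interior of some $B$-cone whose closure extends into $\pm\Tits(A)$, so that $\v_\infty$ spans the unique ray of $\F_B$ not already in $\F_B^\circ$. With this fan description in hand, the $R$-basis property follows in two steps: for independence, Proposition~\ref{Tits b-coherent} shows that any nontrivial $B$-coherent relation among the candidate basis vectors must involve only basis vectors outside $\pm\Tits(A)$---of which $\v_\infty$ is the sole one---and a single-vector relation must be trivial since $\v_\infty \ne \mathbf{0}$; for positive spanning, each $\a \in R^n$ is located in some $B$-cone via the completed fan description and expressed positively in the corresponding basis vectors. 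Uniqueness of $\v_\infty$ (once existence is established) is then immediate from Corollary~\ref{FB basis ray}.

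The hard part will be the fan description on $H$: proving that $\v_\infty$ actually spans a ray of $\F_B$ (equivalently, by Proposition~\ref{contained Bcone}, that the one-vector set $\set{\v_\infty}$ is sign-coherent under every composition $\eta_\k^B$) and that no other ray of $\F_B$ lies in $H$. This requires controlling the limiting behavior of the mutation maps as one approaches $H$ from inside $\pm\Tits(A)$, in particular showing via Proposition~\ref{lim B cone} that sequences of $\g$-vector cones accumulate only along the $\v_\infty$-ray. This is precisely the content the affine framework of~\cite{afframe} is designed to supply; the finite Cambrian framework of~\cite{framework} does not see this boundary structure on its own.
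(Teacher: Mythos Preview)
The statement you are attempting to prove is stated in the paper as a \emph{conjecture}, not a theorem; the paper does not give a general proof. It establishes only the rank-$2$ cases (Proposition~\ref{rk2 affine basis}) and one rank-$3$ example (Proposition~\ref{affine B3 R-basis}), and states in the introduction that a general proof is intended for future work using the affine Cambrian framework of~\cite{afframe}. Your outline is broadly in the spirit of that announced approach, and the reduction to the acyclic case, the use of Theorem~\ref{g subfan} to identify $\F_B^\circ$ with the $\g$-vector fan, and the appeal to Proposition~\ref{lim B cone} for the limiting ray are all reasonable ingredients.

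However, your independence argument has a genuine gap. You claim that after applying Proposition~\ref{Tits b-coherent}, the only basis vector remaining outside $\pm\Tits(A)$ is $\v_\infty$. This is false in general: in affine type the Tits cone is the \emph{open} halfspace $\set{\x:\br{\x,\beta}>0}$ together with $\set{\mathbf{0}}$, so any $\g$-vector lying on the boundary hyperplane $H$ is outside $\pm\Tits(A)$ and is not eliminated by Proposition~\ref{Tits b-coherent}. The paper's worked rank-$3$ example exhibits exactly this phenomenon: the $\g$-vectors $\v_+$ and $\v_-$ both lie on $H$, and after Proposition~\ref{Tits b-coherent} the relation is only known to be supported on $\set{\v_+,\v_\infty,\v_-}$. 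The paper then needs a separate ad hoc step (two applications of Proposition~\ref{one positive or one negative}) to kill the coefficients on $\v_+$ and $\v_-$. Your proposal does not account for these boundary $\g$-vectors, and in a general affine type there may be many of them; handling them uniformly is part of what makes the conjecture nontrivial, and is presumably among the content to be supplied by~\cite{afframe}.
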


For the rest of the section, we take $B$ to be the exchange matrix $\begin{bsmallmatrix*}[r]0&2&0\\-1&0&1\\0&-2&\hspace{6pt}0\\\end{bsmallmatrix*}$.
This is of affine Cartan type with Cartan companion $A=\begin{bsmallmatrix*}[r]2&-2&0\\-1&2&-1\\0&-2&2\\\end{bsmallmatrix*}$.
The matrix $B$ has a special property not shared by all exchange matrices of affine Cartan type.
It is of the form $DS$, where $D$ is an integer diagonal matrix and $S$ is an integer skew-symmetric matrix.  
Specifically, $B=\begin{bsmallmatrix*}[r]2&0&0\\0&1&0\\0&0&2\\\end{bsmallmatrix*}\begin{bsmallmatrix*}[r]0&1&0\\-1&0&1\\0&-1&\hspace{6pt}0\\\end{bsmallmatrix*}$.
The mutation class of $B$ is 
\begin{equation}\label{mut class particular B}
\set{\pm B,\, \pm\begin{bsmallmatrix*}[r]0&-2&0\\1&0&1\\0&-2&\hspace{6pt}0\\\end{bsmallmatrix*},\, \pm\begin{bsmallmatrix*}[r]0&-2&2\\1&0&-1\\-2&2&\hspace{6pt}0\\\end{bsmallmatrix*}},
\end{equation}
and each of these exchange matrices shares the same special property for the same~$D$.
By results of \cite{Demonet}, as explained in Remark~\ref{when sign-coherence known}, we conclude that the Standard Hypotheses hold for $B$.

The vector $\t=\begin{bsmallmatrix*}[r]1&1&1\end{bsmallmatrix*}$ is a $0$-eigenvector of $A$.
The closure of the Tits cone $\Tits(A)$ is the set of vectors in $\reals^n$ whose scalar product with $\t$ is nonnegative.
In light of Theorem~\ref{NZ 4.1(v)}, one can use Conjecture~\ref{gvec mu} and induction to calculate the $\g$-vectors for cluster variables associated to $B^T$.
One finds that there are exactly two $\g$-vectors 
\begin{equation}\label{v pm}
\v_+=\begin{bsmallmatrix*}[r]0&\hspace{6pt}1&-1\end{bsmallmatrix*}\quad\mbox{and}\quad\v_-=\begin{bsmallmatrix*}[r]1&-1&\hspace{6pt}0\end{bsmallmatrix*}
\end{equation}
in the boundary of $\Tits(A)$.
The remaining $\g$-vectors consist of six infinite families, with the $\g$-vector rays in each family limiting to the same ray in the boundary of $\Tits(A)$.
The vector 
\begin{equation}\label{v inf}
\v_\infty=\begin{bsmallmatrix*}[r]1&\hspace{6pt}0&-1\end{bsmallmatrix*}
\end{equation}
is the smallest nonzero integer vector in this limiting ray.
The infinite families of $\g$-vectors are $\set{\v_i+n\v_\infty:n\in\integers_{\ge0}}$ with $(\v_i:i=1\ldots6)$ being the vectors
\begin{equation}\label{the vi}
\begin{bsmallmatrix*}[r]
0&1&0
\end{bsmallmatrix*}
\quad
\begin{bsmallmatrix*}[r]
0&0&1
\end{bsmallmatrix*}
\quad
\begin{bsmallmatrix*}[r]
0&2&-1
\end{bsmallmatrix*}
\quad
\begin{bsmallmatrix*}[r]
-1&0&0
\end{bsmallmatrix*}
\quad
\begin{bsmallmatrix*}[r]
0&-1&0
\end{bsmallmatrix*}
\quad
\begin{bsmallmatrix*}[r]
1&-2&0
\end{bsmallmatrix*}
\end{equation}

The $\g$-vector cones for $B^T$ are the maximal cones of a simplicial fan occupying almost all of $\reals^3$.
This fan is depicted in Figure~\ref{FB B fig}.
\begin{figure}[ht]
\scalebox{.94}{\includegraphics{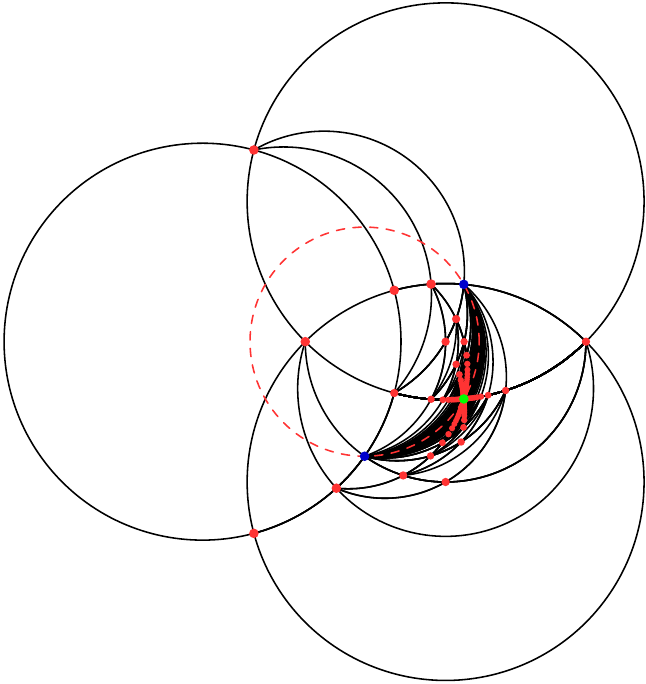}
\begin{picture}(0,0)(135,-164)
\put(-2.5,28.5){$\v_1$}
\put(-38,8){$\v_2$}
\put(16,32){$\v_3$}
\put(-69,98){$\v_4$}
\put(-67,-102){$\v_5$}
\put(-21,-80){$\v_6$}
\put(48,31){$\v_+$}
\put(-5.5,-64.5){$\v_-$}
\end{picture}}
\caption[The mutation fan $\F_B$]{
The mutation fan $\F_B$ for $B=\begin{bsmallmatrix*}[r]0&2&0\\-1&0&1\\0&-2&\hspace{6pt}0\\\end{bsmallmatrix*}$
}
\label{FB B fig}
\end{figure}
The picture is interpreted as follows:  
Intersecting each nonzero cone with a unit sphere about the origin, we obtain a collection of points, arcs and spherical triangles. 
These are depicted in the plane by stereographic projection, with the ray spanned by $\t$ projecting to the origin.
The rays spanned by $\v_+$ and $\v_-$ are indicated by blue (or dark gray) dots, the rays spanned by $\v_i+n\v_\infty$ are indicated by red (or medium gray) dots, and the limiting ray spanned by $\v_\infty$ is indicated by a green (or light gray) dot.
The dotted circle indicates the boundary of $\Tits(A)$.

Theorem~\ref{g subfan} says that the $\g$-vector cones determine a subfan of $\F_B$.
The points not contained in this subfan form an open cone consisting of positive linear combinations of $\v_+$ and $\v_-$.
This open cone is covered by the nonnegative span $C_+$ of $\v_+$ and $\v_\infty$ and the nonnegative span $C_-$ of $\v_-$ and $\v_\infty$.
Since $C_+$ is a limit of $B$-cones, it is contained in a $B$-cone by Proposition~\ref{lim B cone}, and similarly $C_-$ is contained in a $B$-cone.
These two cones cover the set of points not contained in $\g$-vector cones, so either $C_+$ and $C_-$ are each $B$-cones or $C_+\cup C_-$ is a single $B$-cone.
The latter possibility is ruled out by Proposition~\ref{contained Bcone}, taking $\k$ to be the empty sequence.
Thus the maximal cones of $\F_B$ are the $\g$-vector cones and $C_+$ and $C_-$.

Having determined the fan $\F_B$, we now prove Conjecture~\ref{affine univ conj} for this $B$.
The rays of $\F_B$ are spanned by the $\g$-vectors for $B^T$ and the vector $\v_\infty$.
We choose the vectors $(\v_\rho:\rho\in\R(B))$ to be these $\g$-vectors and $\v_\infty$.
Proposition~\ref{Tits b-coherent} says that any $B$-coherent linear relation supported on $\set{\v_\rho:\rho\in\R(B)}$ is in fact supported on $\set{\v_+,\v_\infty,\v_-}$.
Taking $\k$ to be the empty sequence and $j=2$ in Proposition~\ref{one positive or one negative} (twice), we see that the relation is supported on $\set{\v_\infty}$, and thus is trivial.
Appealing to Theorem~\ref{NZ 4.1(iv)} and Proposition~\ref{positive cone basis}, we have established the following proposition.
\begin{prop}\label{affine B3 R-basis}
For $B=\begin{bsmallmatrix*}[r]0&2&0\\-1&0&1\\0&-2&\hspace{6pt}0\\\end{bsmallmatrix*}$ and any underlying ring $R$, the set
\begin{equation}
\set{\v_+,\v_\infty,\v_-}\cup\bigcup_{i=1}^6\set{\v_i+n\v_\infty:n\in\integers_{n\ge0}}
\end{equation}
is a positive $R$-basis for $B$, where $\v_+$, $\v_\infty$, $\v_-$ and the $\v_i$ are as in \eqref{v pm}, \eqref{v inf}, and \eqref{the vi}.
\end{prop}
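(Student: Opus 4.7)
The plan is to apply Proposition~\ref{positive cone basis}\eqref{positive cone basis property}: I will show that the given collection is $R$-independent for $B$ and that each $B$-cone is nonnegatively $R$-spanned by elements of the collection lying in it. Both requirements rest on an explicit determination of $\F_B$.

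First I would collect the structural input. Since $B=\diag(2,1,2)\,S$ for an integer skew-symmetric $S$, and every matrix in the mutation class~\eqref{mut class particular B} has this form, Remark~\ref{when sign-coherence known} (Demonet) gives the Standard Hypotheses on $B$. The Cartan companion $A$ is of affine type with null vector $\t=(1,1,1)^T$, so $\Tits(A)\cup(-\Tits(A))$ equals $\reals^3\setminus\t^\perp$. Using Theorem~\ref{NZ 4.1(v)} to invoke the recursion of Conjecture~\ref{gvec mu}, I would compute the $\g$-vectors for $B^T$ by a finite check along $\T_3$ combined with periodicity under the Coxeter element, recovering the list in~\eqref{v pm},~\eqref{v inf}, and~\eqref{the vi}; among these, only $\v_+$ and $\v_-$ lie on $\t^\perp$.

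Next I would pin down $\F_B$. By Theorem~\ref{g subfan}, each $\g$-vector cone is a maximal $B$-cone, and these cones cover $\reals^3$ except for the open cone $U$ of strictly positive combinations of $\v_+$ and $\v_-$. The cones $C_\pm=\Cone(\v_\pm,\v_\infty)$ arise as limits of $\g$-vector cones along the infinite families $\set{\v_i+n\v_\infty}$, so Proposition~\ref{lim B cone} places each of $C_+$ and $C_-$ inside some $B$-cone; a single $B$-cone containing both $\v_+$ and $\v_-$ is excluded by Proposition~\ref{contained Bcone} with $\k$ empty, since $\v_+=(0,1,-1)$ and $\v_-=(1,-1,0)$ have strictly opposite signs in coordinate~$2$. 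Hence $C_+$ and $C_-$ are themselves the two remaining maximal $B$-cones, and $\R(B)$ comprises the rays spanned by the $\g$-vectors together with $\reals_{\ge0}\v_\infty$.

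To finish, I would check the hypotheses of Proposition~\ref{positive cone basis}\eqref{positive cone basis property}. The spanning condition holds on each maximal cone of $\F_B$: a $\g$-vector cone is $\integers$-spanned by its three $\g$-vectors, which form a $\integers$-basis of $\integers^3$ by Theorem~\ref{NZ 4.1(iv)}, and each $C_\pm$ is $\integers$-spanned by the adjacent $\g$-vector together with the primitive vector $\v_\infty$. For $R$-independence, the principal submatrices $B_{\br1}$, $B_{\br2}$, $B_{\br3}$ are of Cartan types $B_2$, $A_1\times A_1$, $B_2$, all finite, so Proposition~\ref{Tits b-coherent} reduces any $B$-coherent linear relation on the collection to one supported on $\set{\v_+,\v_\infty,\v_-}$. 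Then Proposition~\ref{one positive or one negative} with $\k$ empty and $j=2$ forces $c_{\v_+}=0$ (since the second coordinate of $\v_+$ is strictly positive while those of $\v_\infty$ and $\v_-$ are nonpositive) and, symmetrically, $c_{\v_-}=0$; the residual relation $c_{\v_\infty}\v_\infty=\mathbf{0}$ then forces $c_{\v_\infty}=0$. Positivity follows from \eqref{positive cone basis property}$\Rightarrow$\eqref{positive cone basis pos} in Proposition~\ref{positive cone basis}. The main obstacle is the explicit enumeration of $\g$-vectors and the verification that together with $\reals_{\ge 0}\v_\infty$ their rays exhaust $\R(B)$; once $\F_B$ is identified, the cleanup via Propositions~\ref{Tits b-coherent} and~\ref{one positive or one negative} is routine.
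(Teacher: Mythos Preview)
Your proposal is correct and follows essentially the same approach as the paper: establish the Standard Hypotheses via Demonet, compute the $\g$-vectors for $B^T$, identify $\F_B$ as the $\g$-vector fan together with the two limit cones $C_\pm$ via Proposition~\ref{lim B cone} and Proposition~\ref{contained Bcone}, then use Proposition~\ref{Tits b-coherent} followed by Proposition~\ref{one positive or one negative} (with $\k$ empty, $j=2$) for independence and Theorem~\ref{NZ 4.1(iv)} with Proposition~\ref{positive cone basis} for the positive spanning. Your treatment is in fact slightly more explicit than the paper's in verifying the finite-type hypothesis on the submatrices $B_{\br j}$ and in checking the $\integers$-spanning on $C_\pm$.
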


\addtocontents{toc}{\mbox{ }}

\section*{Acknowledgments}
Thanks to Ehud Hrushovski for enlightening the author on the subject of endomorphisms of the additive group $\reals$ (in connection with Remark~\ref{why linear}).
Thanks to David Speyer for pointing out the role of the polynomials $P_m$ in describing the $\g$-vectors associated to rank-$2$ exchange matrices of infinite type.
(See Section~\ref{rk2 sec}.)
Thanks to an anonymous referee of \cite{unisurface} for pointing out that Proposition~\ref{basis exists} needs the hypothesis that $R$ is a field.
Thanks to Kiyoshi Igusa and Dylan Rupel for helping to detect an error in an earlier version.
Thanks to an anonymous referee of this paper for many helpful suggestions which improved the exposition.

\end{document}